\newcommand{\commented}[1]{}
\newcommand{\essinf}{\mathop{\rm ess inf}}
\newcommand{\E}{\mathbb{E}}
\mathchardef\given="626A
\newcommand{\eps}{\varepsilon}
\def\e{\epsilon}
\newcommand\sumin{\sum_{i=1}^n}
\newcommand\sumjn{\sum_{j=1}^n}
\newcommand{\Sob}{H}
\newcommand{\Hil}{G}
\newcommand{\Gil}{L}
\renewcommand{\div}{\mathop{\rm div}\nolimits}
\def\dWW{\dot{\mathbb{W}}}
\def\L{\mathcal{L}}
\def\d{\delta}
\def\O{\mathcal{O}}
\def\a{\alpha}
\def\g{\gamma}
\def\b{\beta}
\def\h{\eta}
\def\t{\tau}
\def\r{\rho}
\def\k{\kappa}
\def\l{\lambda}
\def\RR{\mathbb{R}}
\def\NN{\mathbb{N}}
\def\GG{\mathbb{G}}
\def\HH{\mathbb{H}}
\def\LL{\mathbb{L}}
\def\VV{\mathbb{V}}
\def\WW{\mathbb{W}}
\def\ra{\rightarrow}
\newcommand{\m}{\mu}
\def\diam{\text{diam}}
\newcommand\lle{\lesssim}
\def\F{{\cal F}}
\def\tr{\mathop{\rm tr}\nolimits}
\def\cov{\mathop{\rm cov}\nolimits}
\def\var{\mathop{\rm var}\nolimits}
\def\generalweak#1{\ {\mathchoice{\buildrel #1\over \rightsquigarrow}%
{\raise-2pt\hbox{$\buildrel #1\over \rightsquigarrow$}}{}{}}\ }
\def\generalprob#1{\ {\mathchoice{\raise-1.5pt\hbox{$\buildrel#1\over\ra$}}
{\raise-2pt\hbox{$\buildrel#1\over\ra$}}{}{}}\ }
\def\prob{\generalprob{\scriptstyle P}}
\def\Ccor{c}
\def\tp{\intercal}
\newcommand{\ind}{\, \raise-2pt\hbox{$\stackrel{\makebox{\scriptsize ind}}{\sim}$}\, }
\newcommand{\iid}{\, \raise-2pt\hbox{$\stackrel{\makebox{\scriptsize iid}}{\sim}$}\,}
\def\argmin{\mathop{\rm argmin}}
\def\thalf{{\textstyle{\frac12}}}
\newenvironment{caseAlign}
{
	\left\lbrace
	\array{@{}l@{\quad} l@{}l@{}l@{}l@{}l@{} }
}
{\endarray \right.}
\numberwithin{equation}{section}
\theoremstyle{plain}
\newtheorem{theorem}{Theorem}[section]
\newtheorem{proposition}[theorem]{Proposition}
\newtheorem{lemma}[theorem]{Lemma}
\theoremstyle{remark}
\newtheorem{remark}[theorem]{Remark}
\newtheorem*{example}{Example}
\begin{document}
	
	\begin{frontmatter}
		\title{Linear methods for  non-linear inverse problems}
		\runtitle{Linear methods for non-linear inverse problems}
		
		\begin{aug}
			\author[A]{\fnms{Geerten} \snm{Koers}\thanksref{t1}},
			\author[B]{\fnms{Botond} \snm{Szab\'o}\thanksref{t2}\ead[label=e2]{botond.szabo@unibocconi.it}}
			\and
			\author[A]{\fnms{Aad} \snm{van der Vaart}\thanksref{t1}\ead[label=e3]{a.w.vandervaart@tudelft.nl}}
			\address[A]{Delft University of Technology, DIAM, \printead{e3}}
			\address[B]{Bocconi University, Department of Data Sciences and BIDSA, \printead{e2}}

\thankstext{t1}{The research leading to these results  is partly financed by a Spinoza prize 
by the Netherlands Organisation for Scientific Research (NWO).}
\thankstext{t2}{Co-funded by the European Union (ERC, BigBayesUQ, project number:
101041064). Views and opinions expressed are however those of the author(s) only and do not
necessarily reflect those of the European Union or the European Research Council. Neither the
European Union nor the granting authority can be held responsible for them.}
		\end{aug}
		
		\begin{abstract}
We consider the recovery of an unknown function $f$ from a noisy observation of the solution
$u_f$ to a partial differential equation that can be written in the form $\L u_f=c(f,u_f)$,
for a differential operator $\L$ that is rich enough to recover $f$ from $\L u_f$. 
Examples include the time-independent Schr\"odinger equation $\Delta u_f = 2u_ff$,
the heat equation with absorption term $(\partial_t  -\Delta_x/2) u_f=fu_f$, and the Darcy problem
$\nabla\cdot (f \nabla u_f) = h$.
We transform this problem into the linear inverse problem of recovering $\L u_f$ under the Dirichlet boundary condition, and show that Bayesian methods 
with priors placed either on $u_f$ or $\L u_f$ for this problem yield optimal recovery rates not only 
for $u_f$, but also for $f$. We also derive frequentist coverage guarantees for the corresponding Bayesian credible sets. Adaptive
priors are shown to yield adaptive contraction rates for $f$, thus eliminating the need to know the smoothness of this function. 
The results are illustrated by numerical experiments on synthetic data sets.
		\end{abstract}
		
		\begin{keyword}[class=MSC]
			\kwd[Primary ]{62G05, 62G15}
			\kwd[; secondary ]{62G20}
		\end{keyword}
		
		\begin{keyword}
			\kwd{Posterior distribution}
			\kwd{Inverse problem}
                        \kwd{non-linear}
                        \kwd{Contraction rate}
                        \kwd{Adaptation}
                        \kwd{Uncertainty quantification}
		\end{keyword}
		
	\end{frontmatter}
	
\section{Introduction}\label{sec: introduction}
Consider the recovery of a function $f: \O\to\RR$ on a known domain $\O\subset\RR^d$ 
from a noisy observation of the function $u_f$ satisfying  a partial differential equation (PDE) of the form
\begin{equation}
\label{eq: general PDE}
    \left\{\begin{aligned}
		 \L u_f &= c(f,u_f), \qquad &&\text{ on } \O, \\
	         u_f &= g, \qquad &&\text{ on } \Gamma\subseteq\partial \O.
	\end{aligned}\right.
\end{equation}
Here  $\L$ is a linear differential operator and $g: \Gamma\to\RR$ is a known function
on  a subset $\Gamma$ of the boundary $\partial \O$ of $\O$. 
The map $c$ transforms the pair of functions $(f, u_f)$ into a function on $\O$.
In most of our examples this map takes the form of a pointwise transformation, such as
$c(f,u_f)(x)=\bar c\bigl(f(x),u_f(x)\bigr)$, for a map $\bar c: \RR^2\to\RR$, but $c$ may also act on $(f,u_f)$ as elements
of given function spaces (and hence also involve derivatives of $f$ or $u_f$). 
The idea of \eqref{eq: general PDE} is to isolate possible non-linearities in the PDE in the term $c(f,u_f)$,
and to choose the linear transformation $\L u_f$ rich enough to be able to recover the function $f$ from $\L u_f$. 
We make the latter precise by assuming that there is a (sufficiently regular) solution map $e$ such that 
\begin{equation}
f=e(\L u_f).
\label{EqSolutionOperator}
\end{equation}
In several examples the map $e$ takes a simple, pointwise form, which makes our
general method very easy to implement. More generally, the map $e$ may act on $\L u_f$ 
as an element of a function space, and practical implementation  may involve a numerical
implementation of the recovery of $f$ from $\L u_f$. 

The general idea of the paper is to solve the linear inverse problem of recovering 
$\L u_f$ from the observations by the Bayesian method, and next recover $f$ by an implementation of
\eqref{EqSolutionOperator}. We prove that the method can attain optimal results, 
and confirm the feasibility of the approach also by numerical experiments. The two-step approach 
has both computational and theoretical advantages.

The observational model consists of observing the function $u_f$ plus Gaussian white noise, which we write informally
in the form
\begin{align}
\label{eq: observation}
	Y_n = u_f + \frac{1}{\sqrt n} \dWW.
\end{align}
We consider two precise versions of this model. In the white noise model the observation is 
the Gaussian process $\bigl(Y_n(h): h \in L_2(\O)\bigr)$, given by 
$$Y_n(h)=\langle h, u_f \rangle_{L_2}+\frac1{\sqrt n}\dWW(h),$$ 
for $\dWW=\bigl(\dWW(h): h \in L_2(\O)\bigr)$ the mean zero Gaussian process with covariance function
$\cov\bigl(\dWW(h_1), \dWW(h_2)\bigr) = \langle h_1, h_2 \rangle_{L_2}$.
Next to this continuous interpretation of \eqref{eq: observation}, 
we also consider the practically relevant discrete observational setting, 
where $Y_n$ is observed at $n$ given design points $x_{n,1},\ldots, x_{n,n}\in \O$. In this case the observation $Y_n$
is a vector in $\RR^n$ with coordinates $Y_n(i)=u_f(x_{n,i})+W_i$, where $W_1,\ldots, W_n$ are
i.i.d.\ standard normal variables. In our general discussion we shall use \eqref{eq: observation} to refer to both models.

Examples of the structure \eqref{eq: general PDE}
include the time-independent Schr\"odinger equation, 
considered in \cite{Nickl18,Monardetal2021, nickl2020convergence} with $\L=\Delta$ the Laplacian,
the heat equation with absorption term, considered in \cite{kekkonen2022consistency,koers2023misspecified},
with $\L=-\Delta_x/2+\partial_t$, 
the one-dimensional Darcy problem with $\L$ the inverse Volterra operator,
and the general Darcy problem, considered in \cite{GiordanoNickl2020}, with $\L$ equal to the Laplacian. 
Details for these examples and additional examples are given in Sections~\ref{sec: Schrodinger}-\ref{SectionExponentialVolterra}. 
It is not clear that the approach works in general, but the list of examples
is encouraging.

A non-homogeneous boundary condition ($u_f=g$ on $\Gamma$ with $g$ nonzero)
still renders the problem of recovering 
$\L u_f$ non-linear. To remove also this non-linearity,
define $K$ as the solution operator $u \mapsto Ku$ of the 
homogeneous partial differential equation
\begin{equation}\label{eq: definition K gen}
	\left\{\begin{aligned}
		\L Ku &= u, \qquad &&\text{ on } \O,\\
		Ku&= 0, \qquad &&\text{ on } \Gamma\subseteq\partial\O.
	\end{aligned}\right.
\end{equation}
Thus the linear operator $K$ is the inverse of $\L$ under the Dirichlet boundary condition.
(For our purposes it suffices that $K$ is defined and satisfies \eqref{eq: definition K gen}  on
the smaller set of functions $\{\L u: u=0 \text{ on }\Gamma\}$.)
Furthermore,  for the given boundary function $g: \Gamma\to\RR$ in \eqref{eq: general PDE}, 
let $\tilde{g}: \O \to \RR$ be the function that satisfies 	
\begin{equation}\label{eq: definition g tilde gen}
	\left\{\begin{aligned}
		\L \tilde{g} &= 0, \qquad &&\text{ on } \O,\\
		\tilde{g} &= g, \qquad &&\text{ on } \Gamma\subseteq\partial \O.
	\end{aligned}\right.
\end{equation}
Solutions to the equations \eqref{eq: definition K gen}-\eqref{eq: definition g tilde gen}
exist and are unique under mild regularity conditions on the  operator $\L$, 
domain $\O$, boundary set $\Gamma$ and function $g$.
In particular, suppose that the solution to the problem to \eqref{eq: definition g tilde gen} with zero boundary condition
is unique
(i.e.\ the function $v=0$ is the only solution to the problem $\L v=0$ on $\mathcal {O}$ and $v=0$ on $\Gamma$).
Because the definitions \eqref{eq: definition K gen} and \eqref{eq: definition g tilde gen}
imply that $\L(K\L u_f+\tilde g)=\L K(\L u_f)+0=\L u_f$ on $\O$,
and $K\L u_f + \tilde{g}=0+g$ on $\Gamma$,
 it then follows  that 
\begin{equation}\label{Equ=KLu+tildeg}
u_f = K\L u_f + \tilde{g}.
\end{equation}
Since the function $\tilde g$ is known, this and \eqref{eq: observation} leads to the adapted observational model 
\begin{align}\label{def:model:linear}
\tilde{Y}_n := Y_n - \tilde{g} = K(\L u_f) + \frac{1}{\sqrt{n}}\dWW.
\end{align}
Thus the problem of estimating $\L u_f$ has been reduced to solving the linear inverse problem
$\tilde Y_n=Kv+n^{-1/2}\dWW$, defined by the operator $K$. The solution $v$ to this problem 
is next substituted as $v=\L u_f$ in the inversion map \eqref{EqSolutionOperator} to find $f$. In the Bayesian approach,
the induced distribution of $f$ when this map is applied to a posterior distribution of $v$ is the usual posterior distribution
of $f$ for the given prior.  (We give a  precise statement in Section~\ref{sec: general method}.)

It is natural to put a prior distribution directly on the function $f$, which is
the basic parameter. Within the context of non-linear inverse problems this approach is taken 
in \cite{Nickl18,GiordanoNickl2020,Monardetal2021,MR4230066,kekkonen2022consistency,Nickl23}, 
who put a Gaussian process prior on $f$ or $\log f$ or independent priors 
on the coefficients of an orthonormal wavelet basis of the domain.
However, in our two-step approach, which uses the Bayesian method to recover $\L u_f$ in the observational model \eqref{def:model:linear}, 
it is easier to put a prior on $\L u_f$ or $u_f$. Because  $f$ is determined
uniquely by $\L u_f$  in view of \eqref{EqSolutionOperator}, this induces a prior (and posterior) distribution on $f$. 
Methods for computing the posterior distribution of $\L u_f$ given a prior on this function, 
as well as theoretical results on this posterior distribution
are  available (e.g.\ \cite{Knapik2011,KnapikHeat,Ray2013,agapiou:2013,Knapik2016,Yan2020,Yan2020thesis,Yan2024}). 
A main purpose of the present paper is to show 
that natural priors on $\L u_f$ can lead to accurate recovery also of the function $f$. Nothing
appears to be lost at the level of contraction rates of the posterior distribution, and
our numerical experiments also show satisfactory performance. 

The posterior distribution is the conditional distribution given the data $Y_n$, as usual,
where the distribution of the data given the parameter is determined by \eqref{eq: observation}.
We denote all posterior distributions by $\Pi_n(\cdot \given Y_n)$, where the argument
$\cdot$ can be specialised to a set concerning $u_f$, $\L u_f$  or $f$.

We obtain posterior contraction rates, as usual (\cite{GGvdV2000}), in the non-Bayesian setup
where the observation is assumed to be generated according to the model
\eqref{eq: general PDE} and \eqref{eq: observation} with a fixed parameter $f_0$. For instance,
a contraction rate for $f$ relative to a given norm $\|\cdot\|$ is a sequence $\e_n\rightarrow 0$
such that $\Pi_n\bigl(f: \|f-f_0\|\ge M_n\e_n\given Y_n\bigr)\rightarrow 0$ in probability, for every
$M_n\ra\infty$.
This rate can be compared to an optimal contraction rate, which is determined, 
in a  minimax sense, by the smoothness of the
function $f_0$. The optimal contraction rate is typically obtainable by using a prior that matches the smoothness of the true parameters. 
By mixing over priors of different smoothness levels (hierarchical Bayes) or using
a prior of data-determined smoothness level (empirical Bayes),
optimal contraction rates for a range of different smoothness levels are obtainable by a single prior.
An advantage of our approach is that such ``adaptive'' contraction rates for the linear inverse problem of estimating
$\L u_f$ carry over into adaptive rates for the recovery of $f$. 
Adaptive priors for the linear problem have been 
constructed by using a scale of priors of fixed smoothness simultaneously through empirical or hierarchical Bayes 
methods (e.g.\ \cite{Knapik2016,Yan2020,Szabo2013,rousseau2017asymptotic,agapiou:savva:2024}). 

Besides for estimating the parameter at a (nearly) minimax rate, a posterior distribution can be 
used for uncertainty quantification. This can be justified, or not, from a non-Bayesian point of view by 
the coverage of credible sets, i.e.\ the probability that a (central) set of given posterior probability 
covers the true parameter $f_0$. 
For infinite-dimensional credible sets that involve a bias-variance trade-off, such as balls and bands, this was 
studied for linear inverse problems in \cite{Knapik2016,SzabovdVvZ2015,SniekersvdV2020,rousseau2020asymptotic}. Coverage is different
for priors of fixed smoothness and adaptive priors, and depends on properties of the true parameter
relative to the prior. Since our method is based on transforming the non-linear problem at hand into a linear one, 
the positive results showing the existence of reasonable credible sets in the linear problem imply such
results for the non-linear problem of interest (see Section~\ref{sec: general method}). 

The coverage of credible intervals for smooth functionals of the parameter
or of credible balls in a weak norm follow from a suitable version of
the Bernstein-von Mises theorem (\cite{Castillo2012,CastilloNickl2014,Ray2017,Nickl18,Monardetal2021}).
It is an open question whether such results are valid for the main construction in the present paper.

The approach of this paper is partly motivated by computational efficiency.  The recent paper \cite{nickl:wang:2020} derived
a Langevin type algorithm to compute the posterior mean, which provenly takes only polynomial time.
The algorithm is based on only the forward map $f\mapsto u_f$, which is an advantage for many problems.
However, repeatedly evaluating the forward map
may still be heavy for some practical applications in case of medium or large data sets. 
A gain of our approach is that the linear problem, to compute $\L u_f$,
may be substantially faster to solve than the original non-linear one. If the inverse map \eqref{EqSolutionOperator} is easy
(as it is in the case of most of our examples), then this advantage is propagated to the
computation of $f$.  A further gain is that our approach  may combine well with distributed computation 
based on partitioning the data in space. Since the
observation is a noisy version of the function $u_f: \O\to\RR$, a (possibly overlapping) partition of
the domain $\O$, followed by separate reconstruction of $\L u_f$ on the partitioning sets, is feasible. 
Preliminary work shows that such a distributional approach may give theoretically optimal and practically competitive 
results compared to other computational shortcuts. In particular, spatial partitioning allows priors and posteriors
to adapt to the smoothness of the true function  (for the regression problem without inversion, see \cite{szabo2023adaptation}).
In comparison, an approach based on partitioning the domain
of the function of $f$ seems impossible, due to the structure of the partial differential equation.  Another  approach to speed up the computations is 
the extended version of the inducing variable variational Bayes approach \cite{titsias2009variational}. 
By optimally choosing the number of inducing variables, this procedure recovers the functional parameter of interest 
with the optimal minimax rate \cite{randrianarisoa2023variational}  and (in case of the direct regression model) 
provides reliable uncertainty quantification \cite{nieman2023uncertainty,travis2024pointwise}.

The paper is organised as follows. 
In Section~\ref{sec: general method} we formalise the relation between the non-linear inverse problem of
interest and the auxiliary linear inverse problem. Next in Section~\ref{SectionLinearProblem} we review and extend Bayesian methods for
the linear inverse problem, describing prior choices, with both fixed and data driven choices of the tuning parameter,
and resulting results on contraction rates and frequentist coverage of credible sets.
In Sections~\ref{sec: Schrodinger}--\ref{SectionExponentialVolterra}
we apply our approach to five examples of PDEs.  In Section~\ref{sec: simulations} we present numerical simulations
that illustrate our approach in various settings. 
We conclude with a discussion of the approach and open questions in Section~\ref{sec:discussion}.
Sections~\ref{SectionHilbertAndSobolev}--\ref{SectionComplementsDarcy} provided
technical complements. 
Sections~\ref{sec:contract:smoothness:GWN} and~\ref{sec:contract:smoothness:regression} 
presents posterior contraction rates in smoothness norms for the linear inverse problem in the Gaussian white noise, 
and the discrete observation settings, respectively.

\subsection{Notation}
For two sequences $a_n,b_n$ we write $a_n\lesssim b_n$ if there exists a constant $C>0$ such that $a_n/b_n\leq C$, for every $n$. 
We write $a_n\asymp b_n$ if $a_n\lesssim b_n$ and $b_n\lesssim a_n$ hold simultaneously.  
The letters $c$ and $C$ denote constants not depending on $n$; their values might change from line to line. 

A \emph{domain} is an open set $\O\subset\RR^d$, which  throughout the paper we shall take to be bounded and connected
with a Lipschitz boundary.

For $\b\in\RR$ the notation $\Sob^\b(\O)$ is the Sobolev space $W^{\b,2}(\O)$ of functions (or distributions)  $f: \O\to \RR$ of smoothness $\b$. 
For $\b=0$ this is $L_2(\O)$, while for $\b \in \NN$ this consists of the functions  $f: \O\to\RR$
with weak derivatives $D^\a f$ up to order $|\a|\le \b$, normed by
\begin{align}
	\|f\|_{\Sob^\b(\O)} := \sqrt{\sum_{0\le |\a| \leq \b} \|D^{\a} f\|_{L_2}^2} < \infty.\label{def:Sobolev}
\end{align}
Sobolev spaces of non-integer smoothness $\b \notin \NN$ can be constructed in various equivalent ways
(\cite{Evans10,Triebel2008EMS,Hitchhiker}). Sobolev spaces with negative smoothness are by definition
the dual spaces of the corresponding spaces of positive smoothness. All spaces refer to a given
domain $\O$, which may be dropped from the notation.

\section{General method}\label{sec: general method}
In this section we present the skeleton of our approach, specialised to rates of contraction and uncertainty quantification.
In the next sections, we verify the generic assumptions of the present section for combinations of priors and  PDE models.

We assume given a prior distribution on the function $v=\L u_f$, and consider the two posterior distributions
corresponding to the models \eqref{def:model:linear} and \eqref{eq: observation}:
\begin{itemize}
\item[(L)] the posterior $\tilde\Pi_n(v\in \cdot\given \tilde Y_n)$ of $v$ in the observational model $\tilde Y_n=Kv+n^{-1/2}\,\dWW$;
\item[(N)] the posterior $\Pi_n(f\in \cdot\given Y_n)$ of $f$ in the observational model $Y_n=u_f+n^{-1/2}\,\dWW$.
\end{itemize}
The two models are linked through the identity $v=\L u_f$ and equations  \eqref{EqSolutionOperator} and \eqref{Equ=KLu+tildeg}.
Our approach is to construct the posterior distribution
of $v$ from model (L) and data $\tilde Y_n$, and next  form the posterior distribution $\tilde\Pi_n\bigl(e(v)\in \cdot\given \tilde Y_n\bigr)$ 
of $e(v)$ induced by the solution operator $e$ given in \eqref{EqSolutionOperator}. The latter equation shows that this
is a posterior distribution of $f$ in model (N) under the identification $\tilde Y_n=Y_n-\tilde g$.

To be more precise, a prior distribution on $v$ that gives full probability to the set $\{\L u_f: f\in\F\}$, for some set $\F$, induces a prior on
$f\in \F$ through \eqref{EqSolutionOperator}. The induced posterior distribution $\tilde\Pi_n\bigl(e(v)\in \cdot\given \tilde Y_n\bigr)$ of $e(v)$ is the
ordinary posterior distribution $\Pi_n(f\in \cdot\given Y_n)$ of $f$ under this prior
(see the proof of Proposition~\ref{prop:stability} for a precise statement and derivation of this correspondence).
In particular, and trivially,  this applies to  priors on $v=\L u_f$ induced from a prior on $f$. 

For flexibility we also allow prior distributions on $v$ that are not restricted to the range of the map $f\mapsto \L u_f$, 
requiring only that the inverse map $e$:
\begin{itemize}
\item[(a)] is defined on a set of full probability under the prior of $v$ and
\item[(b)] recovers $f$ as in \eqref{EqSolutionOperator} when applied to a function $\L u_f$. 
\end{itemize}
The posterior distribution $\tilde\Pi_n\bigl(e(v)\in \cdot\given \tilde Y_n\bigr)$ 
may then involve functions $f=e(v)$ for which existence of the forward solution $u_f$ 
to the partial differential equation \eqref{eq: general PDE} may be hard to verify, or does not  exist.
In particular, PDE theory may ensure the existence of a solution $u_f$ of \eqref{eq: general PDE} only under certain conditions on $f$.  

For our purposes it is not important to verify such conditions; instead we can extend the definition of the forward map $f\mapsto u_f$ as follows,
keeping the basic identities.
For a given function $v$ in model (L), the function $u=Kv+\tilde g$ is well defined  and satisfies $v=\L u$ on $\O$ and $u=g$ on $\partial \O$,
by \eqref{eq: definition K gen}-\eqref{eq: definition g tilde gen}. If $v=\L u_f$ for a function $f$ and solution $u_f$ of \eqref{eq: general PDE},
then $u=u_f$ by \eqref{Equ=KLu+tildeg} and $f=e(\L u_f)$ by  the definition \eqref{EqSolutionOperator} of the solution operator. In the other case, if $v$ does not belong to the
range of the map $f\mapsto \L u_f$, then we \emph{define} $f=e(v)$ and $u_f=u$ and 
again \eqref{EqSolutionOperator} and \eqref{Equ=KLu+tildeg} hold, even if  \eqref{eq: general PDE} may not.
(It does if $c\bigl(e(v),Kv+\tilde g\bigr)=v$ on $\O$.)
An alternative to extending the definition of $u_f$ in this way is to consider the posterior distribution
$\tilde\Pi_n\bigl(e(v)\in \cdot\given \tilde Y_n\bigr)$  as a \emph{projection-posterior} for $f$ in the sense of \cite{moumita},
but this disregards that it is an ordinary posterior distribution in most cases.

The observations $\tilde Y_n$ and $Y_n$ may be interpreted in terms of the white noise model, or be understood as
vectors in $\RR^n$, for given $x_{n,1},\ldots, x_{n,n}\in\O$, equal to the sum of the 
vectors $\bigl(Kv(x_{n,i}): i=1,\ldots, n\bigr)$ and $\bigl(u_f(x_{n,i}): i=1,\ldots, n\bigr)$  and a 
standard normal vector, in the linear and non-linear model, respectively.

\subsection{Posterior contraction rate}
We assume that the posterior distribution of $v$ is a Borel law on a normed space $(V,\|\cdot\|)$ and
possesses a rate of contraction $\e_n$ to $v_0$ in the sense that $\tilde\Pi_n(v: \|v-v_0\|\ge \e_nM_n\given \tilde Y_n)\prob 0$,
for any $M_n\ra\infty$, under the assumption that $v_0$ gives the true distribution of $\tilde Y_n=Kv_0+n^{-1/2}\dWW$.

\begin{proposition}\label{prop:stability}
Suppose that the posterior distribution $\tilde \Pi_n(v\in\cdot\given \tilde Y_n)$ of $v$ in model (L) contracts under $v_0$
to $v_0=\L u_{f_0}$ at rate $\e_n$ in $(V,\|\cdot\|)$ 
and satisfies $\Pi_n(v\in V_n\given \tilde Y_n)\prob 1$ for given sets $V_n\subset V$. 
If  \eqref{EqSolutionOperator} holds for a map $e$ such that $e: V_n\to L_2$ is Lipschitz at $v_0$, 
then the posterior distribution of
$f$ in model (N) attains a rate of contraction $\e_n$ under $f_0$ relative to the $L_2$-norm. 
\end{proposition}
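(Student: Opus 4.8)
The plan is to make explicit the correspondence between the two posteriors and then push the contraction rate through the Lipschitz map $e$. First I would set up the change-of-variables. A prior on $v$ supported on a set of full probability on which $e$ is defined induces, via \eqref{EqSolutionOperator}, a law on $f=e(v)$; conversely, given $\tilde Y_n=Y_n-\tilde g$ and the identity $u_f=Kv+\tilde g$ from \eqref{Equ=KLu+tildeg}, the likelihood of $\tilde Y_n$ under parameter $v$ in model (L) coincides (in the white-noise case, as Gaussian shift experiments with the same Cameron--Martin structure; in the discrete case, as the same Gaussian vector with mean $(Kv(x_{n,i}))_i=(u_f(x_{n,i})-\tilde g(x_{n,i}))_i$) with the likelihood of $Y_n$ under parameter $f$ in model (N). Hence Bayes's rule gives that the pushforward $\tilde\Pi_n\bigl(e(v)\in\cdot\given\tilde Y_n\bigr)$ is literally the posterior $\Pi_n(f\in\cdot\given Y_n)$ for the induced prior; this is the precise statement referenced after the proposition. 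In the ``extended'' case where the prior on $v$ is not supported on the range of $f\mapsto\L u_f$, one simply takes this pushforward as the definition of $\Pi_n(f\in\cdot\given Y_n)$, consistently with the extension of $f\mapsto u_f$ described above.

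Next I would transfer the contraction statement. By hypothesis $\tilde\Pi_n(v:\|v-v_0\|\ge M_n\e_n\given\tilde Y_n)\prob 0$ and $\tilde\Pi_n(v\in V_n\given\tilde Y_n)\prob 1$, with $v_0=\L u_{f_0}$ and $f_0=e(v_0)$. Let $L$ and $\delta>0$ be the Lipschitz constant and radius of $e$ at $v_0$ on $V_n$, i.e.\ $\|e(v)-e(v_0)\|_{L_2}\le L\|v-v_0\|$ whenever $v\in V_n$ and $\|v-v_0\|\le\delta$. Fix an arbitrary $M_n\to\infty$; without loss of generality $M_n\e_n\le\delta$ eventually (otherwise replace $M_n$ by $\min(M_n,\delta/\e_n)$, which still tends to infinity since $\e_n\to 0$, and note a contraction rate at the smaller radius implies it at the larger). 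On the event $\{v\in V_n\}\cap\{\|v-v_0\|<M_n\e_n\}$ we have $f=e(v)$ with $\|f-f_0\|_{L_2}\le L\|v-v_0\|< LM_n\e_n$. Therefore
\begin{equation*}
\Pi_n\bigl(f:\|f-f_0\|_{L_2}\ge LM_n\e_n\given Y_n\bigr)
\le \tilde\Pi_n\bigl(v:\|v-v_0\|\ge M_n\e_n\given\tilde Y_n\bigr)
+\tilde\Pi_n\bigl(v\notin V_n\given\tilde Y_n\bigr),
\end{equation*}
and both terms on the right converge to $0$ in $P_{f_0}$-probability. Since $L$ is a fixed constant and $M_n\to\infty$ was arbitrary, replacing $M_n$ by $M_n/L$ shows that $\e_n$ is a contraction rate for $f$ in $L_2$ under $f_0$.

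The genuinely delicate point is the first step: verifying that the pushforward of the linear posterior really is the non-linear posterior, i.e.\ that the two Gaussian experiments are measure-theoretically matched and that Bayes's formula commutes with the deterministic change of variables $v\mapsto e(v)$. For the white-noise model this requires checking that the log-likelihood ratios $d P^{(L)}_{v}/d P^{(L)}_{v_0}$ and $d P^{(N)}_{f}/d P^{(N)}_{f_0}$ agree $P$-a.s.\ under the substitution $Kv=u_f-\tilde g$; this is where the identity \eqref{Equ=KLu+tildeg} and the fact that $\tilde g$ is a known, fixed shift do the work, so that the Radon--Nikodym derivatives involve only $\langle\cdot,Kv\rangle=\langle\cdot,u_f-\tilde g\rangle$ and $\|Kv\|^2$, matching up exactly. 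For the discrete model the matching is elementary once one observes the mean vectors coincide. The remaining content of the proposition — propagating the rate — is then purely the soft Lipschitz argument above, with the only minor care being the truncation ensuring we stay within the radius $\delta$ on which $e$ is Lipschitz.
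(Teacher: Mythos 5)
Your proof is correct and follows essentially the same route as the paper: you first identify the two posteriors via the shared prior on $v=\L u_f$ and the fact that the conditional law of $\tilde Y_n=Y_n-\tilde g$ given the parameter is identical in models (L) and (N), so that $\Pi_n(f\in\cdot\given Y_n)$ is the pushforward of $\tilde\Pi_n(\cdot\given\tilde Y_n)$ under $e$, and then you transfer the contraction rate using the Lipschitz bound for $e$ on $V_n$ together with $\tilde\Pi_n(V_n\given\tilde Y_n)\prob 1$. Your extra truncation of $M_n$ to remain inside a local Lipschitz radius $\delta$ is harmless over-caution — the paper reads the hypothesis as a Lipschitz bound valid uniformly on all of $V_n$, so that step is not needed.
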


\begin{proof}
By assumption the prior distributions of $\L u_f$ and $v$ are the same, and in view of 
\eqref{def:model:linear} and \eqref{Equ=KLu+tildeg}, the conditional distributions of 
$Y_n-\tilde g=K\L u_f+n^{-1/2}\dWW$ given $\L u_f$ and $\tilde Y_n=Kv+n^{-1/2}\dWW$ given $v$ are the same as well,
under the identification $v=\L u_f$.
It follows that the conditional distributions  of $\L u_f$ given $Y_n-\tilde g$ and of $v$ given $\tilde Y_n$
are the same. More precisely, if $(y,B)\mapsto L(y,B)$ is a Markov kernel such that
$v\given \tilde Y_n\sim L(\tilde Y_n,\cdot)$, then $\L u_f \given Y_n\sim L(Y_n-\tilde g,\cdot)$.

By \eqref{EqSolutionOperator}, we have that $f-f_0=e(\L u_f)-e(\L u_{f_0})$ and hence
$\Pi_n\bigl(f: \|f-f_0\|_{L_2}<\e, \L u_f\in V_n\given Y_n\bigr)
=\Pi_n\bigl(f: \|e(\L u_f)-e(\L u_{f_0})\|_{L_2}<\e, \L u_f\in V_n\given Y_n\bigr)$. 
By the preceding paragraph this is the same as 
$L(Y_n-\tilde g, B)$, for $B=\{v\in V_n: \|e(v)-e(v_0)\|_{L_2}<\e\}$. Since $Y_n-\tilde g$ is distributed
as $\tilde Y_n$ under $v_0$, we obtain that $L(Y_n-\tilde g, B)\sim \tilde\Pi_n(B\given \tilde Y_n)$.
The assumptions that $e$ is Lipschitz on $V_n$ and that $\tilde \Pi_n(V_n\given \tilde Y_n)\prob 1$ show
that there exists a constant  $C$ such that 
$\tilde\Pi_n(B\given \tilde Y_n)\le  \tilde\Pi_n(v: \|v-v_0\|\le C \e\given \tilde Y_n)$ on a set
of probability tending to 1. The last probability tends to zero in probability for $\e=M_n\e_n$ and 
every $M_n\ra\infty$.
\end{proof}

The proposition uses the $L_2$-norm on the functions $f$ for definiteness, and allows a general norm $\|\cdot\|$
on the functions $\L u_f$ for flexibility. It connects the posteriors for the two problems
\eqref{eq: general PDE} and  \eqref{def:model:linear} in an abstract way, and needs to be made precise
for particular problems. The input is a $\|\cdot\|$-contraction rate for $v$ in the problem 
$\tilde Y_n=Kv+n^{-1/2}\,\dWW$ at $v_0=\L u_{f_0}$  and the guarantee that
the posterior in this problem concentrates on sets $V_n$ on which the solution map \eqref{EqSolutionOperator} is 
Lipschitz. 

In our examples the Lipschitz assumption, relative to a natural norm, is usually mild. For instance, it can be verified
by ascertaining that the posterior distribution  $\tilde\Pi_n(Kv\in \cdot\given \tilde Y_n)$ of $Kv$ concentrates
on functions that are bounded away from zero, or functions whose derivative is well behaved.
We show this by establishing consistency of this posterior distribution for a relevant norm
combined with the relevant assumption on the true function $Kv_0$. Because $K$ is smoothing,
uniform consistency for $Kv$ is typically automatic for the usual priors, and often sufficient.

\subsection{Uncertainty quantification}
Credible sets for the linear inverse problem (L) map naturally into credible sets for the non-linear
problem (N) of interest. Assume given  a credible set $\tilde C_{n}(\tilde Y_n)$ for $v$ based on 
the observation $\tilde Y_n=Kv+n^{-1/2}\dWW$: a set of prescribed probability under the posterior
distribution $\tilde\Pi_n(\cdot\given \tilde Y_n)$. We transform this in the set of functions $f$ given by 
\begin{equation}
\label{EqGeneralCredibleSet}
C_{n}(Y_n):= \bigl\{f: \L u_f\in \tilde C_{n}(Y_n-\tilde g)\bigr\}=\bigl\{e(v): v\in  \tilde C_{n}(Y_n-\tilde g)\bigr\},
\end{equation}
where $e$ is the solution map given in \eqref{EqSolutionOperator}. 

The \emph{credible level} of the credible set $\tilde C_{n}(\tilde Y_n)$ for $v$ in the model $\tilde Y_n=Kv+n^{-1/2}\dWW$  
is by definition the posterior probability $\tilde\Pi_n\bigl(\tilde C_{n}(\tilde Y_n)\given \tilde Y_n\bigr)$
and its \emph{coverage} at $v_0$ is by definition the probability $\Pr_{v_0}\bigl(\tilde C_{n}(\tilde Y_n)\ni v_0\bigr)$
if $\tilde Y_n=Kv_0+n^{-1/2}\dWW$. For the model $Y_n=u_f+n^{-1/2}\dWW$  the same quantities 
are defined analogously relative to the parameter $f$ with true value $f_0$. 

In the following proposition we identify $\tilde Y_n$ and $Y_n-\tilde g$, so that
the assertions can be understood in an almost sure sense.

\begin{proposition}
\label{proposition_credible_general}
The credible levels of the credible sets $\tilde C_{n}(\tilde Y_n)$ for $v$ in model (L) and $C_{n}(Y_n)$ for $f$ in model (N), given in \eqref{EqGeneralCredibleSet}, are equal,
and so are the coverage levels of these sets at $v_0=\L u_{f_0}$ and $f_0$, respectively. 
Furthermore, if  the map  $e: V_n\to L_2$  in \eqref{EqSolutionOperator} is uniformly Lipschitz at points $\bar v_n\in V_n$ 
and $\tilde C_{n}(\tilde Y_n)\subset V_n$, then on the event $\bar v_n\in \tilde C_n(\tilde Y_n)$ 
the $L_2$-diameters of the sets $C_{n}(Y_n)$ are of the same order in probability under $f_0$ as 
the $\|\cdot\|$-diameters of the sets $\tilde C_{n}(\tilde Y_n)$ under $v_0$.
\end{proposition}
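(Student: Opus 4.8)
The plan is to treat this proposition as a bookkeeping consequence of the posterior correspondence already established in the proof of Proposition~\ref{prop:stability}, the only genuine work being the diameter comparison. First I would record, exactly as in that proof, that under the identification $\tilde Y_n=Y_n-\tilde g$ the conditional law of $\L u_f$ given $Y_n$ in model (N) coincides with the conditional law of $v$ given $\tilde Y_n$ in model (L): if $v\given\tilde Y_n\sim L(\tilde Y_n,\cdot)$ for a Markov kernel $L$, then $\L u_f\given Y_n\sim L(Y_n-\tilde g,\cdot)$. Using the first representation of $C_n(Y_n)$ in \eqref{EqGeneralCredibleSet}, this gives $\Pi_n\bigl(C_n(Y_n)\given Y_n\bigr)=\Pi_n\bigl(f:\L u_f\in\tilde C_n(Y_n-\tilde g)\given Y_n\bigr)=L\bigl(Y_n-\tilde g,\tilde C_n(Y_n-\tilde g)\bigr)$, which is $\tilde\Pi_n\bigl(\tilde C_n(\tilde Y_n)\given\tilde Y_n\bigr)$ once $\tilde Y_n$ and $Y_n-\tilde g$ are identified; hence the two credible levels are equal.

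For the coverage, the first representation in \eqref{EqGeneralCredibleSet} shows that the event $\{f_0\in C_n(Y_n)\}$ equals $\{\L u_{f_0}\in\tilde C_n(Y_n-\tilde g)\}=\{v_0\in\tilde C_n(Y_n-\tilde g)\}$, since $v_0=\L u_{f_0}$. When the data are generated at $f_0$, identity \eqref{Equ=KLu+tildeg} gives $Y_n-\tilde g=K\L u_{f_0}+n^{-1/2}\dWW=Kv_0+n^{-1/2}\dWW$, which has exactly the law of $\tilde Y_n$ under $v_0$ by \eqref{def:model:linear}. Hence $\Pr_{f_0}\bigl(f_0\in C_n(Y_n)\bigr)=\Pr_{v_0}\bigl(v_0\in\tilde C_n(\tilde Y_n)\bigr)$, i.e.\ the coverages at $f_0$ and at $v_0$ coincide. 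The same distributional identity turns every ``in probability under $f_0$'' statement about $C_n(Y_n)$ into an ``in probability under $v_0$'' statement about $\tilde C_n(\tilde Y_n)$, which is how I would read the diameter claim.

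For the diameters I would use the second representation $C_n(Y_n)=\{e(v):v\in\tilde C_n(Y_n-\tilde g)\}$, so that $\diam_{L_2}\bigl(C_n(Y_n)\bigr)=\sup_{v,v'\in\tilde C_n(Y_n-\tilde g)}\|e(v)-e(v')\|_{L_2}$. On the event $\{\bar v_n\in\tilde C_n(\tilde Y_n)\}$ --- which, by the previous paragraph, has the same probability whether one thinks of $Y_n-\tilde g$ under $f_0$ or of $\tilde Y_n$ under $v_0$ --- I would insert $e(\bar v_n)$, apply the triangle inequality, and use the uniform Lipschitz bound $\|e(v)-e(\bar v_n)\|_{L_2}\le L\|v-\bar v_n\|$ (valid for $v\in V_n\supseteq\tilde C_n$, with $L$ independent of $n$) to obtain $\|e(v)-e(v')\|_{L_2}\le L\bigl(\|v-\bar v_n\|+\|v'-\bar v_n\|\bigr)\le 2L\,\diam_{\|\cdot\|}\bigl(\tilde C_n(Y_n-\tilde g)\bigr)$, the last step because $\bar v_n\in\tilde C_n$. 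Thus $\diam_{L_2}\bigl(C_n(Y_n)\bigr)\le 2L\,\diam_{\|\cdot\|}\bigl(\tilde C_n(Y_n-\tilde g)\bigr)$ on that event, which is the ``$\lesssim$'' direction in probability.

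The step I expect to be the real obstacle is the reverse inequality needed for the two diameters to be of the \emph{same} order. For $v\in V_n$ the extended forward map satisfies $\L u_{e(v)}=v$ by \eqref{eq: definition K gen}--\eqref{eq: definition g tilde gen}, so $\|v-v'\|=\|\L u_{e(v)}-\L u_{e(v')}\|$, and the matching lower bound therefore requires a uniform Lipschitz bound for the inverse map $f\mapsto\L u_f$ near $e(\bar v_n)$, i.e.\ bi-Lipschitzness of $e$ on the relevant set. In the concrete PDE examples this is mild --- it follows from $Kv+\tilde g$ being bounded away from $0$ and from above on the set carrying the posterior mass --- and then running the same triangle-inequality argument backwards gives $\diam_{\|\cdot\|}(\tilde C_n)\lesssim\diam_{L_2}(C_n(Y_n))$ on $\{\bar v_n\in\tilde C_n\}$, so that the two diameters are comparable. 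The only bookkeeping point to watch throughout is that the Lipschitz constants, and hence all implied constants, are kept independent of $n$, which is exactly what ``uniformly'' buys us.
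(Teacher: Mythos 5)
Your arguments for the credible-level and coverage claims coincide with the paper's: both rest on the Markov-kernel correspondence $\L u_f\given Y_n\sim L(Y_n-\tilde g,\cdot)$ inherited from the proof of Proposition~\ref{prop:stability}, combined with the identification $\tilde Y_n=Y_n-\tilde g$. Your diameter upper bound is also the paper's argument verbatim: write the $L_2$-diameter as $\sup\{\|e(v)-e(w)\|_{L_2}:v,w\in\tilde C_n(\tilde Y_n)\}$, insert $e(\bar v_n)$, apply the triangle inequality and the uniform Lipschitz bound on $V_n$, and conclude on the event $\bar v_n\in\tilde C_n(\tilde Y_n)$ that this is at most a constant times $\diam_{\|\cdot\|}\bigl(\tilde C_n(\tilde Y_n)\bigr)$.

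The one place you diverge is the final paragraph. You read ``of the same order in probability'' as a two-sided comparison and correctly observe that a matching lower bound $\diam_{\|\cdot\|}(\tilde C_n)\lesssim\diam_{L_2}(C_n(Y_n))$ would require bi-Lipschitzness of $e$ (equivalently a uniform Lipschitz bound for $f\mapsto\L u_f$), which is an additional hypothesis not stated in the proposition. The paper, however, proves only the upper bound and stops there; the assertion is evidently meant in the weaker, one-sided sense that the transformed credible sets are of at most the order of the original ones, which is all that is needed for the downstream coverage-and-size results. So your worry about the reverse direction is a fair critique of the phrasing but is not a gap relative to what the paper actually establishes or uses. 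Everything up to that paragraph is correct and matches the paper's proof.
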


\begin{proof}
As noted in the proof of Proposition~\ref{prop:stability},
if the Markov kernel $(y,B)\mapsto L(y,\cdot)$ gives the posterior distribution of $v$ given $\tilde Y_n=y$, 
then $L(Y_n-\tilde g,\cdot)$ gives the posterior distribution of $\L u_f$ given $Y_n$. 
By the definition of $C_n(Y_n)$ it follows that
$\Pi_n\bigl(f\in C_n(Y_n)\given Y_n\bigr)=\Pi_n\bigl(\L u_f\in \tilde C_n(Y_n-\tilde g)\given Y_n\bigr)
=L\bigl(Y_n-\tilde g, \tilde C_n(Y_n-\tilde g)\bigr)$, which is the same as 
$\tilde\Pi_n\bigl(\tilde C_{n}(\tilde Y_n)\given \tilde Y_n\bigr)$, since $\tilde Y_n=Y_n-\tilde g$.
Similarly $\Pr_{f_0}\bigl(f_0\in C_n(Y_n)\bigr)=\Pr_{f_0}\bigl(\L u_{f_0}\in \tilde C_n(Y_n-\tilde g)\bigr)$
is the same as $\Pr_{v_0}\bigl(v_0\in \tilde C_n(\tilde Y_n)\bigr)$.

The $L_2$-diameter  $\sup\{\|f-g\|_{L_2}: f,g\in C_n(Y_n)\}$ of $C_n(Y_n)$ is equal to 
$\sup\{\|e(v)-e(w)\|_{L_2}: v,w \in\tilde C_n(\tilde Y_n)\}\le 2\sup\{\|e(v)-e(\bar v_n)\|_{L_2}: v\in\tilde C_n(\tilde Y_n)\} $, 
which is bounded by a constant times $\sup\{\|v-\bar v_n\|: v \in\tilde C_n(\tilde Y_n)\}$, because
$\tilde C_n(\tilde Y_n)\subset V_n$ and $e$ is uniformly Lipschitz at $\bar v_n\in V_n$. 
On the event $\bar v_n\in \tilde C_n(\tilde Y_n)$ the right side is bounded above by the
diameter of $\tilde C_n(\tilde Y_n)$.
\end{proof}

Thus credible and coverage levels translate from the linear to the non-linear inverse problem
for arbitrary credible sets. To retain also the size of the sets it may be necessary to restrict the
starting sets $\tilde C_{n}(\tilde Y_n)$ to sets $V_n$ on which the solution map
\eqref{EqSolutionOperator} is locally Lipschitz. 

In our examples the latter can be achieved without affecting credible or coverage levels.
Indeed, in general as soon as $\E_{v_0}\tilde \Pi_n( V_n\given \tilde Y_n)\ra 1$ 
and $\Pr_{v_0}( V_n\ni v_0)\ra 1$, the credible and coverage levels of the
sets $\tilde C_{n}(\tilde Y_n)$ and $\tilde C_{n}(\tilde Y_n)\cap V_n$ are asymptotically the same.
In our examples we choose the sets $V_n$ for instance a uniform ball $V_n=\{v: \|Kv-K\hat v_n\|_\infty\le c_0\}$
of small radius around estimators $K\hat v_n$ of $Kv_0$. Then consistency of $K\hat v_n$ for $Kv_0$ for the uniform norm
gives  $\Pr_{v_0}(V_n\ni v_0)\ra 1$,  and consistency of the posterior distribution of $Kv$ for $Kv_0$
relative to the uniform norm gives $\E_{v_0}\tilde \Pi_n( V_n\given \tilde Y_n)\ra 1$.

\section{Bayesian analysis in the linear problem}\label{SectionLinearProblem}
In this section we review and extend results on the problem of recovering a function $v$ from the
noisy observation $\tilde Y_n=Kv+n^{-1/2}\dWW$, where $K$ is the operator given in
\eqref{eq: definition K gen}. When applied to solving the non-linear problem \eqref{eq: observation},
the function $v$ will be taken equal to $v=\L u_f$. With a view towards this setting, we extend results from the literature by
considering contraction relative to the uniform norm and smoothness norms.
In Sections~\ref{SmoothnessScale}--\ref{SectionSobolevSpaces}
we consider the white noise model, whereas in Section~\ref{sec:discrete:obs} we consider
the discrete observational model. We restrict to mildly ill-posed problems
in the sense of \cite{cavalier2008nonparametric}. 

We consider Gaussian priors on $v$, or mixtures thereof, possibly with tuning parameters set by 
the empirical Bayes method. (Non-Gaussian priors were considered in \cite{agapiou:2013,Ray2013} and \cite{Yan2020,Yan2020thesis}.)
Without loss of generality such a prior can be constructed by equipping the coefficients of $v$
in a basis expansion with (conditionally) independent univariate normal distributions. For  
a given orthonormal basis $(h_i)$ of $L_2(\O)$ and $v=\sum_{i=1}^\infty v_ih_i$,  we set
\begin{align}\label{eq: prior}
	(v_1,v_2,\ldots)\given \t,\a\sim \bigotimes_{i=1}^{\infty} N(0, \t^2 i^{- 1 - 2\a/d}).
\end{align}
The hyper-parameters $\t$ or $\a$ determine the smoothness of the prior, and 
may be chosen fixed, given a hyper prior, or set by the empirical Bayes method.

\subsection{Smoothness scales}
\label{SmoothnessScale}
The basis $(h_i)$ give rises to a scale of function classes $\Hil^s$, for $s\ge0$, consisting
of all functions $v=\sum_{i=1}^\infty v_ih_i\in L_2(\O)$ with $\|v\|_{\Hil^s}<\infty$, for
\begin{equation}\label{EqDefHilbertSpaceNorm}
\|v\|_{\Hil^s}^2=\sum_{i=1}^\infty v_i^2i^{2s/d}.
\end{equation}
For $s<0$ we define the same norm on the sequences $(v_i)$ and define $\Hil^s$ as the corresponding
normed space; this can also be identified with the dual space of $\Hil^{-s}$. 
Assume that the operator $K:\Hil^0\to L_2(\O) $ is \emph{smoothing of order} $p$ in the scale $\Hil^s$ in the sense that
\begin{equation}
	\|K v\|_{L_2}\asymp \|v\|_{\Hil^{-p}},\qquad v\in \Hil^0.\label{eq:K:smoothing}
\end{equation}

\begin{theorem}\label{theorem: general uniform convergence}
Let $K: \Hil^0\to L_2$ be a linear operator satisfying \eqref{eq:K:smoothing}. Consider the white noise model
with the prior \eqref{eq: prior}, for fixed $\t$ and $\a$.
\begin{itemize}
\item[(i).] If $v_0\in \Hil^\b$ and $-p\le\d<\a\wedge \b$, then the $\Hil^\d$-contraction rate to $v_0$
of the posterior distribution of $v$ is of the order $n^{-(\a\wedge \b-\d)/(2\a+2p+d)}$.
\item[(ii).] If either $\sup_ii^{u/d}\|Kh_i\|_\infty<\infty$, for some $u$ with $d/2<\a \wedge\b+u$, 
or $K: \Hil^{\g-p}\to \Hil^\g$ is continuous and $\|\cdot\|_{\Sob^\g(\O)}\lesssim \|\cdot\|_{\Hil^\g}$ 
for some $\g$ with $d/2<\g<\a\wedge \b+p$, then the posterior distribution of $Kv$ is consistent for $Kv_0$
relative to the uniform norm.
\end{itemize}
\end{theorem}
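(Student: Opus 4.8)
The plan is to exploit the exact conjugacy between the linear--Gaussian model $\tilde Y_n=Kv+n^{-1/2}\dWW$ and the Gaussian prior \eqref{eq: prior}. Writing $\Lambda=\mathrm{diag}\bigl(\t^2 i^{-1-2\a/d}\bigr)$ for the prior covariance in the basis $(h_i)$, the posterior $\tilde\Pi_n(\cdot\given\tilde Y_n)$ is the Gaussian law $N(m_n,\Sigma_n)$ with $\Sigma_n^{-1}=\Lambda^{-1}+nK^*K$ and $m_n=n\Sigma_n K^*\tilde Y_n$; when $K$ is not diagonalised by $(h_i)$ this is made precise by diagonalising $K$ through its singular value decomposition, or by passing to the limit in finite-dimensional truncations. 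The operator $K$ enters only through $K^*K$, and the smoothing assumption \eqref{eq:K:smoothing} is exactly the statement $c\,D\preceq K^*K\preceq C\,D$ as quadratic forms on $\Hil^0$, with $D=\mathrm{diag}\bigl(i^{-2p/d}\bigr)$. By operator monotonicity of the inverse this sandwiches $\Sigma_n$ between the diagonal operators $(\Lambda^{-1}+CnD)^{-1}$ and $\tilde\Sigma_n:=(\Lambda^{-1}+cnD)^{-1}$, which reduces all subsequent estimates to elementary sums, evaluated by splitting at the index $N\asymp n^{d/(2\a+2p+d)}$ at which the two terms of $\tilde\Sigma_n^{-1}$ balance.

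For part (i) I would start from the bias--variance identity for a Gaussian posterior,
\[
\int\|v-v_0\|_{\Hil^\d}^2\,d\tilde\Pi_n(v\given\tilde Y_n)=\tr\bigl(W\Sigma_n\bigr)+\|m_n-v_0\|_{\Hil^\d}^2,\qquad W:=\mathrm{diag}\bigl(i^{2\d/d}\bigr),
\]
and take expectations under $\tilde Y_n=Kv_0+n^{-1/2}\dWW$, which splits $\E_{v_0}\|m_n-v_0\|_{\Hil^\d}^2$ into the squared deterministic bias $\|(I-n\Sigma_n K^*K)v_0\|_{\Hil^\d}^2=\|\Sigma_n\Lambda^{-1}v_0\|_{\Hil^\d}^2$ (using $I-n\Sigma_n K^*K=\Sigma_n\Lambda^{-1}$) and the noise term $n\,\tr\bigl(W\Sigma_n K^*K\Sigma_n\bigr)$. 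The two trace terms are controlled by the sandwich: $\tr(W\Sigma_n)\le\tr(W\tilde\Sigma_n)=\sum_i i^{2\d/d}\bigl(\t^{-2}i^{1+2\a/d}+cni^{-2p/d}\bigr)^{-1}\asymp n^{-2(\a-\d)/(2\a+2p+d)}$ for $\d<\a$, while $nK^*K\preceq (C/c)\Sigma_n^{-1}$ gives $\Sigma_n(nK^*K)\Sigma_n\preceq (C/c)\Sigma_n$, so the noise term is of the same order. For the deterministic bias I would split $v_0=P_Nv_0+(I-P_N)v_0$, with $P_N$ the orthogonal projection onto $\mathrm{span}(h_1,\dots,h_N)$: on the low block, the elementary inequality $\Sigma_n W\Sigma_n\preceq\|W^{1/2}\Sigma_n W^{1/2}\|_{\mathrm{op}}\Sigma_n$ together with $\Sigma_n\preceq\tilde\Sigma_n$ gives $\|\Sigma_n\Lambda^{-1}P_Nv_0\|_{\Hil^\d}^2\le\|W^{1/2}\tilde\Sigma_n W^{1/2}\|_{\mathrm{op}}\,\langle\Lambda^{-1}P_Nv_0,\tilde\Sigma_n\Lambda^{-1}P_Nv_0\rangle$, a product of diagonal sums over $i\le N$ that, using $v_0\in\Hil^\b$, multiplies out to $n^{-2(\b-\d)/(2\a+2p+d)}$; on the high block, $\Sigma_n\Lambda^{-1}(I-P_N)v_0=(I-P_N)v_0-n\Sigma_n K^*K(I-P_N)v_0$, whose first part has $\Hil^\d$-norm $\lesssim N^{-(\b-\d)/d}\|v_0\|_{\Hil^\b}$ since $\d<\b$, and whose second part is bounded by the same $\Sigma_n W\Sigma_n$ inequality together with $\tilde\Sigma_n\preceq (cnD)^{-1}$ and the consequence $\|D^{-1/2}K^*Kw\|\asymp\|Kw\|\asymp\|w\|_{\Hil^{-p}}$ of \eqref{eq:K:smoothing} (operator monotonicity of the square root, plus duality). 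Collecting the bounds, $\E_{v_0}\int\|v-v_0\|_{\Hil^\d}^2\,d\tilde\Pi_n\lesssim\e_n^2$ with $\e_n=n^{-(\a\wedge\b-\d)/(2\a+2p+d)}$, and Markov's inequality yields $\E_{v_0}\tilde\Pi_n\bigl(\|v-v_0\|_{\Hil^\d}\ge M_n\e_n\given\tilde Y_n\bigr)\le C/M_n^2\to0$; each contribution is also bounded below by a matching power, so the rate is sharp.

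For part (ii) I would deduce uniform consistency of $Kv$ from the $\Hil^s$-contraction of part (i) at a suitable $s$. Under the first hypothesis, $\|Kv-Kv_0\|_\infty\le\sum_i|v_i-v_{0,i}|\,\|Kh_i\|_\infty\le\bigl(\sup_i i^{u/d}\|Kh_i\|_\infty\bigr)\bigl(\sum_i i^{-2(s+u)/d}\bigr)^{1/2}\|v-v_0\|_{\Hil^s}$ whenever $s+u>d/2$; since $d/2<\a\wedge\b+u$ one can choose $s$ with $\max(-p,\,d/2-u)<s<\a\wedge\b$, and part (i) makes $\tilde\Pi_n(\|v-v_0\|_{\Hil^s}>\eps\given\tilde Y_n)\prob0$ for every $\eps>0$. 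Under the second hypothesis, set $\d=\g-p$, which lies in $[-p,\a\wedge\b)$ because $0<d/2<\g<\a\wedge\b+p$; part (i) then gives $\tilde\Pi_n(\|v-v_0\|_{\Hil^{\g-p}}>\eps\given\tilde Y_n)\prob0$, and $\|Kv-Kv_0\|_\infty\lesssim\|Kv-Kv_0\|_{\Sob^\g(\O)}\lesssim\|Kv-Kv_0\|_{\Hil^\g}\lesssim\|v-v_0\|_{\Hil^{\g-p}}$ by the Sobolev embedding $\Sob^\g(\O)\hookrightarrow C(\overline\O)$ (valid since $\g>d/2$), the assumed norm domination, and continuity of $K:\Hil^{\g-p}\to\Hil^\g$.

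The main obstacle is that, because $K$ is only assumed norm-equivalent to the $\Hil^{-p}$-scale, $\Sigma_n$ need not be diagonal in $(h_i)$, so the classical coordinatewise bias--variance computation must be replaced throughout by operator-monotonicity arguments---the sandwich $\Sigma_n\preceq\tilde\Sigma_n$, the bound $nK^*K\preceq(C/c)\Sigma_n^{-1}$, the inequality $\Sigma_n W\Sigma_n\preceq\|W^{1/2}\Sigma_n W^{1/2}\|_{\mathrm{op}}\Sigma_n$, and $\|D^{-1/2}K^*Kw\|\asymp\|w\|_{\Hil^{-p}}$. The remaining work is the routine but careful bookkeeping of the frequency split at $N\asymp n^{d/(2\a+2p+d)}$, organised so that the variance and noise terms produce the exponent $\a-\d$ while the bias produces $\b-\d$, whose minimum gives the stated rate; making the infinite-dimensional posterior formulas rigorous is standard.
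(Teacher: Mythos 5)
Your proof of part (ii) is essentially the paper's: Cauchy--Schwarz against $\|Kh_i\|_\infty$ under the first hypothesis, Sobolev embedding through $\|Kv\|_\infty\lesssim\|Kv\|_{\Hil^\g}\lesssim\|v\|_{\Hil^{\g-p}}$ under the second, choosing $\d$ so that part (i) gives consistency. For part (i), however, you take a genuinely different route. The paper proves it (via Theorem~\ref{thm: contr:diff:norm} in the supplement, following Yan's framework) with the Ghosal--Ghosh--van der Vaart machinery: a small-ball estimate for the prior mass, a remaining-mass condition on the sieve $\{v:\|R_{j_n}Kv-v\|_{\Hil^\d}\le\h_n\}$, and tests built from the Galerkin solution $R_{j_n}Y_n$, with Borell's inequality supplying the Gaussian concentration in each step. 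You instead exploit exact conjugacy: $\Sigma_n^{-1}=\Lambda^{-1}+nK^*K$ is sandwiched between the diagonal operators $(\Lambda^{-1}+CnD)^{-1}\preceq\Sigma_n\preceq\tilde\Sigma_n:=(\Lambda^{-1}+cnD)^{-1}$ by operator monotonicity of the inverse and the two-sided form $cD\preceq K^*K\preceq CD$ of \eqref{eq:K:smoothing}, and the posterior second moment $\E_{v_0}\int\|v-v_0\|_{\Hil^\d}^2\,d\tilde\Pi_n$ is bounded directly, finishing by Markov's inequality. I checked the key steps---the variance bound $\tr(W\Sigma_n)\le\tr(W\tilde\Sigma_n)\asymp n^{-2(\a-\d)/(2\a+2p+d)}$; absorption of the noise trace via $nK^*K\preceq(C/c)\Sigma_n^{-1}$; and the bias split at $N\asymp n^{d/(2\a+2p+d)}$ using $\Sigma_nW\Sigma_n\preceq\|W^{1/2}\tilde\Sigma_nW^{1/2}\|_{\mathrm{op}}\,\tilde\Sigma_n$ together with $\|D^{-1/2}K^*Kw\|\lesssim\|w\|_{\Hil^{-p}}$---and they are sound. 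The only unstated detail is that the low-block bias sum $\sum_{i\le N}i^{2+(4\a+2p-2\b)/d}\cdot i^{2\b/d}v_{0,i}^2/n$ is maximised at $i=N$ only when $\b\le d+2\a+p$; the other regime gives an even faster rate, so the $\a\wedge\b$ exponent still emerges, but this case split should be made explicit. Your route is more elementary and transparent for this Gaussian-conjugate model, avoiding any sieve or test construction. The paper's route is longer but modular: its prior-mass/testing decomposition is the form that extends to non-Gaussian priors, and the intermediate Galerkin and concentration lemmas it establishes are reused verbatim in the adaptive Theorem~\ref{theorem: general uniform convergence_adaptive} and the discrete-observation Theorem~\ref{thm:regression:linear}.
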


\begin{proof}
The contraction rate (i) is an extension of Theorem~6.1 in \cite{Yan2020} to the $\|\cdot\|_{\Hil^\d}$-norm,
proved in Theorem~\ref{thm: contr:diff:norm}.

For the uniform consistency (ii) of the posterior distribution of $Kv$ under the first condition, we 
argue that $Kv=\sum_i v_iKh_i$, and hence $\|Kv\|_\infty^2\le \|v\|_{\Hil^\d}^2\sum_i\|Kh_i\|_\infty^2i^{-2\d/d}$, by
the Cauchy-Schwarz inequality. Under the condition on the uniform norms of the functions $Kh_i$, this
is bounded above by a multiple of $\|v\|_{\Hil^\d}^2$, for $(2\d+2u)/d>1$, equivalently $\d+u>d/2$. 
Thus the uniform consistency follows from the contraction in $\Hil^\d$, which takes place for $\d<\a\wedge\b$.
Under the condition $\a\wedge\b+u>d/2$, a value of $\d$ that satisfies both requirements exists.
Under the second condition we use Sobolev embedding to see that $\|K v\|_\infty\lesssim \|Kv\|_{\Sob^\g(\O)}$,
for $\g>d/2$, which  by assumption is bounded above by $\|Kv\|_{\Hil^\g}\lesssim \|v\|_{\Hil^{\g-p}}$. Thus the uniform
consistency follows from contraction in $\Hil^\d$ for $\d=\g-p$, which takes place if $\d<\a\wedge\b$.
\end{proof}

The preceding theorem assumes fixed hyper-parameters. Theorem~7.2 of \cite{Yan2020} shows that the mixture prior obtained by
choosing a parameter $\tau$ from an inverse Gamma distribution and a fixed $\a$ gives an $L_2$-contraction
rate $n^{-\b/(d+2\b+2p)}$, for any $\b\in (0,\a]$, but has not been extended to rates in $\Hil^\d$.

\subsection{Eigenbasis}
If $K: L_2(\O)\to L_2(\O)$ is compact, then the self-adjoint operator $K^\tp K: L_2(\O)\to L_2(\O)$ possesses
an orthonormal basis  $(h_i)$ of eigenfunctions.
If  $\k_i^2$ are the corresponding eigenvalues of $K^\tp K$, then $\|Kv\|_{L_2}^2=\langle K^\tp Kv, v\rangle_{L_2}=\sum_{i=1}^\infty v_i^2\k_i^2$.
Hence relative to the eigenbasis $K$ is smoothing of order $p$ in the sense of \eqref{eq:K:smoothing} if 
\begin{align}\label{def:kappa:mild}
	\k_i\asymp i^{-p/d}.
\end{align}
If $K$ is self-adjoint, then $K^sv=\k_i^sv$, and $K$ is also
smoothing in the extended sense that $\|Kv\|_{\Hil^s}\asymp\|v\|_{\Hil^{s-p}}$, for every $s\in\RR$.

For the prior constructed relative to the eigenbasis,  the white noise problem is equivalent to observing a 
sequence $(\tilde Y_{n,1},\tilde Y_{n,2},\ldots)$ of independent normal variables with $\tilde Y_{n,i}\given v\sim N(\k_i v_i ,1/n)$,
equipped with conditionally independent priors $v_i\given \a,\t\sim N(0,\t^2 i^{-1-2\a/d})$. Conditionally on
the hyper-parameters the problem is conjugate and the posterior can be obtained explicitly.
The contraction rates for fixed hyper-parameters follow from
Theorem~\ref{theorem: general uniform convergence},
but  contraction rates for priors with hyper-parameters set by the empirical 
and hierarchical Bayes methods were studied in \cite{Szabo2013,Knapik2016},
and credible balls and adaptive  credible balls in \cite{Knapik2011,SzabovdVvZ2015}. 
(The parameter $\a$ in \cite{Knapik2011,Szabo2013,SzabovdVvZ2015,Knapik2016} is denoted by $\a/d$ in the present
paper.) 

The hierarchical Bayes method can be implemented with fixed $\t$ and
a prior on $\a$ with density $\l$ on $(0,\infty)$ chosen such that, for every $c_1>0$ there exist
$c_2\geq 0$, $c_3\in\RR$ and $c_4>1$, with $c_3>1$ if $c_2=0$, such that for $\a\geq c_1$,
\begin{align}
c_4^{-1}\a^{-c_3}e^{-c_2\a}\leq\l(\a)\leq c_4\a^{-c_3}e^{-c_2\a}.\label{def:hyperprior}
\end{align}
The empirical Bayes method chooses $\a$ equal to the maximum likelihood estimator, restricted to $[0,\log n]$,
based on the Bayesian marginal distribution of the observation $\tilde Y_n$. This can be seen to be 
\begin{align}\label{eq: log likelihood Y given a}
	\hat\a_n =\argmin_\a  \sum_{i=1}^{\infty}\Bigl[\log\Bigl(1 + \frac{n}{i^{1+2\a/d} \k_i^{-2}}\Bigr)
-\frac{n^2}{i^{1+2\a/d}\k_i^{-2} + n} \tilde Y_{n,i}^2\Bigr].
\end{align}
Alternatively, the parameter $\a$ may be fixed and the parameter $\t$ chosen by empirical Bayes or 
equipped with a prior, see \cite{Szabo2013}.

The papers \cite{Knapik2011,SzabovdVvZ2015} studied credible balls of the type,
for $\hat{v}_n$ the posterior mean and $r_n$ set to 
the smallest value such that $\bigl\{v: \|v-\hat v_n\|_{L_2}\le r_n\bigr\}$ possesses a prescribed posterior probability in $(0,1)$,
\begin{equation}
\label{EqCredibleBall}
\tilde C_n(\tilde Y_n)=\bigl\{v: \|v-\hat v_n\|_{L_2}\le c r_n\bigr\}.
\end{equation}
In \cite{Knapik2011} priors with fixed hyper-parameters $(\t, \a)$ were shown to
give frequentist coverage provided $\a$ undershoots the true smoothness. 
As it is impossible to construct confidence sets of uniform coverage and rate-adaptive size, see e.g. \cite{cai2006adaptive,
robins2006adaptive}, neither the empirical nor the hierarchical Bayes methods can provide credible 
sets with prescribed uniform frequentist coverage over all possible parameters. 
However, in \cite{SzabovdVvZ2015,SniekersvdV2015,rousseau2020asymptotic} the credible sets $\tilde C_n(\tilde Y_n)$
are shown to be confidence sets for true parameters whose coefficients decrease not too
irregularly, so-called \emph{polished tail} sequences. 

We record these results, together with extensions of contraction in $\Hil^\d$ and uniform consistency,
in the following theorem.

\begin{theorem}\label{theorem: general uniform convergence_adaptive}
Consider the white noise model with the prior based on the eigenbasis with coefficients \eqref{eq: prior} with fixed $\t$ and $\a$ 
determined by the hierarchical or empirical Bayes method \eqref{def:hyperprior} or \eqref{eq: log likelihood Y given a}.
\begin{itemize}
\item[(i).] If $v_0\in \Hil^\b$ and $-p\le\d<\b$, 
then the $\Hil^\d$-contraction rate of the posterior distribution of $v$ is $l_nn^{-(\b-\d)/(2\b+2p+d)}$, for $l_n=(\log n)^{3/2}(\log\log n)^{1/2}$.
\item[(ii).] If either $\sup_ii^{u/d}\|Kh_i\|_\infty<\infty$, for some $u$ with $d/2<\b+u$, 
or $\|\cdot\|_{\Sob^\g(\O)}\lesssim \|\cdot\|_{\Hil^\g}$ for  some $\g$ with $d/2<\g<\b+p$, then
the posterior distribution of $Kv$ is consistent for the uniform norm.
\item[(iii).] If the prior is constructed with fixed hyper-parameters $\t$ and $\a$, then the coverage
tends to 1 provided $\a<\b$ and the $L_2$-diameter is of the order $O_P(n^{-(\a\wedge \b)/(2\a+2p+d)})$.
\item[(iv).] If $v_0$ satisfies the polished tail condition, then for sufficiently large $c$ the coverage of the 
sets \eqref{EqCredibleBall} tends to 1 and the $L_2$-diameter is of the order 
$O_P(l_nn^{-\b/(2\b+2p+d)})$.
\end{itemize}
\end{theorem}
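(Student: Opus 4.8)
The plan is to reduce each of the four statements to results already available for the linear problem, together with the $\Hil^\d$-refinement supplied in Section~\ref{sec:contract:smoothness:GWN}, following the same pattern as the proof of Theorem~\ref{theorem: general uniform convergence}. The $L_2$-contraction rates behind (i) (the case $\d=0$) are proved for the empirical and hierarchical Bayes posteriors in \cite{Szabo2013,Knapik2016}; coverage for fixed hyper-parameters in (iii) is \cite{Knapik2011}; coverage for polished-tail truths in (iv) is \cite{SzabovdVvZ2015}, see also \cite{SniekersvdV2015,rousseau2020asymptotic}. As noted in the text, these references parametrise with $\a/d$ in place of our $\a$, so their exponents transfer verbatim. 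The only genuinely new ingredient is the passage to the stronger norm $\Hil^\d$ in (i), from which (ii) follows.

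For (i) I would work from the explicit conjugate posterior of $v$ given $(\t,\a)$, a product of univariate normals in the eigenbasis, and split the $\Hil^\d$-risk into a squared-bias series $\sum_i\bigl(1-w_i(\a)\bigr)^2 v_{0,i}^2\,i^{2\d/d}$ and a posterior-spread series $\sum_i(\text{posterior variance})_i\,i^{2\d/d}$, where $w_i(\a)$ are the shrinkage weights. The adaptation analysis of \cite{Szabo2013,Knapik2016} shows that $\hat\a_n$ from \eqref{eq: log likelihood Y given a} --- respectively the mass assigned by the hyperprior \eqref{def:hyperprior} --- concentrates on a window of width $O(1/\log n)$ around the oracle exponent that balances bias against variance for a $\Hil^\b$-truth. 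It then remains to re-estimate the two series in the $\Hil^\d$-metric uniformly over that window, to verify that the balancing exponent for the $\Hil^\d$-loss still lies in the controlled range, and to absorb a union bound over a discretisation of the window into the factor $l_n=(\log n)^{3/2}(\log\log n)^{1/2}$. I expect this to be the main obstacle: for $\d>0$ the two series are less convergent than in $L_2$, which narrows the range of admissible exponents and calls for a careful reworking of the risk bounds of \cite{Szabo2013,Knapik2016}. This is exactly the content of the appendix result in Section~\ref{sec:contract:smoothness:GWN}, which plays here the role that Theorem~\ref{thm: contr:diff:norm} plays in Theorem~\ref{theorem: general uniform convergence}.

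Statement (ii) follows verbatim as in the proof of Theorem~\ref{theorem: general uniform convergence}(ii): by Cauchy--Schwarz $\|Kv\|_\infty^2\le\|v\|_{\Hil^\d}^2\sum_i\|Kh_i\|_\infty^2 i^{-2\d/d}$, finite under the first condition whenever $\d+u>d/2$, which is compatible with $\d<\b$ once $\b+u>d/2$; under the second condition Sobolev embedding and \eqref{eq:K:smoothing} give $\|Kv\|_\infty\lesssim\|Kv\|_{\Hil^\g}\lesssim\|v\|_{\Hil^{\g-p}}$, and one takes $\d=\g-p<\b$. The $\Hil^\d$-contraction of part (i) then forces $\|Kv-Kv_0\|_\infty\to0$ in posterior probability, the extra $l_n$ being harmless since $l_n n^{-(\b-\d)/(2\b+2p+d)}\to0$.

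For (iii) and (iv) the coverage claims are the cited theorems, so only the size of the ball \eqref{EqCredibleBall} needs to be pinned down. Since $r_n$ is the smallest radius for which $\{v:\|v-\hat v_n\|_{L_2}\le r_n\}$ carries a fixed posterior mass, it is, up to constants, the $L_2$-concentration radius of the (Gaussian) posterior, hence of the order of the $L_2$-contraction rate: $n^{-(\a\wedge\b)/(2\a+2p+d)}$ for fixed $(\t,\a)$ by Theorem~\ref{theorem: general uniform convergence}(i) with $\d=0$, and $l_n n^{-\b/(2\b+2p+d)}$ for the adaptive priors on polished-tail truths by part (i) with $\d=0$. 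The $L_2$-diameter of $\tilde C_n(\tilde Y_n)$ is at most $2cr_n$, which gives the stated orders.
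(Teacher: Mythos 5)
Your plan follows the paper's proof almost line for line: assertions (iii) and (iv) are taken from \cite{Knapik2011} and \cite{SzabovdVvZ2015}, (ii) is obtained from (i) exactly as in Theorem~\ref{theorem: general uniform convergence}, and for (i) you use the explicit conjugate posterior, a bias/variance decomposition of the $\Hil^\d$-risk, the concentration of $\hat\a_n$ on a deterministic window $[\underline\a_n,\overline\a_n]$ from \cite{Knapik2016}, and uniform control of the two series over that window. One misstep: the place where the harder, random and non-monotone bias term is controlled uniformly over the window is not Section~\ref{sec:contract:smoothness:GWN} (that section proves the fixed-$\a$ Theorem~\ref{thm: contr:diff:norm}), but the supplementary Section~\ref{sec:EB:delta} (Theorems~\ref{thm: ConvergenceEB} and~\ref{thm: ConvergenceHB}), where a chaining argument via Corollary~2.2.5 of \cite{vdVWellner} is used rather than a discretisation-plus-union bound; you point at the right kind of result but cite the wrong section.
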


\begin{proof}
Assertions (iii) and (iv) follow immediately from the results in \cite{Knapik2011} and \cite{SzabovdVvZ2015}.
Assertion (ii) follows from assertion (i) by the arguments in the proof of Theorem~\ref{theorem: general uniform convergence}.

For the proof of (i), we extend the results of \cite{Knapik2016}, which are based on
the explicit representation of the posterior distribution: for $\l_i(\a) = i^{-1-2\a/d}$ and $\k_i^2 \asymp i^{-2p/d}$,
the posterior distribution satisfies $v_i-v_{i,0}\given (\tilde Y_n,\a) \sim N(m_i(\a),\sigma_i^2(\a))$,  for
\begin{align*}
	m_i(\a) &:= \frac{n \l_i(\a) \k_i \tilde Y_{n,i}}{1 + n\l_i(\a) \k_i^2} - v_{0,i},
\qquad	\sigma_i^2(\a) := \frac{\l_i(\a)}{1 + n\l_i(\a) \k_i^2}.
\end{align*}
Theorem~1 in \cite{Knapik2016} shows that the empirical Bayes estimator \eqref{eq: log likelihood Y given a} is 
with  probability tending to 1 under $v_0$ bounded below and above by deterministic sequences $\underline\a_n$ and $\overline\a_n$,
characterised as crossing points of a certain deterministic criterion function $h_n$, derived from the likelihood.
In the proof of Theorem~3 in the same reference, it is shown that 
 in the hierarchical Bayes case the posterior probability that $\a$ falls in the interval $[\underline\a_n,\overline\a_n]$ tends in
probabiity to one. The lower bound satisfies $\underline{\a}_n \geq \b - {c_0}/{\log n}$, for some constant $c_0$, which ensures
that in both cases the rate of contraction does not fall essentially below the optimal rate for a $\b$-smooth parameter.

It follows that, for the proof of (i) for both methods it suffices to show, for any $M_n\ra\infty$,
\begin{align}
\sup_{\underline\a_n\le\a\le\overline\a_n} \Pi_n\bigl(v:\, \|v- v_0\|_{\Hil^\d}>M_n l_ nn^{-\frac{\b-\d}{d+2\b+2p}}\given \tilde{Y}_n,\a\bigr)
\stackrel{P_{v_0}}{\rightarrow}0.\label{eq:help:unifconv}
\end{align}
By Markov's inequality, the posterior probability can be bounded above by
$(M_n\e_n)^{-2}\E\bigl(\|v-v_0\|^2_{\Hil^\d}\given \tilde Y_n,\a\bigr)$, for $\e_n$ the rate of contraction.
The second moment can be computed using the explicit form of the posterior distribution. It follows that the preceding display
is valid if  the following two relations hold:
\begin{align*}
\sup_{\underline\a_n\le\a\le\overline\a_n}\sum_{i=1}^{\infty} m_i^2(\a) i^{2\d/d}&=O_{P_{v_0}}(l_n^2n^{-\frac{2(\b-\d)}{d+2\b+2p}}) ,\\
 \sup_{\underline\a_n\le\a\le\overline\a_n}\sum_{i=1}^{\infty} \sigma_i^2(\a) i^{2\d/d}&=O(l_n^2n^{-\frac{2(\b-\d)}{d+2\b+2p}}).
\end{align*}
The left side of the second equation is deterministic and monotonely decreasing in $\a$, and hence it is sufficient to consider the
series at $\a=\underline{\a}_n$. By elementary calculus, this can
be seen to be of order $n^{-2(\underline{\a}_n-\d)/(d+2\underline{\a}_n+2p)}\lesssim n^{-2(\b-\d)/(d+2\b+2p)}$.

The analysis of the first relation is considerably harder, as it is random and non-monotone.
However, the relation can be verified by the same arguments as in \cite{Knapik2016}. For completeness
we include the complete proof in Section~\ref{sec:EB:delta} of the supplement, 
see Theorems \ref{thm: ConvergenceEB} and \ref{thm: ConvergenceHB}.
\end{proof}

In the preceding results the proof of uniform consistency is essentially based on Sobolev embedding.
The following lemma shows that this approach is generally possible for
parabolic or elliptic differential operators.

\begin{lemma}\label{cor:elliptic_parabolic}
Let $(\Hil^s)$ be the eigenscale of the inverse operator  $K$  (as in \eqref{eq: definition K gen})
of a parabolic or elliptic differential operator $\L$ under the Dirichlet boundary condition.
Then $K: \Hil^\g\to L_\infty$ is continuous for every $\g>d$.
\end{lemma}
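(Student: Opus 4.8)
The plan is to bound the uniform norm of $Kv$ by a Sobolev norm via the classical Sobolev embedding $\Sob^\g(\O)\hookrightarrow L_\infty(\O)$, valid for $\g>d/2$, and then to control the Sobolev norm of $Kv$ by the $\Hil^\g$-norm of $v$ using elliptic (or parabolic) regularity theory for the operator $\L$. First I would recall that, by definition of $K$ in \eqref{eq: definition K gen}, the function $u=Kv$ solves $\L u=v$ on $\O$ with $u=0$ on $\Gamma$. For an elliptic operator $\L$ of order $m$ (for the examples in the paper, $\L=\Delta$ has $m=2$), interior-plus-boundary elliptic regularity gives $\|Kv\|_{\Sob^{s+m}(\O)}\lesssim \|v\|_{\Sob^s(\O)}+\|v\|_{L_2(\O)}$ for $v$ in a suitable range of $s$; the smoothing order in the Sobolev scale is then $p_{\mathrm{Sob}}=m$. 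For the parabolic case one uses the analogous parabolic regularity estimate in anisotropic (parabolic) Sobolev spaces, where the smoothing is of order $2$ in space and order $1$ in time, and a parabolic Sobolev embedding into $L_\infty$ holds once the total (anisotropic) smoothness exceeds $(d+2)/2$; absorbing the time variable, this still amounts to requiring spatial-type smoothness above a fixed threshold.

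Next I would connect the Sobolev scale $(\Sob^s)$ to the eigenscale $(\Hil^s)$ of $K$. Since $(h_i)$ is the eigenbasis of $K^\tp K$ (equivalently, of the self-adjoint realisation of $\L$ under the Dirichlet condition) with eigenvalues $\k_i\asymp i^{-p/d}$, Weyl-type asymptotics for the eigenvalues of $\L$ force $p$ to equal the order of the smoothing, and the eigenfunction expansion identifies $\Hil^s$ with a Sobolev-type space: one has the norm equivalence $\|\cdot\|_{\Hil^s}\asymp\|\cdot\|_{\Sob^{s}(\O)}$ on the relevant range, or at least the one-sided bound $\|\cdot\|_{\Sob^{\g}(\O)}\lesssim\|\cdot\|_{\Hil^\g}$ needed in Theorem~\ref{theorem: general uniform convergence}(ii). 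Combining the two displays, $\|Kv\|_\infty\lesssim\|Kv\|_{\Sob^\g(\O)}\lesssim\|v\|_{\Sob^{\g-p}(\O)}\lesssim\|v\|_{\Hil^{\g-p}}\le\|v\|_{\Hil^\g}$ whenever $\g>d/2$ and $\g-p$ lies in the range where elliptic regularity and the eigenscale identification both apply. Taking $\g>d$ comfortably covers $\g-p\ge 0$ for the second-order examples ($p$ of order $2$) and gives room to spare, which is presumably why the lemma is stated with the crude threshold $\g>d$ rather than the sharp $\g>d/2$.

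The main obstacle is the identification of the abstract eigenscale $\Hil^s$ with genuine Sobolev spaces, uniformly enough to push elliptic/parabolic regularity through. On a bounded Lipschitz domain with Dirichlet boundary conditions this identification is only clean for $|s|$ below a boundary-regularity threshold (roughly $s<3/2$ for $\Delta$), because higher Sobolev regularity of $Kv$ requires compatibility conditions and smoother boundary; for general $s$ one must instead work with the scale of domains of powers of the operator, $D((-\L)^{s/2})$, which coincides with $\Hil^s$ by the spectral theorem but only embeds into the classical $\Sob^s$ in a restricted range. I would handle this by invoking the regularity estimate only once, at the fixed level $\g>d$, so that a single application of $\L(Kv)=v$ plus the spectral characterisation $\|Kv\|_{\Hil^\g}=\|(-\L)^{\g/2}Kv\|_{L_2}\asymp\|(-\L)^{\g/2-p/2}v\|_{L_2}=\|v\|_{\Hil^{\g-p}}$ suffices, and then quote the embedding $D((-\L)^{\g/2})\hookrightarrow L_\infty$ for $\g>d$ (which follows from Sobolev embedding together with the fact that eigenfunctions and their relevant derivatives are controlled on Lipschitz domains in this range). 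The parabolic case is handled identically after replacing $(-\L)$ by the appropriate space-time realisation and using the parabolic analogue of the embedding, the extra slack in $\g>d$ again absorbing the anisotropy between the time and space variables.
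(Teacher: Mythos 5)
The paper's ``proof'' of this lemma is a one-line citation (Theorem~B.2 in \cite{Kinderlehrer00} for elliptic $\L$, Theorem~4.5 in \cite{troltzsch2010optimal} for parabolic $\L$); you instead try to reconstruct the argument from Sobolev embedding and elliptic regularity, and the scaffolding is useful but has a few genuine problems.

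First, the displayed chain
$\|Kv\|_\infty\lesssim\|Kv\|_{\Sob^\g(\O)}\lesssim\|v\|_{\Sob^{\g-p}(\O)}\lesssim\|v\|_{\Hil^{\g-p}}\le\|v\|_{\Hil^\g}$
is wasteful in a way that actually causes a gap: you apply Sobolev embedding at level $\g$ but then use elliptic regularity to step back to level $\g-p$ on the right, throwing away the $p$ extra orders of smoothness that $K$ provides. This forces you to control $\|\cdot\|_{\Sob^{\g-p}(\O)}$ and $\|\cdot\|_{\Hil^{\g-p}}$, and your claim that ``Taking $\g>d$ comfortably covers $\g-p\ge 0$'' is false in general — for a second-order elliptic operator with $d=1$, $\g>1$ does not give $\g\ge 2=p$, so the Proposition~\ref{PropositionSmoothingEllipticH0}-type embedding $\Hil^{\g-p}\subset\Sob^{\g-p}(\O)$ you invoke has a negative index and is the wrong tool. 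The chain you should be using is
$\|Kv\|_\infty\lesssim\|Kv\|_{\Sob^{\g+p}(\O)}\lesssim\|Kv\|_{\Hil^{\g+p}}\asymp\|v\|_{\Hil^\g}$,
which requires only the one-sided embedding at the positive level $\g+p$ and the Sobolev embedding threshold $\g+p>d/2$, comfortably beneath $\g>d$ for all the examples in the paper.

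Second, your fallback — ``quote the embedding $D((-\L)^{\g/2})\hookrightarrow L_\infty$ for $\g>d$'' — is precisely the content of the theorems the paper cites, so it does not turn your sketch into a self-contained alternative; it turns it into the paper's proof with a longer preamble. The cited references also handle the two issues you flag but leave unresolved: the non-smooth boundary of the hypercube (where $\Hil^s\subset\Sob^s(\O)$ for large $s$ does not follow from Proposition~\ref{PropositionSmoothingEllipticH0}, whose hypotheses require $\partial\O\in C^{\lceil s\rceil+2}$), and the anisotropy in the parabolic case, where the ``$d$'' in the lemma is really the $(d+1)$-dimensional space-time domain and the smoothing order $p=2(d+1)/(d+2)$ is fractional, so the slack-counting you do for the elliptic case does not transfer verbatim.

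In short: the proposal takes a different surface route (Sobolev chain) from the paper's bare citation but bottoms out on the same external theorem, and the intermediate chain as written has an exponent error at $\g-p$ that invalidates it in low dimension; rewriting the chain with $\Hil^{\g+p}$ in place of $\Hil^{\g-p}$ fixes the algebra, but the boundary-regularity and parabolic-anisotropy issues still require the cited results rather than the sketch you give.
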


\begin{proof}
See Theorem~B.2 in \cite{Kinderlehrer00} for elliptic $\L$, and Theorem~4.5 in \cite{troltzsch2010optimal} 
(adapted to Dirichlet boundary condition) for parabolic $\L$.
\end{proof}

\subsection{Sobolev spaces}\label{SectionSobolevSpaces}
The scale of Sobolev spaces $\Sob^s(\O)$ is of special interest in connection to differential operators.
There are several bases that generate this scale,
popular ones being the wavelet bases. Although a wavelet basis is most 
conveniently indexed by a double index in the form $(h_{j,k})$, 
the base functions can be ordered in a sequence and the resulting scale takes the form as in 
Section~\ref{SmoothnessScale} (see Example~\ref{ExampleWaveletOrdering}).
The Gaussian prior on this sequence constructed as in Section~\ref{SmoothnessScale} takes the form
$\sum_j\sum_k v_{j,k} h_{j,k}$ in the wavelet domain, for independent variables $v_{j,k}\sim N(0, \t^22^{-2j(\a+d/2)})$.

Thus we obtain the results of Theorem~\ref{theorem: general uniform convergence} for a wavelet prior,
provided the operator $K$ is smoothing \eqref{SmoothnessScale} in the Sobolev scale. Unfortunately, this
appears to be not often true for the full Sobolev scale, due to the boundary conditions connected
to the differential operator. For instance, the inverse of the Laplacian is smoothing (of order 2) relative
to the scale $\Sob_0^1\cap \Sob^s(\O)$, where $\Sob_0^1$ is the space of weak solutions of \eqref{eq: definition K gen},
the subscript 0 arising from the Dirichlet boundary condition (see Proposition~\ref{PropositionSmoothingEllipticH0}).
In such a case the contraction rates provided by Theorem~\ref{theorem: general uniform convergence} 
are still valid for  priors constructed from wavelet bases that observe the boundary conditions.
We conjecture that an unrestricted wavelet basis would give the same results.

More abstractly, the operator $K$ will be smoothing relative to the scale generated
by the differential operator $\L$, and we may construct a Gaussian prior with covariance operator equal to
the inverse of $\L$ (see Proposition~8.19 in \cite{Engletal}). The equivalence of Hilbert scales 
resulting from linear differential operators $\L$ and Sobolev spaces has been investigated extensively, 
see for instance Theorem~10.1 in \cite{Seeley65} for elliptic differential operators $\L$.

\subsection{Discrete observations}\label{sec:discrete:obs}
The preceding results for the white noise model extend to the discrete observational scheme, in which
for given ``design points'' $x_{n,1},\ldots, x_{n,n}\in\O$, we observe  $\tilde Y_n(1),\ldots, \tilde Y_n(n)$ given by,
for  i.i.d.\ standard normal variables $Z_i$,
\begin{equation*}
	\tilde Y_n(i) =  Kv(x_{n,i}) + Z_i, \qquad i = 1, \ldots, n.
\end{equation*}
The key is an interpolation technique,
mapping discrete signals to the continuous domain, as employed in Chapter~8 of \cite{Yan2020thesis} and \cite{Yan2024}.

Denote the empirical inner product by $\langle v, w \rangle_{\LL_n} := n^{-1}\sum_{i=1}^n v(x_{n,i}) w(x_{n,i})$
and let $\|v\|_{\LL_n} := \sqrt{\langle v, v \rangle_{\LL_n}}$ be the induced semi-norm. 
Assume that for every $n \in \NN$, there exists an $n$-dimensional subspace $\LL_n\subset L_2(\O)\cap C(\O)$
such that, for constants $0<C_1<C_2<\infty$, independent of $n$, 
\begin{align}
\label{eq:regression_W_n_isomorphism}
C_1 \|v\|_{L_2} \leq \|v\|_{\LL_n} \leq C_2 \|v\|_{L_2},\qquad v\in \LL_n.
\end{align}
Let $\mathcal{I}_n: L_2(\O)\cap C(\O)\to \LL_n$ be the map such that $\mathcal{I}_nv$ is
the unique element of $\LL_n$ that interpolates $v$ at the design points, i.e.\ $\mathcal{I}_nv(x_{n,i})=v(x_{n,i})$, for every $i=1,\ldots, n$.
Then assume that for every $s$ in some interval $(s_d,S_d)$, and some sequence $\d_{n,s} \ra 0$,
\begin{align}
\label{eq:regression_stable_reconstruction}
\|\mathcal{I}_n v - v \|_{L_2}\lesssim \d_{n,s} \|v\|_{\Hil^{s}}.
\end{align}

\begin{theorem}\label{thm:regression:linear}
If \eqref{eq:regression_W_n_isomorphism}-\eqref {eq:regression_stable_reconstruction}
hold with $n\d_{n,\b}^2\ra 0$ and $s_d<\b+p<S_d$ and $K:G^\b\to G^{\b+p}$ continuous, then under
the conditions of Theorem~\ref{theorem: general uniform convergence} the assertions (i)-(ii) of the
theorem are valid also in the discrete observational model.
\end{theorem}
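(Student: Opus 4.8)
The plan is to transfer the white-noise results of Theorem~\ref{theorem: general uniform convergence} to the discrete model by comparing the discrete likelihood with a white-noise-type likelihood built from the interpolation map $\mathcal{I}_n$, exactly as in Chapter~8 of \cite{Yan2020thesis} and in \cite{Yan2024}. First I would set up the comparison: given the data vector $\bigl(\tilde Y_n(i)\bigr)_{i=1}^n$ with $\tilde Y_n(i)=Kv(x_{n,i})+Z_i$, form the interpolant $\mathcal{I}_n\tilde Y_n\in\LL_n$, whose coefficients in an orthonormal-in-$\LL_n$ basis of $\LL_n$ are (up to the near-isometry \eqref{eq:regression_W_n_isomorphism}) a Gaussian perturbation of the coefficients of $\mathcal{I}_n(Kv)$ with variance of order $1/n$. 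Thus the discrete observation is equivalent, modulo the approximation error $\mathcal{I}_n(Kv)-Kv$, to a finite-dimensional white-noise observation of $Kv$ projected onto $\LL_n$. The hypothesis $K:\Hil^\b\to\Hil^{\b+p}$ continuous together with \eqref{eq:regression_stable_reconstruction} controls this error: $\|\mathcal{I}_n(Kv)-Kv\|_{L_2}\lesssim \d_{n,\b+p}\|Kv\|_{\Hil^{\b+p}}\lesssim\d_{n,\b+p}\|v\|_{\Hil^{\b}}$, and since $s_d<\b+p<S_d$ this is available; the condition $n\d_{n,\b}^2\to 0$ (and the analogous bound at $\b+p$, which follows from smoothing) guarantees the error is negligible at the scale $n^{-1/2}$ of the noise, so it does not degrade the rate.

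Next I would run the standard conjugate posterior computation for the finite-dimensional Gaussian observation model with the prior \eqref{eq: prior} truncated/projected onto $\LL_n$, obtaining explicit posterior means and variances analogous to the $m_i(\a),\sigma_i^2(\a)$ appearing in the proof of Theorem~\ref{theorem: general uniform convergence_adaptive}. The contraction rate in $\Hil^\d$ then follows by a bias--variance decomposition identical in form to the white-noise case, with the extra terms coming from (a) the projection of $v_0$ onto $\LL_n$ and (b) the interpolation error $\mathcal{I}_n(Kv_0)-Kv_0$, both of which are controlled by the displayed estimates and the assumption $n\d_{n,\b}^2\to 0$. This yields assertion (i). For assertion (ii), uniform consistency of $Kv$ follows from the $\Hil^\d$-contraction exactly as in the proof of Theorem~\ref{theorem: general uniform convergence}: either $Kv=\sum_i v_i Kh_i$ is bounded in sup-norm by a multiple of $\|v\|_{\Hil^\d}$ under the condition $\sup_i i^{u/d}\|Kh_i\|_\infty<\infty$ with $\d+u>d/2$, or one invokes Sobolev embedding $\|Kv\|_\infty\lesssim\|Kv\|_{\Sob^\g}\lesssim\|v\|_{\Hil^{\g-p}}$ for $\g>d/2$ and takes $\d=\g-p$; in either case the relevant $\d<\a\wedge\b$ is admissible under the hypotheses of Theorem~\ref{theorem: general uniform convergence}.

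The main obstacle is the passage from the genuine discrete likelihood to the interpolated white-noise surrogate: one must show that the posterior computed from $\bigl(\tilde Y_n(i)\bigr)$ and the posterior computed from the surrogate observation of $\mathcal{I}_n(Kv)$ in $\LL_n$ are close in a strong enough sense (e.g.\ that the log-likelihood ratio between the two experiments is $o_P(1)$ uniformly over the relevant parameter set, or that the empirical and $L_2$ norms on $\LL_n$ are equivalent so that the two posteriors literally coincide up to the near-isometry \eqref{eq:regression_W_n_isomorphism}). The near-isometry \eqref{eq:regression_W_n_isomorphism} handles the norm comparison on $\LL_n$ itself; the delicate part is that the true signal $Kv_0$, and the posterior draws $Kv$, are \emph{not} in $\LL_n$, so one genuinely needs \eqref{eq:regression_stable_reconstruction} and the continuity of $K$ to absorb the residuals. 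I expect this to be the step requiring the most care, and I would carry it out following the interpolation argument of \cite{Yan2024}, quoting their lemma on the equivalence of the discrete and interpolated experiments and checking that our hypotheses $n\d_{n,\b}^2\to 0$, $s_d<\b+p<S_d$, and $K:\Hil^\b\to\Hil^{\b+p}$ continuous are exactly what that lemma requires.
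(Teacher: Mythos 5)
Your high-level architecture is right: interpolate the discrete observations into the continuous space $\LL_n$, control the interpolation residual $\mathcal{I}_n Kv - Kv$ through the bound $\d_{n,\b+p}\|v\|_{\Hil^\b}$ (using $K:\Hil^\b\to\Hil^{\b+p}$ and monotonicity of $s\mapsto\d_{n,s}$), make the residual negligible at the $n^{-1/2}$ noise scale via $n\d_{n,\b}^2\to 0$, and then carry assertion (ii) over unchanged since it is derived from the $\Hil^\d$-rate by Sobolev embedding or by bounding $\|Kh_i\|_\infty$. You also correctly observe that, by the $\LL_n$-near-isometry, the embedded signal $Y^{(n)}$ carries exactly the same information as the data vector, so the problem reduces to working in the model $Y^{(n)}=\mathcal{I}_n Kv+n^{-1/2}\xi^{(n)}$.

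Where you diverge from the paper -- and where the proposal has a real gap -- is the mechanism for the contraction rate. You propose running ``the standard conjugate posterior computation'' and obtaining ``explicit posterior means and variances analogous to $m_i(\a),\sigma_i^2(\a)$''. But those closed forms exist only because in the white-noise model the sequence $(\tilde Y_{n,i})$ diagonalises in the eigenbasis of $K^\tp K$, so the posterior factorises coordinatewise. In the discrete model that alignment is destroyed: the prior is on $v$ in the infinite-dimensional signal space, while the observation lives in the $n$-dimensional space $\LL_n$ and is linked to $v$ through $\mathcal{I}_n K$, whose restriction to the eigenbasis is not diagonal relative to the empirical inner product. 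The posterior remains Gaussian, but its covariance is $(\Sigma^{-1}+nA^\tp A)^{-1}$ for a dense matrix $A$, so there is no coordinatewise $m_i,\sigma_i^2$ to plug into the bias--variance calculation. Also, ``truncating/projecting the prior onto $\LL_n$'' is not coherent as written, since $\LL_n$ is a subspace of the observation space, not of the space of $v$'s. You are aware the step is delicate and defer to a lemma of Yan, but the lemma in question (Lemma~\ref{LemmaInterpolation}) only gives the $\LL_n$/$L_2$ norm equivalence; it does not deliver a conjugate representation.

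The paper's actual proof (Theorem~\ref{thm: contr:diff:norm:regression}) avoids conjugacy entirely and instead re-runs the Ghosal--van der Vaart machinery from Theorem~\ref{thm: contr:diff:norm} in the modified model $Y^{(n)}=K_n f+n^{-1/2}\xi^{(n)}$ with $K_n=\mathcal{I}_n K$: evidence lower bound, small-ball prior mass $\Pi(\|Kf-Kf_0\|_{\LL_n}<\e_n)$, a sieve defined via the Galerkin inverse $R_{j_n}$ applied to $K_n$ rather than $K$, and plug-in tests $\tau_n=1\{\|R_{j_n}Y^{(n)}-R_{j_n}K_n f_0\|_{\Hil^\d}>M_0\eta_n\}$. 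The interpolation error then enters only as additive perturbations in each of the four claims and is absorbed under $n\d_{n,\b+p}^2\to 0$ (which is what your $n\d_{n,\b}^2\to 0$ implies through $\d_{n,\b+p}\le\d_{n,\b}$). This testing route is what makes the theorem prior-agnostic within the class $\eqref{eq: prior}$; if you wanted to salvage the conjugate route, you would first have to establish total-variation closeness of the discrete posterior to the (unobtainable in practice) white-noise posterior, a stronger and harder claim than the theorem actually needs.
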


\begin{proof}
The assertion on the contraction rate is an extension of results of \cite{Yan2020thesis,Yan2024} to smoothness norms,
which is proved in Section~\ref{sec:contract:smoothness:regression}.

The remainder of the proof is the same as the proof of Theorem~\ref{theorem: general uniform convergence}. 
\end{proof}

\section{Schr\"odinger equation}\label{sec: Schrodinger}
For a bounded domain $\O\subset \RR^d$, a 
given non-negative  function $f \in L_2(\O)$ and  a bounded, measurable function $g: \partial\O \to \RR$,
 let $u_f$ be the solution to the time-independent \emph{Schr\"odinger equation}
\begin{equation}\label{eq: EqSchr\"odinger}
	\left\{\begin{aligned}
		\textstyle\frac12\Delta u_f &= fu_f, \qquad &&\text{ on } \O,\\
		 u_f &= g, \qquad &&\text{ on } \partial \O.
    \end{aligned}\right.
\end{equation}
The goal is to recover the unknown ``potential'' $f$ using a noisy observation of $u_f$.  

This model can serve as a prototypical, benchmark example for elliptic PDEs, with substantial interest in its own right. 
Previous results in the literature placed a Gaussian prior distribution of fixed regularity on the (logarithm) of the functional parameter $f$.
In \cite{nickl2020convergence} minimax convergence rates were derived for
the MAP estimator corresponding to an optimally rescaled Gaussian process prior. 
Posterior contraction rates were derived for a uniform prior on the coefficients in a series expansion in \cite{Nickl18} and for a rescaled Gaussian
process prior in \cite{Monardetal2021}. Frequentist coverage guarantees for credible balls in weak Sobolev spaces
were deduced from uniform non-parametric Bernstein-von Mises results in \cite{Nickl18,Monardetal2021}.


The solution $u_f$ to the Schr\"odinger equation has
a probabilistic expression through the Feynman-Kac formula (see Theorem~4.7 in \cite{Zhao2012}):
\begin{align}\label{eq: Feynman-Kac}
	u_f(x) = \E^x\left[ g(X_\tau) e^{- \int_0^{\t} f(X_s)\, ds}\right],
\qquad x \in \O.
\end{align}
Here $(X_s)_{s \geq 0}$ denotes a $d$-dimensional Brownian motion with starting value $X_0 = x \in \O$,
and $\t$ is its exit time from $\O$.  It is known that $\sup_x\E^x \t$ is finite (c.f. Proposition~1.16 in \cite{Zhao2012}).
If $f$ is bounded and $g$ is bounded away from zero on $\partial\O$,
then Jensen's inequality applied to \eqref{eq: Feynman-Kac} shows that
\begin{align*}
	\inf_{x \in \O} u_f(x) \geq \inf_{y \in \partial \O} g(y) \inf_{z \in \O}e^{-\|f\|_\infty  \E^z \tau} > 0.
\end{align*}
This justifies an inversion formula of the form \eqref{EqSolutionOperator}:
\begin{align}\label{eq: inversion formula}
	f = \frac{\Delta u_f}{2 u_f}.
\end{align}
We conclude that to estimate $f$, it suffices to estimate  $\Delta u_f$ and $u_f$. In fact,
an estimator for $\Delta u_f$ suffices, since $u_f$ can be recovered without error from $\Delta u_f$ 
and the known boundary condition $u_f=g$ on $\partial \O$. 

This motivates to take $\L$ as in 
\eqref{eq: general PDE} equal to the Laplacian $\L=\Delta$. Its inverse $K$ as in \eqref{eq: definition K gen}
is then the inverse Laplacian under the Dirichlet boundary condition, and 
the function $\tilde g$ in \eqref{eq: definition K gen} is the solution to the {Poisson equation} with
boundary function $g$. We record the existence of this operator and function in the following lemma.
We assume that the boundary points of $\O$ are \emph{regular} in the sense of \cite{Gilbarg2015}, page~25; a sufficient
condition is that $\O$ satisfies the exterior sphere condition: for every $\xi\in\partial\O$ there exists a closed
ball $B\subset\RR^d$ such that $B\cap \bar\O=\{\xi\}$. This is true in particular for the domain $\O=(0,1)^d$.

\begin{lemma}\label{lemma: existence g tilde}
Let $\L=\Delta$ be the Laplacian.
There exists a compact, self-adjoint  linear operator $K: L_2(\O)\to L_2(\O)$ such that \eqref{eq: definition K gen} holds.
If $\O$ is regular, and $g: \partial \O \to \RR$ is continuous, then there exists a function $\tilde{g}: \O \to \RR$ satisfying
\eqref{eq: definition g tilde gen}.
If $\partial \O$ is $C^{m+2}$, for $m\in\NN\cup\{0\}$, then $K: \Sob^m(\O)\to \Sob^{m+2}(\O)$ is continuous.
\end{lemma}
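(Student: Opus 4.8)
The plan is to prove the three assertions of Lemma~\ref{lemma: existence g tilde} separately, relying on classical elliptic PDE theory for the Dirichlet problem on a bounded Lipschitz domain.

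For the existence of the operator $K$, I would invoke the Lax–Milgram theorem applied to the weak formulation of the Poisson problem $\Delta w = u$ on $\O$, $w = 0$ on $\partial\O$: the bilinear form $(w_1,w_2)\mapsto \int_\O \nabla w_1\cdot\nabla w_2$ is coercive on $H_0^1(\O)$ (by the Poincaré inequality, valid since $\O$ is bounded) and bounded, so there is a unique weak solution $w = Ku \in H_0^1(\O)$ for each $u\in L_2(\O)\subset H^{-1}(\O)$, with $\|Ku\|_{H^1}\lesssim \|u\|_{L_2}$. Compactness of $K: L_2(\O)\to L_2(\O)$ then follows from the compact Rellich–Kondrachov embedding $H_0^1(\O)\hookrightarrow L_2(\O)$. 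Self-adjointness is immediate from the symmetry of the Dirichlet form: $\langle Ku_1,u_2\rangle_{L_2} = -\int_\O\nabla(Ku_1)\cdot\nabla(Ku_2) = \langle u_1,Ku_2\rangle_{L_2}$. (One must be mild about signs: strictly $\Delta K = -\mathrm{Id}$ for this convention, or one works with $-\Delta$; I would phrase $K$ as the inverse of $\L=\Delta$ under Dirichlet conditions to match \eqref{eq: definition K gen}, absorbing the sign.)

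For the existence of $\tilde g$ solving $\Delta\tilde g = 0$ on $\O$, $\tilde g = g$ on $\partial\O$ with $g$ continuous and $\O$ regular, I would cite the classical Perron method / Wiener criterion: Theorem~2.14 and the discussion around regular boundary points in \cite{Gilbarg2015} give a (unique, by the maximum principle) harmonic function attaining the continuous boundary data continuously at every regular boundary point. The exterior sphere condition stated in the lemma is exactly the standard sufficient condition (barrier construction) for regularity of all boundary points, so this part is essentially a citation once the hypotheses are in place.

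For the regularity statement $K: H^m(\O)\to H^{m+2}(\O)$ when $\partial\O\in C^{m+2}$, I would appeal to elliptic boundary regularity for the Dirichlet problem (e.g.\ Theorem~8.13 in \cite{Gilbarg2015}, or the $L_2$-based global regularity theory in \cite{Evans10}): if $u\in H^m(\O)$, $\partial\O\in C^{m+2}$, then the solution $w$ of $\Delta w = u$, $w=0$ on $\partial\O$ lies in $H^{m+2}(\O)$ with $\|w\|_{H^{m+2}}\lesssim \|u\|_{H^m}$, the constant depending only on $\O$, $m$, $d$. The main (though still routine) obstacle I anticipate is bookkeeping: making the sign convention consistent between $\L=\Delta$ and the positive-definite form used in Lax–Milgram, and ensuring the regularity citation is stated for the Dirichlet (rather than Neumann) problem and for the full scale $m\in\NN\cup\{0\}$, which is handled by the usual induction on $m$ in the standard references. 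None of the three parts requires new arguments; the lemma is a packaging of textbook elliptic theory tailored to the hypotheses (bounded connected Lipschitz domain, regular boundary, $C^{m+2}$ boundary for the regularity gain).
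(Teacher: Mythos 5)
Your proposal is correct and follows essentially the same route as the paper's proof: the paper cites Theorem~3 in Section~6.2 and Theorem~1 on page~335 of \cite{Evans10} for existence, compactness, and self-adjointness of $K$ (which are exactly the Lax--Milgram/Rellich--Kondrachov arguments you spell out), Theorem~5 on page~323 of \cite{Evans10} for the $\Sob^m\to\Sob^{m+2}$ regularity, and Theorem~2.14 of \cite{Gilbarg2015} with the exterior-sphere remark for $\tilde g$. The only cosmetic difference is that you unpack the standard lemmas behind the citations rather than quoting them directly.
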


\begin{proof}
Theorem~3 in Section~6.2 of \cite{Evans10} with $L$ minus the Laplacian
together with the example following the theorem show that 
there exists a unique (weak) solution $Ku\in H_0^1$ to equations \eqref{eq: definition K gen} with $\L$ the Laplacian, 
for every $u \in L_2(\O)$.
The linearity of the map $u\mapsto Ku$ is clear, while 
compactness and self-adjointness is noted in the proof of Theorem~1 on page 335 of \cite{Evans10}.
The continuity $K: \Sob^m(\O)\to \Sob^{m+2}(\O)$ follows from Theorem~5 on page~323 of \cite{Evans10}. 
	
By Theorem~2.14 in \cite{Gilbarg2015}, the Dirichlet problem on a bounded domain  has a solution 
for every continuous boundary function $g$
if and only if the boundary points of the domain are regular. By the remark made below the stated theorem, a sufficient condition for this is that
the domain satisfies the exterior sphere condition.
\end{proof}

Thus we can proceed by the general approach outlined in the introduction with $\L=\Delta$ the Laplacian. 
Because $u_f=K\Delta u_f+ \tilde{g}$, equation \eqref{eq: inversion formula}
shows that the solution map \eqref{EqSolutionOperator} can be written as $f=e(\Delta u_f)$ for
\begin{align*}
e(v)&= 
	\begin{cases}
		\frac{v}{2 (Kv+\tilde g)}, \quad &\text{if} \ \essinf Kv+\tilde g > 0,\\
		0, \quad &\text{ otherwise}.
	\end{cases}
\end{align*}

\begin{lemma}
\label{LemmaSolutionOperatorSchrodinger}
If $u_{f_0}=Kv_0+\tilde g$ satisfies $\inf_{x\in\O} u_{f_0}(x)\ge c_0$, for some $c_0>0$,
then the map $e: V\subset  L_2(\O)\to L_2(\O)$, for $V=\bigl\{v: \|Kv-Kv_0\|_\infty\le c_0/2\bigr\}$,
is Lipschitz at every uniformly bounded $\bar v\in V$ with Lipschitz constant bounded by a multiple of $1\vee\|\bar v\|_\infty$.
\end{lemma}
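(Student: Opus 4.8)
The plan is to show that for $v, w \in V$ with both $\|\bar v\|_\infty$-type control available, the difference $e(v) - e(w)$ in $L_2$ is bounded by a constant times $\|v - w\|$ in the relevant norm, where the constant scales with $1 \vee \|\bar v\|_\infty$. First I would observe that the condition $\inf_{x\in\O} u_{f_0}(x) = \inf_{x\in\O}(Kv_0 + \tilde g)(x) \ge c_0$ together with $\|Kv - Kv_0\|_\infty \le c_0/2$ for $v \in V$ forces $\essinf(Kv + \tilde g) \ge c_0/2 > 0$ for every $v \in V$. Hence on $V$ the map $e$ always takes the first branch, $e(v) = v / \bigl(2(Kv + \tilde g)\bigr)$, so we never have to worry about the discontinuity at the branch cut — this is the whole point of intersecting with $V$.

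Next I would write the algebraic identity for the difference of two such ratios. For $v, w \in V$, setting $u = Kv + \tilde g$ and $u' = Kw + \tilde g$,
\begin{align*}
e(v) - e(w) = \frac{v}{2u} - \frac{w}{2u'} = \frac{v - w}{2u} + \frac{w(u' - u)}{2u u'} = \frac{v-w}{2u} + \frac{w\,K(w - v)}{2 u u'}.
\end{align*}
Now I would take $L_2$-norms. The denominators $u, u'$ are bounded below by $c_0/2$ uniformly on $\O$, so $\|1/u\|_\infty, \|1/u'\|_\infty \le 2/c_0$. The first term contributes $\lesssim c_0^{-1}\|v - w\|_{L_2}$. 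For the second term, the key is that $w$ is uniformly bounded — this is where the hypothesis that $\bar v$ (and the points at which we test the Lipschitz property) are uniformly bounded enters: the Lipschitz constant at $\bar v$ involves $\|\bar v\|_\infty$, so $\|w\|_\infty$ appears, and $\|K(w-v)\|_{L_2} \le \|K\|\,\|w - v\|_{L_2}$ since $K: L_2 \to L_2$ is bounded (indeed compact, by Lemma~\ref{lemma: existence g tilde}). This gives the second term $\lesssim c_0^{-2}\|\bar v\|_\infty\,\|v - w\|_{L_2}$, and combining, $\|e(v) - e(w)\|_{L_2} \lesssim (1 \vee \|\bar v\|_\infty)\,\|v - w\|_{L_2}$ with a constant depending only on $c_0$ and $\|K\|$, which is the claim (Lipschitz at $\bar v$, with the stated dependence on $\|\bar v\|_\infty$).

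The main subtlety — not really an obstacle but the point requiring care — is matching the quantitative claim precisely: "Lipschitz at $\bar v$" means we compare $e(v)$ to $e(\bar v)$ for $v$ near $\bar v$, so in the identity above one of the two functions is $\bar v$ itself, and one must be careful whether the factor $\|w\|_\infty$ that appears is $\|\bar v\|_\infty$ or $\|v\|_\infty$; by symmetrizing (write the $w(u'-u)$ term alternatively as $v(u'-u)$ plus a lower-order correction, or simply note both forms) one sees the relevant bound is by $\|\bar v\|_\infty$ up to the already-present $\|v - \bar v\|_{L_2}$ factor, since $\|v\|_\infty \le \|\bar v\|_\infty + \|v - \bar v\|_\infty$ and the extra piece is absorbed. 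I would also note in passing that $\tilde g$ need not be smooth, only that $Kv + \tilde g = u_f$ is continuous and bounded below on $V$, which is exactly guaranteed by the uniform-norm proximity to $u_{f_0}$; no further regularity of $\tilde g$ is used. Everything else is the routine estimate above.
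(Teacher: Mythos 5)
Your proof is correct and takes essentially the same approach as the paper's: establish the uniform lower bound $Kv+\tilde g\ge c_0/2$ on $V$, use the difference-of-ratios identity with the anchor $\bar v$ factored out of the second term (so that the factor $\|\bar v\|_\infty$, not $\|v\|_\infty$, appears), and then pass to $L_2$ via boundedness of $K$. The "subtlety" you flag about $\|v\|_\infty$ versus $\|\bar v\|_\infty$ is resolved exactly by the decomposition you already chose (setting $w=\bar v$), so no symmetrization is actually needed.
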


\begin{proof}
If $Kv_0+\tilde g\ge c_0$ and $\|Kv-Kv_0\|_\infty\le c_0/2$, then $Kv+\tilde g\ge c_0/2$. 
By the triangle inequality, uniformly in $v\in V$,
$$\bigl|e(v)-e(\bar v)\bigr|=\Bigl|\frac{v/2}{Kv+\tilde g}-\frac{\bar v/2}{K\bar v+\tilde g}\Bigr|\le\frac{|v-\bar v|}{c_0}+\frac{|Kv-K\bar v|\,\|\bar v\|_\infty}{c_0^2}.$$
Thus the $L_2$-norm of left-side is bounded above by a multiple of 
$\|v-\bar v\|_{L_2}+\|Kv-K\bar v\|_{L_2}\|\bar v\|_\infty\lesssim \|v-\bar v\|_{L_2}(1\vee \|\bar v\|_\infty)$,
by the continuity of $K$.
\end{proof}

\begin{theorem}
\label{CorollarySchrodinger}
Suppose that $u_{f_0}=Kv_0+\tilde g$ is bounded away from zero and $\|v_0\|_\infty<\infty$.
If the posterior distribution of $Kv$  based on the observation $\tilde Y_n= Kv+ n^{-1/2} \dWW$ 
is consistent for the uniform norm, then the posterior distribution of $f$ given $Y_n$ contracts under $f_0$
at the same rate as the posterior distribution of $v$ given $\tilde Y_n$ under $v_0$.
Furthermore, the credible sets $C_n(Y_n)$ given in \eqref{EqGeneralCredibleSet} have diameter of the same order in probability under $f_0$ as
the credible sets $\tilde C_n(\tilde Y_n)$ under $v_0$ provided the latter sets are contained
in $\{v: \|Kv-K v_0\|_\infty< c_0/2\}$, for $c_0=\inf_{x\in\O} u_{f_0}(x)$, and  contain $\bar v_n$ with $\|\bar v_n\|_\infty=O_P(1)$.
\end{theorem}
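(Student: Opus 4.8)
The plan is to obtain Theorem~\ref{CorollarySchrodinger} as a direct application of the abstract transfer results of Section~\ref{sec: general method}, namely Proposition~\ref{prop:stability} for the contraction rate and Proposition~\ref{proposition_credible_general} for the credible sets, feeding them the local Lipschitz property of the inversion map $e$ recorded in Lemma~\ref{LemmaSolutionOperatorSchrodinger}. The one substantive point is that $e(v)=v/\bigl(2(Kv+\tilde g)\bigr)$ is only well behaved where its denominator $Kv+\tilde g$ stays bounded away from zero, so the whole argument hinges on keeping the posterior of $Kv$ inside a uniform-norm neighbourhood of $Kv_0$; this is exactly where the assumed uniform consistency of the posterior of $Kv$ enters.

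Concretely, first set $c_0:=\inf_{x\in\O}u_{f_0}(x)>0$, which is positive by the hypothesis that $u_{f_0}=Kv_0+\tilde g$ is bounded away from zero, and introduce the \emph{deterministic} set $V_n:=\{v:\ \|Kv-Kv_0\|_\infty\le c_0/2\}$. On $V_n$ one has $Kv+\tilde g=(Kv_0+\tilde g)+(Kv-Kv_0)\ge c_0/2>0$, so $e(v)=v/\bigl(2(Kv+\tilde g)\bigr)$ there; and since $v_0\in V_n$ with $\|v_0\|_\infty<\infty$, Lemma~\ref{LemmaSolutionOperatorSchrodinger} shows that $e:V_n\to L_2$ is Lipschitz at $v_0$ (in the $L_2$-norm, with constant $\lesssim 1\vee\|v_0\|_\infty$). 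Next, the assumed uniform-norm consistency of the posterior of $Kv$ for $Kv_0$ gives at once $\tilde\Pi_n\bigl(v\in V_n\given \tilde Y_n\bigr)\prob 1$ under $v_0$. These are precisely the hypotheses of Proposition~\ref{prop:stability}, which therefore yields that the posterior of $f$ in model (N) contracts to $f_0$ in $L_2$ at the rate at which the posterior of $v$ in model (L) contracts to $v_0$ in the norm in which $e$ is Lipschitz (here $L_2$, hence also under contraction of $v$ in any stronger norm).

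For the credible sets, Proposition~\ref{proposition_credible_general} already gives, with no extra assumptions, that $\tilde C_n(\tilde Y_n)$ and $C_n(Y_n)$ have equal credible levels and that their coverage levels at $v_0$ and $f_0$ coincide. For the diameter statement I would invoke the second half of that proposition with the same $V_n$: the hypothesis $\tilde C_n(\tilde Y_n)\subset\{v:\ \|Kv-Kv_0\|_\infty<c_0/2\}$ gives $\tilde C_n(\tilde Y_n)\subset V_n$, while the hypothesis that $\tilde C_n(\tilde Y_n)$ contains a point $\bar v_n$ with $\|\bar v_n\|_\infty=O_P(1)$ means that on the event $\{\|\bar v_n\|_\infty\le M\}$, which has probability tending to one for $M$ large, $\bar v_n$ is a uniformly bounded element of $V_n$, so Lemma~\ref{LemmaSolutionOperatorSchrodinger} makes $e$ Lipschitz at $\bar v_n$ on $V_n$ with constant $\lesssim 1\vee M$, i.e.\ uniformly in $n$ up to an event of vanishing probability. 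Proposition~\ref{proposition_credible_general} then delivers that the $L_2$-diameter of $C_n(Y_n)$ under $f_0$ is of the same order, in probability, as the $\|\cdot\|$-diameter of $\tilde C_n(\tilde Y_n)$ under $v_0$.

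I do not expect a genuine obstacle: the delicate nonlinearity of $e$ near a vanishing denominator is exactly what Lemma~\ref{LemmaSolutionOperatorSchrodinger} isolates, and the uniform consistency of the posterior of $Kv$ that the theorem takes as a hypothesis is cheaply available because $K$ is smoothing (cf.\ Theorem~\ref{theorem: general uniform convergence}(ii) and Lemma~\ref{cor:elliptic_parabolic}), so it holds for all priors considered in Section~\ref{SectionLinearProblem}. The only point requiring a little care is bookkeeping of norms: Lemma~\ref{LemmaSolutionOperatorSchrodinger} controls $e$ only as a map $L_2\to L_2$, so what transfers is the $L_2$-contraction rate of $v$; a rate for $v$ in a stronger norm is a fortiori an $L_2$-rate, so nothing is lost in the statement.
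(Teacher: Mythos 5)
Your proof is correct and follows exactly the same route as the paper: the paper's own proof is a one-line invocation of Lemma~\ref{LemmaSolutionOperatorSchrodinger} together with Propositions~\ref{prop:stability} and~\ref{proposition_credible_general}, and your write-up merely fills in the (correct) details of how the hypotheses of those propositions are verified via the set $V_n=\{v:\|Kv-Kv_0\|_\infty\le c_0/2\}$ and the assumed uniform-norm consistency of the posterior of $Kv$.
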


\begin{proof}
We combine  Lemma~\ref{LemmaSolutionOperatorSchrodinger}
with Proposition~\ref{prop:stability} to obtain the contraction rate, and
with Proposition~\ref{proposition_credible_general} to obtain the coverage of credible sets.
\end{proof}

Thus we focus on priors for $v$ in the problem $\tilde Y_n= Kv+ n^{-1/2} \dWW$, where $K$ is the
inverse Laplacian under the Dirichlet boundary condition, that attain fast $L_2$-contraction rates
and at the same time give uniformly consistent posteriors of $Kv$. 
Theorems~\ref{theorem: general uniform convergence},~\ref{theorem: general uniform convergence_adaptive} and~\ref{thm:regression:linear}
give many possibilities. For a concrete example consider the domain $\O=(0,1)^d$.

It can be verified that the eigenfunctions of the inverse Laplacian on $\O=(0,1)^d$ with Dirichlet boundary condition are given by
\begin{align}\label{eq: eigenfunctions}
	h_{i_1,\ldots, i_j}(x_1, \ldots, x_d) = 2^{d/2}\prod_{j=1}^d \sin(i_j \pi x_j), \qquad  (i_1, \ldots, i_d) \in \NN^d,
\end{align}
with corresponding eigenvalues $-\k_{i_1,\ldots, i_d}$, for
\begin{align*}
	\k_{i_1, \ldots, i_d} = \frac{1}{(\sum_{j=1}^d i_j^2) \pi^2}.
\end{align*}
The indexing by the $d$-tuples $(i_1, \ldots, i_d)$ precludes an immediate application of
contraction results for sequence priors. However the sequence $k_1\geq k_2\ge\cdots$
resulting from ordering the array of eigenvalues $(\k_i: i\in \NN^d)$ in decreasing magnitude has polynomial decrease
$k_\ell\asymp \ell^{-2/d}$, as $\ell\ra\infty$ (see Lemma~\ref{lemma: sorted eigenvalues} in Section~\ref{SectionProofsSchrodinger}). 
Furthermore, if $v_1,v_2,\ldots$ are the array of coefficients $(\nu_i: i\in\NN^d)$ 
in corresponding order of a function $v=\sum_{i\in\NN^d}\nu_ih_i$,
then the square norm \eqref{EqDefHilbertSpaceNorm} of the smoothness scale  $(\Hil^s)_{s\in\RR}$ 
generated by the basis $(h_i)$ ordered in a sequence satisfies,  for $s\ge 0$,
$$\|v\|_{\Hil^s}^2=\sum_{l=1}^\infty v_l^2l^{2s/d}
\asymp \sum_{i\in\NN^d}\nu_i^2\Bigl(\sum_{j=1}^d i_j^2\Bigr)^s.$$
Although the eigenfunctions are not always explicit, 
similar results are valid for the Laplacian on more general domains; see equation~(3.1) in \cite{Ivrii2016}.

The next lemma verifies the interpolation assumptions of Theorem~\ref{thm:regression:linear}
(see Section~\ref{SectionProofsSchrodinger} for a proof).

\begin{lemma}\label{lemma: interpolation}
Let  $(\Hil^s)$ be the scale generated by the eigenfunctions \eqref{eq: eigenfunctions} and 
let $\LL_n := \bigl\{\sum_{i\in\NN^d, \|i\|_\infty \leq n^{1/d}} c_i h_i: c_i \in \RR\big\}$.
If $\b > d/2$, then for every $v\in\Hil^\b$,
there exists an element $\mathcal{I}_n v \in \LL_n$ 
such that $\mathcal{I}_n v(x) = v(x)$ for every $x\in \{(\frac{2i}{2m+1})_{i=1,...,m}\}^d$, and
		\begin{equation}\label{eq: interpolation assumption}
			\| \mathcal{I}_n v - v\|_{L_2} \lesssim  n^{-\b/d}\|v\|_{\Hil^\b}.
		\end{equation}
Furthermore, there exist constants $0<C_1<C_2<\infty$ such that
\begin{align*}
 C_1 \|v\|_{L_2} \leq \|v\|_{\LL_n} \leq C_2 \|v\|_{L_2},\qquad \forall v\in \LL_n.
\end{align*}
\end{lemma}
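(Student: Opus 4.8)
The plan is to construct $\mathcal{I}_n v$ as the tensor-product trigonometric interpolant at the regular grid $\{(\frac{2i}{2m+1})_{i=1,\dots,m}\}^d$ (where $m\asymp n^{1/d}$), and to bound the approximation error by a frequency-truncation argument. First I would set up the one-dimensional picture: the functions $\sin(i\pi x)$ for $i=1,\dots,m$ form a basis of the $m$-dimensional space spanned by the first $m$ sine modes, and the map sending a function to the unique such trigonometric polynomial agreeing with it at the $m$ interior grid points $x=\frac{2j}{2m+1}$ is well defined — this is a discrete sine transform, and its invertibility follows from the fact that the relevant sine matrix $(\sin(i\pi x_j))_{i,j}$ is nonsingular (equivalently, the grid points are chosen so that no nontrivial degree-$m$ sine polynomial vanishes at all of them). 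The tensor product of these one-dimensional interpolants gives $\mathcal{I}_n$ on $\LL_n$, and it interpolates $v$ on the product grid by construction.

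Next I would prove the error bound \eqref{eq: interpolation assumption}. Write $v=\sum_{i\in\NN^d}\nu_i h_i$ and split into the low-frequency part $P_n v$ (indices with $\|i\|_\infty\le m$) and the tail. Since $\mathcal{I}_n$ reproduces $P_n v$ exactly (it is already in $\LL_n$ and agrees with $v$, hence with itself, on the grid), we get $\mathcal{I}_n v - v = \mathcal{I}_n(v-P_n v) - (v-P_n v)$. The term $v-P_n v$ has $L_2$-norm controlled by the smoothness: using $\|v\|_{\Hil^\b}^2\asymp\sum_i \nu_i^2(\sum_j i_j^2)^\b$ and that $\sum_j i_j^2\gtrsim m^2$ whenever $\|i\|_\infty> m$, one gets $\|v-P_n v\|_{L_2}\lesssim m^{-\b}\|v\|_{\Hil^\b}\asymp n^{-\b/d}\|v\|_{\Hil^\b}$. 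For the term $\mathcal{I}_n(v-P_n v)$, the key is a stability estimate: $\|\mathcal{I}_n w\|_{L_2}\lesssim \|w\|_{\ell_\infty(\text{grid})}$ for $w$ continuous, combined with an aliasing bound showing the high-frequency modes of $v$ contribute to the grid values in a controlled way. Concretely, because $w=v-P_nv\in\Hil^\b$ with $\b>d/2$, Sobolev embedding gives $\|w\|_\infty\lesssim\|w\|_{\Hil^\b}$ applied to the tail, i.e. $\|w\|_\infty\lesssim (\sum_{\|i\|_\infty>m}\nu_i^2(\sum_j i_j^2)^\b)^{1/2}\cdot(\sum_{\|i\|_\infty>m}(\sum_j i_j^2)^{-\b}\|h_i\|_\infty^2/\|h_i\|_{\Hil^\b}^2)^{1/2}$-type bound — more cleanly, split off an extra power: $\|w\|_\infty\lesssim m^{-(\b-d/2)}\|v\|_{\Hil^\b}$ for $\b>d/2$, since the tail sum $\sum_{\|i\|_\infty>m}(\sum_j i_j^2)^{-\b+?}$ of the embedding constants is summable with the displayed rate. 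Then $\|\mathcal{I}_n w\|_{L_2}\lesssim \|w\|_\infty\lesssim m^{-(\b-d/2)}\|v\|_{\Hil^\b}$. Since $\b-d/2<\b$, this is dominated by $n^{-\b/d}\|v\|_{\Hil^\b}$ when $\b\le$ — wait, one must be careful: $m^{-(\b-d/2)}$ is \emph{larger} than $m^{-\b}$, so to land at $n^{-\b/d}=m^{-\b}$ one needs a sharper bound on $\|\mathcal{I}_n w\|_{L_2}$ than the crude $L_\infty$ one. The honest route is the classical aliasing identity for the discrete sine transform: the grid values of $h_i$ coincide with $\pm$ grid values of a low-frequency mode obtained by reflecting $i$ modulo $2m+1$, so $\mathcal{I}_n h_i = \pm h_{\sigma(i)}$ with $\|\sigma(i)\|_\infty\le m$, and therefore $\|\mathcal{I}_n w\|_{L_2}^2 \lesssim \sum_{\|i\|_\infty>m}\nu_i^2 \le m^{-2\b}\|v\|_{\Hil^\b}^2$ by the same tail estimate as before (an orthogonality/grouping argument handles the many-to-one nature of $\sigma$, costing at most a dimensional constant). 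Combining the two pieces gives \eqref{eq: interpolation assumption}.

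Finally, the norm equivalence $C_1\|v\|_{L_2}\le\|v\|_{\LL_n}\le C_2\|v\|_{L_2}$ on $\LL_n$ follows from the exact quadrature property of the grid: for $v=\sum_{\|i\|_\infty\le m}c_ih_i\in\LL_n$, the empirical inner product $n^{-1}\sum_k v(x_{n,k})w(x_{n,k})$ equals $\langle v,w\rangle_{L_2}$ up to aliasing terms, and since all frequencies present are $\le m$, the aliased frequencies are $>m$ hence absent, so in fact the discrete sine transform is an orthogonal transform and $\|v\|_{\LL_n}^2 = \sum_{\|i\|_\infty\le m}c_i^2 = \|v\|_{L_2}^2$, giving $C_1=C_2=1$ (or constants absorbing the precise normalization of the grid weights). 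The main obstacle I anticipate is getting the aliasing bookkeeping exactly right — pinning down the reflection map $\sigma$ and verifying that grid values of high modes genuinely fold onto low modes with the correct signs, so that the crude $L_\infty$ estimate is replaced by the sharp $\ell_2$ tail bound yielding the rate $n^{-\b/d}$ rather than the suboptimal $n^{-(\b-d/2)/d}$; everything else is routine tensor-product and Sobolev-embedding bookkeeping.
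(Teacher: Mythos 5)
The overall architecture of your argument matches the paper's: decompose via the orthogonal projection $P_n$ onto $\LL_n$, observe $\mathcal{I}_n v - v = (\mathcal{I}_n - I)(v - P_n v)$, bound the tail $\|v-P_nv\|_{L_2}\lesssim m^{-\b}\|v\|_{\Hil^\b}$, identify the aliasing $\mathcal{I}_n h_{k(2m+1)+l}=\pm h_{l}$ at the grid, and prove the norm equivalence via exact discrete orthogonality of the sine basis (you are right that this gives a near-isometry; the paper computes the constant as $(1+\tfrac{1}{2m})^{d/2}$). Those steps are all sound.

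The gap is in the aliasing estimate. You assert
\[
\|\mathcal{I}_n w\|_{L_2}^2 \ \lesssim\ \sum_{\|i\|_\infty>m}\nu_i^2,
\]
attributing the implicit constant to ``an orthogonality/grouping argument.'' This is false: the map $i\mapsto\sigma(i)$ that folds high frequencies onto $\{\|l\|_\infty\le m\}$ is \emph{infinite-to-one} (each low mode $l$ receives contributions from $k(2m+1)\pm l$ for all $k\in\NN_0^d$), so
\[
\|\mathcal{I}_n w\|_{L_2}^2\ \asymp\ \sum_{\|l\|_\infty\le m}\Bigl(\sum_{\sigma(i)=l}\pm\nu_i\Bigr)^2,
\]
and for a generic $\ell_2$-tail the inner sums may diverge (take $\nu_i\sim i^{-1/2}(\log i)^{-1}$ in $d=1$). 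There is no constant-cost grouping; what closes the argument is Cauchy--Schwarz against the $\Hil^\b$-weights, i.e.\ bounding $(\sum_k\langle v,h_{k(2m+1)+l}\rangle)^2$ by $(\sum_k\|k(2m+1)+l\|_\infty^{2\b}\langle v,h_{k(2m+1)+l}\rangle^2)\cdot(\sum_k\|k(2m+1)+l\|_\infty^{-2\b})$, and the second factor is $O(m^{-2\b})$ uniformly in $l$ \emph{precisely because} $\b>d/2$. This is where the smoothness hypothesis actually bites; in your sketch it is never used at this step, which is a sign the step can't be right. Your earlier digression about the $L_\infty$-route being ``suboptimal'' is accurate, but the replacement you propose does not fix it — you need the weighted decoupling, not a sign/orthogonality cancellation.
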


Given the eigenfunctions $(h_i)$, for $i\in \NN^d$, equip $v=\Delta u_{f}$  
with the prior $\sum_{i=1}^\infty \nu_ih_i$, for $\nu_i\ind N(0, \k_i^{d/2+\a})$,
where the value of $\a$ is  chosen either fixed or determined by the empirical Bayes or hierarchical Bayes approaches.
For the array $(h_i)$ ordered in by decreasing eigenvalues, this corresponds to a prior
of the form  $v=\sum_{\ell=1}^\infty v_\ell h_\ell$, with $v_\ell\ind N(0, \ell^{-1-2\a/d})$, as
in Section~\ref{SectionLinearProblem}.
To quantify the uncertainty of the procedure, consider credible sets of the form 
\begin{equation}
\label{EqCredibleSchrodinger}
C_n(Y_n)
=\Bigl\{\frac{v}{2(Kv+\tilde g)}: \|v-\hat{v}_n\|_{L_2}\leq r_n, \|Kv-K\hat{v}_n\|_\infty\leq \d_0\Bigr\},
\end{equation}
for $\hat v_n$ the posterior mean of $v$, the constant $r_n$ determined as the
$1-\g$-quantile of the posterior distribution of $\|v-\hat v_n\|_{L_2}$ and a fixed constant $\d_0>0$. 

\begin{theorem}\label{thm:Schr\"odinger}
Assume that $\Delta u_{f_0} \in \Hil^\b$, for $\b > d/2$ and $\inf_x u_{f_0}(x)>2\d_0>0$. 
(i). The $L_2$-contraction rate of the posterior distribution of $f$ to $f_0$ in the white noise model is 
$n^{-(\a\wedge \b)/(d+2\a+4)}$ if the regularity of the prior is chosen 
deterministic and equal to $\a$ with $\a\wedge\b+2>d/2$.
(ii). This contraction rate is $l_n n^{-\b/(d + 2\b + 4)}$ for a slowly varying sequence $l_n$,
if $\a$ is chosen by the empirical Bayes or hierarchical Bayes methods and $\b+2>d/2$. 
(iii). For the prior with deterministic $\a$ set to $\a=\b-c/\log n$ and sufficiently large  $c$,
 the credible set $C_n(Y_n)$ has frequentist coverage tending to one and $L_2$-diameter
of the order $O_P(n^{-\b/(d+2\b+4)})$. (iv). For the prior with fixed $\a>d/2$, the contraction rate as in (i) is also attained 
in the discrete observational scheme.
\end{theorem}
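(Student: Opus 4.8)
The plan is to derive Theorem~\ref{thm:Schr\"odinger} directly from the general machinery already assembled, treating each of its four parts as an instantiation of a result from Sections~\ref{sec: general method} and~\ref{SectionLinearProblem} combined with the Lipschitz property of the Schr\"odinger solution map proved in Lemma~\ref{LemmaSolutionOperatorSchrodinger}. The operator here is $\L=\Delta$ on $\O=(0,1)^d$, whose inverse $K$ under the Dirichlet boundary condition has eigenvalues with $k_\ell\asymp\ell^{-2/d}$ by Lemma~\ref{lemma: sorted eigenvalues}; hence $K$ is smoothing of order $p=2$ in the eigenscale $(\Hil^s)$. I would record at the outset that the assumptions $\inf_x u_{f_0}(x)>2\d_0$ and $\Delta u_{f_0}=v_0\in\Hil^\b$ with $\|v_0\|_\infty<\infty$ (the latter following from $\b>d/2$ and Sobolev embedding in the eigenscale) put us exactly in the hypotheses of Theorem~\ref{CorollarySchrodinger}.

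For part (i): apply Theorem~\ref{theorem: general uniform convergence}(i) with $p=2$, $\d=0$, to get $L_2$-contraction of the posterior of $v$ at rate $n^{-(\a\wedge\b)/(d+2\a+4)}$, and Theorem~\ref{theorem: general uniform convergence}(ii) to get uniform consistency of the posterior of $Kv$ for $Kv_0$. Here the relevant hypothesis for (ii) is met via Lemma~\ref{cor:elliptic_parabolic} (the Laplacian is elliptic, so $K:\Hil^\g\to L_\infty$ is continuous for $\g>d$), together with the condition $\a\wedge\b+2>d/2$ which guarantees a valid $\g<\a\wedge\b+p$ exists when $d/2<\g$; one checks that $\a\wedge\b+2>d/2$ is what is needed after accounting for $p=2$. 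Then Theorem~\ref{CorollarySchrodinger} transfers both the rate and (trivially) the setup to the posterior of $f$. Part (ii) is identical but invokes Theorem~\ref{theorem: general uniform convergence_adaptive}(i)--(ii) in place of Theorem~\ref{theorem: general uniform convergence}, yielding the extra slowly varying factor $l_n=(\log n)^{3/2}(\log\log n)^{1/2}$; the lower bound $\underline\a_n\ge\b-c_0/\log n$ from the adaptation theory is what keeps the rate at $l_nn^{-\b/(d+2\b+4)}$.

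For part (iii): take the prior with deterministic $\a=\b-c/\log n$. By Theorem~\ref{theorem: general uniform convergence_adaptive}(iii) (or directly \cite{Knapik2011}), since $\a<\b$, the $L_2$-credible ball $\{v:\|v-\hat v_n\|_{L_2}\le cr_n\}$ has frequentist coverage tending to one and $L_2$-diameter $O_P(n^{-(\a\wedge\b)/(2\a+2p+d)})=O_P(n^{-\b/(d+2\b+4)})$ up to the $\log$-correction absorbed by the choice of $\a$. The credible set $C_n(Y_n)$ in \eqref{EqCredibleSchrodinger} is the image under $e$ of $\tilde C_n(\tilde Y_n)=\{v:\|v-\hat v_n\|_{L_2}\le r_n,\ \|Kv-K\hat v_n\|_\infty\le\d_0\}$, so by Proposition~\ref{proposition_credible_general} its coverage equals that of $\tilde C_n$ and its $L_2$-diameter matches that of $\tilde C_n$ provided $\tilde C_n\subset V_n=\{v:\|Kv-Kv_0\|_\infty\le c_0/2\}$ with $c_0=\inf u_{f_0}>2\d_0$ and $\tilde C_n$ contains a point $\bar v_n$ with $\|\bar v_n\|_\infty=O_P(1)$; the first follows on the coverage event from the $\d_0$-constraint and $\d_0<c_0/2$, and for the second one takes $\bar v_n=\hat v_n$, whose uniform boundedness follows from the uniform consistency of the posterior mean of $Kv$ — wait, $\|\hat v_n\|_\infty$ not $\|K\hat v_n\|_\infty$ — so instead one restricts $\tilde C_n$ to also lie in a uniform ball for $v$ as in the discussion after Proposition~\ref{proposition_credible_general}, which does not affect coverage since the posterior of $v$ is also uniformly consistent on $\Hil^\b$ with $\b>d/2$. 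The main obstacle is this bookkeeping of the auxiliary set $V_n$: ensuring the intersection with $V_n$ neither destroys the prescribed credible level nor the coverage, which requires the two consistency statements ($\E_{v_0}\tilde\Pi_n(V_n\given\tilde Y_n)\to1$ and $\Pr_{v_0}(V_n\ni v_0)\to1$) from Theorem~\ref{theorem: general uniform convergence_adaptive}(ii). Finally part (iv): verify the interpolation hypotheses \eqref{eq:regression_W_n_isomorphism}--\eqref{eq:regression_stable_reconstruction} via Lemma~\ref{lemma: interpolation} (which gives $\d_{n,s}\asymp n^{-s/d}$, so $n\d_{n,\b}^2\to0$ exactly when $\b>d/2$), note $K:G^\b\to G^{\b+2}$ is continuous by the eigenscale smoothing, and apply Theorem~\ref{thm:regression:linear} to carry assertions (i)--(ii) into the discrete model, then Theorem~\ref{CorollarySchrodinger} once more.
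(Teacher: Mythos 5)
Your overall architecture — route parts (i)–(ii) through Theorem~\ref{CorollarySchrodinger} combined with Theorems~\ref{theorem: general uniform convergence} and~\ref{theorem: general uniform convergence_adaptive}, part (iii) through Proposition~\ref{proposition_credible_general} and Theorem~\ref{theorem: general uniform convergence_adaptive}(iii), and part (iv) through Lemma~\ref{lemma: interpolation} and Theorem~\ref{thm:regression:linear} — is exactly the paper's. The gap is in how you verify the uniform-consistency hypothesis of Theorem~\ref{theorem: general uniform convergence}(ii). You invoke Lemma~\ref{cor:elliptic_parabolic}, which only gives $K:\Hil^\g\to L_\infty$ continuous for $\g>d$, but you then assert a valid $\g$ exists in $(d/2,\a\wedge\b+2)$ under the stated condition $\a\wedge\b+2>d/2$; those two requirements are incompatible. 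If you insist on Lemma~\ref{cor:elliptic_parabolic}, you would need $\d>d$ in the $\Hil^\d$-contraction of $v$ (since the lemma transfers $\Hil^\d$-contraction of $v$ directly to uniform contraction of $Kv$ only for $\d>d$), and since contraction only holds for $\d<\a\wedge\b$ this forces $\a\wedge\b>d$ — a strictly stronger assumption than the theorem's $\a\wedge\b>d/2-2$.

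What the paper actually does is use the \emph{first} branch of Theorem~\ref{theorem: general uniform convergence}(ii): the eigenfunctions $h_\ell$ of the Dirichlet Laplacian on $(0,1)^d$ are uniformly bounded (each is $2^{d/2}\prod_j\sin(i_j\pi x_j)$), and $Kh_\ell=\kappa_\ell h_\ell$ with $\kappa_\ell\asymp\ell^{-2/d}$ by Lemma~\ref{lemma: sorted eigenvalues}, so $\sup_\ell\ell^{2/d}\|Kh_\ell\|_\infty<\infty$ and the first branch holds with $u=2$, yielding exactly the threshold $d/2<\a\wedge\b+2$ (and $d/2<\b+2$ in the adaptive case). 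If you prefer the second branch, the right citation would be Proposition~\ref{PropositionSmoothingEllipticH0} (giving $\|\cdot\|_{\Sob^\g}\lesssim\|\cdot\|_{\Hil^\g}$) plus Sobolev embedding at $\g>d/2$ — not Lemma~\ref{cor:elliptic_parabolic}, whose threshold $\g>d$ is too coarse; but on $(0,1)^d$ the boundary regularity hypothesis of Proposition~\ref{PropositionSmoothingEllipticH0} needs attention, which is presumably why the paper opts for the direct eigenfunction bound. Your treatment of parts (iii)–(iv) is sound and in fact more careful than the paper's one-line proof regarding the auxiliary set $V_n$; the self-correction about $\|\hat v_n\|_\infty$ versus $\|K\hat v_n\|_\infty$ is the right instinct, and is handled as you say by intersecting with a uniform ball for $v$ and using uniform consistency of the posterior of $v$ itself (valid since $\b>d/2$ gives contraction in $\Hil^\d$ for some $\d>d/2$, whence $\|v-v_0\|_\infty\lesssim\|v-v_0\|_{\Hil^\d}$).
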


\begin{proof}
The contraction rates (i) and (ii) in the Gaussian white noise model follow from combining 
Theorem~\ref{CorollarySchrodinger}
with Theorem~\ref{theorem: general uniform convergence} or Theorem~\ref{theorem: general uniform convergence_adaptive}(i)-(ii). 
Parts (i) of these theorems give a contraction rate for $\Delta u_f$, which translates into a contraction rate
for $f$ provided the uniform norms $\|u_f\|_\infty$ are bounded away from zero. The latter is
guaranteed through the uniform posterior consistency obtained in parts (ii) of the two theorems,
which apply with $u=2$ under the conditions $\a\wedge \b+2>d/2$
and $\b+2>d/2$, respectively, as $\|K h_\ell\|_\infty\lesssim \ell^{-2/d}$.

The frequentist coverage guarantee (iii) follows in the same way
from combining Theorem~\ref{theorem: general uniform convergence_adaptive}(iii) with
Proposition~\ref{proposition_credible_general}. 
The contraction rate (iv) in the discrete observational model follows from combining
Lemma~\ref{lemma: interpolation} and Theorem~\ref{thm:regression:linear}.
\end{proof}

As discussed in Section~\ref{SectionSobolevSpaces}, similar results are valid for other priors than those resulting
from the eigenexpansion, for instance priors based on certain wavelet expansions.

A sufficient condition for the assumption that the function $\Delta u_{f_0}$ belongs to the Hilbert scale $\Hil^\b$ is
that it is compactly supported in $\O$ and belongs to the Sobolev space $\Sob^\b(\O)$.
See Proposition~\ref{PropositionSmoothingEllipticH0} and
Chapter 5, Appendix~A (page 471), in particular equation (A.18) in \cite{Taylor2011}.)

\section{Heat equation with absorption} 
For  given (smooth) functions $f\in L_2\bigl((0,1)^d\times(0,1)\bigr)$,  $u_0 \in L_2((0,1)^d)$ and $g \in L_2\bigl(\partial (0,1)^d\times [0,1]\bigr)$,
consider the solution $u_f:  [0,1]^d\times[0,1]\to\RR$ to the parabolic partial differential equation
\begin{equation}\label{eq: heat:equation}
	\left\{\begin{aligned}
		\frac{\partial}{\partial t}u - \frac{1}{2}\Delta u &= fu, \qquad && \text{on }   (0,1)^d\times (0,1];\\
		u &= g, \qquad && \text{on }  \partial (0,1)^d\times [0,1];\\
		u &= u_0, \qquad && \text{on } (0,1)^d\times  \{0\}.
	\end{aligned}\right.
\end{equation}
The white noise version of this problem was investigated in \cite{kekkonen2022consistency}, 
where minimax contraction rates and frequentist guarantees for credible sets in a weak metric 
space were derived under the assumption that the regularity $\b$ of the function $f\in \Sob^\b(\O)$ is known. 

The equation is of the form \eqref{eq: general PDE} for the 
parabolic differential operator $\L = \frac{\partial}{\partial t} - \frac{1}{2}\Delta$,
with $\Gamma=\bigl(\partial (0,1)^d\times [0,1]\bigr) \cup \bigl((0,1)^d\times \{0\}\bigr)$. The function
$f$ can be recovered from the solution $u_f$ to the equation as 
$$f=\frac{\L u_f}{u_f}.$$
The existence of the inverse operator $K \in L_2\bigl([0,1)^d\times [0,1]\bigr)$ satisfying \eqref{eq: definition K gen} and 
the function $\tilde g$ satisfying \eqref{eq: definition g tilde gen}
is proved in Theorems~3 and~4 of Section~7.1 in \cite{Evans10} and in Section~3 of \cite{Doob}.
This gives the solution map $f=e(\L u_f)$ as in \eqref{EqSolutionOperator} given by 
\begin{align*}
e(v)&= 
	\begin{cases}
		\frac{v}{Kv+\tilde g}, \quad &\text{if} \ \essinf Kv+\tilde g > 0,\\
		0, \quad &\text{ otherwise}.
	\end{cases}
\end{align*}
This is similar to the Schr\"odinger equation in Section~\ref{sec: Schrodinger}, except for the
different definition of $K$.  Lemma~\ref{sec: Schrodinger} is true in exactly the same form for the
present operator $K$ and solution map.

In view of Proposition~\ref{prop:stability}, an $L_2$-contraction rate of the
posterior distribution of $v$ to $v_0$ based on the observation $\tilde Y_n= Kv+ n^{-1/2} \dWW$ 
implies the same $L_2$-contraction rate for the posterior distribution of $f$, 
provided the posterior distribution
of $Kv$ is consistent for $Kv_0$ relative to the uniform norm and $u_{f_0}=Kv_0+\tilde g$ is bounded away from zero. Proposition~\ref{proposition_credible_general} 
shows that the coverage and diameter of credible
sets translate similarly. 
This is exactly as in Theorem~\ref{CorollarySchrodinger}, and applicable to
many priors.

For a concrete example, consider the prior on the eigenfunctions of 
the operator $K^\tp K$, which can be derived in closed form
 (see Lemma~\ref{lemma: bounded eigenfunctions parabolic}).
We may endow $v=\L u_f$ with a Gaussian prior 
$\sum_{i\in\NN}v_ih_i$, for $h_1,h_2,\ldots$ the eigenfunctions ordered by decreasing eigenvalues
and $v_i\ind N(0, i^{-1-\a/d})$, with the hyper-parameter $\a$ either fixed to some value 
or selected with the empirical or hierarchical Bayes methods, 
as described in Section~\ref{SectionLinearProblem}. 
For uncertainty quantification we consider the credible set described in \eqref{EqCredibleSchrodinger}.

Let $(\Hil^s)$ be the smoothness scale corresponding to the sequence of eigenfunctions,
 ordered by corresponding decreasing eigenvalues and viewed as functions on a $(d+1)$-dimensional domain.
Thus a function  belongs to $\Hil^\b$ if it has a representation $v=\sum_{i\in\NN}v_ih_i$ for coefficients $v_1,v_2,\ldots$
with $\sum_{i\in\NN}i^{2\b/(d+1)}v_i^2<\infty$. 

\begin{theorem}\label{thm:parabolic}
Assume that  $\L u_{f_0} \in \Hil^\b$, for $\b > d+1$, and $\inf_x u_{f_0}(x)>2\d_0>0$.
(i). In the white noise model the $L_2$-contraction rate of the posterior distribution for $f$ to $f_0$ is
equal to $n^{-(\a\wedge \b)/((d+1)(d+6)/(d+2)+2\a)}$ if the regularity of the prior is chosen deterministic and equal to $\a>d+1$,
and equal to $l_n n^{-\b/((d+1)(d+6)/(d+2) + 2\b)}$ for a slowly varying sequence $l_n$
if $\a$ is chosen by the  hierarchical or empirical Bayes methods.  
(ii). For the prior with deterministic $\a$ set to $\a=\b-c/\log n$ and  sufficiently large $c$, 
the credible set \eqref{EqCredibleSchrodinger} has frequentist coverage tending to one and 
$L_2$-diameter of the order $n^{-\b/((d+1) (d+6)/(d+2)+ 2\b )}$.
\end{theorem}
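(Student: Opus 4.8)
The plan is to run the same reduction used for the Schr\"odinger equation in Theorem~\ref{thm:Schr\"odinger}: pass to the linear inverse problem $\tilde Y_n = Kv + n^{-1/2}\dWW$ for the parabolic operator $K$, quote the contraction and coverage results of Section~\ref{SectionLinearProblem}, and transport them to $f$ via Propositions~\ref{prop:stability} and~\ref{proposition_credible_general}. First one checks that, for the present $K$, the solution map $e(v) = v/(Kv + \tilde g)$ is Lipschitz from $L_2$ to $L_2$ on the slab $V = \{v: \|Kv - Kv_0\|_\infty \le \d_0\}$ at every uniformly bounded point, with Lipschitz constant a multiple of $1 \vee \|\cdot\|_\infty$; this is the verbatim analogue of Lemma~\ref{LemmaSolutionOperatorSchrodinger} (without the factor $2$), using only the $L_2\to L_2$ continuity of $K$ recorded above from Theorems~3 and~4 in Section~7.1 of \cite{Evans10}. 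Since $\L u_{f_0} \in \Hil^\b$ with $\b > d+1$ forces $v_0 = \L u_{f_0}$ to be bounded (so the Lipschitz lemma applies at $v_0$), and the hypothesis $\inf_x u_{f_0}(x) > 2\d_0$ says $u_{f_0} = Kv_0 + \tilde g$ is bounded away from zero, the analogue of Theorem~\ref{CorollarySchrodinger} applies: it then remains only to supply (a) an $L_2$-contraction rate for the posterior of $v$ and (b) uniform consistency of the posterior of $Kv$ for $Kv_0$.

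For (a) and (b) the one genuinely new computation is the ill-posedness index of the parabolic $K$. Starting from the closed-form eigenfunctions of $K^\tp K$ in Lemma~\ref{lemma: bounded eigenfunctions parabolic} --- separation of variables produces spatial modes with $-\Delta/2$-eigenvalues $\mu_k \asymp k^{2/d}$ and, for each, a Volterra-type operator in time --- one shows that the squared singular value attached to the mode with spatial index $k$ and time frequency $m$ is of order $(\mu_k^2 + m^2)^{-1} \asymp (k^{4/d} + m^2)^{-1}$, whence a Weyl-type count $\#\{(k,m): k^{4/d} + m^2 \le \Lambda^2\} \asymp \Lambda^{d/2 + 1}$ gives, after ordering by decreasing magnitude, $\k_\ell \asymp \ell^{-2/(d+2)}$; that is, $K$ is mildly ill-posed of order $p = 2(d+1)/(d+2)$ in the $(d+1)$-dimensional scale $(\Hil^s)$, and indeed $2\a + 2p + (d+1) = 2\a + (d+1)(d+6)/(d+2)$, the denominator in the statement. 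Theorem~\ref{theorem: general uniform convergence}(i) with $\d = 0$ then gives the $L_2$-rate $n^{-(\a\wedge\b)/(2\a + 2p + d+1)}$ for deterministic $\a$, and Theorem~\ref{theorem: general uniform convergence_adaptive}(i) gives $l_n n^{-\b/(2\b + 2p + d+1)}$ for the empirical or hierarchical Bayes choice; for uniform consistency of $Kv$ one uses part (ii) of the same theorems, noting that Lemma~\ref{cor:elliptic_parabolic} makes $K: \Hil^\g \to L_\infty$ continuous for $\g > d$, so $\|Kv\|_\infty \lesssim \|v\|_{\Hil^\d}$ for some $\d \in (d, \a\wedge\b)$ --- such $\d$ exists since $\b > d+1$ (and $\a > d+1$ in the fixed case) --- and contraction in $\Hil^\d$ forces $\|Kv - Kv_0\|_\infty \to 0$. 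Plugging this into Proposition~\ref{prop:stability} yields assertion (i). For (ii), set $\a = \b - c/\log n$: Theorem~\ref{theorem: general uniform convergence_adaptive}(iii) gives coverage $\to 1$ and $L_2$-diameter $O_P(n^{-\b/(2\b+2p+d+1)})$ for the $L_2$-credible ball around the posterior mean $\hat v_n$, Proposition~\ref{proposition_credible_general} transfers the coverage level exactly to the transformed set \eqref{EqGeneralCredibleSet} (i.e.\ \eqref{EqCredibleSchrodinger} read with $e(v) = v/(Kv+\tilde g)$), and the diameter transfers because the uniform consistency just established puts asymptotic posterior mass one on $V_n = \{v: \|Kv - K\hat v_n\|_\infty \le \d_0\}$, on which $e$ is uniformly Lipschitz at the point $\hat v_n$ (whose sup-norm is $O_P(1)$), and intersecting the ball with $V_n$ affects neither credible nor coverage level, as in the remark after Proposition~\ref{proposition_credible_general}.

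The main obstacle is precisely the spectral analysis behind the order $p = 2(d+1)/(d+2)$: in contrast with the elliptic Schr\"odinger case, where $K$ smooths by the integer order $2$, the parabolic inverse operator smooths anisotropically (two orders in space, one in time), so its effective order in the isotropic $(d+1)$-dimensional Hilbert scale is this non-obvious fraction, and establishing it demands the mixed spatial/temporal Weyl count above, equivalently a careful bound on the singular values of the time-Volterra operators produced by separation of variables. A secondary point meriting care is that the Sobolev-embedding route to uniform consistency (Lemma~\ref{cor:elliptic_parabolic}) is phrased in terms of the spatial dimension $d$ rather than the space-time dimension $d+1$, which is why the blanket hypothesis $\b > d+1$ is imposed rather than something weaker.
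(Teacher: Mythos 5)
Your proposal is correct and follows essentially the same route as the paper's own proof: pass to the linear inverse problem for the parabolic inverse operator $K$, determine the ill-posedness order $p$ from the ordered singular values, quote the contraction and coverage results of Section~\ref{SectionLinearProblem}, and transport them to $f$ via Propositions~\ref{prop:stability} and~\ref{proposition_credible_general} together with the Lipschitz lemma for the solution map $e(v) = v/(Kv+\tilde g)$ (which, as the text before the theorem notes, is identical in form to Lemma~\ref{LemmaSolutionOperatorSchrodinger}). Your direct Weyl count establishing $\k_\ell \asymp \ell^{-2/(d+2)}$ from $\l_{i,k}\asymp(\mu_i^2+k^2)^{-1}$ is a self-contained version of what the paper obtains by combining Lemma~\ref{lemma: bounded eigenfunctions parabolic} with Lemma~\ref{lemma: sorted eigenvalues2} (the latter handling precisely the anisotropic count $\#\{(i,k): \|i\|^2+k\le \Lambda\}$), and both yield $p=2(d+1)/(d+2)$ and hence the denominator $2\a+(d+1)(d+6)/(d+2)$. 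One small point: in invoking Lemma~\ref{cor:elliptic_parabolic} you read the threshold as $\g>d$, whereas the paper's proof of Theorem~\ref{thm:parabolic} reads the ``$d$'' there as the dimension of the space-time domain and requires $\g>d+1$; since the hypothesis $\b>d+1$ (and $\a>d+1$) leaves room for a $\d$ satisfying both $\d>d+1$ and $\d<\a\wedge\b$, this discrepancy in reading does not affect the argument.
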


\begin{proof}
In view of Lemma~\ref{lemma: sorted eigenvalues} and Lemma~\ref{lemma: bounded eigenfunctions parabolic},
the eigenvalues $k_\ell^2$ of $K^\tp K$ ordered by decreasing magnitude satisfy $k_\ell\asymp \ell^{-2/(d+2)}$.
Thus the  operator $K$ is smoothing as in \eqref{eq:K:smoothing}  relative to the eigenscale $(\Hil^s)$ 
on a $(d+1)$-dimensional domain of order $p$ such that $2/(d+2)=p/(d+1)$, i.e.\ $p=2(d+1)/(d+2)$.

Consequently, Theorem~\ref{theorem: general uniform convergence_adaptive} 
gives  $L_2$-contraction at the rates $n^{-(\a\wedge\b)/(d+1+2\a+2p)}$, which is the rate given in (i), and
$\|\cdot\|_{\Hil^\g}$-contraction at some rate, for every $\g<\b$.

By Proposition~\ref{prop:stability}
the $L_2$-contraction rate of the posterior of $v$  is transferred to the same rate for $f$ provided
that the posterior distribution of $Kv$ is consistent for the uniform norm. 
By Lemma~\ref{cor:elliptic_parabolic}, the latter
 follows from consistency in the $\|\cdot\|_{\Hil^\g}$-norm for some $\g> d+1$.

The frequentist coverage (ii) of the credible sets follows similarly, with the help of
Proposition~\ref{proposition_credible_general}.
\end{proof}

\begin{remark}
The space and time variables $x$ and $t$  in this problem do not play symmetric roles and it would be
reasonable to treat them differently when estimating the function $(x,t)\mapsto f(x,t)$, in practice when
devising a prior distribution and in theory when obtaining a contraction rate. In the preceding we lumped the
two variables together in order to apply the contraction results on series priors given in Section~\ref{SectionLinearProblem}. 
As shown in Lemma~\ref{lemma: bounded eigenfunctions parabolic}, the singular values of $\L$ are of the order $\|i\|^2+k$
(for the eigenfunctions $(x,t)\mapsto\prod_{i=1}^d\sin(i_j\pi x) \bigl(c_{i,k}\sin(\nu_{i,k} t)+\cos(\nu_{i,k} t)\bigr)$),
where $\nu_{i,k}\asymp k$. This suggests that the problem is ill-posed of order $2$ in the space variable and
of order $1$ in the time variable, as is also intuitively clear from the definition of $\L$. In the series setup of
Section~\ref{SectionLinearProblem}  this is accommodated by a single parameter
of ill-posedness equal to $p=2(d+1)/(d+2)$ (see the preceding proof), which is between 1 and 2. Using instead two parameters of ill-posedness
would allow to obtain results for more general priors and more general model classes, more appropriate for the
anisotropic nature of the problem. (A further refinement would be to model also the coordinates
of the space variable anisotropically.) We leave this for further work.
\end{remark}

\begin{remark}
In \cite{kekkonen2022consistency} the function $f$ was considered static over time, i.e. $f:\mathcal{O}\rightarrow \mathbb{R}$, while in our case $f$ was allowed to change over time. We also note that our contraction rate is different than $n^{-\beta/(d+2\beta+4)}$ obtained in  \cite{kekkonen2022consistency}. A partial explanation of the mismatch is the different smoothness condition and choice of the prior (in our case $\L u_{f_0} \in \Hil^\b$ and $\L u_{f}$ is endowed with a Gaussian process prior, while \cite{kekkonen2022consistency} assumes that $f_0\in H^\beta(\mathcal{O})$ and endows $f$ with a rescaled Mat\'ern process prior). 
\end{remark}

\section{One-dimensional Darcy equation}
\label{SectionOneDimensionalDarcy}
For given functions $h: (0,1)\to\RR$ and $g: \{0,1\}\to\RR$, consider the one-dimensional differential equation
\begin{equation}\label{EqDarcyOneDimensional}
	\left\{\begin{aligned}
		f'u_f'+fu_f'' &= h, \qquad && \text{on } (0,1),\\
		u_f &= g, \qquad && \text{on } \{0, 1\}.
	\end{aligned}\right.
\end{equation}
Because the solutions $f$ to the first order linear ordinary differential equation $f'u'+fu''=h$, for given $u$ (and hence $u'$ and $u''$),
form a one-dimensional affine space, and we can at best retrieve $u_f$ from the data, the function $f$ can
be recovered from the data at best up to a constant. We shall assume that the value $f(0)$ is given. 

The equation can be written in the form $(fu_f')'=h$. Integration of this equation gives that
$fu_f'-f(0)u_f'(0)= H$, for $H(x)=\int_0^x h(s)\,ds$.
Under the assumption that $u_f'$ is positive, we can solve $f$ as $f=e(u_f')$ for
\begin{equation}\label{eq: inversion Darcy}
	e(v) (x)= \frac{H(x) + f(0)v(0)}{v(x)}.
\end{equation}
We conclude that we can write equation \eqref{EqDarcyOneDimensional}
 in the form \eqref{eq: general PDE} with $\L u=u'$ (and $c(f,u_f)=(f(0)u_f'(0)+H)/f$)
and solution map \eqref{EqSolutionOperator} given in the preceding display. 
Let $\tilde g$ be the function $\tilde g(x)=g(0)+\bigl(g(1)-g(0)\bigr)x$, and define an operator $K$ by
$$Kv(x)=\int_0^x v(s)\,ds-x\int_0^1 v(s)\,ds.$$
Then $Kv(0)=Kv(1)=0$, for every $v$, and hence $Ku'+\tilde g=g$ on $\partial \O$. Since also $(Ku'+\tilde g)''=u''$
on $\O=(0,1)$, it follows that $u_f=K\L u_f+\tilde g$, verifying the identity \eqref{Equ=KLu+tildeg}.

The adjoint of $K$ is given by
$K^\tp u =-\int_0^\cdot u(s)\,ds+\int_0^1\int_0^tu(s)\,ds\,dt$, and hence $(K^\tp K v)''=-(Kv)'=-v+\int_0^1 v(s)\,ds$,
with (Neumann) boundary conditions $(K^\tp K v)'(x)=-Kv(x)=0$, for $x\in\{0,1\}$.
It follows that the eigenfunctions and corresponding eigenvalues of $K^\tp K$  are given by 
\begin{align*}
h_0&=1, \qquad \k_0^2=0, \\
h_i(x)&=\sqrt 2 \cos(\pi ix),\qquad \k_i^2=\frac1{(\pi i)^2}, \qquad i\in\NN.
\end{align*}
The boundary conditions $(u_f-\tilde g)(0)=(u_f-\tilde g)(1)=0$ imply that $\int_0^1 (u_f-\tilde g)'(s)\,ds=0$,
whence the functions $u_f'-\tilde g'$ are orthogonal to $h_0$. This motivates to
consider the prior on $v=u_f'$ equal to the distribution of $\tilde g'+\sum_{i\in\NN} v_ih_i$, for independent $v_i\sim N(0, i^{-1-2\a})$. 
Define $(\Hil^s)_{s\in\RR}$ as the scale generated by the eigenexpansion of $K^\tp K$.

\begin{theorem}\label{cor:darcy}
Suppose $\|H\|_\infty<\infty$,  $u_{f_0}' \in \Hil^\b$, for $\b>1/2$, and $\inf_{0<x<1} u_{f_0}'(x)>0$.
(i). For any $\a\wedge\b>\d>1/2$ the contraction rate of the posterior distribution of $f$
to $f(0)$ relative to the $\|\cdot\|_\infty$-norm is (at least) $n^{-(\a \wedge \b-\d)/(1+2\a + 2)}$. 
(ii). If $f(0)=0$ and $\a\wedge \b>1/2$, then the contraction rate of the posterior distribution of $f$
 to $f(0)$ relative to the $L_2$-norm is $n^{-(\a \wedge \b)/(1+2\a + 2)}$.
\end{theorem}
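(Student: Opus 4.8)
The plan is to run the two-step scheme of Section~\ref{sec: general method}: obtain a contraction statement for $v=u_f'$ in the linear problem $\tilde Y_n=Kv+n^{-1/2}\dWW$ and push it through the inversion map \eqref{eq: inversion Darcy} by Proposition~\ref{prop:stability}, after checking that $e$ is locally Lipschitz on a set that carries posterior mass tending to one. The linear input is read off the eigendecomposition of $K^\tp K$ listed before the theorem: its null space is $\mathrm{span}(h_0)=\mathrm{span}(1)$, and on the orthocomplement $\k_i\asymp i^{-1}$, so with $d=1$ the operator $K$ is smoothing of order $p=1$ in the sense of \eqref{eq:K:smoothing} relative to the scale $(\Hil^s)$ generated by $(h_i)_{i\ge1}$. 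The prior drift $\tilde g'$ is a known constant, $K$ annihilates constants, and the boundary conditions $u_{f_0}=g$ on $\{0,1\}$ force $v_0-\tilde g'\perp h_0$; hence, writing $w=v-\tilde g'$ and $w_0=v_0-\tilde g'\in\Hil^\b$, the problem $\tilde Y_n=Kw+n^{-1/2}\dWW$ is exactly the Gaussian series linear inverse problem of Section~\ref{SectionLinearProblem} with $d=1$, $p=1$ and prior \eqref{eq: prior}. Theorem~\ref{theorem: general uniform convergence}(i) then gives, for $v_0\in\Hil^\b$ and $-1\le\d<\a\wedge\b$, that the posterior of $v$ contracts to $v_0$ in $\|\cdot\|_{\Hil^\d}$ at rate $\e_{n,\d}:=n^{-(\a\wedge\b-\d)/(1+2\a+2)}$, the offset $\tilde g'$ cancelling in the difference $v-v_0=w-w_0$.

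For (ii), where $f(0)=0$ so that \eqref{eq: inversion Darcy} reduces to $e(v)=H/v$, fix any $\d$ with $1/2<\d<\a\wedge\b$, which exists because $\a\wedge\b>1/2$. The $\|\cdot\|_{\Hil^\d}$-contraction of $v$ together with the Sobolev embedding $\Hil^\d\hookrightarrow C[0,1]$ (elementary for the cosine scale once $\d>d/2=1/2$, by Cauchy--Schwarz) shows the posterior of $v$ is also uniformly consistent for $v_0$; in particular, with $c_0:=\inf_{0<x<1}v_0(x)>0$, it puts mass tending to one on $V_n:=\{v:\|v-v_0\|_\infty\le c_0/2\}$, on which $v\ge c_0/2$. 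Since $H$ is the fixed function with $\|H\|_\infty<\infty$, on $V_n$ one has $|e(v)(x)-e(v_0)(x)|=|H(x)|\,|v(x)-v_0(x)|/(v(x)v_0(x))\le 4\|H\|_\infty c_0^{-2}\,|v(x)-v_0(x)|$, so $e:(V_n,\|\cdot\|_{L_2})\to(L_2,\|\cdot\|_{L_2})$ is Lipschitz at $v_0$. Taking $\d=0$ in the contraction statement and applying Proposition~\ref{prop:stability} with this $V_n$ yields the $L_2$-contraction rate $\e_{n,0}=n^{-(\a\wedge\b)/(1+2\a+2)}$ for the posterior of $f$, which is (ii).

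Part (i) follows the same scheme, except that \eqref{eq: inversion Darcy} now also contains the pointwise value $v(0)$ in the numerator, which is why only a $\|\cdot\|_\infty$-rate with a $\d>1/2$ gap is asserted. Writing $e(v)(x)-e(v_0)(x)=H(x)\bigl(v(x)^{-1}-v_0(x)^{-1}\bigr)+f(0)\bigl(v(0)/v(x)-v_0(0)/v_0(x)\bigr)$ and splitting the numerator of the second term as $v(0)v_0(x)-v_0(0)v(x)=\bigl(v(0)-v_0(0)\bigr)v_0(x)+v_0(0)\bigl(v_0(x)-v(x)\bigr)$, a computation like that in Lemma~\ref{LemmaSolutionOperatorSchrodinger} gives, on $V_n$, $\|e(v)-e(v_0)\|_\infty\lesssim\|v-v_0\|_\infty$ with constant depending on $c_0$, $\|H\|_\infty$, $|f(0)|$ and $\|v_0\|_\infty$ (finite since $v_0\in\Hil^\b$, $\b>1/2$). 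For $\d\in(1/2,\a\wedge\b)$ the $\|\cdot\|_{\Hil^\d}$-contraction of $v$ plus $\Hil^\d\hookrightarrow C[0,1]$ gives both the concentration on $V_n$ and, via $\|\cdot\|_\infty\lesssim\|\cdot\|_{\Hil^\d}$, that $e$ is Lipschitz at $v_0$ from $(V_n,\|\cdot\|_{\Hil^\d})$ into $(C[0,1],\|\cdot\|_\infty)$; the argument of Proposition~\ref{prop:stability}, which is insensitive to the norm used on the $f$-side, then transfers the rate $\e_{n,\d}$ to the $\|\cdot\|_\infty$-posterior of $f$, which is (i).

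The one point that needs care --- more a structural feature than a genuine obstacle --- is that, in contrast with the Schr\"odinger and heat examples where the inversion denominator is the smoothed quantity $Kv+\tilde g=u_f$, here it is the \emph{unsmoothed} $v=u_f'$ itself (and in (i) the numerator even uses $v(0)$). Uniform control of $v$, not of $Kv$, is therefore required, and it is precisely the assumption $\b>1/2$ that supplies it: in this mildly ill-posed one-dimensional problem, $\|\cdot\|_{\Hil^\d}$-contraction for some $\d>1/2$ together with Sobolev embedding delivers uniform consistency of $v$. When $f(0)=0$ the numerator is the fixed function $H$, the $L_2$-Lipschitz estimate is clean, and the full $L_2$-rate survives; otherwise the pointwise value in the numerator costs the $\d>1/2$ gap and only a $\|\cdot\|_\infty$-rate remains. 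A minor routine check is that Proposition~\ref{prop:stability}, although stated for the $L_2$-norm on $f$, goes through verbatim with $\|\cdot\|_\infty$ on $f$.
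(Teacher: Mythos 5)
Your proposal is correct and follows essentially the same two-step route as the paper's own proof: reduce to the linear problem via the eigenexpansion of the one-dimensional $K^\tp K$ (smoothing of order $p=1$ in $d=1$), invoke the $\Hil^\d$-contraction result for the series prior, convert $\Hil^\d$-contraction with $\d>1/2$ into uniform consistency by Cauchy--Schwarz against the uniformly bounded cosines, and then push through the pointwise Lipschitz estimate for $e(v)=(H+f(0)v(0))/v$ on the set where $v$ is bounded below. The only cosmetic difference is that you cite Theorem~\ref{theorem: general uniform convergence}(i) for the $\Hil^\d$-rate with fixed $\a$ whereas the paper's proof text refers to Theorem~\ref{theorem: general uniform convergence_adaptive}; your reference is in fact the appropriate one for the fixed-$\a$ prior used here.
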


\begin{proof}
The solution map \eqref{EqSolutionOperator}, relative to $\L u=u'$, is given by \eqref{eq: inversion Darcy}.
On the set of functions $V_n=\{v: v\ge c\}$, for fixed $c>0$, it satisfies
$$\bigl|e(v)-e(v_0)\bigr|\le \frac{f(0)|v(0)-v_0(0)|}{c}+\bigl(\|H\|_\infty+f(0)v_0(0)\bigr)\frac{|v_0-v|}{c^2}
\lesssim \|v-v_0\|_\infty.$$
Since $v_0=u_{f_0}'$ is bounded away from zero by assumption, the posterior distribution
of $v$ will concentrate on $V_n$ as soon as it is consistent
for the uniform norm. Thus in view of Proposition~\ref{prop:stability},
for assertion (i) it suffices to prove that the posterior distribution of $v$ contracts to $v_0$ relative to the uniform norm
at the rate $n^{-(\a \wedge \b-\d)/(1+2\a + 2)}$.

By Theorem~\ref{theorem: general uniform convergence_adaptive} 
the posterior contraction rate for $v$ relative to the $\|\cdot\|_{\Hil^\d}$-norm is $n^{-(\a \wedge \b-\d)/(1+2\a + 2)}$.
By construction $\langle u_{f_0}',1\rangle_{L_2}=\langle v,1\rangle_{L_2} =\tilde g(1)-\tilde g(0)=0$, for almost every function $v$ in a set of prior (and hence
posterior) probability one. It follows that on this set
$$\|v-v_0\|_\infty=\sup_{0<x<1}\Bigl|\sum_{j\in \NN}(v_j-v_{0,j})h_j(x)\Bigr|\le \|v-v_0\|_{\Hil^\d}\sup_{0<x<1}\sqrt{\sum_{j\in\NN}\frac{h_j(x)^2}{j^{2\d}}}.$$
Since the functions $h_j$ are uniformly bounded, the right side
is bounded by a universal multiple of $\|v-v_0\|_{\Hil^\d}$, for any fixed $\d>1/2$.
Thus the posterior contraction relative to the $\|v-v_0\|_{\Hil^\d}$-norm implies a posterior contraction relative to the uniform norm at the same rate.

To prove assertion (ii), we first note that if $f(0)=0$, then  the map $e$ is Lipschitz at $v_0$ also relative to the $L_2$-norm on $V_n$.
By Theorem~\ref{theorem: general uniform convergence_adaptive} the posterior contraction rate for $v$ 
relative to the $L_2$-norm is $n^{-(\a \wedge \b)/(1+2\a + 2)}$.
By Proposition~\ref{prop:stability} this carries over to a posterior contraction rate for $f$ provided that the posterior 
probability of the sets $V_n$ tends to one. Under the assumption that $v_0=u_{f_0}'$ is bounded away from zero,
the latter follows from posterior consistency of $v$ for the uniform norm. By the argument of the preceding paragraph
this follows from consistency relative to the $\|\cdot\|_{\Hil^\d}$-norm, for some $\d>1/2$,  and this is true for $\a\wedge \b>1/2$.
\end{proof}

\begin{remark}
The assumption $\inf_{0<x<1}u_{f_0}'(x)>0$ can be relaxed to the assumption that $\inf_{0<x<1}\bigl(|u_{f_0}'(x)|+u_{f_0}''(x)\bigr)>0$. See
Appendix~\ref{AppendixOneDimensionalDarcy}.
\end{remark}

\begin{remark}
The \emph{two} boundary conditions of the non-homogeneous problem \eqref{EqDarcyOneDimensional}
cannot be removed by the general scheme  \eqref{eq: definition K gen} --\eqref{eq: definition g tilde gen} explained in the
introduction when using the first order differential operator $\L u=u'$. Posing the problem instead
in terms of the second order operator $\L_0 u=-u''$ would remedy this, and yield the same function $\tilde g$
and  inverse operator $-KK^\tp$ as in the preceding discussion. This would give $u_f=-KK^\tp u_f''+\tilde g= Ku_f'+\tilde g$, 
and hence lead to the same inverse equation as introduced.
\end{remark}

\begin{remark}
Other boundary conditions than the Dirichlet boundary condition in \eqref{EqDarcyOneDimensional}
may motivate different priors. For instance, for the mixed boundary condition $u_f''(0)=u_f'(1)=0$,
we may use the same differential operator $\L u=u'$, but take the standard Volterra
operator $Kv=\int_0^\cdot v(s)\,ds$ as its inverse. 
The eigenfunctions of $K^\tp K$ 
are the functions $h_i(x) = \sqrt{2}\cos\bigl((i+1/2) \pi x\bigr)$ with corresponding eigenvalues 
$\k_i^2 = ((i + 1/2)\pi)^{-2}$, for $i\in \NN\cup\{0\}$. The functions in the eigenscale $\Hil^s$ generated by the
operator $K^\tp K$ satisfy the boundary condition on $u_f'$, and the preceding corollary
remains valid provided $u_{f_0}' \in \Hil^\b$.
\end{remark}

\section{Darcy equation}
\label{SectionDarcy}
For a bounded domain $\O\subset\RR^d$, an unknown function $f:\O\to\RR$ and given functions $h: \O\to\RR$ and $g: \partial \O\to\RR$, consider 
the solution $u_f: \bar \O\to\RR$ of the partial differential equation
\begin{equation}\label{EqDarcy}
	\left\{\begin{aligned}
		\div (f \nabla u_f) &= h, \qquad && \text{on } \O,\\
		u_f &= g, \qquad && \text{on } \partial \O.
	\end{aligned}\right.
\end{equation}
Here $\div v=\nabla\cdot v$ denotes the divergence $\div v=\sum_{i=1}^d\frac{\partial}{\partial x_i} v_i$ of a vector field $v: \O\to\RR^d$
and $\nabla u$ is the gradient of a function $u: \O\to \RR$, so that
$\div (f\nabla u)=\nabla f\cdot\nabla u +f\Delta u$, with the dot on the right side the ordinary
inner product of $\RR^d$ and $\Delta$ the Laplacian. For a sufficiently smooth domain and sufficiently smooth
functions $h$ and $g$, a solution $u_f$ is known to exist for every given positive $f\in H^\b(\O)$ with $\b>1+d/2$
(see \cite{Nickl23}, Proposition~6.1.5 or Chapter~8 in \cite{Gilbarg2015}).
Estimation of $f$ in the white noise version of this model was investigated in \cite{GiordanoNickl2020},
 who obtained contraction rates for certain Gaussian process priors on the function $\Phi^{-1}(f)$ for a given link function $\Phi$.

For $d=1$, this model reduces to the model considered in Section~\ref{SectionOneDimensionalDarcy}, which
already revealed interesting features. The equation for $d>1$ arises in many applied settings, but the challenges 
are substantial. The function $f$ can be recovered from $u_f$ only under certain conditions, and this may require 
that certain boundary values of $f$ are pre-given, in addition to the boundary values on $u_f$ in \eqref{EqDarcy}.
Moreover, if recovery is possible, then the solution map \eqref{EqSolutionOperator} is not explicit, but only
available as a numerical algorithm. Such difficulties can be avoided by taking multiple measurements
of the solution function $u_f$, for the same function $f$, but with different boundary functions $g$. This is
feasible in particular in an experimental setup, when the boundary function is under the control of the experimenter. 
Not only do multiple measurements simplify the inverse problem, but also better
recovery rates may be expected. In this section we apply our general method with both a single and multiple measurements,
in both cases with our operator $\L$ taken to be the Laplacian.

We start by a brief review of some aspects of the inversion map. 
A standard approach is the method of \emph{characteristics} (see e.g.\ \cite{Evans10}, p97--115), 
the set of curves $x: [\tau_0,\t]\to\O$ in the domain $\O$ defined through the gradient flows
 $x'(t)=\nabla u\bigl(x(t)\bigr)$, with varying initial values $x(\tau_0)$. 
For each characteristic, equation \eqref{EqDarcy} gives, for $u=u_f$,
$$\frac{d}{dt}f\bigl(x(t)\bigr)+f\bigl(x(t)\bigr)\Delta u \bigl(x(t)\bigr)=h\bigl(x(t)\bigr).$$
This ordinary differential equation can be solved by integrating factors, to give
\begin{equation}
\label{EqDarcySolutionAlongCharacteristics}
f\bigl(x(t)\bigr)=e^{-\int_{\tau_0}^t\Delta u\bigl(x(s)\bigr)\,ds}f\bigl(x(\tau_0)\bigr)+\int_{\tau_0}^t h\bigl(x(s)\bigr)e^{-\int_s^t\Delta u\bigl(x(r)\bigr)\,dr}\,ds.
\end{equation}
This equation completely specifies $f$ on the characteristic $\{x(t): t\in [\tau_0,\t]\}$ in terms of $\Delta u$ and the starting value
$f\bigl(x(\tau_0)\bigr)$. (It is attractive that the formula uses only the Laplacian $\Delta u$, which is the starting point
of our reconstruction method, but the method also needs the gradient $\nabla u$ to construct the characteristics.) 
The method of characteristics is to cover the complete domain $\O$ by characteristics, and thus obtain
an explicit solution to the inverse problem. This works best if the gradient $\nabla u$ never vanishes,
which (given sufficient smoothness) would allow to extend every given characteristic (one can start one at every interior point of $\O$) to the boundary of $\O$.
The domain is then covered by a set of characteristics starting at some boundary point (said to belong to the \emph{influx boundary})
and exiting the domain at another boundary point. For a full reconstruction the function $f$ then needs to be 
initialised on (only) the influx boundary. 

A nonvanishing gradient is considered desirable, but depends on the problem, in particular on the boundary function $g$.
If the gradient $\nabla u$ does vanish at some interior point of the domain, then a characteristic essentially stops there.
It is shown in \cite{Richter}, that the method of characteristics is then still applicable under the condition that the Laplacian $\Delta u$ is nonzero at such a point
and has the same sign at all zeros of the gradient, e.g.\ $\Delta u+\|\nabla u\|>0$ throughout $\O$.
In the latter case the domain $\O$ can be covered by characteristics as before, between boundary points, and characteristics starting at zeros of $\nabla u$ leading
to the boundary (with infinite time set), and $f$ can be recovered by integrating over the characteristics, as before. Interestingly, at a zero of $\nabla u$, equation
\eqref{EqDarcy} specifies that  $0+f\Delta u=h$, and hence the initial value of $f$ at such a point is given by the value of $h/\Delta u$, which is measured
and need not be externally specified. It may even be that all characteristics emanate from a point, or points, where the gradient vanishes (so that the influx boundary is empty)
and no pre-given values of $f$ are necessary for its recovery.

The influx boundary are the points where the gradient $\nabla u$ points into the interior of the domain $\O$.
As seen previously, the recovery of $f$ requires pre-given values on the influx boundary, and then $f$ is determined on $\O$ and also at the
remaining boundary points. This is somewhat tricky, as it indicates that one can also specify too many boundary values,
and rule out every function $f$, the more so since the inflow boundary depends on $u_f$ in general.

The paper \cite{Richter} also discusses cases where there is no (unique) solution $f$ to the inverse problem. 
In practice, such situations may be avoided if the measurements can be taken for specific boundary functions $g$.
The inverse problem can be further simplified by taking measurements for multiple boundary functions.
If $u_{f,1},\ldots, u_{f,d}$ satisfy \eqref{EqDarcy} for $h=0$ and boundary functions $g_1,\ldots, g_d$,  and the
same function $f$, then (with $[a_1,\cdots a_d]$ the matrix with columns $a_1,\ldots,a_d$),
$$\nabla f\cdot \bigl[\nabla u_{f,1}\cdots\nabla u_{f,d}\bigr]  +f\big[\Delta u_{f,1}\cdots \Delta u_{f,d}\bigr]=0.$$
If the boundary functions can be chosen such that $\det [\nabla u_{f,1}\cdots\nabla u_{f,d}]\not=0$ throughout the domain $\O$, then
this gives the explicit inversion formula
\begin{equation}
\label{EqInverseFormulaHybrid}
\frac {\nabla f}f= -\big[\Delta u_{f,1}\cdots \Delta u_{f,d}\bigr]\bigl[\nabla u_{f,1}\cdots\nabla u_{f,d}\bigr]^{-1}.
\end{equation}
For a convex domain $\O$, the function $f$ (or rather $\log f$) can be recovered from this up to a multiplicative (or additive) constant
by  integration along lines starting from a single point. The construction of appropriate
boundary functions $g_i$ and other aspects of this approach are studied in \cite{AlbertiCapdeboscq}.

\subsection{Single measurement}
In this section we consider the case of a single (noisy) measurement of the function $u_f$, following \cite{Nickl23} and \cite{GiordanoNickl2020}.
It is shown in  \cite{Richter} that every $u\in C^2(\bar \O)$ such that
\begin{equation}
\label{EqCu}
C(u):= \inf_{x\in\O}\bigl(\Delta u+\|\nabla u\|^2\bigr)(x)>0,
\end{equation}
arises as $u=u_f$ for some  function $f$ that is absolutely continuous along the characteristics of $u$.%
\footnote{As noted in \cite{Richter} and \cite{GiordanoNickl2020} condition \eqref{EqCu} is certainly
satisfied if $f$ and $h$ are stricitly positive and bounded, since \eqref{EqDarcy} implies 
$\inf h\le |\nabla \cdot (f\nabla u_f)|\le \|\nabla f\|\,\|\nabla u_f\|+f\Delta u_f$, whence
either $2\|\nabla u_f\|\ge \inf h/\|\nabla f\|_\infty$ or $2\Delta u_f\ge \inf h/\|f\|_\infty$. In \cite{Richter} it is also noted
that weakening the condition by replacing $\Delta u$ by its absolute value gives a very different problem, in which $f$ is no longer uniquely determined.}
The following lemma, which is a slight adaptation of Proposition~2.1.5 of \cite{Nickl23} shows that the inverse map $u_f\mapsto f$
is Lipschitz relative to the $\Sob^2(\O)$-norm, up to boundary values.

Let $\|f\|_{C^1(\O)}=\|f\|_\infty+\|\nabla f\|_\infty$. A proof of the lemma, following \cite{Nickl23}, is included in Section~\ref{SectionComplementsDarcy}.
(The assumed smoothness of the domain is needed only for validity of Green's formula; the lemma is also valid for e.g.\ $\O=[0,1]^2$.)

\begin{lemma}
\label{LemmaDarcyStable}
Let $\O$ be a smooth bounded domain in $\RR^d$ and let $g: \O\to\RR$ and $h: \partial \O\to\RR$ be given  smooth functions.
Let $u_f, u_{f_0}\in C^2(\O)$ be solutions to \eqref{EqDarcy} for given $f, f_0\in C^1(\O)$. Then
\begin{align*}
\|f-f_0\|_{L_2(\O)}^2\,C(u_f) e^{-2\|u_f\|_\infty}&\le 2\|f_0\|_{C^1(\O)}\,\|u_f-u_{f_0}\|_{\Sob^2(\O)}\|f-f_0\|_{L_2(\O)}\\
&\qquad\qquad+\int_{(\partial \O)_{1,f}}(f-f_0)^2\,dS \sup_{x\in\partial \O}\|\nabla u_f(x)\|,
\end{align*}
where the integral in the last term is a surface integral over the inflow boundary
$(\partial\O)_{1,f}:=\{x\in\partial \O: \nabla u_f\cdot \vec n(x)< 0\}$, for $\vec n$ the outer normal vector field on $\partial\O$.
In particular,  for every pair of positive functions $f,f_0\in \Sob^\b(\O)$, for $\b>1+d/2$,  with $f=f_0$ on $\partial \O$,
\begin{equation}
\|f-f_0\|_{L_2(\O)}\lesssim \frac{e^{2\|u_f\|_\infty}}{C(u_f)} \ \|\Delta u_f-\Delta u_{f_0}\|_{L^2(\O)}.
\label {EqRichardDarcy}
\end{equation}
\end{lemma}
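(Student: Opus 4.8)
\emph{Proof plan.} Following \cite{Nickl23}, this is an energy estimate for the first-order equation satisfied by $w:=f-f_0$. Since the two instances of \eqref{EqDarcy} for $f$ and $f_0$ share the same right-hand side $h$, subtracting them shows that the vector field $w\nabla u_f+f_0\nabla(u_f-u_{f_0})$ is divergence free, which, after expanding the divergence via $\div(w\nabla u_f)=\nabla w\cdot\nabla u_f+w\Delta u_f$, reads
\begin{equation*}
\nabla w\cdot\nabla u_f+w\,\Delta u_f=-\div\bigl(f_0\nabla(u_f-u_{f_0})\bigr)\qquad\text{on }\O .
\end{equation*}
This is a linear transport equation for $w$ with velocity field $\nabla u_f$, zeroth-order coefficient $\Delta u_f$, and a source driven by $u_f-u_{f_0}$.

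The plan is to test this identity against the weighted function $e^{-2u_f}w$ and integrate over $\O$. Using Green's formula, $w\nabla w=\tfrac12\nabla(w^2)$ and $\nabla e^{-2u_f}=-2e^{-2u_f}\nabla u_f$, the left-hand side becomes
\begin{equation*}
\int_\O e^{-2u_f}w^2\Bigl(\|\nabla u_f\|^2+\tfrac12\Delta u_f\Bigr)\,dx+\tfrac12\int_{\partial\O}e^{-2u_f}w^2(\nabla u_f\cdot\vec n)\,dS .
\end{equation*}
Because $\|\nabla u_f\|^2+\tfrac12\Delta u_f\ge\tfrac12\bigl(\|\nabla u_f\|^2+\Delta u_f\bigr)\ge\tfrac12 C(u_f)$ and $e^{-2u_f}\ge e^{-2\|u_f\|_\infty}$, the interior term is bounded below by a multiple of $C(u_f)e^{-2\|u_f\|_\infty}\|f-f_0\|_{L_2(\O)}^2$. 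In the surface term I would split $\partial\O$ into the inflow set $(\partial\O)_{1,f}=\{\nabla u_f\cdot\vec n<0\}$ and its complement: on the complement the integrand is non-negative and may be dropped, while the inflow part is moved to the other side and estimated by $\int_{(\partial\O)_{1,f}}(f-f_0)^2\,dS\,\sup_{\partial\O}\|\nabla u_f\|$. The source term $-\int_\O\div\bigl(f_0\nabla(u_f-u_{f_0})\bigr)e^{-2u_f}w\,dx$ is handled by Cauchy--Schwarz: writing $\div(f_0\nabla(u_f-u_{f_0}))=\nabla f_0\cdot\nabla(u_f-u_{f_0})+f_0\Delta(u_f-u_{f_0})$, its $L_2(\O)$-norm is bounded by $\|f_0\|_{C^1(\O)}\,\|u_f-u_{f_0}\|_{\Sob^2(\O)}$ (up to the weight $e^{-2u_f}\le e^{2\|u_f\|_\infty}$), the complementary Cauchy--Schwarz factor yielding $\|f-f_0\|_{L_2(\O)}$. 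Collecting the three contributions and keeping track of the numerical and exponential constants gives the first displayed inequality.

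For \eqref{EqRichardDarcy} I would specialise to positive $f=f_0\in\Sob^\b(\O)$, $\b>1+d/2$, that agree on $\partial\O$. Then $w=f-f_0$ vanishes on $\partial\O$, so the surface term is absent, and dividing the first inequality by $\|f-f_0\|_{L_2(\O)}$ yields $\|f-f_0\|_{L_2(\O)}\lesssim e^{2\|u_f\|_\infty}C(u_f)^{-1}\|f_0\|_{C^1(\O)}\,\|u_f-u_{f_0}\|_{\Sob^2(\O)}$. Since $u_f-u_{f_0}$ also vanishes on $\partial\O$, standard $\Sob^2$-regularity for the Dirichlet Laplacian (valid on smooth or convex domains such as $[0,1]^2$) together with Poincar\'e's inequality gives $\|u_f-u_{f_0}\|_{\Sob^2(\O)}\lesssim\|\Delta u_f-\Delta u_{f_0}\|_{L_2(\O)}$; and $\|f_0\|_{C^1(\O)}\lesssim\|f_0\|_{\Sob^\b(\O)}<\infty$ by Sobolev embedding, so this constant is absorbed into $\lesssim$, giving \eqref{EqRichardDarcy}.

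I expect the main obstacle to be the weighted integration-by-parts bookkeeping in the second step: the exponential weight must be chosen so that the divergence structure produces precisely the coercive quantity $\|\nabla u_f\|^2+\Delta u_f$ (hence the constant $C(u_f)$), and so that the remaining exponential factors land on the correct sides with the stated constants. A secondary, more routine point is to justify Green's formula, for which it suffices that $u_f\in C^2(\O)$ (given), $w=f-f_0\in H^1(\O)$ (which holds since $f,f_0\in C^1$, or by Sobolev embedding in the $\Sob^\b$ case), and that $\partial\O$ is Lipschitz.
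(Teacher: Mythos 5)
Your overall strategy is exactly the paper's: derive the identity $\div\bigl((f-f_0)\nabla u_f\bigr)=-\div\bigl(f_0\nabla(u_f-u_{f_0})\bigr)$, test it against the difference multiplied by a decaying exponential weight, and use Green's formula to turn the left side into a coercive interior term plus a boundary term. Your integration-by-parts calculation for the weight you chose is also mechanically correct. But the choice of weight is where the proof breaks down.

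You test against $e^{-2u_f}w$. With that weight the coercive quantity that drops out is $\|\nabla u_f\|^2+\tfrac12\Delta u_f$ (as you correctly compute), which is only lower-bounded by $\tfrac12 C(u_f)$; that numerical loss is harmless. The real problem is the accompanying exponential factors. The interior term acquires $e^{-2\|u_f\|_\infty}$ (from bounding $e^{-2u_f}\ge e^{-2\|u_f\|_\infty}$), but the source term and the inflow-boundary term acquire $e^{+2\|u_f\|_\infty}$ (from $e^{-2u_f}\le e^{2\|u_f\|_\infty}$), a factor you flag parenthetically but then silently drop. Keeping track honestly gives
\[
C(u_f)e^{-2\|u_f\|_\infty}\|f-f_0\|_{L_2}^2\le 2e^{2\|u_f\|_\infty}\|f_0\|_{C^1}\|u_f-u_{f_0}\|_{\Sob^2}\|f-f_0\|_{L_2}
+e^{2\|u_f\|_\infty}\sup_{\partial\O}\|\nabla u_f\|\int_{(\partial\O)_{1,f}}(f-f_0)^2\,dS,
\]
which is strictly weaker than the lemma's display — there is no way to cancel the extra $e^{2\|u_f\|_\infty}$ on the right — and it propagates into $e^{4\|u_f\|_\infty}$ rather than $e^{2\|u_f\|_\infty}$ in \eqref{EqRichardDarcy}.

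The fix is to test against $e^{-u_f}(f-f_0)$ instead of $e^{-2u_f}(f-f_0)$. With the weight $e^{-u_f}$ the calculation (using $\Delta(e^{-u})=e^{-u}(\|\nabla u\|^2-\Delta u)$) produces the coercive quantity $\tfrac12\bigl(\|\nabla u_f\|^2+\Delta u_f\bigr)$ exactly, the interior side carries $e^{-\|u_f\|_\infty}$, and the source and boundary sides carry only $e^{+\|u_f\|_\infty}$; dividing the whole inequality by $e^{\|u_f\|_\infty}$ gives precisely the constants in the lemma. You anticipate this issue in your closing remark — that the weight must be tuned so the coercive expression comes out as $\|\nabla u_f\|^2+\Delta u_f$ and the exponential factors land correctly — but the weight you actually selected does not accomplish that. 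For the paper's downstream use the weaker constant would still serve (the exponential is just absorbed into a bounded Lipschitz constant), so the conceptual content of your argument is sound; nonetheless, as written it does not prove the lemma with the stated constants. The deduction of \eqref{EqRichardDarcy} from the first inequality via elliptic $\Sob^2$-regularity for the Dirichlet Laplacian is correct and matches the paper.
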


Inequality \eqref{EqRichardDarcy} suggests that our approach with the operator $\L=\Delta$  equal to the Laplacian may yield 
contraction and coverage results relative to the $L_2(\O)$-norm on the functions $f$.
The multiplicative constant $e^{2\|u_f\|_\infty}/C(u_f)$ can be controlled by consistency
of the posterior distribution of $\Delta u_f$ relative to the uniform norm. 
Two potential difficulties are the  boundary conditions on $f$ and the fact that the formula is restricted to the
surface of Laplacians of solutions $u_f$ to \eqref{EqDarcy}.

Formula \eqref{EqRichardDarcy} presumes that $f$ and $f_0$ agree on the influx boundary. The preceding discussion
suggests that this presumption cannot be discarded and the first formula in the lemma shows that additional
terms must be added otherwise. This is intrinsic to the inverse problem, but both practically and theoretically inconvenient.

A fortunate situation arises when the influx boundary is empty, in which case $f$ can be reconstructed from $u_f$ without specifying boundary values,
and the inverse inequality \eqref{EqRichardDarcy} is valid.  A case of interest where this is true, is
when the boundary function $g$ in \eqref{EqDarcy} is constant and $f$ is positive.  Indeed, if $u_f=g$ is constant on $\partial \O$, 
and $f$ is positive, then $u_f$ cannot have a maximum in $\O$ 
by the maximum principle for elliptic partial differential equations (\cite{Gilbarg2015}, page~31) 
and hence $u_f$ will never increase when moving from a boundary point into the interior of $\O$, ensuring
that $(\partial\O)_{1,f}$ is empty.

Another case of interest is that the values of $f$ on the full boundary happen to be known. 
Then one could proceed by constructing a posterior distribution that is concentrated on the set of 
functions $u_f$ that agree with the known boundary values.\footnote{Every  positive  $f\in H^\b(\O)$ defines a solution  $u_f$. 
The solutions obtained from $f$ with the given boundary
values define a surface of functions that agree with these boundary values.}
In \cite{GiordanoNickl2020} it is assumed that the function $f$ is known and also constant 
on a neighbourhood of the boundary of $\O$. Under the same assumption, 
a posterior distribution on the Laplacian $v=\Delta u_f$, the starting point of our analysis,  could be modified
to correspond to functions $f$ with the given boundary conditions before inverting to the function $f$ by a solution operator \eqref{EqSolutionOperator}. 
For instance, if the gradient  $\nabla f$ vanishes near the boundary (as in \cite{GiordanoNickl2020}), then $\Delta u_f=h/f$ near the boundary,
by \eqref{EqDarcy}, and a given function $v$ can be modfied into a function $\bar v$
that takes the known value $h/f$  near the boundary. 
(This modification decreases the $L_2$-distance $\|v-\Delta u_{f_0}\|_{L_2(\O)}$ as long as $f_0$ has the same boundary values.)

Another limitation of \eqref{EqRichardDarcy} is that it applies only to solutions $u_f$ of \eqref{EqDarcy}.
This can be circumvented, in principle, by defining  a solution map $e: L_2(\O)\to L_2(\O)$ by
\begin{equation} 
\label{EqSolutionMapByMinimisation}
e(v)=\argmin_{f\in\F} \Bigl(f\mapsto \|\Delta u_f -v\|_{L_2(\O)}: u_f \text{ solves \eqref{EqDarcy}} \Bigr).
\end{equation}
Boundary conditions on $f$ can be inserted through the domain $\F$ of the minimization.
(We assume that the minimum is assumed, uniquely; otherwise, the argument can proceed with a
consistent choice of a near minimiser, within a negligible tolerance.) If $v=\Delta u_f$ for 
$f\in\F$, then clearly $e(v)=f$, whence \eqref{EqSolutionOperator} is satisfied for $f\in\F$.
Furthermore, by \eqref{EqRichardDarcy}, provided $\exp(2\|u_{e(v)}\|_\infty)/C(u_{e(v)})$ is bounded,
$$\|e(v)-f_0\|_{L_2(\O)}\lesssim \|\Delta u_{e(v)}-\Delta u_{f_0}\|_{L_2(\O)}
\le 2 \|v-\Delta u_{f_0}\|_{L_2(\O)},$$
in view of the triangle inequality and the fact that $\|v-\Delta u_{e(v)}\|_{L_2(\O)}\le \|v-\Delta u_{f_0}\|_{L_2(\O)}$ by the
definition of $e(v)$. 
If $C(u_{f_0})\ge c_0>0$ and $\|u_{f_0}\|_\infty\le d_0$, then consistency of the posterior distributions of 
$\Delta u_f$, $\nabla u_f $ and $u_f$ for the uniform norm, shows
 that restricting $\F$ to functions such that $C(u_f)\ge c_0/2$ and $\|u_f\|_\infty \le 2d_0$ 
has an asymptotically negligible effect on the induced posterior distribution of $e(v)$.
Then the $L_2$-contraction rate of a posterior distribution of the Laplacian $v=\Delta u_f$ is carried over
into the same  $L_2$-contraction rate of the induced posterior of the reconstructed function  $f=e(v)$.

For a concrete example for the domain  $\O=(0,1)^d$,
we may use the prior on the eigen expansion of the Laplacian, as discussed in Section~\ref{sec: Schrodinger}.
Let $(\Hil^s)$ be the smoothness scale generated by the eigenfunctions  \eqref{eq: eigenfunctions}.
We assume that the boundary values of $f$ are given and incorporated in such a way that
\eqref{EqRichardDarcy}  holds (see the preceding discussion, e.g.\ consider the case that $g$ is constant or $f$ is constant
in a neighbourhood of the boundary). 

\begin{theorem}\label{thm:Darcy}
Assume that $\Delta u_{f_0} \in \Hil^\b$, for $\b > d/2$ and $C(u_{f_0})>0$. 
(i). The $L_2$-contraction rate of the posterior distribution of $f$ to $f_0$ in the white noise model is 
$n^{-(\a\wedge \b)/(2\a+4+d)}$ if the regularity of the prior is chosen 
deterministic and equal to $\a$ with $\a>d/2$.
(ii). This $L_2$-posterior contraction rate is $l_n n^{-\b/( 2\b + 4+d)}$ for a slowly varying sequence $l_n$,
if $\a$ is chosen by the empirical Bayes or hierarchical Bayes methods. 
\end{theorem}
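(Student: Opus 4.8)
The plan is to mirror the structure of the proof of Theorem~\ref{thm:Schr\"odinger}, replacing the pointwise Schr\"odinger inversion by the stability estimate \eqref{EqRichardDarcy} of Lemma~\ref{LemmaDarcyStable} and the solution map \eqref{EqSolutionMapByMinimisation}. First I would record that, under the eigenbasis \eqref{eq: eigenfunctions} of the Dirichlet Laplacian on $\O=(0,1)^d$, the sorted eigenvalues satisfy $k_\ell\asymp \ell^{-2/d}$ (Lemma~\ref{lemma: sorted eigenvalues}), so $K$ is smoothing of order $p=2$ in the eigenscale $(\Hil^s)$ in the sense of \eqref{eq:K:smoothing}, and $\|Kh_\ell\|_\infty\lesssim \ell^{-2/d}$. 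Applying Theorem~\ref{theorem: general uniform convergence} (for deterministic $\a$) and Theorem~\ref{theorem: general uniform convergence_adaptive}(i)--(ii) (for the empirical/hierarchical Bayes choices) to the linear problem $\tilde Y_n=Kv+n^{-1/2}\dWW$ with $v=\Delta u_f$ then yields, under $\Delta u_{f_0}\in\Hil^\b$ with $\b>d/2$: an $L_2$-contraction rate $n^{-(\a\wedge\b)/(2\a+4+d)}$ (resp.\ $l_nn^{-\b/(2\b+4+d)}$) for $v$, together with uniform consistency of the posterior of $Kv$ for $Kv_0=u_{f_0}-\tilde g$, the latter because $\a\wedge\b+2>d/2$ is automatic when $\a,\b>d/2$.

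Next I would transfer this to $f$ via Proposition~\ref{prop:stability}. The role of the Lipschitz solution map is played by $e$ defined in \eqref{EqSolutionMapByMinimisation}, with $\F$ restricted (as in the discussion preceding the theorem) to positive $\Sob^\b$-functions with the prescribed boundary values and with $C(u_f)\ge c_0/2$ and $\|u_f\|_\infty\le 2d_0$, where $c_0:=C(u_{f_0})>0$ and $d_0:=\|u_{f_0}\|_\infty$. On the set $V_n$ of functions $v$ with $\|Kv-Kv_0\|_\infty$, $\|\nabla Kv-\nabla Kv_0\|_\infty$ and (if needed) $\|\Delta Kv -\Delta Kv_0\|_\infty$ small, the quantities $C(u_{e(v)})$ and $\|u_{e(v)}\|_\infty$ are pinned near $c_0$ and $d_0$, so the displayed computation after \eqref{EqSolutionMapByMinimisation} gives $\|e(v)-f_0\|_{L_2}\le 2\|v-\Delta u_{f_0}\|_{L_2}=2\|v-v_0\|_{L_2}$; that is, $e:V_n\to L_2$ is Lipschitz at $v_0$ with the $L_2$-norm on both sides (so one may take $\|\cdot\|=\|\cdot\|_{L_2}$ in Proposition~\ref{prop:stability}). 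Uniform consistency of the posterior of $Kv$, combined with the smoothing continuity $K:\Hil^{\g-2}\to\Hil^{\g}$ and the Sobolev embedding $\Hil^{\g}\hookrightarrow C^1$ for $\g$ large enough (Lemma~\ref{cor:elliptic_parabolic} and the arguments in Theorem~\ref{theorem: general uniform convergence}(ii)), delivers $\tilde\Pi_n(v\in V_n\given\tilde Y_n)\prob 1$. Proposition~\ref{prop:stability} then yields the claimed $L_2$-contraction rate for the induced posterior of $f$, which is the ordinary posterior of $f$ in model (N), establishing both (i) and (ii).

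The main obstacle, and the place where this theorem genuinely differs from the Schr\"odinger case, is controlling the solution map $e$ when the posterior puts mass on functions $v$ that do \emph{not} lie on the surface $\{\Delta u_f:f\in\F\}$. This is precisely why $e$ is defined by the minimisation \eqref{EqSolutionMapByMinimisation} rather than by an explicit pointwise formula, and it forces two things that must be handled carefully: one must argue that the minimiser in \eqref{EqSolutionMapByMinimisation} is (essentially) attained and that, on $V_n$, the minimising $f$ automatically satisfies $C(u_{e(v)})\ge c_0/2$ and $\|u_{e(v)}\|_\infty\le 2d_0$ — so that \eqref{EqRichardDarcy} may be invoked with a uniformly bounded constant — and one must verify that imposing these truncations on $\F$ does not disturb the posterior, which follows since $\tilde\Pi_n(V_n\given\tilde Y_n)\prob1$ and $v_0\in V_n$ eventually. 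A secondary technical point is that the stability bound \eqref{EqRichardDarcy} requires $f$ and $f_0$ to agree on the influx boundary; the statement sidesteps this by the standing assumption that boundary values of $f$ are known and built into the prior (e.g.\ $g$ constant, so the influx boundary is empty, or $f$ constant near $\partial\O$), exactly as in the paragraph preceding the theorem, so in the proof I would simply invoke that assumption to license the use of \eqref{EqRichardDarcy}. No separate credible-set claim is made in parts (i)--(ii), so Proposition~\ref{proposition_credible_general} is not needed here.
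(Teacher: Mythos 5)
Your overall route is the same as the paper's: apply Theorems~\ref{theorem: general uniform convergence} and~\ref{theorem: general uniform convergence_adaptive} to $v=\Delta u_f$ in the linear problem, restrict the posterior to a set $V_n$ on which the solution map \eqref{EqSolutionMapByMinimisation} is Lipschitz via the stability bound \eqref{EqRichardDarcy}, and then invoke Proposition~\ref{prop:stability}. That matches the paper, and your explicit bookkeeping about $\F$, $e$, and $V_n$ is a fair expansion of the paper's terser proof.

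There is, however, one genuine gap, and it is precisely at the point you flag with ``(if needed).'' To bound the Lipschitz constant $e^{2\|u_{e(v)}\|_\infty}/C(u_{e(v)})$ you must control $C(u_f)=\inf_x\bigl(\Delta u_f+\|\nabla u_f\|^2\bigr)(x)$, and this requires uniform control of $\Delta u_f=v$ \emph{itself}, not merely of $Kv$ and $\nabla Kv$. Your set $V_n$ correctly includes the condition $\|\Delta Kv-\Delta Kv_0\|_\infty=\|v-v_0\|_\infty$ small, but your justification that $\tilde\Pi_n(V_n\given\tilde Y_n)\to1$ only invokes uniform consistency of $Kv$ (via $\a\wedge\b+2>d/2$) together with the smoothing $K:\Hil^{\g-2}\to\Hil^\g$ and $\Hil^\g\hookrightarrow C^1$; all of these control $u_f$ and $\nabla u_f$, none of them control $\Delta u_f$. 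The missing ingredient is the $\Hil^\d$-contraction rate for $v$ with $\d>d/2$ combined with the Sobolev embedding $\Hil^\d\hookrightarrow L_\infty$ applied directly to $v$ (not to $Kv$): since $\a>d/2$ and $\b>d/2$, one can pick $d/2<\d<\a\wedge\b$ and Theorem~\ref{theorem: general uniform convergence}(i) (resp.\ Theorem~\ref{theorem: general uniform convergence_adaptive}(i)) gives posterior consistency of $v$ in $L_\infty$. This is exactly why Theorem~\ref{thm:Darcy} assumes the stronger $\a>d/2$ rather than the $\a\wedge\b+2>d/2$ that sufficed in the Schr\"odinger case, where only $u_f=Kv+\tilde g$ had to be bounded away from zero. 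Once that step is stated correctly, the rest of your argument goes through.
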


\begin{proof}
The contraction rates (i) and (ii) follow from Theorem~\ref{theorem: general uniform convergence}
  and Theorem~\ref{theorem: general uniform convergence_adaptive},
where we obtain both a contraction rate $n^{-(\a\wedge\b-\d)/(2\a+4+d)}$ for the Laplacian $\Delta u_f$ relative
to the $\Hil^\d$ norm and a contraction rate  $n^{-(\a\wedge\b)/(2\a+4+d)}$  for the Laplacian relative to the $L_2$-norm,
with $\a=\b$ in the case of (ii) and provided  $\a\wedge \b>\d$.
By Sobolev embedding, the first, with $\d>d/2$, implies posterior consistency for the Laplacian relative to the uniform norm,
thus allowing control over the constants $C(u_f)$ and $\|u_f\|_\infty$ needed to use the inversion
formula \ref{EqRichardDarcy} to obtain the contraction rate in $L_2$ for $f$ from the contraction rate for the Laplacian.
\end{proof}

While definition \eqref{EqSolutionMapByMinimisation} proves the feasibility of our approach, it is not practically 
useful without an (efficient) algorithm to solve the minimisation problem. This is similar to solving
the well known \emph{deterministic} inverse problem of computing $f$ from $u_f$, but better conditioned
as our starting point is the Laplacian $\Delta u_f$ instead of $u_f$. Given an estimate $v$ for $\Delta u_f$, we can use
\eqref{Equ=KLu+tildeg} to compute $u=Kv+\tilde g$ and $\nabla u=\nabla Kv+\nabla \tilde g$ 
as estimates of $u_f$ and $\nabla u_f$ and next $f$ by the method of
characteristics \eqref{EqDarcySolutionAlongCharacteristics}. The first two steps are particularly
easy if $v$ is given in terms of the eigenexpansion $v=\sum_i v_ih_i$ of $\Delta$,  in which case
$\nabla u=\sum_i \k_iv_i\nabla h_i+\nabla \tilde g$.

A numerical procedure implementing the third step \eqref{EqDarcySolutionAlongCharacteristics} is already provided in \cite{RichterNumerical}, 
exactly under the condition that $C(u)$ is positive, but under the condition that the target function $f$ is $C^2(\O)$ (see Theorem~2
 in \cite{RichterNumerical}; note that $f$ in the latter paper is our $g$ and $\alpha$ is our $f$). 
In the remainder of this section we adapt the algorithm to take the Laplacian $v=\Delta u$ and  gradient $w=\nabla u$ as inputs, and
show that the inversion is stable if the function $f$ is absolutely continuous (along discretised characteristics) and contained in $C^1(\O)$. 
(We have not investigated whether the latter condition is always reasonable, in particular in the case of a vanishing gradient, but
it cannot be much improved as the gradient $\nabla f$ appears in the defining equation. In Section~\ref{AppendixOneDimensionalDarcy}
the assumption is verified in the case $d=1$, even for a vanishing gradient, provided $v$ is sufficiently smooth.)

We consider the case that the domain is the two-dimensional unit square $\O=[0,1]^2$. The construction can be extended to 
less regular domains along the lines described in Section~4 of \cite{RichterNumerical}. For a given $\d>0$ so that $1/\d\in\NN$, 
consider the grid consisting of the points $x_{i,j}=(i\d, j\d)$, for $0\le i,j\le 1/\d$. It is shown in \cite{Richter} that for given $u\in C^2([0,1]^2)$,
given values on the influx boundary $\partial\O_{1,u}$ and smooth functions $g$ and $h$, there exists a function $f$ that is absolutely continuous
along the characteristics such that $u=u_f$, for $u_f$ solving the Darcy equation \eqref{EqDarcy} (and $f$ is given by \eqref{EqDarcySolutionAlongCharacteristics}).
We shall give a numerically efficient algorithm to construct numbers $\a_{i,j}$ so that $\max_{i,j}|\a_{i,j}-f(x_{i,j})|\lesssim \d^{\eta/2}$, for some $\eta>0$,
thus allowing the recovery of $f$ at any precision. The complexity of the algorithm is linear in the number of grid points.

For a function $u:[0,1]^2\to\RR$, abbreviate $u(x_{i,j})$ to $u_{i,j}$. We are seeking an approximation to the solution $f$
to the discretised equations $(\L_fu)_{i,j}=g_{i,j}$, for   $\L_f$ the operator given by $\L_fu=\nabla f\cdot \nabla u+f\Delta u$.
To this end, consider the approximation $\L_\a^\d$, defined by
\begin{equation}
\label{EqDefLad}
(\L_\a^\d u)_{i,j}=\begin{cases} \a_{i,j}\Delta u_{i,j},& \text{if }\|\nabla u_{i,j}\|< \sqrt\d,\\
\frac{\|\nabla u_{i,j}\|}{\|z_{i,j}\|}(\a_{i,j}-\a_{k,l})+\a_{i,j}\Delta u_{i,j},&\text{if }\|\nabla u_{i,j}\|\ge \sqrt\d,
\end{cases}
\end{equation}
where $(k,l)=(k_{i,j},l_{i,j})$ are the coordinates of another grid point  or a point in $\partial\O_{1,u}$ that is linked to $x_{i,j}$, as defined below,
and $z_{i,j}=x_{i,j}-x_{k,l}$. The  display mimics $(\L_fu)_{i,j}=\nabla f_{i,j}\cdot\nabla u_{i,j}+f_{i,j}\Delta u_{i,j}$, where $f$ is replaced by $\a$. In both cases in the
display the term $f_{i,j}\Delta u_{i,j}$ is simply copied into $\a_{i,j}\Delta u_{i,j}$, but the term $\nabla f_{i,j}\cdot \nabla u_{i,j}$ is approximated. In the first case,
when $\|\nabla u_{i,j}\|< \sqrt\d$, the latter term is approximated by 0, while the second case involves a vector $z_{i,j}$, which is chosen
in the direction of $\nabla u_{i,j}$, as follows.  For $f\in C^{1}$ the approximation $f(x_{i,j})-f(x_{i,j}-v)\approx \nabla f(x_{i,j})\cdot v$, valid for any small vector $v\in\RR^2$,  gives $\nabla f_{i,j}\cdot \nabla u_{i,j}\approx \|\nabla u_{i,j}\|/\|z_{i,j}\|\bigl(f(x_{i,j})-f(x_{i,j}-z_{i,j})\bigr)$, for a small vector $z_{i,j}$ in the direction of $\nabla u_{i,j}$. We arrive at the approximation in the display by choosing small $z_{i,j}$ in the direction of $\nabla u_{i,j}$ so that $x_{k,l}=x_{i,j}-z_{i,j}$ is a point of the grid or a point of the influx boundary. An explicit choice is 
\begin{equation}
\label{EqDefv}
z_{i,j}= \d \Bigl[\frac1{\sqrt\d}\frac{\nabla u_{i,j}}{\|\nabla u_{i,j}\|}\Bigr],
\end{equation}
\def\sgn{\mathop{\rm sign}\nolimits}%
where for a scalar $x$ the notation $[x]=\sgn(x)\lfloor|x|\rfloor$ denotes the  integer closest to $x$ inside the interval $[-|x|,|x|]$,
and  $[x]=([x_1],[x_2])$ for a vector $x=(x_1,x_2)$. This choice always gives a point $x_{i,j}-z_{i,j}$ in the grid $\d\mathbb{Z}^2$. If 
this point falls outside the unit square, then we redefine $z_{i,j}$ so that $x_{k,l}:=x_{i,j}-z_{i,j}$ is the point in $\partial [0,1]^2$
where the line from $x_{i.j}$ to the original $x_{i,j}-z_{i,j}$ crosses this boundary. We shall refer
to the latter type of points $x_{k,l}$ as belonging to the (discretised) influx boundary $\partial_d\O_{1,u}$ 
and assume that the values of $f$ at these points are prespecified.

For points $x_{i,j}$ in the influx boundary $\partial_d\O_{1,u}$ we set the value $\a_{i,j}$ equal to the pre-specified values $f(x_{i,j})$.
For the other points we define $\a_{i,j}$ as the solution to the equation $(\L_\a^\d u)_{i,j}=g_{i,j}$, thus
mimicking the discretised Darcy equation $(\L_f u)_{i,j}=g_{i,j}$. In view of \eqref{EqDefLad} this solution takes the form
\begin{equation}\label{eq:sol:discrete}
\alpha_{ij}=
\begin{cases}
g_{i,j}/\Delta u_{i,j}, & \text{if $\|\nabla u_{i,j}\|<\sqrt\d$,}\\
\frac{ g_{i,j}+\alpha_{k,l}\|\nabla u_{i,j}\|/\|z_{i,j}\| }{\Delta u_{ij}+\|\nabla u_{i,j}\|/\|z_{i,j}\|},& \text{if  $\|\nabla u_{i,j}\|\geq\sqrt\d$}.
\end{cases}
\end{equation}
Under the condition that $C(u)$ given in \eqref{EqCu} is bounded away from zero, these quotients are well defined for small enough $\d>0$.
In the first case of the display, the value $\a_{i,j}$ is expressed explicitly in the known values $g_{i,j}$ and $\Delta u_{i,j}$, but
the second case involves the solution $\a_{k,l}$ at the grid point $x_{k,l}$, in addition to these known values and $\nabla u_{i,j}$. The latter recursion
can be solved along a chain of recursions $x_{i',j'}\to x_{k',l'}$ that connects a point $x_{i,j}$ with $\|\nabla u_{i,j}\|\geq\sqrt\d$
to a point $x_{i_0,j_0}$ which either belongs to the influx boundary or satisfies $\|\nabla u_{i_0,j_0}\|<\sqrt\d$. The value
$\a_{i_0,j_0}$ is then determined and $\a_{i,j}$ can be computed by repeated substitutions. We show below that 
$u_{i,j}>u_{k,l}$ if $\|\nabla u_{i,j}\|\geq\sqrt\d$, so that the equations are then solved in the order of increasing
value of $u_{i,j}$. The strict decrease shows that the chain of recursions $x_{i',j'}\to x_{k',l'}$ cannot visit
any grid point twice and hence must end at a point $x_{i_0,j_0}$ after finitely many steps.

The proof of the following lemma is given in Section~\ref{SectionComplementsDarcy}. 

\begin{lemma}
\label{LemmaDiscretisationDarcy}
For $\O=[0,1]^2$ and given $u \in C^{2+\eta}(\O)$ with $C(u):=\Delta u+\|\nabla u\|^2>0$ and $\eta\in (0,1]$
such that $u=u_f$ for $f\in C^1(\O)$, the preceding algorithm gives $(\a_{i,j})$ such that, for small enough $\d>0$,
$$\max_{0\le i,j\le 1/\d}|\a_{i,j}-f(x_{i,j})|\lesssim \d^{\eta/2},$$
where the multiplicative constant depends on $\|u\|_{C^{2+\eta}(\O)}$, $C(u)$, $\|f\|_{C^1(\O)}$ and  $\|g\|_{C^1(\O)}$ only.
\end{lemma}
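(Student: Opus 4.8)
The plan is to track the error $e_{i,j} := \a_{i,j}-f(x_{i,j})$ through the recursion and show it stays of order $\d^{\eta/2}$ uniformly, by a discrete Gr\"onwall-type argument along the chains of recursions. First I would record the two regimes. When $\|\nabla u_{i,j}\|<\sqrt\d$, equation \eqref{eq:sol:discrete} gives $\a_{i,j}=g_{i,j}/\Delta u_{i,j}$ while the true Darcy equation \eqref{EqDarcy} gives $f(x_{i,j})\Delta u(x_{i,j})=g(x_{i,j})-\nabla f(x_{i,j})\cdot\nabla u(x_{i,j})$; since $\|\nabla u_{i,j}\|<\sqrt\d$ and $\|\nabla f\|_\infty\le\|f\|_{C^1}$, the neglected term is $O(\sqrt\d)$, and dividing by $\Delta u_{i,j}\ge C(u)-\|\nabla u_{i,j}\|^2\ge C(u)-\d$ (bounded below for small $\d$) yields $|e_{i,j}|\lesssim\sqrt\d$. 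This handles the base points of the chains (those are either influx-boundary points, where $e_{i,j}=0$ by construction, or small-gradient points, where $|e_{i,j}|\lesssim\sqrt\d$).

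Next I would treat the recursive step $\|\nabla u_{i,j}\|\ge\sqrt\d$. Write $q_{i,j}:=\|\nabla u_{i,j}\|/\|z_{i,j}\|$; by \eqref{EqDefv}, $\|z_{i,j}\|\asymp\sqrt\d$ (the rounding $[\cdot]$ changes $z_{i,j}$ from $\sqrt\d\,\nabla u_{i,j}/\|\nabla u_{i,j}\|$ by at most $O(\d)$ per coordinate, and $\|\nabla u_{i,j}\|\ge\sqrt\d$ keeps the rounded vector nonzero with the right direction up to $O(\sqrt\d)$ relative error), so $q_{i,j}\asymp\|\nabla u_{i,j}\|/\sqrt\d$, which can be large. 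The key cancellation is that the exact Taylor expansion of $f\in C^1$ gives
\begin{equation*}
f(x_{i,j})-f(x_{k,l})=\nabla f(\xi)\cdot z_{i,j}=\nabla f(x_{i,j})\cdot z_{i,j}+R,\qquad |R|\lesssim\|z_{i,j}\|^{1+\eta}\lesssim\d^{(1+\eta)/2}
\end{equation*}
using $f\in C^{1,\eta}$-type regularity implied along characteristics (here I would invoke that $u\in C^{2+\eta}$ forces, via \eqref{EqDarcySolutionAlongCharacteristics}, enough regularity of $f$; if only $f\in C^1$ is available one replaces $R$ by a modulus-of-continuity term $o(\|z_{i,j}\|)$, and the final rate degrades to $o(\sqrt\d)$ — I would check which is actually claimed and use $C^{2+\eta}$ of $u$ to get the clean exponent). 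Moreover $z_{i,j}$ is within angle $O(\sqrt\d)$ of $\nabla u_{i,j}$, so $\nabla f(x_{i,j})\cdot z_{i,j}=\|z_{i,j}\|\,\nabla f(x_{i,j})\cdot\nabla u_{i,j}/\|\nabla u_{i,j}\|+O(\|z_{i,j}\|\sqrt\d)$. Hence the exact $f$ satisfies, up to an additive error $\lesssim\d^{(1+\eta)/2}+\d$ (after multiplying by $q_{i,j}\lesssim 1/\sqrt\d$, this becomes $\lesssim\d^{\eta/2}+\sqrt\d$),
\begin{equation*}
q_{i,j}\bigl(f(x_{i,j})-f(x_{k,l})\bigr)+f(x_{i,j})\Delta u_{i,j}=g_{i,j}+\text{(error)}.
\end{equation*}
Subtracting the defining identity $q_{i,j}(\a_{i,j}-\a_{k,l})+\a_{i,j}\Delta u_{i,j}=g_{i,j}$ gives
\begin{equation*}
e_{i,j}\bigl(q_{i,j}+\Delta u_{i,j}\bigr)=q_{i,j}\,e_{k,l}+O(\d^{\eta/2}),
\end{equation*}
so $|e_{i,j}|\le\theta_{i,j}\,|e_{k,l}|+C\d^{\eta/2}$ with contraction factor $\theta_{i,j}=q_{i,j}/(q_{i,j}+\Delta u_{i,j})<1$.

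Now I would close the estimate by iterating this along the chain. Since $\theta_{i,j}\le1$ one gets $|e_{i,j}|\le|e_{\text{base}}|+C\d^{\eta/2}\cdot(\text{chain length})$, which is not good enough unless the chain is short; so the real point is to exploit that $1-\theta_{i,j}=\Delta u_{i,j}/(q_{i,j}+\Delta u_{i,j})\gtrsim\Delta u_{i,j}\,\|z_{i,j}\|/\|\nabla u_{i,j}\|\gtrsim\sqrt\d$ (using $\Delta u_{i,j}$ bounded below away from zero when the gradient is not small — here I use $C(u)>0$: if $\|\nabla u_{i,j}\|$ is bounded, $\Delta u_{i,j}\ge C(u)-\|\nabla u_{i,j}\|^2$ could be negative, so instead I use $\Delta u_{i,j}+\|\nabla u_{i,j}\|^2\ge C(u)$ together with $q_{i,j}\asymp\|\nabla u_{i,j}\|/\sqrt\d$ to show $q_{i,j}+\Delta u_{i,j}\gtrsim\|\nabla u_{i,j}\|^2+$ lower-order, giving $1-\theta_{i,j}\gtrsim\sqrt\d$ in all cases). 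A geometric series with ratio $\le1-c\sqrt\d$ and additive term $C\d^{\eta/2}$ has supremum $\lesssim\d^{\eta/2}/\sqrt\d$ — still not quite it; the fix is that each step moves $x$ a distance $\asymp\sqrt\d$ in the increasing direction of $u$ (I would prove $u_{i,j}>u_{k,l}$ and quantify $u_{i,j}-u_{k,l}\asymp\|\nabla u_{i,j}\|\,\|z_{i,j}\|\gtrsim\d$), so a chain has at most $O(1/\d)$ steps but the \emph{total} weight is controlled: telescoping $|e_{i,j}|\le\sum_{\text{chain}}\bigl(\prod\theta\bigr)C\d^{\eta/2}$ and bounding $\prod\theta\le\exp(-c\sqrt\d\cdot\#\text{steps remaining})$, the sum is $\lesssim\d^{\eta/2}\cdot(1/\sqrt\d)$ only if errors don't accumulate favorably — so I would instead absorb one factor of $\sqrt\d$ by noting the per-step additive error is actually $O(\d^{\eta/2}\cdot\sqrt\d)=O(\d^{(\eta+1)/2})$ when measured correctly (the $q_{i,j}\times O(\d^{(1+\eta)/2})$ bound), making the geometric sum $\lesssim\d^{(\eta+1)/2}/\sqrt\d=\d^{\eta/2}$.

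The main obstacle is exactly this bookkeeping of how the large factor $q_{i,j}\sim\|\nabla u_{i,j}\|/\sqrt\d$ interacts with the small displacement $\|z_{i,j}\|\sim\sqrt\d$ and the contraction deficit $1-\theta_{i,j}\sim\sqrt\d$: one must get the powers of $\d$ to balance so that neither the rounding error in \eqref{EqDefv}, nor the $C^{2+\eta}$-Taylor remainder, nor the accumulation over an $O(1/\d)$-length chain, dominates. Getting $\d^{\eta/2}$ rather than, say, $\d^{\eta/2-1/2}$ requires being careful that the additive per-step error is $O(\d^{(\eta+1)/2})$ (not $O(\d^{\eta/2})$), which in turn rests on the exact first-order Taylor identity with $C^{1,\eta}$ remainder and on $z_{i,j}$ being within $O(\sqrt\d)$ of the gradient direction. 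The uniformity of all implied constants — depending only on $\|u\|_{C^{2+\eta}}$, $C(u)$, $\|f\|_{C^1}$, $\|g\|_{C^1}$ — follows by tracking them through these elementary bounds. The final assertion that the algorithm's complexity is linear is immediate, since each $\a_{i,j}$ is computed once, in order of increasing $u_{i,j}$, by a single arithmetic operation referencing at most one previously computed neighbor.
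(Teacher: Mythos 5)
Your plan of iterating $e_{i,j}=\a_{i,j}-f(x_{i,j})$ through the recursion is, at heart, the paper's approach (the paper applies its discrete stability bound, step (iii) of its proof, to $\a-f$ by linearity in step (v), and separately checks in step (iv) that the true $f$ satisfies the discretised equation up to $O(\d^{\eta/2})$). However, there is a genuine gap at the central step: you assert that the recursion factor $\theta_{i,j}=q_{i,j}/(q_{i,j}+\Delta u_{i,j})$ satisfies $\theta_{i,j}<1$, and later that $1-\theta_{i,j}\gtrsim\sqrt\d$ ``in all cases.'' Both fail when $\Delta u_{i,j}<0$, which the standing hypothesis $C(u)=\inf(\Delta u+\|\nabla u\|^2)>0$ does not rule out: then $1-\theta_{i,j}=\Delta u_{i,j}/(q_{i,j}+\Delta u_{i,j})<0$, so the step amplifies rather than contracts. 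Your attempted repair, replacing the lower bound $\Delta u_{i,j}\ge C(u)-\|\nabla u_{i,j}\|^2$ by the combination $q_{i,j}+\Delta u_{i,j}\gtrsim \|\nabla u_{i,j}\|^2$, does give a positive denominator but cannot make the numerator $\Delta u_{i,j}$ positive, so it does not rescue the sign of $1-\theta_{i,j}$ and the proposed geometric-series estimate collapses.

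The paper's proof circumvents exactly this obstruction. Rather than a contraction, it establishes a discrete maximum principle (its step (iii)) in which the per-step \emph{amplification} factor is bounded by $1$ when $\Delta u_{i,j}>D(u)$, via the elementary inequality $(a+b)/(c+d)\le(a/c)\vee(b/d)$, and is bounded by $(1+\|z_{i,j}\|)/(1-\|z_{i,j}\|)\le e^{3\|z_{i,j}\|}$ in the dangerous regime $\Delta u_{i,j}\le D(u)$ (where necessarily $\|\nabla u_{i,j}\|\ge D(u)$). The product of these factors along a chain is then controlled by $\exp(3\sum\|z_{k',l'}\|)$, and the crucial observation making $\sum\|z_{k',l'}\|$ bounded is the \emph{telescoping} bound from step (ii): each such step increases $u$ by at least a fixed multiple of $\|z_{k',l'}\|$, so $\sum\|z_{k',l'}\|\lesssim(\max u-\min u)/D(u)$. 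The result is a bounded (not small) exponential prefactor. Applying this stability estimate to the linear-in-$\a$ scheme with right-hand side $\bar g=(\L^\d_f u)-(\L^\d_\a u)$ and using the $O(\d^{\eta/2})$ consistency from step (iv) yields the claim. Your proposal's difficulty in balancing powers of $\d$ along the chain is precisely the symptom of the missing telescoping argument; once stability and consistency are decoupled, no such balancing is needed.
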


\subsection{Multiple measurements}
Suppose given multiple measurements $Y_{n,j}=u_{f,j}+n^{-1/2}\dot\WW_j$, for $u_{f,1},\ldots,u_{f,d}$ solutions
to \eqref{EqDarcy} for $h=0$ and boundary functions $g_1,\ldots, g_d$. If the matrix $[\nabla u_{f,1}\cdots \nabla u_{f,d}]$ is invertible throughout
the domain, then \eqref{EqInverseFormulaHybrid} gives an explicit reconstruction map for $\nabla f/f$. 
We can accommodate this in our setup by replacing the single measurement $Y_n$ by the
vector $(Y_{n,1},\ldots, Y_{n,d})$ and defining a solution operator $e$ as in \eqref{EqSolutionOperator} as operating on the
vector of Laplacians $\bigl(\Delta u_{f,1},\ldots, \Delta u_{f,d}\bigr)$, as follows.

If $K$ is the inverse of the Laplacian $\L=\Delta$ as defined in \eqref{eq: definition K gen}, then
$u_{f,j}=K\Delta u_{f,j}+\tilde g_j$ by \eqref{Equ=KLu+tildeg}, for $j=1,\ldots,d$,
and hence $\nabla u_{f,j}=\nabla K \Delta u_{f,j}+\nabla \tilde g_j$.
Our method is to infer $v_j=\Delta u_{f,j}$ from the inverse problem $Y_{n,j}-\tilde g_j=Kv_j+n^{-1/2}\dot\WW_j$.
Then $\nabla Kv_j+\nabla \tilde g_j$ is  the corresponding estimator of $\nabla u_{f,j}$. This motivates to define
the inverse operator 
\begin{align*}
\bar e(v_1,\ldots,v_d)&=-\bigl[v_1,\ldots, v_d\bigr] \bigl[\nabla K v_1+\nabla \tilde g_1,\ldots, \nabla K v_d+\nabla \tilde g_d\bigr]^{-1}.
\end{align*}
By \eqref{EqInverseFormulaHybrid} and the preceding reasoning
$\nabla f/f=e(\Delta u_{f,1},\ldots, \Delta u_{f,d})$. The map $\bar e$ is Lipschitz on the domain of functions $(v_1,\ldots, v_d)$
such that the norm of the inverse matrices $\bigl[\nabla K v_1+\nabla \tilde g_1,\ldots, \nabla K v_d+\nabla \tilde g_d\bigr]^{-1}$ is uniformly bounded. 
We can compose $\bar e$ with a map that recovers $f$ from $\nabla f/f$, given a boundary value, 
to construct a solution operator \eqref{EqSolutionOperator}.

For a concrete example for the domain  $\O=(0,1)^d$,
we use the prior on the eigen expansion of the Laplacian, as discussed in Section~\ref{sec: Schrodinger}.
Let $(\Hil^s)$ be the smoothness scale generated by the eigenfunctions \eqref{eq: eigenfunctions}.

\begin{theorem}
Assume that $\Delta u_{f_0,1},\ldots,\Delta u_{f_0,d} \in \Hil^\b$, for $\b > d/2-1$ and that the $(d\times d)$ matrix
$\bigl[\nabla u_{f_0,1}(x)\cdots\nabla u_{f_0,d}(x)\bigr]$ is continuously invertible for every $x\in\O$ with
inverses of uniformly bounded norms.
(i). The $L_2$-contraction rate of the posterior distribution of $\nabla f/f$ to $\nabla f_0/f_0$ in the white noise model is 
$n^{-(\a\wedge \b)/(2\a+4+d)}$ if the regularity of the prior is chosen 
deterministic and equal to $\a$ with $\a>d/2-1$.
(ii). This $L_2$-posterior contraction rate is $l_n n^{-\b/( 2\b + 4+d)}$ for a slowly varying sequence $l_n$,
if $\a$ is chosen by the empirical Bayes or hierarchical Bayes methods. 
\end{theorem}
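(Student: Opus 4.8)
The plan is to run the two–step scheme of Sections~\ref{sec: general method}--\ref{SectionLinearProblem} with $\L=\Delta$ the Laplacian and the \emph{vector} of Laplacians $v=(v_1,\dots,v_d):=(\Delta u_{f,1},\dots,\Delta u_{f,d})$ as linear parameter, the inversion being the map $\bar e$. Since the noises $\dot\WW_1,\dots,\dot\WW_d$ are independent and the eigenexpansion prior of Section~\ref{sec: Schrodinger} is placed on each $v_j$ separately, the posterior of $v$ given $(\tilde Y_{n,1},\dots,\tilde Y_{n,d})$, with $\tilde Y_{n,j}=Y_{n,j}-\tilde g_j=Kv_j+n^{-1/2}\dot\WW_j$ and $K$ the inverse Dirichlet Laplacian on $(0,1)^d$, is the product of the $d$ scalar posteriors. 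By Lemma~\ref{lemma: sorted eigenvalues} this $K$ is smoothing of order $p=2$ in the eigenscale $(\Hil^s)$ generated by \eqref{eq: eigenfunctions} on the $d$-dimensional domain, so Theorem~\ref{theorem: general uniform convergence}(i) for deterministic $\a>d/2-1$, and Theorem~\ref{theorem: general uniform convergence_adaptive}(i) for $\a$ chosen by hierarchical or empirical Bayes, give for each $j$ contraction of $v_j$ to $v_{0,j}=\Delta u_{f_0,j}$ at the rate $\eps_n:=n^{-(\a\wedge\b)/(2\a+4+d)}$ (respectively $l_nn^{-\b/(2\b+4+d)}$) in $L_2$, and at the rate $n^{-(\a\wedge\b-s)/(2\a+4+d)}$ (respectively $l_nn^{-(\b-s)/(2\b+4+d)}$) in $\Hil^s$ for every $s\in[-2,0]$, which is never slower than $\eps_n$.

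Next I would localise. Because $K$ is smoothing of order two, $\nabla$ costs one derivative and $\Hil^t(\O)\hookrightarrow L_r(\O)$ for $1/r=(1/2-t/d)_+$, the operator $\nabla K$ maps $\Hil^s$ continuously into $L_r$ with $1/r=(1/2-(s+1)/d)_+$; choosing $s'\in(d/2-1,\a\wedge\b)$ — possible because $\a\wedge\b>d/2-1$ — the $\Hil^{s'}$-contraction of $v_j$ gives $\|\nabla Kv_j-\nabla Kv_{0,j}\|_\infty\to0$ in probability, so $\nabla u_{f,j}=\nabla Kv_j+\nabla\tilde g_j\to\nabla u_{f_0,j}$ uniformly. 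Writing $A(v)(x):=[\nabla Kv_1(x)+\nabla\tilde g_1(x)\cdots\nabla Kv_d(x)+\nabla\tilde g_d(x)]$ and $M_0:=\sup_{x\in\O}\bigl\|[\nabla u_{f_0,1}(x)\cdots\nabla u_{f_0,d}(x)]^{-1}\bigr\|<\infty$, continuity of matrix inversion yields $\rho>0$ such that on $V_n:=\{v:\max_j\|\nabla Kv_j-\nabla Kv_{0,j}\|_\infty\le\rho\}$ the matrix $A(v)(x)$ is invertible for all $x$ with $\sup_x\|A(v)(x)^{-1}\|\le 2M_0$; by the previous display $\tilde\Pi_n(V_n\given\tilde Y_n)\to1$ in probability and $v_0:=(v_{0,1},\dots,v_{0,d})\in V_n$ eventually.

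On $V_n$ the map $\bar e(v)=-[v_1\cdots v_d]A(v)^{-1}$ is well defined, and it remains to bound its $L_2$-modulus of continuity at $v_0$. The resolvent identity gives
\[
\bar e(v)-\bar e(v_0)=-\bigl([v_1\cdots v_d]-[v_{0,1}\cdots v_{0,d}]\bigr)A(v)^{-1}+[v_{0,1}\cdots v_{0,d}]\,A(v)^{-1}\bigl(A(v)-A(v_0)\bigr)A(v_0)^{-1}.
\]
On $V_n$ the first term has $L_2$-norm $\lesssim M_0\max_j\|v_j-v_{0,j}\|_{L_2}$. For the second term the entries of $A(v)-A(v_0)$ are $\nabla K(v_j-v_{0,j})$, so $\|\nabla K(v_j-v_{0,j})\|_{L_r}\lesssim\|v_j-v_{0,j}\|_{\Hil^s}$ with $1/r=(1/2-(s+1)/d)_+$; combining this with $v_{0,j}\in\Hil^\b\hookrightarrow L_q$, $1/q=(1/2-\b/d)_+$, Hölder's inequality and the uniform bounds on $\|A(v)^{-1}\|$, $\|A(v_0)^{-1}\|$ bounds the second term in $L_2$ by $\lesssim M_0^2(\max_j\|v_{0,j}\|_{L_q})\max_j\|v_j-v_{0,j}\|_{\Hil^s}$, provided $1/q+1/r\le1/2$, i.e.\ $s\ge d/2-\b-1$. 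Since also $s\in[-2,0]$ is needed, such an $s$ exists exactly because $\b>d/2-1$, and for it the $\Hil^s$-rate is $\lesssim\eps_n$. Hence $\bar e:V_n\to L_2$ is Lipschitz at $v_0$ for the norm $v\mapsto\max_j(\|v_j\|_{L_2}+\|v_j\|_{\Hil^s})$, in which $v$ contracts at rate $\lesssim\eps_n$. Proposition~\ref{prop:stability} — with the non-linear parameter taken to be $\nabla f/f$, which equals $\bar e(\Delta u_{f,1},\dots,\Delta u_{f,d})$ by \eqref{EqInverseFormulaHybrid}, and with the sets $V_n$ — now gives contraction of the posterior of $\nabla f/f$ to $\nabla f_0/f_0$ in $L_2$ at rate $\eps_n$, which is (i) for deterministic $\a$ and (ii) for data-driven $\a$.

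The step I expect to be the main obstacle is the bound on the second term of the resolvent identity without degrading the $L_2$-rate: $v_{0,j}=\Delta u_{f_0,j}$ need not be bounded (only $\nabla u_{f_0,j}\in\Hil^{\b+1}\hookrightarrow L_\infty$ is, since $\b>d/2-1$), so multiplying it against the perturbation $\nabla K(v_j-v_{0,j})$ would, handled naively, cost a positive smoothness index in $\Hil^s$ and a loss in the rate. The resolution is to spend the order-one smoothing of $\nabla K$ to push the perturbation into a high Lebesgue space $L_r$ while keeping $v_{0,j}$ in $L_q$, the two demands on the auxiliary index — no rate loss ($s\le0$) and Hölder admissibility ($s\ge d/2-\b-1$) — being simultaneously solvable exactly under $\b>d/2-1$; this is why that threshold appears, in place of the $\b>d/2$ of Theorem~\ref{thm:Darcy}. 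A routine secondary point, handled as around Proposition~\ref{proposition_credible_general}, is that $\bar e$ must be defined in an arbitrary measurable way off $V_n$ and off the range of $f\mapsto(\Delta u_{f,1},\dots,\Delta u_{f,d})$, which is harmless since $\tilde\Pi_n(V_n\given\tilde Y_n)\to1$.
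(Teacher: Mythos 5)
Your proof follows the paper's proof at the structural level: the product posterior for $v=(\Delta u_{f,1},\dots,\Delta u_{f,d})$, the two mapping properties of $\nabla K$ (as $L_2\to L_2$ and as $\Hil^\delta\to L_\infty$ for $\delta>d/2-1$, the latter giving uniform consistency of $\nabla K v_j$ and hence uniform invertibility of $A(v)$ on a high-posterior-mass set), and Proposition~\ref{prop:stability}. What you add, and where you deviate usefully, is the explicit H\"older balance needed to convert the $L_2$- and $\Hil^s$-contraction of the $v_j$ into an $L_2$-rate for $\bar e(v)$ that does not degrade. The paper's proof asserts that $\bar e: L_2^d\to L_2^d$ is Lipschitz on the high-mass set without spelling this out, and the step is genuinely not automatic: in the term $[v_{0,1}\cdots v_{0,d}]A(v)^{-1}\bigl(A(v)-A(v_0)\bigr)A(v_0)^{-1}$, bounding $\|A(v)-A(v_0)\|_\infty$ costs the $\Hil^\delta$-norm with $\delta>d/2-1\ge0$ (for $d\ge2$), which is slower than the $L_2$-rate, while bounding $\|A(v)-A(v_0)\|_{L_2}$ would require $v_{0,j}=\Delta u_{f_0,j}\in L_\infty$, which $\beta>d/2-1$ does not give. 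Your interpolation — pick $s\in\bigl(\max(d/2-\beta-1,-1),0\bigr]$, push $\nabla K(v_j-v_{0,j})$ into $L_r$ using the $1$-order smoothing of $\nabla K$ while $v_{0,j}\in L_q$ with $1/q+1/r\le1/2$ — is exactly what is needed, and the feasibility condition coincides with $\beta>d/2-1$, so your argument also \emph{explains} why that threshold appears here rather than the $\beta>d/2$ of the single-measurement Theorem~\ref{thm:Darcy}. Two small corrections: the binding lower constraint on $s$ should be $s\ge-1$ (so that the Sobolev embedding $H^{s+1}\hookrightarrow L_r$ makes sense), not $s\ge-2$; it is automatically satisfiable. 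And your $\nabla K:\Hil^s\to L_r$ estimate passes through $\Hil^{s+2}\subset\Sob^{s+2}(\O)$, i.e.\ Proposition~\ref{PropositionSmoothingEllipticH0}, which requires a smooth boundary and does not literally cover the cube $\O=(0,1)^d$; the paper's direct eigenbasis estimate for $\nabla K$ on the sine basis sidesteps this, and the same direct calculation would make your intermediate $L_r$-bound rigorous there too.
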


\begin{proof}
Theorems~\ref{theorem: general uniform convergence}  and~\ref{theorem: general uniform convergence_adaptive}
give a posterior contraction rate $n^{-(\a\wedge\b-\d)/(2\a+4+d)}$ for the Laplacians $v_j=\Delta u_{f,j}$ relative
to the $\Hil^\d$ norm and a posterior contraction rate  $n^{-(\a\wedge\b)/(2\a+4+d)}$  for the Laplacians relative to the $L_2$-norm,
with $\a=\b$ in the case of (ii) and provided  $\a\wedge \b>\d$.

From the eigen expansion of $K$ (see \eqref{eq: eigenfunctions}), we find that $\nabla Kv=\sum_{i\in\NN^d}\k_iv_i\nabla h_i$, where
$\nabla h_i$ is the vector-valued function with $k$th coordinate $2^{d/2}\pi i_k\prod_{j\not =k}\sin(i_j\pi x_j)\cos(i_k\pi x_k)$ and
$\k_i\asymp 1/\|i\|^2$.
The latter functions without the factor $\pi i_k$ are orthonormal in $L_2$ and hence 
$\|\nabla Kv\|_{L_2(\O)}\lesssim \|v\|_{L_2(\O)}$ (easily). Furthermore, the same representation shows that
$\|\nabla Kv\|_\infty\le \sum_{i\in \NN^d}\k_i |v_i|\,\|\nabla h_i\|_\infty \lesssim \sum_{i\in \NN^d}\sqrt {\k_i} |v_i|$.
Let $\tilde v_1,\tilde v_2,\ldots$ be the array of values $(v_i: i\in \NN^d)$ ordered in a sequence by decreasing eigenvalues $\k_i$.
Then in view of Lemma~\ref{lemma: sorted eigenvalues}, 
the last series can be bounded by a multiple of $\sum_{\ell=1}^\infty \ell^{-1/d} |\tilde v_\ell|
\le (\sum_{\ell=1}^\infty \ell^{-(2+2\d)/d} )^{1/2}\|v\|_{\Hil^\d}$, by the Cauchy-Schwarz inequality.
For $\d>d/2-1$, the first series converges. We conclude that the operator $\nabla K$ is continuous both 
as an operator $\nabla K: L_2(\O)\to L_2(\O)$ and as an operator $\nabla K: \Hil^\d\to L_\infty(\O)$.

The second shows that the posterior distribution of $\nabla K v_j+\nabla \tilde g_j$ is consistent 
for $\nabla u_{f_0,j}$ relative to the uniform norm. Since the matrices $\bigl[\nabla u_{f_0,1}\cdots\nabla u_{f_0,d}\bigr]$ are
invertible uniformly throughout the domain $\O$, the same is true for the matrices 
$\bigl[\nabla K v_1+\nabla \tilde g_1\cdots \nabla K v_d+\nabla\tilde g_d\bigr]$,
for every $v_1,\ldots, v_d$ in a set of posterior mass tending to one. 

We can conclude that the map $\bar e: L_2(\O)^d\to L_2(\O)^d$ is Lipschitz on a set of posterior probability tending to one, 
so that the contraction rate of the posterior distributions of the Laplacians $v_j=\Delta u_{f,j}$ is carried over
into the same contraction rate for the function $\nabla f/f$, by Proposition~\ref{prop:stability}.
\end{proof}

\begin{remark}
The function $\nabla f/f$ determines $f$ up to a multiplicative constant, and the map $\nabla f/f\mapsto f$ is smooth
provided $f$ is bounded. Thus the contraction rates in the preceding theorem imply the same rates for $f$.
One might hope that the rates for $f$ are actually better, because of the integration involved in the map
$\nabla f/f\mapsto f$. We have not investigated this further.
\end{remark}

\section{Exponentiated Volterra operator}
\label{SectionExponentialVolterra}
For a known positive constant $g$, consider the solution $u_f: [0,1]\to\RR$ of the
ordinary differential equation
\begin{equation}\label{def:ODE}
	\left\{\begin{aligned}
		 u_f' &= f u_f, \qquad && \text{on }  (0,1),\\
		u_f &= g, \qquad && \text{at }  0.
	\end{aligned}\right.
\end{equation}
This equation takes the form \eqref{eq: general PDE} with $\O=(0,1)$ and $\Gamma=\{0\}$,
differential operator $\L u=u'$ and $c(u,f)=uf$. The inverse operator $K$ 
defined in \eqref{eq: definition K gen}  is the standard
Volterra operator $K: L_2([0,1]) \to L_2([0,1])$ defined by
\begin{align*}
	Kv(x) := \int_0^x v(s)\, ds.
\end{align*}
The function $\tilde g: (0,1)\to \RR$ as in \eqref{eq: definition g tilde gen} is the constant $\tilde g=g$.
It is immediate from the differential equation \eqref{def:ODE} that $f=u_f'/u_f$, where we note that
the function $u_f$ is nonzero, as follows from the explicit expression of the solution:
$u_f(x) = g \exp\left( \int_0^x f(s)\, ds \right)$. Thus the solution map
\eqref{EqSolutionOperator} is given by 
$$e(v)=\frac{v}{Kv+ g}.$$
Because $\|Kv\|_{L_2}\le\|Kv\|_\infty\le \|v\|_{L_2}$, the map $e: L_2([0,1]) \to L_2([0,1])$ is Lipschitz with respect to the
$L_2$-norm in a sufficiently small neighbourhood $V=\{v: \|v-v_0\|_{L_2}<c_0\}$  of any 
function $v_0$ such that $Kv_0+ g$ is bounded away from zero.
Therefore $L_2$-contraction rates for $v=u_f'$ in the linear Volterra inverse  problem 
translate immediately into $L_2$-contraction rates for $f$.

One possible prior is the one based on the eigenexpansion.
The eigenfunctions of the operator $K^\tp K: L_2(0,1)\to L_2(0,1)$ take the form (see e.g.\ \cite{Halmos})
$$h_i(x) = \sqrt{2}\cos\bigl((i-1/2)\pi x\bigr),\qquad i\in\NN,$$
with corresponding eigenvalues 
$$\k_i^2 =\frac1{\pi^2(i-1/2)^2}.$$
Hence the operator $K$ is smoothing \eqref{eq:K:smoothing} of order $p=1$ in its eigenscale $(\Hil^\b)_{\b\in\RR}$.

We endow $v=u_f'$ with the prior distribution $v=\sum_{i\ge 1}v_ih_i$, for $v_i\ind N(0, i^{-1-2\a})$,
for fixed $\a$ or with $\a$ determined by the empirical or hierarchical Bayes method.

\begin{theorem}
If $u_{f_0}' \in \Hil^\b$, then the posterior distribution of $f$ contracts in the $L_2$-norm at rate
$\e_n=n^{-(\a \wedge \b)/(1 + 2\a + 2)}$ if $\a$ is chosen fixed and at the rate $\e_n=l_nn^{-\b/(1 + 2\b + 2)}$ for a
slowly varying sequence $l_n$ if $\a$ is chosen by the empirical or hierarchical Bayes methods.
Furthermore, the credible sets \eqref{EqGeneralCredibleSet} with $\tilde C_n(\tilde Y_n)$ the credible ball 
in \eqref{EqCredibleBall} possess frequentist coverage tending to one and diameter of the order $\e_n$.
\end{theorem}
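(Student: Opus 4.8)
The plan is to recognise \eqref{def:ODE} as an instance of the general scheme with $\L u=u'$, $K$ the Volterra operator and solution map $e(v)=v/(Kv+g)$ as in \eqref{EqSolutionOperator}, and then to feed the linear-problem results of Section~\ref{SectionLinearProblem} into the transfer Propositions~\ref{prop:stability} and~\ref{proposition_credible_general}. First I would verify the hypotheses of those propositions at $v_0=u_{f_0}'$. Since $u_{f_0}(x)=g\exp(\int_0^x f_0(s)\,ds)$ is continuous and strictly positive on $[0,1]$, $Kv_0+g=u_{f_0}$ is bounded away from zero; moreover $\|u_{f_0}\|_\infty<\infty$ and $u_{f_0}'=f_0u_{f_0}\in\Hil^\b\subset L_2$ give $f_0\in L_2$. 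On the $L_2$-ball $V=\{v:\|v-v_0\|_{L_2}<c_0\}$ the identity $e(v)-e(w)=\frac{v-w}{Kv+g}+\frac{w\,K(w-v)}{(Kv+g)(Kw+g)}$, combined with $\|Kh\|_\infty\le\|h\|_{L_2}$, shows that $Kv+g$ remains bounded away from zero and that $\|e(v)-e(w)\|_{L_2}\lesssim(1+\|w\|_{L_2})\|v-w\|_{L_2}$; as $\|w\|_{L_2}\le\|v_0\|_{L_2}+c_0$ on $V$, the map $e$ is uniformly Lipschitz there.

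Next I would quote the linear contraction rates. Here $d=1$, and from $\k_i^2\asymp i^{-2}$ the operator $K$ is smoothing of order $p=1$ in the sense of \eqref{eq:K:smoothing} in its eigenscale. Theorem~\ref{theorem: general uniform convergence}(i) with $\d=0$ gives, for fixed $\a$ and $v_0\in\Hil^\b$, the $L_2$-contraction rate $n^{-(\a\wedge\b)/(2\a+2p+d)}=n^{-(\a\wedge\b)/(1+2\a+2)}$ for the posterior of $v$ in the problem $\tilde Y_n=Kv+n^{-1/2}\dWW$, while Theorem~\ref{theorem: general uniform convergence_adaptive}(i) with $\d=0$ gives the rate $l_nn^{-\b/(1+2\b+2)}$ when $\a$ is chosen by empirical or hierarchical Bayes. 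In either case the $L_2$-rate $\e_n\to 0$ forces $\tilde\Pi_n(v:\|v-v_0\|_{L_2}\ge c_0\mid\tilde Y_n)\prob 0$, so the posterior concentrates on $V$; Proposition~\ref{prop:stability} (with $V_n\equiv V$) then transfers $\e_n$ into the stated $L_2$-contraction rate for $f$.

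For the credible sets I would take $\tilde C_n(\tilde Y_n)$ to be the ball \eqref{EqCredibleBall} centred at the posterior mean $\hat v_n$. For the empirical or hierarchical Bayes prior and $v_0=u_{f_0}'$ satisfying the polished tail condition, Theorem~\ref{theorem: general uniform convergence_adaptive}(iv) gives, for $c$ large enough, frequentist coverage of $\tilde C_n(\tilde Y_n)$ tending to one and $L_2$-diameter $O_P(l_nn^{-\b/(1+2\b+2)})$; for the fixed prior with $\a$ undersmoothing (e.g.\ $\a=\b-c'/\log n$, so part~(iii) applies) the coverage tends to one and the diameter is $O_P(n^{-(\a\wedge\b)/(1+2\a+2)})$. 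Proposition~\ref{proposition_credible_general} then yields that the transported sets $C_n(Y_n)$ of \eqref{EqGeneralCredibleSet} have the same credible and coverage levels, and, because $e$ is uniformly Lipschitz on $V_n=\{v:\|v-\hat v_n\|_{L_2}\le c_0/2\}$ and $\hat v_n\in\tilde C_n(\tilde Y_n)$ by construction, their $L_2$-diameters are of order $O_P(\e_n)$. To meet the inclusion $\tilde C_n(\tilde Y_n)\subset V_n$ required by that proposition I would intersect with $V_n$ and invoke the remark following it: consistency of $\hat v_n$ and of the posterior of $v$ for the $L_2$-norm give $\Pr_{v_0}(V_n\ni v_0)\to 1$ and $\E_{v_0}\tilde\Pi_n(V_n\mid\tilde Y_n)\to 1$, so passing to $\tilde C_n(\tilde Y_n)\cap V_n$ affects neither level asymptotically.

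This argument is an assembly of the general machinery rather than a standalone proof, so there is no deep obstacle; the step demanding the most care is the credible-ball part, namely confining the ball to the region $V_n$ on which $e$ is Lipschitz without degrading its coverage, together with keeping the Lipschitz constant of $e$ independent of $\|v\|_\infty$, which rests on the elementary bound $\|Kh\|_\infty\le\|h\|_{L_2}$ that reduces the estimate on $e$ to one depending only on $\|v\|_{L_2}$.
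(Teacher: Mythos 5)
Your proposal is correct and follows the same route as the paper: verify that $Kv_0+g=u_{f_0}$ is bounded away from zero, deduce that $e(v)=v/(Kv+g)$ is $L_2$-Lipschitz on a small $L_2$-ball around $v_0$ (using $\|Kh\|_\infty\le\|h\|_{L_2}$), quote the $L_2$-rates from Theorems~\ref{theorem: general uniform convergence} and~\ref{theorem: general uniform convergence_adaptive} with $p=d=1$, and transfer them via Propositions~\ref{prop:stability} and~\ref{proposition_credible_general}. The paper's proof compresses all of this into two sentences, so your version is in effect a careful expansion of the same argument; the only difference of substance is that you spell out the polished-tail and undersmoothing hypotheses that the paper's theorem statement leaves implicit for the coverage claim, and you explicitly handle the inclusion $\tilde C_n(\tilde Y_n)\subset V_n$ by intersection and the remark after Proposition~\ref{proposition_credible_general}, both of which the paper silently invokes. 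These are not departures but completions of the intended argument.
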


\begin{proof}
Because $K u_{f_0}'+g$ is bounded away from zero, the map $e: L_2([0,1]) \to L_2([0,1])$ is Lipschitz with respect to the $L_2$-norm.
Therefore, the results are an immediate consequence of Theorem~\ref{theorem: general uniform convergence_adaptive}
and Propositions~~\ref{prop:stability} and~\ref{proposition_credible_general}.
\end{proof}

\section{Numerical analysis}\label{sec: simulations}
We demonstrate the practical performance of our approach on the time-independent Schr\"odinger equation 
\eqref{eq: EqSchr\"odinger}, discussed in Section~\ref{sec: Schrodinger}. We consider synthetic data sets 
generated from various choices of true function $f_0$ in the Gaussian white noise observational model 
\eqref{eq: observation} and investigate the behaviour of the posterior distributions
corresponding to different choices of the prior. 

First we consider the one-dimensional case ($d=1$) with domain the unit interval. We set the boundary conditions 
for $u_f$ as $g(0) = 1$ and $g(1) = 2$, so that the function $\tilde g$ in \eqref{eq: definition g tilde gen} is given by
$\tilde{g}(x) = 1 + x$. The results are shown in Figures~\ref{fig: adaptation and zero boundary} and~\ref{fig: nonzero boundary}.
Each subpanel of the figures illustrates the posterior distribution corresponding to a single simulation of data.
From top to bottom, the four panels correspond to signal-to-noise ratios $n = 10^{4}, 10^{6}, 10^{8}$, and $10^{10}$.
From left to right the panels in the two figures correspond to different functions $f_0$ (Figure~\ref{fig: adaptation and zero boundary}) 
or different prior distributions (Figure~\ref{fig: nonzero boundary}).

For each plot, we took $500$ draws from the posterior distribution of the Laplacian $v=\Delta u_f$, 
and transformed these into draws of $f$ using the solution operator \eqref{eq: inversion formula}. 
We approximated the posterior mean of $f$ as the average
of these 500 draws (the red curves), as well as pointwise $2.5\%$ and $97.5\%$ quantiles to form credible bands (the green curves).
In each panel we also plot 20 draws selected randomly out of the 500 draws to visualize typical samples from the posterior
(shown as  dashed gray curves).

In Figure~\ref{fig: adaptation and zero boundary} the true functions (drawn in black) were 
$f_0(x) = \sum_{i=1}^{\infty} i^{-3/2}\sin(i) h_i$ (left four panels) 
and $f_0(x) = 1 - 4(x-1/2)^2 - \tfrac{3}{4}\exp(-500(x-1/2)^2)$ (right four panels). 
The first function is in $\Hil^\b$, for all  $\b < 1$. 
The prior was based on the eigenexpansion of the Laplacian with Gaussian coefficients
\eqref{eq: prior} and regularity hyper-parameter $\a$ set with the empirical Bayes approach 
by maximising the marginal likelihood function given in \eqref{eq: log likelihood Y given a}. 
For the first function the approach works well: the posterior mean provides an accurate estimate of the true
function and the credible band contains the true function $f_0$, thus resulting in reliable uncertainty quantification
from the frequentist perspective. For the second function the performance is less satisfying:
a larger sample size (smaller signal-to-noise ratio) is needed to detect the bump that is present in this function.
However, this behaviour appears to be less connected to the method as to the problem:
it is hard to detect local spikes, especially if the regularity of the function $f_0$ is unknown. 
Already the direct estimation problem using Gaussian process priors of the form \eqref{eq: prior} needs large sample
sizes.

\begin{figure}
	\includegraphics[width=0.65\textwidth]{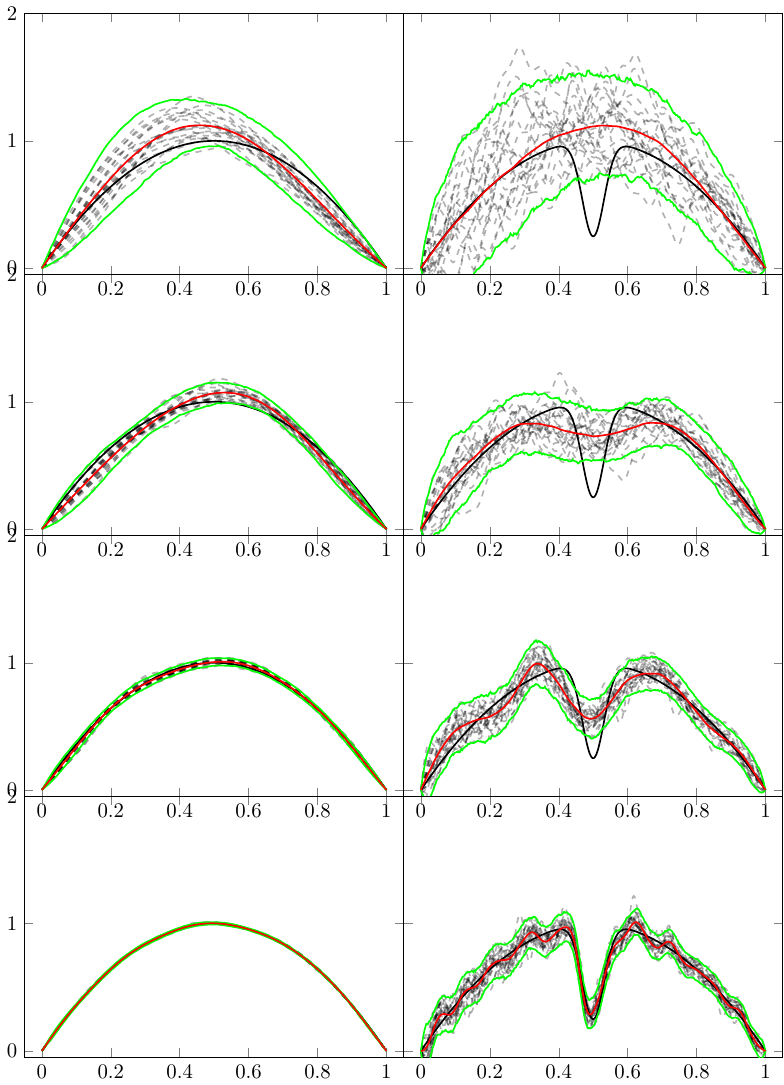}
\caption{Posterior distributions of $f$ based on data $Y_n=u_f+n^{-1/2}\dWW$ for  $n = 10^{4}, 10^{6}, 10^{8}, 10^{10}$
(top to bottom) and prior based on eigenbasis.
Black curve: true function $f_0$. Red curve: posterior mean. Green curves: pointwise  $95$\% credible band. Dashed black curves:
$20$ draws from the posterior distribution. Left panels: $f_0(x) = 1+4(x-1/2)^2$, right panels: $f_0(x) = 1 - 4(x-1/2)^2 - \tfrac{3}{4}\exp(-500(x-1/2)^2)$.}
	\label{fig: adaptation and zero boundary}
\end{figure}

The two true functions in Figure~\ref{fig: adaptation and zero boundary} vanish at the boundary,
which agrees with the prior set on the singular basis of the inverse Laplace operator.
In Figure~\ref{fig: nonzero boundary} we investigate the function $f_0(x) = 1 + \sin(3\pi x)$,
which takes the value 1 at both boundary points.
Our theoretical results show that even for this function our posterior contracts around the truth with the optimal $L_2$-norm and the $L_2$-credible ball provides high frequentist coverage. However, the pointwise behavior at the boundary using the singular value decomposition basis fails, as the posterior is supported on functions vanishing at the boundary. This is demonstrated in the four panels in the left column of 
Figure~\ref{fig: nonzero boundary}, which is based on the same prior as previously.
The posterior distribution shows good performance except near the boundary.
To overcome the boundary problem we also implemented a prior based on the B-spline basis, which does not restrict the
boundary. The resulting posterior distribution is shown in the right column of Figure~\ref{fig: nonzero boundary}.
Both the estimation and uncertainty quantification are more accurate at the boundary.

\begin{figure}
\includegraphics[width=0.65\textwidth]{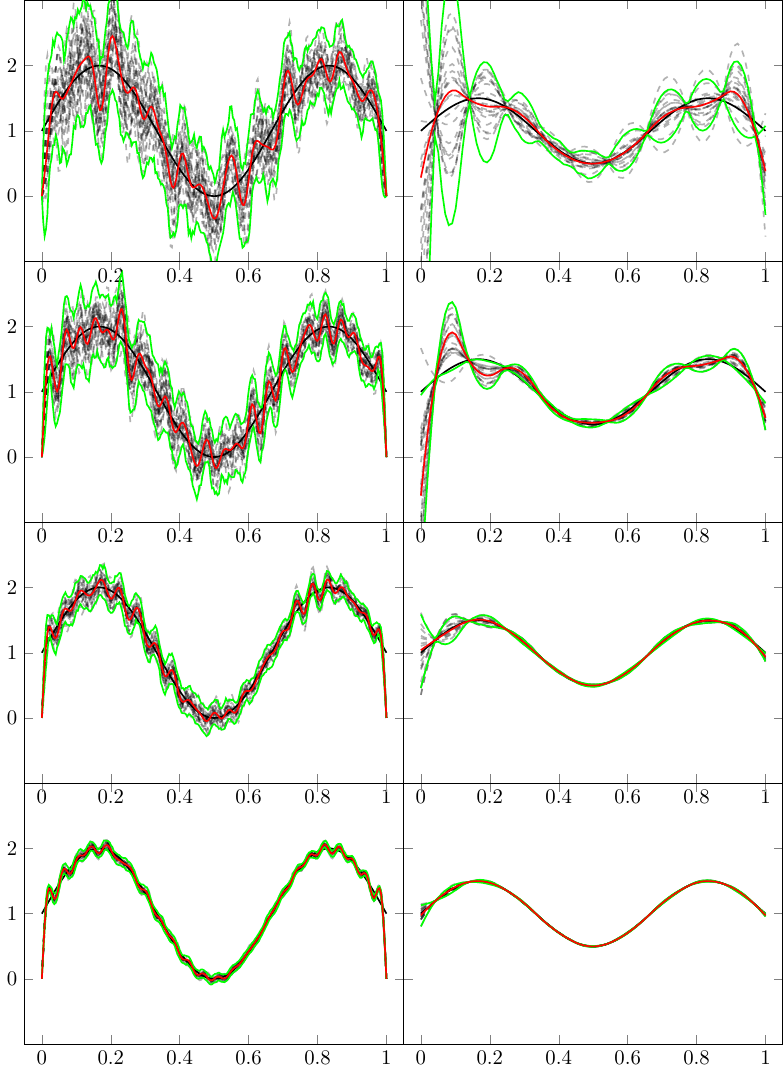}
\caption{Posterior distributions of $f$ based on data $Y_n=u_f+n^{-1/2}\dWW$ for  $n = 10^{4}, 10^{6}, 10^{8}, 10^{10}$
(top to bottom).
Black curve: true function $f_0(x) = \sum_{i=1}^{\infty} i^{-3/2}\sin(i) h_i$. 
Red curve: posterior mean. Green curves: pointwise  $95$\% credible band. Dashed black curves:
$20$ draws from the posterior distribution. Left panels: prior based on eigenbasis. Right panels: prior based on spline basis.}
	\label{fig: nonzero boundary}
\end{figure}

Next we demonstrate the behaviour of our approach in a two-dimensional example, with
true function and boundary condition on the domain $(0,1)^2$ given by
\begin{align}\label{def:f0:2d}
	f_0(x,y) &= 2x(x-1)y(y-1)(2+\sin(3\pi x)\sin(\pi y),\\
	g(x,y) &= 3 + xy^2 + 2y\sin(2\pi x) + x\cos(3\pi y).\nonumber
\end{align}
The four panels in the first column of Figure~\ref{fig: 3d plot} all show a heat map of the function $f_0$.
The other columns show heat maps of different aspects of the posterior distribution, each time based on
single data obtained using increasing signal-to-noise ratio $n = 10^{4}, 10^{6}, 10^{8}$ and $10^{10}$, from
top to bottom: the posterior mean (second column),  lower and upper $2.5\%$ pointwise quantiles of the posterior (third and fourth
column), and the absolute difference between the posterior mean and the true function (fifth column).
The prior was based on the eigenbasis \eqref{eq: eigenfunctions} of the Laplacian 
with Gaussian coefficients  \eqref{eq: prior} and regularity $\a$ set by the empirical Bayes method. 
One can observe that similarly to Figure~\ref{fig: adaptation and zero boundary},
we get good approximation and reliable uncertainty quantification, in this case even pointwise as $f_0$ vanishes at the boundary. 

\begin{figure}
	\includegraphics[width=\textwidth]{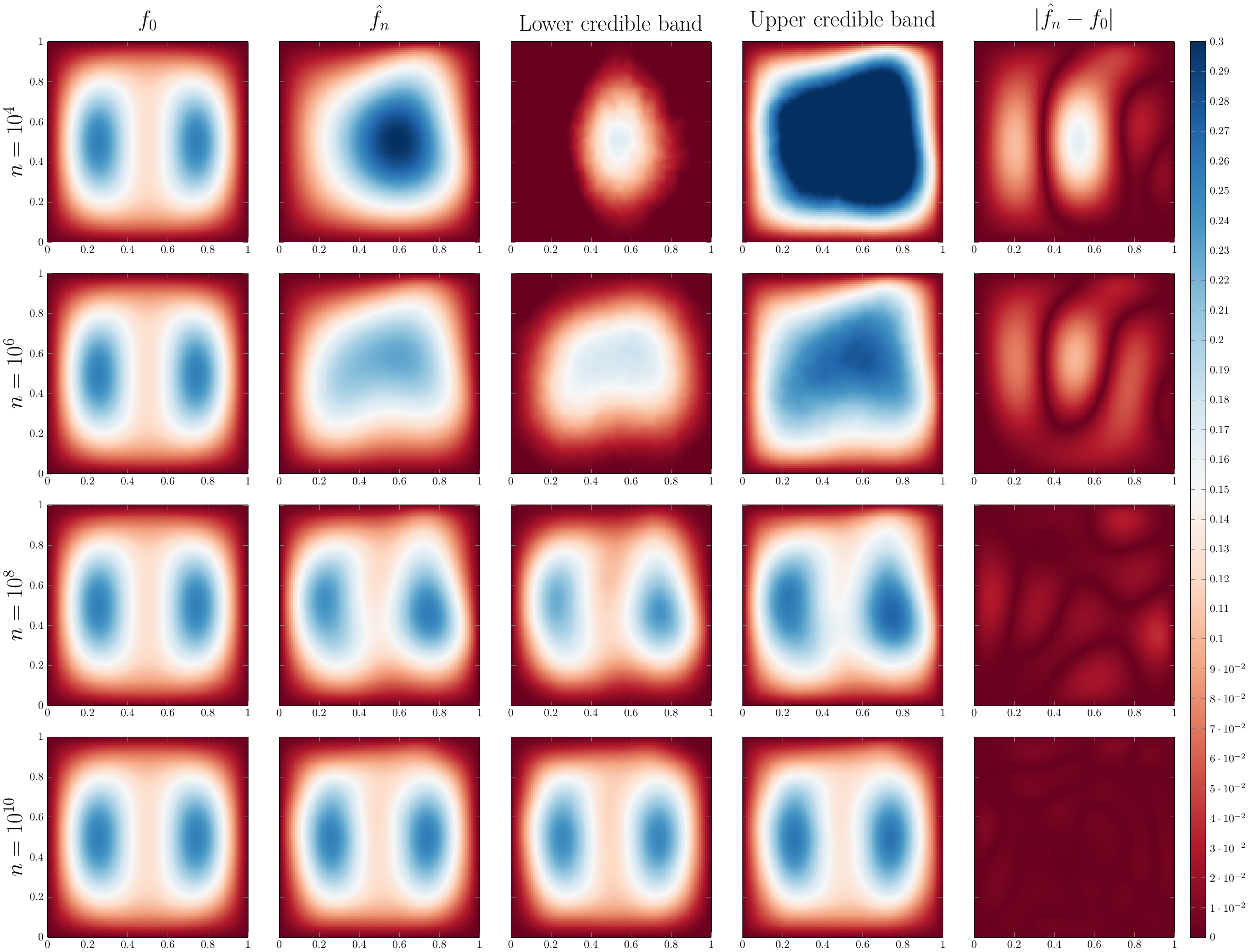}
	\caption{The true function $f_0$ given in \eqref{def:f0:2d}, the posterior mean, lower and upper $2.5\%$ pointwise posterior 
quantiles, and absolute approximation error of posterior mean,
resulting from the proposed Bayesian approach with signal-to-noise ratios $n = 10^{4}, 10^{6}, 10^{8}, 10^{10}$ 
in the PDE constrained  Gaussian white noise model  \eqref{eq: general PDE}-\eqref{eq: observation}.}
	\label{fig: 3d plot}
\end{figure}

\section{Discussion}\label{sec:discussion}
We have derived a novel, Bayesian linearisation method for non-linear inverse problems. Our approach potentially decreases computation time, as routines for solving linear inverse problems are simpler and faster than for  non-linear ones. Even if the linear inverse problem is non-conjugate (e.g.\ if the prior is not set on the singular value basis of $K$) and iterative methods are needed, our method may be substantially faster as it is does not require to solve the forward problem at each iteration step. 

We derived posterior contraction rates for Gaussian process priors, with both fixed and data driven regularity hyper-parameters (using either the empirical or hierarchical Bayes methods). Our results on adaptation are the first one on PDE-constrained, non-linear inverse problems. We also showed that the frequentist coverage guarantees of credible sets in the corresponding linear inverse problems translates to the non-linear inverse problems. Our results apply both to the benchmark Gaussian white noise model and to the practically more relevant discrete observational framework. Our results can be extended to priors going beyond the eigenfunctions of the differential operator $\L$. Our numerical analysis on synthetic data sets shows that the proposed approach has good numerical properties, although
proper handling of boundary conditions appears to require care.

A potential bottleneck of our approach is the solution operator \eqref{EqSolutionOperator}. As a proof of concept, we have covered a collection of benchmark PDE and ODE problems, where this operator takes an explicit form, as well as the Darcy flow problem, where the solution operator only exists in the form of a numerical algorithm. In general,
a numerical implementation of the operator may be necessary. We can borrow here from the field of (numerical) analysis,
where this is an active research area. Another interesting question for future research is scaling up our approach with various machine learning techniques, including distributed and variational inference. Our linearised setup provides a convenient framework to derive such approximation algorithms compared to the original non-linear setting. 

\section{Acknowledgments}
We thank Richard Nickl for sharing his insights during multiple discussions and pointing out references,
in particular regarding the Darcy problem.

\appendix

\section{Hilbert scales and Sobolev spaces}
\label{SectionHilbertAndSobolev}
Recall that $\Sob^s(\O)$ is the Sobolev space of order $s$ and that $\Sob_0^s(\O)$
is the subset of functions that vanish on the boundary of $\O$ (i.e.\ have vanishing trace).

Let $\L$ be a symmetric, second order, elliptic differential operator with $C^\infty$ coefficient functions
such that the function $v=0$ is the only solution to the problem $\L v=0$ on $\mathcal {O}$ and $v=0$ on $\partial\O$. For $u\in L_2(\O)$, let $Ku\in \Sob_0^1(\O)$ be the weak solution to 
\begin{equation*}
	\left\{\begin{aligned}
		\L Ku &= u, \qquad &&\text{ on } \O,\\
		Ku&= 0, \qquad &&\text{ on } \partial\O.
	\end{aligned}\right.
\end{equation*}
Then $K: L_2(\O)\to L_2(\O)$ is self-adjoint and compact (it is even bounded as an operator
$K: L_2(\O)\to \Sob_0^1(\O)$; see \cite{Evans10}, Theorem~6 in Section~6.2). Given its eigenfunctions $(h_j)$ and
eigenvalues $\k_j$ and $s\ge 0$,  let $\Hil^s$ be the set of functions $v=\sum_jv_jh_j\in L_2$ for which
$\|v\|_{\Hil^s}^2:=\sum_{j=1}^\infty v_j^2\k_j^{-s}$ is finite. For $s<0$, let $\Hil^s$ be the dual space of $\Hil^{-s}$.

\begin{proposition}
\label{PropositionSmoothingEllipticH0}
For  $s\ge 0$ such that $\partial \O\in C^{\lceil s\rceil+2}$, we have $\Hil^s\subset \Sob^s(\O)$, 
with $\|u\|_{\Sob^s(\O)}\lesssim \|u\|_{\Hil^s}$, for every $u\in \Hil^s$, with
$\|u\|_{\Sob^s(\O)}\asymp\|u\|_{\Hil^s}$ for every even integer $s$.
For $s=2$ we have $\Hil^2=\Sob^2(\O)\cap \Sob^1_0(\O)$ if $\partial \O\in C^2$.
\end{proposition}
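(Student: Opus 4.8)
The plan is to reduce the statement to classical elliptic regularity on the even integers and to obtain the intermediate values of $s$ by interpolation. The first step is to identify the Hilbert scale with the fractional powers of $\L$. Since $K$ is compact, self-adjoint and injective (if $Kv=0$ then $v=\L Kv=0$), its eigenfunctions $(h_j)$ form an orthonormal basis of $L_2(\O)$ and $\L$ is the self-adjoint operator with $\L h_j=\k_j^{-1}h_j$. Assuming, as we may and as holds in all our examples, that $\L$ is positive definite, the eigenvalues $\mu_j:=\k_j^{-1}$ are positive and $\mu_j\to\infty$, and $\|u\|_{\Hil^s}=\|\L^{s/2}u\|_{L_2}$ with $\L^{s/2}u=\sum_j u_j\mu_j^{s/2}h_j$. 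In particular $\Hil^0=L_2(\O)=\Sob^0(\O)$, and $\Hil^1=\Sob^1_0(\O)$ with equivalent norms, because $\|u\|_{\Hil^1}^2=\sum_j\mu_j u_j^2=a(u,u)$ is the Dirichlet form associated with $\L$, whose form domain is $\Sob^1_0(\O)$ and which is equivalent there to $\|u\|_{\Sob^1(\O)}^2$ by the G\aa rding and Poincar\'e inequalities.

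Next I would handle even integers $s=2m$, $m\ge1$. If $u\in\Hil^{2m}$ then $\L u=\sum_j\mu_j u_j h_j\in\Hil^{2m-2}$ and $u\in\Hil^1=\Sob^1_0(\O)$, and a direct computation with the eigenexpansion shows that $u$ is the weak $\Sob^1_0$-solution of the Dirichlet problem with right-hand side $\L u$. Iterating the boundary regularity theorem for elliptic operators with smooth coefficients (Theorem~5 in Section~6.3 of \cite{Evans10}), each step raising the Sobolev order by two and requiring two more derivatives of $\partial\O$, one gets $u\in\Sob^{2m}(\O)$ with $\|u\|_{\Sob^{2m}(\O)}\lesssim\|\L^m u\|_{L_2}+\|u\|_{L_2}$; since $\|u\|_{L_2}^2\le\mu_1^{-2m}\|\L^m u\|_{L_2}^2$ this gives $\|u\|_{\Sob^{2m}(\O)}\lesssim\|u\|_{\Hil^{2m}}$, and hence $\Hil^{2m}\subset\Sob^{2m}(\O)$. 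The reverse bound $\|\L^m u\|_{L_2}\lesssim\|u\|_{\Sob^{2m}(\O)}$ is immediate since $\L$ maps $\Sob^k(\O)$ boundedly into $\Sob^{k-2}(\O)$, so $\|u\|_{\Sob^{2m}(\O)}\asymp\|u\|_{\Hil^{2m}}$ on $\Hil^{2m}$. For $s=2$ I would add the converse inclusion: if $u\in\Sob^2(\O)\cap\Sob^1_0(\O)$ then $g:=\L u\in L_2(\O)$, and Green's identity shows $u$ is the weak solution of $\L u=g$, so $u=Kg$, $u_j=\k_j\langle g,h_j\rangle$, and $\sum_j u_j^2\mu_j^2=\|g\|_{L_2}^2<\infty$; hence $\Hil^2=\Sob^2(\O)\cap\Sob^1_0(\O)$.

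It remains to pass to general $s\ge0$ by interpolation. Fix $s$ and an integer $m$ with $2m\ge s$ (so $2m\le\lceil s\rceil+1$, which the hypothesis $\partial\O\in C^{\lceil s\rceil+2}$ accommodates for the bootstrap above). By the previous step the identity map is bounded $\Hil^{2m}\to\Sob^{2m}(\O)$, and trivially $\Hil^0\to\Sob^0(\O)$. The spectral description gives $[\Hil^0,\Hil^{2m}]_\theta=\Hil^{2m\theta}$ with equivalent norms for $\theta\in(0,1)$, since these are weighted $\ell^2$-spaces and complex interpolation of such spaces reproduces the geometric-mean weight; on the Sobolev side $[\Sob^0(\O),\Sob^{2m}(\O)]_\theta=\Sob^{2m\theta}(\O)$ with equivalent norms for the Lipschitz domain $\O$ (see \cite{Triebel2008EMS,Hitchhiker}, and \cite{Seeley65} for the equivalence of elliptic Hilbert scales with Sobolev scales). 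Since interpolation is an exact functor, the identity is bounded $\Hil^s\to\Sob^s(\O)$, i.e.\ $\|u\|_{\Sob^s(\O)}\lesssim\|u\|_{\Hil^s}$; at even integers the first step already gives the equivalence.

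I expect the main obstacle to be the bookkeeping in the even-integer step: one must verify that the \emph{spectral} definition of $\Hil^{2m}$ genuinely forces $u$ and $\L u,\dots,\L^{m-1}u$ to satisfy the Dirichlet condition in the trace sense, so that the iterated boundary elliptic estimate applies with no boundary remainder term, and that the required boundary smoothness at each of the $m$ steps stays within the assumed $C^{\lceil s\rceil+2}$. Everything else is functional calculus, an elementary Green-identity argument, or a citation.
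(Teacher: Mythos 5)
Your proposal is correct and follows essentially the same route as the paper: identify the Hilbert scale with a power of the operator, bootstrap elliptic regularity through the even integers, then interpolate. There are two points worth flagging. First, the paper parameterizes the scale via the smoothing operator, $\Hil^{2m}=K^m L_2$, and proves the inductive step by writing $K^{m+1}u=K(K^m u)$ and invoking $K:\Sob^{2m}(\O)\to\Sob^{2m+2}(\O)$ (Evans, Thm.~5, \S6.3). This automatically hands each stage a function that is a weak $\Sob^1_0$-solution, so the Dirichlet trace condition that you correctly flag as the ``main obstacle'' never has to be verified separately. Your version, which applies $\L$ forward and iterates elliptic regularity, genuinely needs the observation that $\L^k u\in\Hil^{2(m-k)}\subset\Hil^2=\Sob^2(\O)\cap\Sob^1_0(\O)$ for $k\le m-1$; this is easy from the spectral description (the norms are nested because $\mu_j\ge\mu_1>0$), but you leave it as an acknowledged gap rather than closing it, whereas the paper's choice of direction makes it a non-issue. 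Second, the paper uses real $K$-method interpolation (verifying by hand that $\Hil^s=[L_2,\Hil^m]_{s/m,2}$ via a direct $K$-functional computation), while you invoke complex interpolation of weighted $\ell^2$ spaces; for Hilbert couples these coincide, so this is only a cosmetic difference. Your detour through $\Hil^1=\Sob^1_0(\O)$ via the Dirichlet form is not needed and the paper does not make it, but it is harmless.
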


\begin{proof}
The space $\Hil^s$ is equivalent to the set of functions $K^s v$, for $v\in L_2(\O)$, with the norm
$\|K^sv\|_{\Hil^s}=\|v\|_{L_2(\O)}$. 

For every $m\in\NN\cup\{0\}$, 
 under the condition that $\partial \O\in C^{m+2}$, 
Theorem~5 in \cite{Evans10} gives that $Ku\in \Sob^{m+2}(\O)$ if $u\in \Sob^m(\O)$,
with $\|Ku\|_{\Sob^{m+2}(\O)}\lesssim \|u\|_{\Sob^m(\O)}$. 

For $m=0$, this shows that $\Hil^2=KL_2\subset \Sob^2(\O)$ and $\|Ku\|_{\Sob^2(\O)}\lesssim \|u\|_{L_2(\O)}\asymp \|Ku\|_{\Hil^2}$.
By the definition of a weak solution with Dirichlet boundary condition, it is
also true that $KL_2\subset \Sob_0^1$ and hence $\Hil^2\subset \Sob^2(\O)\cap \Sob^1_0(\O)$
with $\|u\|_{\Sob^2(\O)}\lesssim \|u\|_{\Hil^2}$. 
Conversely if $u\Sob^2(\O)$, then $\L u\in L_2$ by the definitions of $\Sob^2(\O)$ and $\L$
(or see (5.12) in Chapter~4.5 of \cite{Taylor2011}). If also $u\in\Sob_0^1$,
then $w=u$ solves $\L w=\L u$ (trivially) on $\O$ and $w=0$ on $\partial \O$ and hence
$u=K\L u\in \Hil^2$, with norm $\|u\|_{\Hil^2}=\|\L u\|_{L_2}\lesssim \|u\|_{\Sob^2(\O)}$.
This concludes the proof that $\Hil^2=\Sob^2(\O)\cap \Sob^1_0(\O)$ and that the norms
are equivalent on this space.

Next we prove by induction that $\Hil^{2m}\subset \Sob^{2m}(\O)$ with equivalent norms,
for every $m\in\NN$. For $m=1$ the assertion is proved. 
From the definitions it follows that $\Hil^{2m+2}=K\Hil^{2m}$ with norms satisfying
$\|K^{m+1}u\|_{\Hil^{2m+2}}=\|u\|_{L_2}=\|K^mu\|_{\Hil^{2m}}$.
By Theorem~5 in \cite{Evans10} as quoted previously $K^{m+1}u= K(K^mu)$ is contained
in $\Sob^{2m+2}(\O)$ with norms 
satisfying $\|K^{m+1}u\|_{\Sob^{2m+2}(\O)}\lesssim \|K^mu\|_{\Sob^{2m}(\O)}$. 
Therefore if the assertion is true for $m$, then it is true for $m+1$.

The Sobolev spaces $\Sob^s(\O)$ for non-integer $s\ge0$ can be constructed as the 
interpolation spaces $[L^2, \Sob^m(\O)]_{s/m,2}$, for $s\le m\in \NN$ (see \cite{Adams}, Section~7.57 or
\cite{BerghLofstrom}.) This means that the norm 
$\|u\|_{\Sob^s(\O)}$ of $u\in\Sob^s(\O)$
is equivalent to
$$\biggl(\int_0^\infty \Bigl(\frac1{t^{s/m}}\inf _{u=u_0+u_m}\bigl(\|u_0\|_{L_2}+t\|u_m\|_{\Sob^m(\O)}\bigr)\Bigr)^2\,\frac{dt}{t}\biggr)^{1/2},$$
where the infimum is taken over all decompositions $u=u_0+u_m$ with $u_0\in L_2$ and $u_m\in\Sob^m(\O)$,
and the space $\Sob^s(\O)$ is exactly the set of functions $u$ for which the expression is finite.
By direct argument it can be seen that the space $\Hil^s$ is the interpolation
$[L^2, \Hil^m]_{s/m,2}$. Since $\Hil^m\subset \Sob^m(\O)$, for even $m$, with equivalent norms,
it follows from the interpolation formula that $\Hil^s\subset \Sob^s(\O)$ with $\|u\|_{\Sob^s(\O)}\lesssim \|u\|_{\Hil^s}$.
\end{proof}

\begin{lemma}\label{lemma: interpolation Hs}
For $0\le s \le m$, the space $\Hil^s$ is the interpolation space $\Hil^s = [L_2,\Hil^m]_{s/m,2}$.
\end{lemma}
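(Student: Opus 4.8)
The plan is to diagonalise everything in the eigenbasis $(h_j)$ of $K$, which reduces the statement to the standard description of real interpolation of weighted $\ell_2$ sequence spaces. Writing $v=\sum_j v_jh_j$, we have $\|v\|_{L_2}^2=\sum_j v_j^2$ and $\|v\|_{\Hil^m}^2=\sum_j \kappa_j^{-m}v_j^2$; since the eigenvalues are bounded ($\kappa_j\le\kappa_1<\infty$), the inclusion $\Hil^m\hookrightarrow L_2$ is continuous and $(L_2,\Hil^m)$ is a compatible couple whose intersection $\Hil^m$ is dense in both. Recall that $[L_2,\Hil^m]_{\theta,2}$, with $\theta=s/m$, consists of those $v\in L_2$ for which $\|v\|_{\theta,2}^2:=\int_0^\infty\bigl(t^{-\theta}K(t,v)\bigr)^2\,dt/t<\infty$, where $K(t,v)=\inf_{v=v_0+v_1}\bigl(\|v_0\|_{L_2}+t\|v_1\|_{\Hil^m}\bigr)$ is the Peetre $K$-functional (\cite{BerghLofstrom,Adams}). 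It thus suffices to show $\|v\|_{\theta,2}\asymp\|v\|_{\Hil^s}$ with constants depending only on $\theta$.

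First I would pass to the equivalent quadratic functional $K_2(t,v)=\inf_{v=v_0+v_1}\bigl(\|v_0\|_{L_2}^2+t^2\|v_1\|_{\Hil^m}^2\bigr)^{1/2}$, which satisfies $K_2\le K\le\sqrt2\,K_2$. Because both norms are sums over $j$, the minimisation decouples coordinatewise, and an elementary one-variable calculation gives
\begin{equation*}
K_2(t,v)^2=\sum_j\frac{t^2\kappa_j^{-m}}{1+t^2\kappa_j^{-m}}\,v_j^2 .
\end{equation*}
Inserting this into $\|v\|_{\theta,2}^2$ and interchanging the (nonnegative) sum and integral by Tonelli's theorem, the problem reduces to evaluating $\int_0^\infty t^{-2\theta-1}\,\frac{t^2\kappa_j^{-m}}{1+t^2\kappa_j^{-m}}\,dt$ for each $j$. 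The substitution $t=\kappa_j^{m/2}\tau$ turns this into $\kappa_j^{-m\theta}\int_0^\infty\frac{\tau^{1-2\theta}}{1+\tau^2}\,d\tau=C_\theta\,\kappa_j^{-m\theta}=C_\theta\,\kappa_j^{-s}$, a finite constant depending only on $\theta\in(0,1)$ times $\kappa_j^{-s}$. Summing over $j$ yields $\|v\|_{\theta,2}^2\asymp C_\theta\sum_j\kappa_j^{-s}v_j^2=C_\theta\|v\|_{\Hil^s}^2$, which is the assertion for $0<s<m$; the cases $s=0$ and $s=m$ hold trivially, by the usual endpoint conventions together with $\Hil^0=L_2$.

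There is no genuine obstacle here: the only points requiring care are the standard equivalence $K\asymp K_2$, the legitimacy of the Tonelli interchange (the integrand is nonnegative and measurable in $t$), and keeping track of which constants depend only on $\theta$. Equivalently, one may simply invoke the known identity $[\ell^2(a_j),\ell^2(b_j)]_{\theta,2}=\ell^2\bigl(a_j^{1-\theta}b_j^{\theta}\bigr)$ for weighted $\ell_2$ spaces (\cite{BerghLofstrom}) applied with $a_j=1$ and $b_j=\kappa_j^{-m/2}$, which produces exactly the weight $\kappa_j^{-m\theta/2}=\kappa_j^{-s/2}$ defining $\Hil^s$.
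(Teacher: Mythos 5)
Your proof is correct and takes essentially the same approach as the paper: diagonalise in the eigenbasis of $K$, pass to the equivalent quadratic $K$-functional, compute the per-coordinate contribution, and evaluate the resulting dilation-invariant integral. The only cosmetic difference is that you compute the coordinatewise minimum exactly as $t^2\kappa_j^{-m}/(1+t^2\kappa_j^{-m})\,v_j^2$, whereas the paper replaces it by the comparable quantity $v_j^2\wedge (v_j^2 t^2\kappa_j^{-m})$ and then splits the integral at $t=\kappa_j^{m/2}$; both lead to the same constant $C_\theta\kappa_j^{-s}$.
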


\begin{proof}
Let $K(t,u)=\inf \bigl\{\|u_0\|_{L_2}+t\|u_m\|_{\Hil^m}: u=u_0+u_m\}$, where the infimum is taken
over $u_0\in L_2$ and $u_m\in \Hil^m$.
Because $a^2+b^2t^2\le (a+bt)^2\le 2a^2+2b^2t^2$, for any $a,b,t\ge0$, 
\begin{align*}
K^2(t,u)&\asymp\inf _{u=u_0+u_m}\bigl(\|u_0\|_{L_2}^2+t^2\|u_m\|_{\Hil^m}^2\bigr)\\
&=\inf_{u=u_0+u_m}\bigl(\sum_i(u_{i,0}^2+u_{m,i}^2t^2\k_i^{-m}\bigr)
=\sum_i u_i^2\wedge (u_i^2t^2\k_i^{-m}).
\end{align*}
It follows that the square interpolation norm is given by
\begin{align*}
\int_0^\infty\frac{K^2(t,u)}{t^{2s/m}}\,\frac{dt}t
&=\sum_i u_i^2\int_0^\infty\frac{1\wedge(t^2\k_i^{-2m})}{t^{2s/m}}\,\frac{dt}t\\
&=\sum_i u_i^2\Bigl(\int_0^{\k_i^{m/2}}t^{1-2s/m}\,dt\k_i^{-m}+
\int_{\k_i^{m/2}}^\infty \frac1{t^{2s/m+1}}\,dt\Bigr).
\end{align*}
The right side can be seen to be up to a multiple equivalent to $ \sum_i u_i^2\k_i^{-s}=\|u\|_{\Hil^s}^2$.
\end{proof}

\section{Complements for Schr\"odinger equation}
\label{SectionProofsSchrodinger}

\begin{lemma}\label{lem:smoothness:Luf}
Let $K: L_2\to L_2$ be a self-adjoint linear operator 
with eigenvalues $\k_j\asymp j^{-p/d}$ and such $K^{-\b/p}(\Sob^\b(\O))\subset L_2$.  
If $f \in \Sob^\b(\O)$  and $u_f \in \Sob^\b(\O)$ for some  $\b>d/2$, then $K^{-1}(fu_f) \in \Hil^\b$,
for $(\Hil^s)$ the Hilbert scale generated by the eigenfunctions and eigenvalues of $K$.
\end{lemma}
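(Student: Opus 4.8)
The plan is to reduce the statement to two standard ingredients: that $\Sob^\b(\O)$ is multiplicatively closed when $\b>d/2$, so that $fu_f\in\Sob^\b(\O)$; and that the displayed hypothesis $K^{-\b/p}\bigl(\Sob^\b(\O)\bigr)\subset L_2$ is merely a reformulation of the continuous embedding $\Sob^\b(\O)\hookrightarrow\Hil^\b$ into the eigenscale of $K$. Granting these, the quantity $K^{-1}(fu_f)$ — which in the present examples is just $\L u_f=K^{-1}(u_f-\tilde g)=c(f,u_f)=fu_f$ (up to the multiplicative constant appearing in $c$) — lands in $\Hil^\b$, which is the assertion.

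First I would invoke the Sobolev multiplication inequality on the bounded Lipschitz domain $\O$: for $\b>d/2$ there is a constant $C_\b<\infty$ with $\|\varphi\psi\|_{\Sob^\b(\O)}\le C_\b\|\varphi\|_{\Sob^\b(\O)}\|\psi\|_{\Sob^\b(\O)}$ for all $\varphi,\psi\in\Sob^\b(\O)$; this follows from the embedding $\Sob^\b(\O)\hookrightarrow L_\infty(\O)$ together with the usual interpolation/paraproduct estimates (see \cite{Hitchhiker,Triebel2008EMS}). Applying it with $\varphi=f$ and $\psi=u_f$ yields $fu_f\in\Sob^\b(\O)$, with the quantitative bound $\|fu_f\|_{\Sob^\b(\O)}\le C_\b\|f\|_{\Sob^\b(\O)}\|u_f\|_{\Sob^\b(\O)}$.

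Next I would unwind the hypothesis. Expanding $\phi\in L_2(\O)$ in the orthonormal eigenbasis $(h_j)$ of $K$ as $\phi=\sum_j\phi_jh_j$, the operator $K^{-\b/p}$ acts diagonally by $\phi\mapsto\sum_j\k_j^{-\b/p}\phi_jh_j$, so $K^{-\b/p}\phi\in L_2(\O)$ is equivalent to $\sum_j\k_j^{-2\b/p}\phi_j^2<\infty$; since $\k_j\asymp j^{-p/d}$ this reads $\sum_j j^{2\b/d}\phi_j^2<\infty$, i.e. $\phi\in\Hil^\b$. Hence the hypothesis says exactly $\Sob^\b(\O)\subset\Hil^\b$, and by the closed graph theorem the inclusion is bounded, $\|\phi\|_{\Hil^\b}\lesssim\|\phi\|_{\Sob^\b(\O)}$. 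Combining with the first step, $K^{-1}(fu_f)=fu_f\in\Hil^\b$, with $\|fu_f\|_{\Hil^\b}\lesssim\|f\|_{\Sob^\b(\O)}\|u_f\|_{\Sob^\b(\O)}$; this is in particular the hypothesis $\Delta u_{f_0}\in\Hil^\b$ required in Theorem~\ref{thm:Schr\"odinger}.

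The only real subtlety is the exponent bookkeeping in the last step: one must match the normalisation of the eigenscale $(\Hil^s)$, the power $p$ in $\k_j\asymp j^{-p/d}$, and the order $\b/p$ in $K^{-\b/p}$, so that the hypothesis aligns with $\Sob^\b(\O)\hookrightarrow\Hil^\b$ rather than with $\Hil^s$ for some $s\neq\b$ — this is precisely why the operator occurring in the hypothesis is calibrated as $K^{-\b/p}$. The Sobolev algebra estimate is classical once $\b>d/2$ and I expect it to cause no difficulty beyond citing it for a bounded Lipschitz domain.
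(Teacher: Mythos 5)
Your argument is essentially the one the paper uses. You first invoke Sobolev multiplicativity to get $fu_f\in\Sob^\b(\O)$ from $\b>d/2$; you then unwind the hypothesis $K^{-\b/p}(\Sob^\b(\O))\subset L_2$ by expanding in the eigenbasis of $K$ and using $\k_j\asymp j^{-p/d}$ to identify it with the embedding $\Sob^\b(\O)\hookrightarrow\Hil^\b$. The paper carries out the same eigenbasis computation, just applied directly to $\phi=fu_f$ via self-adjointness ($\langle fu_f,h_j\rangle_{L_2}=\k_j^{\b/p}\langle K^{-\b/p}(fu_f),h_j\rangle_{L_2}$) rather than to a generic $\phi$; the content is identical, so your conclusion $fu_f\in\Hil^\b$ and its proof match the paper's.

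One thing you should not let stand is the asserted chain of identities in the opening paragraph. You write that $K^{-1}(fu_f)$ ``is just $\L u_f=K^{-1}(u_f-\tilde g)=c(f,u_f)=fu_f$,'' and again later that $K^{-1}(fu_f)=fu_f$. Neither equality holds: $K^{-1}=\L$ on the relevant space, so $K^{-1}(fu_f)=\L(fu_f)$, which is not $\L u_f$ and certainly not $fu_f$ (that would force $\L$ to be the identity). What is true is $\L u_f=c(f,u_f)$, which for the Schr\"odinger equation is $2fu_f$. The lemma as printed contains a slip — the conclusion should read $fu_f\in\Hil^\b$, as the paper's own proof establishes and as is needed for the application (it is invoked to justify the assumption $\Delta u_{f_0}\in\Hil^\b$ in Theorem~\ref{thm:Schr\"odinger}, and $\Delta u_{f_0}=2f_0u_{f_0}$). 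The correct move is to flag the typo and prove $fu_f\in\Hil^\b$, as you in fact do; rationalising the statement through a false identity is the wrong repair, and if taken literally would make your proof prove something other than what you compute.
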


\begin{proof}
For every eigenfunction $h_j$ and $s\in\RR$, we have  $K^{-s}h_j=\k_j^{-s}h_j$.
Thus the self-adjointness of $K$ gives 
	\begin{align*}
		\langle f u_f,  h_j \rangle_{L_2}
		= \langle f u_f, \k_j^{\b/p} K^{-\b/p} h_j \rangle_{L_2}
		&= \k_j^{\b /p} \langle K^{-\b/p} (f u_f), h_j \rangle_{L_2}.
	\end{align*}
Since $\k_j\asymp j^{-p/d}$, it follows that 
\begin{align*}
	 \sum_{j=1}^{\infty} \langle f u_f, h_j \rangle_{L_2}^2 j^{2\b/d} 
\asymp \sum_{j=1}^{\infty} \langle K^{-\b/p} (f u_f), h_j \rangle_{L_2}^2.
\end{align*}
If $f \in \Sob^\b(\O)$ and $u_f \in \Sob^\b(\O)$, then $f u_f\in\Sob^\b(\O)$, since $\b>d/2$,
and hence $K^{-\b/p}(fu_f)\in L_2$, by assumption. Thus the right side of the display is finite,
and hence so is the left side, which implies $fu_f\in\Hil^\b$.
\end{proof}

\begin{lemma}\label{lemma: sorted eigenvalues}
Let $\{\k_i: i\in \NN^d\}$ be numbers with $C_1/\|i\|^p\le \k_i\le C_2/\|i\|^p$, for positive constants $C_1, C_2$ and $p>0$ and
$\|i\|^2=\sum_{j=1}^di_j^2$, for every $i=(i_1,\ldots, i_d)\in\NN^d$, and let $k_1\geq k_2\ge\cdots$  
denote the same values $\k_i$ sorted in decreasing order.
Then there exist positive constants $\tilde{C}_1,\tilde{C}_2$ that depend on $C_1,C_2,d$ only such that 
		\begin{align*}
			\tilde{C}_1 \ell^{-p/d} \leq k_\ell\leq \tilde{C_2} \ell^{-p/d},\qquad \ell\in \NN.
		\end{align*}
Furthermore, if $v_1,v_2,\ldots$ are the values of the multi-indexed array  $\{\nu_i: i \in \NN^d\}$
in corresponding order, then $\|v\|_{\Hil^\b}^2 =\sum_{\ell=1}^\infty\ell^{2\b/d}v_\ell^2$ satisfies
\begin{align*}
	\|v\|_{\Hil^\b}^2 \asymp \sum_{i \in \NN^d} \|i\|^{2\b}\nu_{i}^2 .
	\end{align*}
\end{lemma}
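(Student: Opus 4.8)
The plan is to reduce everything to a lattice-point count. Write $N(R):=\#\{i\in\NN^d:\|i\|\le R\}$, with $\|i\|^2=\sum_j i_j^2$. The first step is the elementary estimate $a_dR^d\le N(R)\le A_dR^d$ for all $R\ge R_0(d)$, with $0<a_d\le A_d$ and $R_0$ depending only on $d$. For the upper bound, associate to each counted index $i$ the half-open unit cube $\prod_{j=1}^d(i_j-1,i_j]$; these cubes are pairwise disjoint and lie in $\{x\in\RR_{\ge0}^d:\|x\|\le R\}$, a set of volume $2^{-d}\omega_dR^d$ with $\omega_d$ the volume of the unit ball, so $N(R)\le 2^{-d}\omega_dR^d$. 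For the lower bound, the set $\{x\in[1,\infty)^d:\|x\|\le R\}$ is covered by the cubes $\prod_j[i_j,i_j+1)$ over $i\in\NN^d$ with $\|i\|\le R$, so $N(R)$ is at least the volume of that set, which equals $2^{-d}\omega_dR^d$ up to an $O_d(R^{d-1})$ boundary correction and hence exceeds $a_dR^d$ once $R\ge R_0(d)$.

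The second step converts the count of the $\k_i$ into $N$. Let $M(t):=\#\{i\in\NN^d:\k_i\ge t\}$. Since $C_1\|i\|^{-p}\le\k_i\le C_2\|i\|^{-p}$, one has the sandwich $N\bigl((C_1/t)^{1/p}\bigr)\le M(t)\le N\bigl((C_2/t)^{1/p}\bigr)$ for every $t>0$ (so $M(t)<\infty$), while the decreasing rearrangement obeys the order-statistic identity $k_\ell\ge t\iff M(t)\ge\ell$. Feeding in $N(R)\asymp R^d$, I would deduce that for every $\ell$ above a threshold depending only on $d$: $k_\ell\ge t$ whenever $t\le C_1a_d^{p/d}\ell^{-p/d}$, and $k_\ell<t$ whenever $t>C_2A_d^{p/d}\ell^{-p/d}$; letting $t$ approach the respective endpoints gives $\tilde C_1\ell^{-p/d}\le k_\ell\le\tilde C_2\ell^{-p/d}$ with $\tilde C_1=C_1a_d^{p/d}$ and $\tilde C_2=C_2A_d^{p/d}$. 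The finitely many small indices $\ell$ are absorbed by enlarging $\tilde C_2$ and shrinking $\tilde C_1$, using $k_1=\max_i\k_i\le C_2d^{-p/2}$ and $k_\ell\ge\min\{\k_i:i\in S_\ell\}\ge C_1(\max_{i\in S_\ell}\|i\|)^{-p}$ for $S_\ell$ a set of $\ell$ indices of least norm.

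For the second assertion, let $\ell\mapsto i(\ell)$ be a bijection $\NN\to\NN^d$ realising the decreasing order, so that $\k_{i(\ell)}=k_\ell$ and $v_\ell=\nu_{i(\ell)}$. Combining $\tilde C_1\ell^{-p/d}\le\k_{i(\ell)}\le\tilde C_2\ell^{-p/d}$ with $C_1\|i(\ell)\|^{-p}\le\k_{i(\ell)}\le C_2\|i(\ell)\|^{-p}$ yields $\|i(\ell)\|\asymp\ell^{1/d}$, hence $\ell^{2\b/d}\asymp\|i(\ell)\|^{2\b}$ uniformly in $\ell$. Summing term by term across the bijection then turns $\sum_\ell\ell^{2\b/d}v_\ell^2$ into a quantity comparable to $\sum_{i\in\NN^d}\|i\|^{2\b}\nu_i^2$, as claimed.

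The only genuinely substantive ingredient is the lattice-point estimate $N(R)\asymp R^d$; everything else is order-statistic bookkeeping. The place needing a little care is stitching the clean power laws together with the finitely many small $\ell$ (and moderate $R$, where the volume comparison degrades), which is handled by absorbing universal constants. I also note that $\tilde C_1,\tilde C_2$ in fact depend on $p$ as well as on $C_1,C_2,d$, which is harmless since $p$ is fixed throughout.
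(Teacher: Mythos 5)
Your proof is correct and uses the same essential ingredient as the paper's, namely the lattice-point estimate $\#\{i\in\NN^d:\|i\|\le R\}\asymp R^d$. The paper phrases this by bounding the rank bijection $z(i)\asymp\|i\|^d$ directly, whereas you route the same count through the survival function $M(t)$ and the order-statistic identity $k_\ell\ge t\Leftrightarrow M(t)\ge\ell$; these are equivalent reformulations, and the remaining bookkeeping (small $\ell$, the term-by-term comparison for the norm equivalence) is handled soundly.
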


\begin{proof}
Let $z: \NN^d \to \NN$ be a bijection that corresponds to an ordering of the numbers $\k_i$. We show below 
that $z(i)\asymp \|i\|^d$, where the multiplicative constants depend on $C_1,C_2,d$ only. 
If this is true, then $k_{z(i)}=\k_i\asymp \|i\|^{-p}$, by the assumption on the $\k_i$, which is $z(i)^{-p/d}$, and the first
assertion of the lemma is proved.
Furthermore $\sum_{\ell\in \NN}\ell^{2\b/d}v_\ell^2=\sum_{i\in\NN^d}z(i)^{2\b/d}v_{z(i)}^2\asymp \sum_{i\in\NN^d}\|i\|^{2\b}\nu_i^2$,
proving the second assertion of the lemma.

It remains to prove that $z(i)\asymp \|i\|^d$, which we achieve by a separate lower and upper bound.
If $j\in\NN^d$ is such that $\|j\|\le d_1\|i\|$ for a sufficiently small constant $d_1>0$ that depends on $C_1,C_2,d$ only, then
the assumption on the numbers $\k_i$ shows that $\k_j>\k_i$ and hence $z(j)\le z(i)$. It follows that $z(i)\ge \#\bigl(j\in\NN^d: \|j\|\le d_1\|i\|\bigr)$,
which can be seen to be of the order $(d_1\|i\|)^d$ up to multiplicative constants depending on $d$ only. 
Conversely, because every $j\in\NN^d$ with $z(j)\le z(i)$ has
$\k_j\ge \k_i$, we have $z(i)= \#\bigl(j\in\NN^d: z(j)\le z(i)\bigr)\le \#\bigl(j\in\NN^d: \k_j\ge \k_i\bigr)$, which is bounded above by
$\#\bigl(j\in\NN^d: \|j\|\le d_2\|i\|\bigr)$ for some constant $d_2$ by the assumption on the numbers $\k_i$, which is bounded
above by a multiple of $\|i\|^d$.
\end{proof}

For convenience we restate and then prove Lemma  \ref{lemma: interpolation} below.
\begin{lemma}[Lemma \ref{lemma: interpolation}]\label{lemma: interpolation2}
Let  $(\Hil^s)$ be the scale generated by the eigenfunctions \eqref{eq: eigenfunctions} and 
let $\LL_n := \bigl\{\sum_{i\in\NN^d, \|i\|_\infty \leq n^{1/d}} c_i h_i: c_i \in \RR\big\}$.
If $\b > d/2$, then for every $v\in\Hil^\b$,
there exists an element $\mathcal{I}_n v \in \LL_n$ 
such that $\mathcal{I}_n v(x) = v(x)$ for every $x\in \{(\frac{2i}{2m+1})_{i=1,...,m}\}^d$, and
		\begin{equation}\label{eq: interpolation assumption2}
			\| \mathcal{I}_n v - v\|_{L_2} \lesssim  n^{-\b/d}\|v\|_{\Hil^\b}.
		\end{equation}
Furthermore, there exist constants $0<C_1<C_2<\infty$ such that
\begin{align*}
 C_1 \|v\|_{L_2} \leq \|v\|_{\LL_n} \leq C_2 \|v\|_{L_2},\qquad \forall v\in \LL_n.
\end{align*}
\end{lemma}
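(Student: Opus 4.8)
The plan is to exploit the product structure of the basis. Write $m$ for the cutoff appearing in the grid, so that $\LL_n=\mathrm{span}\{h_i:\|i\|_\infty\le m\}$ and $m\asymp n^{1/d}$, and set $G_n:=\{(\tfrac{2\ell}{2m+1})_{\ell=1,\dots,m}\}^d$, which has $m^d\asymp n$ points. Throughout I use $\sum_{i\in\NN^d}\|i\|^{2\b}v_i^2\asymp\|v\|_{\Hil^\b}^2$ from Lemma~\ref{lemma: sorted eigenvalues}; in particular $\sum_i|v_i|<\infty$ when $\b>d/2$ (Cauchy--Schwarz, using $\sum_{i\in\NN^d}\|i\|^{-2\b}<\infty$), so $v=\sum_iv_ih_i$ is a continuous function. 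The proof rests on two facts: a discrete orthogonality relation for the $h_i$ on $G_n$, and an aliasing identity folding a high-frequency $h_i$ restricted to $G_n$ onto a low-frequency one.

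First I would prove $\langle h_i,h_{i'}\rangle_{\LL_n}=(1+\tfrac1{2m})^d\,\delta_{ii'}$ for $i,i'\in\{1,\dots,m\}^d$. By tensorization this reduces to the one-dimensional identity $\tfrac1m\sum_{\ell=1}^m2\sin(j\pi x_\ell)\sin(k\pi x_\ell)=(1+\tfrac1{2m})\delta_{jk}$ for $j,k\in\{1,\dots,m\}$, $x_\ell=\tfrac{2\ell}{2m+1}$, which follows from the product-to-sum formula and the closed form $\sum_{\ell=1}^m\cos(\ell\phi)=\sin(m\phi/2)\cos((m+1)\phi/2)/\sin(\phi/2)$ evaluated at $\phi\in\{\tfrac{2(j-k)\pi}{2m+1},\tfrac{2(j+k)\pi}{2m+1}\}$: a short computation shows this sum equals $-\tfrac12$ whenever $\phi\not\equiv0\pmod{2\pi}$, which holds here since $0<|j\pm k|<2m+1$. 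Consequently $\|v\|_{\LL_n}=(1+\tfrac1{2m})^{d/2}\|v\|_{L_2}$ for every $v\in\LL_n$, giving the last assertion of the lemma with, e.g., $C_1=1$ and $C_2=(3/2)^{d/2}$, and the evaluation map $\LL_n\to\RR^{G_n}$ is a bijection, so the interpolant below is unique.

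Next, from $\sin((2m+1\pm k)\pi x_\ell)=\pm\sin(k\pi x_\ell)$ one obtains, for each $i\in\NN^d$, a multi-index $\rho(i)\in\{0,1,\dots,m\}^d$ (computed coordinatewise by reduction modulo $2m+1$ into $\{0,\dots,2m\}$ followed by the folding $r\mapsto2m+1-r$ on $\{m+1,\dots,2m\}$) and a sign $\varepsilon(i)\in\{-1,0,1\}$, with $\varepsilon(i)=0$ exactly when a coordinate of $\rho(i)$ vanishes, such that $h_i=\varepsilon(i)h_{\rho(i)}$ on $G_n$. Split $v=v_{\le}+v_{>}$ with $v_{\le}:=\sum_{\|i\|_\infty\le m}v_ih_i\in\LL_n$. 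Since $\sum_{\|i\|_\infty>m}|v_i|<\infty$, the function $w:=\sum_{j\in\{1,\dots,m\}^d}a_jh_j$, $a_j:=\sum_{i:\,\rho(i)=j,\,\|i\|_\infty>m}\varepsilon(i)v_i$, is a well-defined element of $\LL_n$, and by the aliasing identity $w$ and $v_{>}$ agree on $G_n$; hence $\mathcal I_nv:=v_{\le}+w\in\LL_n$ interpolates $v$ at $G_n$. As the $h_j$ are orthonormal in $L_2$, $\|\mathcal I_nv-v\|_{L_2}=\|w-v_{>}\|_{L_2}\le\|v_{>}\|_{L_2}+(\sum_ja_j^2)^{1/2}$, and the first term is $\le m^{-\b}(\sum_iv_i^2\|i\|^{2\b})^{1/2}\lesssim m^{-\b}\|v\|_{\Hil^\b}$ because $\|i\|>m$ on the coefficient support of $v_{>}$.

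It remains to control $\sum_ja_j^2$. Cauchy--Schwarz gives $a_j^2\le\bigl(\sum_{i\in C(j)}\|i\|^{-2\b}\bigr)\bigl(\sum_{i\in C(j)}v_i^2\|i\|^{2\b}\bigr)$ with $C(j):=\{i:\rho(i)=j,\|i\|_\infty>m\}$, so $\sum_ja_j^2\le M\sum_{\|i\|_\infty>m}v_i^2\|i\|^{2\b}\lesssim M\|v\|_{\Hil^\b}^2$ where $M:=\sup_j\sum_{i\in C(j)}\|i\|^{-2\b}$. The crux is $M\lesssim m^{-2\b}$: for $i\in C(j)$ at least one coordinate exceeds $m$; fixing the set $A$ of such ``large'' coordinates, the remaining coordinates of $i$ are pinned to the values $j_\ell\le m$, so $\|i\|\asymp_d(\sum_{\ell\in A}i_\ell^2)^{1/2}$, and each large $i_\ell$ runs over a one-dimensional arithmetic set with $O(1)$ elements per interval of length $2m+1$; summing $\|i\|^{-2\b}$ over such $i$ contributes $\lesssim_d m^{-2\b}\sum_{s\in\NN^{|A|}}\|s\|^{-2\b}$, a finite lattice sum since $|A|\le d<2\b$, and summing over the at most $2^d$ choices of $A$ yields $M\lesssim m^{-2\b}$. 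Therefore $\|\mathcal I_nv-v\|_{L_2}\lesssim m^{-\b}\|v\|_{\Hil^\b}\asymp n^{-\b/d}\|v\|_{\Hil^\b}$. I expect this last estimate to be the only delicate point: crude bounds on the aliased energy lose powers of $m$ once $d>1$, and uniformity in the target frequency $j$ — in particular over $j$ near the origin, whose $\rho$-preimages with $\|i\|_\infty>m$ nonetheless all satisfy $\|i\|\gtrsim m$ — is exactly what is needed; here $\b>d/2$ enters a second time, ensuring the $|A|$-dimensional lattice sums converge.
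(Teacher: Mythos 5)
Your proof is correct and follows essentially the same route as the paper's: discrete orthogonality of the $h_i$ on the grid $G_n$ gives the norm equivalence; the aliasing identity $\sin((2m+1\pm k)\pi x)=\pm\sin(k\pi x)$ at grid points folds high-frequency modes onto $\LL_n$; the error is split into a tail part $v_{>}$ and an aliased part $w$; and $\|w\|_{L_2}$ is bounded by a weighted Cauchy--Schwarz inequality together with a lattice-sum estimate that converges precisely because $2\b>d$. The one place where you are somewhat more careful than the paper is the bookkeeping of signs in the folding: you track $\varepsilon(i)\in\{-1,0,1\}$ explicitly, whereas the paper parameterizes the aliased indices by balanced residues $l\in\{-m,\dots,m\}^d$ together with the convention $h_l=0$ for $l$ with a negative coordinate, which as written suppresses the $-1$ sign that the coordinatewise fold on $\{m+1,\dots,2m\}$ produces; this does not affect the magnitude of the bound, but your version is the cleaner one.
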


\begin{proof}
The proof of \eqref{eq: interpolation assumption2} is based on Section~2.3 in \cite{Reiss2008}. 
This part of the proof is not restricted to the singular value basis of the inverse Laplace operator, 
but holds more generally. For convenience we use the notation $m=n^{1/d}$ and define $h_i=0$, for any $i\in\mathbb{Z}^d$ such that $i_j<0$ for some $j\in\{1,...,d\}$. Furthermore, let $P_n: L_2([0,1]^d) \to \LL_n$ be the orthogonal projection on $\LL_n$.
	Then by triangle inequality 
	\begin{align}
	\|\mathcal{I}_n g - g\|_{L_2} \leq \|\mathcal{I}_n f - P_n f\|_{L_2} + \|P_n f - f\|_{L_2}.\label{eq:triangle:help}
	\end{align}
	The second term on the right-hand side can be bounded as
	\begin{align*}
		\| P_n f - f\|_{L_2}^2
		= \sum_{\|l\|_{\infty} > m } \langle f, h_l \rangle_{L_2}^2
		\leq \sum_{\|l\|_{\infty} > m} \langle f, h_l \rangle_{L_2}^2 \frac{\|l\|_{\infty}^{2\b}}{m^{2\b}}
		\leq m^{-2\b} \| f \|_{\Hil^\b}^2.
	\end{align*}
For the first term on the right-hand side of \eqref{eq:triangle:help}, note that
	\begin{align*}
		\| \mathcal{I}_n f - P_n f \|_{L_2}^2
		&= \sum_{\substack{l \in \mathbb{Z}^d \\ \|l\|_{\infty} \leq m}} \Big(\sum_{k \in\NN_0^d\backslash\{0\}} \langle f, h_{k(2m+1)+l}\rangle\Big)^2\\
		&\leq \sum_{\substack{l \in \mathbb{Z}^d \\ \|l\|_{\infty} \leq m}} \Big[ \Big(  \sum_{k \in\NN_0^d\backslash\{0\}} \|k(2m+1)+l\|_{\infty}^{2\b} \langle f, h_{k(2m+1)+l}\rangle^2\Big)\\
		&\qquad\qquad\qquad\times \Big( \sum_{k \in\NN_0^d\backslash\{0\}} \|k(2m+1) + l\|_{\infty}^{-2\b}\Big)\Big]\\
		&\lesssim \|f\|_{\Hil^\b}^2  \sup_{\substack{l \in \mathbb{Z}^d \\ \|l\|_{\infty} \leq m}} \Big(\sum_{k \in\NN_0^d\backslash\{0\}} \|k(2m+1)+l\|_{\infty}^{-2\b}\Big),
	\end{align*}
	where the last inequality follows from  $\|v\|_{\infty} \asymp \sqrt{\sum_{j=1}^d v_j^2}$ and Lemma~\ref{lemma: sorted eigenvalues}.
	Since for $\|l\|_{\infty} \leq m$ by triangle inequality
	\begin{align*}
		m^{-1} \| k(2m+1) + l\|_{\infty}
		\geq \|2k\|_{\infty} - \|l/m\|_{\infty}
		\geq 2\|k\|_{\infty} - 1,
	\end{align*}
	we find that
	\begin{align*}
		\| \mathcal{I}_n f - P_n f \|_{L_2}^2
		&\lesssim \|f\|_{\Hil^\b}^2 m^{-2\b} \Big(\sum_{k \in\NN_0^d\backslash\{0\}} (2\|k\|_{\infty} - 1)^{-2\b}\Big).
	\end{align*}
	As seen in the proof of Lemma~\ref{lemma: sorted eigenvalues}, for $\b >d/2$ we have
	\begin{align*}
		\sum_{k \in\NN_0^d\backslash\{0\}}(2\|k\|_{\infty} - 1)^{-2\b}
		\asymp \sum_{k \in\NN_0^d\backslash\{0\}} \|k\|_{\infty}^{-2\b}
		\asymp \sum_{i=1}^{\infty} i^{-2\b/d}=O(1).
	\end{align*}
	We conclude the proof of the first statement by applying the upper bounds derived above for the two terms on the right-hand side of \eqref{eq:triangle:help} with $m=n^{1/d}$.

To show that $\mathcal{I}_nf$ coincides with $f$ on the design points, note that for  $x\in \mathbb{X}=\{(\frac{2i}{2m+1})_{i=1,...,m}\}^d$, 
	\begin{align*}
		f(x)
		&= \sum_{\substack{l \in \mathbb{Z}^d \\ \|l\|_{\infty} \leq m}}\langle f, h_l\rangle_{L_2} h_l(x) +  \sum_{\substack{l \in \mathbb{Z}^d\\ \|l\|_{\infty} \leq m}} \sum_{k \in \NN_0^d\backslash\{0\}}  \langle f, h_{k(2m+1)+l}\rangle_{L_2} h_{k(2m+1)+l}(x)\\
		&= \sum_{\substack{l \in \mathbb{Z}^d \\ \|l\|_{\infty} \leq m}}\langle f, h_l\rangle_{L_2} h_l(x) + \sum_{\substack{l \in \mathbb{Z}^d\\ \|l\|_{\infty} \leq m}} \Big(\sum_{k \in \NN_0^d\backslash\{0\}} \langle f, h_{k(2m+1)+l} \rangle_{L_2}\Big) h_{l}(x),
	\end{align*}
	where the last equality follows from
	\begin{align*}
		h_{k(2m+1)+l}(x)
		&= 2^{d/2} \prod_{j=1}^d \sin((k(2m+1)+l)_j \pi x_{j})\\
		&= 2^{d/2} \prod_{j=1}^d \sin(l_j \pi x_{j} +  2k_j \pi r_j)
		= 2^{d/2} \prod_{j=1}^d \sin(l_j \pi x_{j})
		= h_{l}(x),
	\end{align*}
	where $r_j= x_j(2m+1)/2\in\NN$.

Finally, to prove the equivalence of the $L_2$-norm with the empirical Euclidean norm $\LL_n$, note that in view of Lemma~\ref{lemma: inner product},  for any $f = \sum_{\|l\|_{\infty} \leq m} c_l h_l \in \LL_n$,
	\begin{align*}
		\|f\|_{\LL_n}^2
		&= \sum_{\|l\|_{\infty} \leq m}\sum_{\|k\|_{\infty} \leq m}c_l c_k\frac{1}{n}\sum_{x \in \mathbb{X}}  h_l(x) h_k(x)\\
		&= \Big(1+\frac{1}{2m}\Big)^d\sum_{\|l\|_{\infty} \leq m} c_l^2
		= \Big(1+\frac{1}{2m}\Big)^d \|f\|_{L_2}^2.
	\end{align*}
\end{proof}

\begin{lemma}\label{lemma: inner product}
Let $ h_l(x)=2^{d/2}\prod_{j=1}^d \sin(l_j\pi x_j)$ for $l\in\NN^d$ and $x\in[0,1]^d$. Then for arbitrary $l, k \in \NN^d$ with $\|l\|_{\infty},\|k\|_{\infty} \leq m$, we have
	\begin{align*}
		\langle h_l, h_k \rangle_{\LL_n} = \begin{cases}
			(1+\frac{1}{2m})^d, & \text{if}\,\, l = k,\\
			0, & \textnormal{otherwise,}
		\end{cases}
	\end{align*}
where the empirical measure $\LL_n$ is defined on the equidistant grid $\mathbb{X}=\{(\frac{2i}{2m+1})_{i=1,...,m}\}^d$.\end{lemma}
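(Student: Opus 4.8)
The plan is to factorise the empirical inner product over the $d$ coordinate directions and thereby reduce to a one-dimensional discrete orthogonality identity for sines. Writing $n=m^d$ and recalling $h_l(x)=2^{d/2}\prod_{j=1}^d\sin(l_j\pi x_j)$, the product structure of the grid $\mathbb{X}=\{(\tfrac{2i}{2m+1})_{i=1,\ldots,m}\}^d$ gives
\[
\langle h_l,h_k\rangle_{\LL_n}=\frac{2^d}{m^d}\prod_{j=1}^d\sum_{i=1}^m\sin\Bigl(\tfrac{2\pi l_j i}{2m+1}\Bigr)\sin\Bigl(\tfrac{2\pi k_j i}{2m+1}\Bigr)=\prod_{j=1}^d S(l_j,k_j),
\]
where for integers $1\le a,b\le m$ I set $S(a,b):=\tfrac{2}{m}\sum_{i=1}^m\sin(\tfrac{2\pi a i}{2m+1})\sin(\tfrac{2\pi b i}{2m+1})$. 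It then suffices to show $S(a,a)=1+\tfrac1{2m}$ and $S(a,b)=0$ for $a\ne b$; multiplying the $d$ factors yields the claim.

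Next I would apply the product-to-sum identity $2\sin A\sin B=\cos(A-B)-\cos(A+B)$ to obtain $S(a,b)=\tfrac1m\sum_{i=1}^m[\cos(\tfrac{2\pi(a-b)i}{2m+1})-\cos(\tfrac{2\pi(a+b)i}{2m+1})]$, so that everything reduces to the elementary sum, valid for every integer $r$,
\[
\sum_{i=1}^m\cos\Bigl(\tfrac{2\pi r i}{2m+1}\Bigr)=\begin{cases} m,& r\equiv 0 \pmod{2m+1},\\ -\tfrac12,& \text{otherwise.}\end{cases}
\]
The first case is immediate. For the second, put $\theta=\tfrac{2\pi r}{2m+1}$ and use the Dirichlet-kernel identity $1+2\sum_{i=1}^m\cos(i\theta)=\sin((m+\tfrac12)\theta)/\sin(\theta/2)$: since $(m+\tfrac12)\theta=\pi r$ the numerator vanishes for integer $r$, while $\sin(\theta/2)\ne 0$ precisely because $r\not\equiv 0\pmod{2m+1}$; hence $\sum_{i=1}^m\cos(i\theta)=-\tfrac12$.

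Finally I would substitute the relevant residues, which is where the hypothesis $\|l\|_\infty,\|k\|_\infty\le m$ enters. For $1\le a,b\le m$ we have $a+b\in\{2,\ldots,2m\}$, hence $a+b\not\equiv 0\pmod{2m+1}$, whereas $a-b\in\{-(m-1),\ldots,m-1\}$ is $\equiv 0\pmod{2m+1}$ iff $a=b$. Thus for $a\ne b$ both cosine sums equal $-\tfrac12$ and $S(a,b)=\tfrac1m(-\tfrac12+\tfrac12)=0$, while for $a=b$ the first sum equals $m$ and the second $-\tfrac12$, so $S(a,a)=\tfrac1m(m+\tfrac12)=1+\tfrac1{2m}$. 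There is no genuine obstacle here --- it is a routine discrete-orthogonality computation --- but the one point demanding care is the bookkeeping of residues modulo $2m+1$ together with the exact form of the Dirichlet kernel, which is what makes the normalising constant come out as $1+\tfrac1{2m}$ rather than $1$.
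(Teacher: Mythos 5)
Your proof is correct and follows essentially the same route as the paper: factorise the empirical inner product over the $d$ coordinate directions, apply the product-to-sum identity for sines, and evaluate the resulting cosine sum $\sum_{i=1}^m\cos(2\pi r i/(2m+1))$. The only cosmetic difference is that you invoke the closed-form Dirichlet-kernel identity to get the value $-\tfrac12$ for $2m+1\nmid r$, whereas the paper derives the same fact directly from the geometric series of complex exponentials in its Lemma~\ref{lemma: sum cosines}.
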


\begin{proof}
First we consider the $d=1$ case and then extend the result for general $d\in\NN$. By standard trigonometric equivalence, for all $l,k\in \NN$
	\begin{align*}
		\frac{1}{m}\sum_{x\in\mathbb{X} } 2\sin\left( l \pi x\right) \sin\left(k \pi x\right)
		&= \frac{1}{m}\sum_{x\in\mathbb{X} }  \cos\left( (l - k) \pi x \right) - \cos\left((l + k)\pi x\right).
	\end{align*}
	Then, in view of Lemma~\ref{lemma: sum cosines}, for $1\leq l,k\leq m$, the right-hand side of the preceding display is equal to
	\begin{align*}
		\frac{1}{m}&\Big(m1_{2m+1| (l-k)}  - \frac{1}{2}1_{2m+1 \nmid (l-k)} - \big[m1_{2m+1|(l+k)} - \frac{1}{2}1_{2m+1 \nmid (l+k)}\big]\Big)\\
		&= \frac{1}{m}\left(m1_{l=k} - \frac{1}{2}1_{l \neq k} + \frac{1}{2}\right)
		= (1+\tfrac{1}{2m})1_{l = k}.
	\end{align*}
This finishes the proof for $d=1$.

To extend the results to $d>1$, note that
	\begin{align*}
		\langle h_l, h_k \rangle_{\LL_n}
		&= \frac{1}{n} \sum_{x\in \mathbb{X}} 2^{d/2}\prod_{j=1}^d \sin(l_j \pi x_j) 2^{d/2}\prod_{j=1}^d \sin(k_j \pi x_j)\\
		&= \prod_{j=1}^d \frac{1}{m} \sum_{x_j\in\mathbb{X}_j } 2\sin\left(l_j \pi x_j\right)\sin\left(k_j \pi x_j\right)
		= \begin{cases}
			(1+\frac{1}{2m})^d & l = k,\\
			0, & \text{otherwise.}
		\end{cases}
	\end{align*}
This concludes  the proof of the lemma.
\end{proof}

\begin{lemma}\label{lemma: sum cosines}
	For any $r,m \in \NN$,
	\begin{align*}
		\sum_{j=1}^m \cos\left(r \pi \frac{2j}{2m+1}\right) =
		\begin{cases}
			m, & 2m +1 | r,\\
			-\frac{1}{2}, & 2m + 1 \nmid r.
		\end{cases}
	\end{align*}
\end{lemma}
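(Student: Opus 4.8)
The plan is to recognize the sum as the real part of a geometric series of $(2m+1)$-th roots of unity, and to exploit the fact that $\{-m,-m+1,\dots,m\}$ is a complete residue system modulo $2m+1$. Write $r\pi\,\tfrac{2j}{2m+1}=\tfrac{2\pi rj}{2m+1}$, and set $\zeta=e^{2\pi i r/(2m+1)}$, so that $\cos\!\bigl(r\pi\tfrac{2j}{2m+1}\bigr)=\operatorname{Re}(\zeta^{\,j})$.

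First I would dispose of the divisible case: if $2m+1\mid r$, then $\zeta=1$ and every summand equals $1$, so the sum is $m$, matching the first branch. For the main case $2m+1\nmid r$, the number $\zeta$ is a $(2m+1)$-th root of unity different from $1$; hence $\sum_{k=0}^{2m}\zeta^{\,k}=\dfrac{\zeta^{2m+1}-1}{\zeta-1}=0$. Since the exponents $j\in\{-m,-m+1,\dots,m\}$ run over a complete set of residues modulo $2m+1$, we also have $\sum_{j=-m}^{m}\zeta^{\,j}=0$. Taking real parts and using that cosine is even gives
\begin{equation*}
0=\sum_{j=-m}^{m}\cos\!\Bigl(\frac{2\pi rj}{2m+1}\Bigr)=1+2\sum_{j=1}^{m}\cos\!\Bigl(\frac{2\pi rj}{2m+1}\Bigr),
\end{equation*}
so that $\sum_{j=1}^{m}\cos\!\bigl(r\pi\tfrac{2j}{2m+1}\bigr)=-\tfrac12$, which is the second branch.

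There is no real obstacle here; the only point requiring a line of care is the identification of $\{-m,\dots,m\}$ with a complete residue system mod $2m+1$ (equivalently, rewriting $\sum_{j=-m}^{m}\zeta^{\,j}=\zeta^{-m}\sum_{k=0}^{2m}\zeta^{\,k}$), after which the vanishing of the full geometric sum for $\zeta\neq1$ finishes the computation. Alternatively one could multiply $\sum_{j=1}^m\cos(2\pi rj/(2m+1))$ by $2\sin\bigl(\pi r/(2m+1)\bigr)$ and telescope, but the roots-of-unity argument is cleaner and avoids dividing by a sine that could vanish.
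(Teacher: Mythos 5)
Your proof is correct. It uses the same underlying object (a geometric series of $(2m+1)$-th roots of unity) as the paper, but a genuinely different route through it. The paper evaluates the partial sum $\sum_{j=1}^m z^j = z(z^m-1)/(z-1)$ directly and extracts its real part by multiplying numerator and denominator by $\bar z - 1$, leading to a somewhat involved algebraic simplification. You instead symmetrize: since cosine is even, $2\sum_{j=1}^m \cos(\cdot) + 1 = \sum_{j=-m}^m \zeta^j$, and the latter is $\zeta^{-m}\sum_{k=0}^{2m}\zeta^k = 0$ because $\{-m,\dots,m\}$ is a complete residue system mod $2m+1$ and $\zeta\neq 1$. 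This avoids the explicit rationalization entirely. What your approach buys is brevity and transparency — the cancellation is structural rather than computational — and it naturally isolates why $2m+1$ (odd) is the right modulus, namely that the symmetric window $\{-m,\dots,m\}$ covers all residues exactly once. The paper's direct computation is more pedestrian but requires no observation about residue systems. Both handle the divisible case identically.
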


\begin{proof}
	For  $r = v (2m+1)$ with $v \in \NN$, we have
	\begin{align*}
		\sum_{j=1}^m \cos\left(r \pi \frac{2j}{2m+1}\right)
		= \sum_{j=1}^m \cos\left( 2 v \pi j \right)
		= m.
	\end{align*}
	For $r \in \NN$ such that $r \neq v(2m+1)$, for any $v \in \NN$, and $z = \exp\left(r i \pi \frac{2}{2m+1}\right)$, we have
	\begin{align*}
		\sum_{j=1}^m \cos\Big(r \pi \frac{2j}{2m+1}\Big)
		&= \sum_{j=1}^m \text{Re}\Big[ \exp \Big(r i \pi \frac{2j}{2m+1}\Big)\Big]
		= \text{Re}\Big(\sum_{j=1}^m z^j \Big)
		= \text{Re}\Big( \frac{z (z^m - 1)}{z-1}\Big).
	\end{align*}
	Furthermore, since $(z-1)(\bar{z}-1) = 2 - 2\text{Re}(z) = 2 - 2\cos(r \pi \frac{2}{2m+1})$, and
	\begin{align*}
		z(z^m-1)&(\bar{z}-1)
		= z^m -1 - z^{m+1} + z\\
		&= \exp\left(r i \pi \frac{2m+1}{2m+1}\right)\exp\left(-r i \pi \frac{1}{2m+1}\right) - 1\\
		&\quad - \exp\left(r i \pi \frac{2m+1}{2m+1}\right)\exp\left(r i \pi \frac{1}{2m+1}\right) + \exp\left(r i \pi \frac{2}{2m+1}\right)\\
		&= (-1)^r\left(\exp\left(-r i \pi \frac{1}{2m+1}\right) - \exp\left(r i \pi \frac{1}{2m+1}\right)\right) -1 + \exp\left(r i \pi \frac{2}{2m+1}\right),
	\end{align*}
	so $\text{Re}(z(z^m-1)(\bar{z}-1)) = \cos(r\pi\frac{2}{2m+1})-1$, we have
	\begin{align*}
		\text{Re}\left( \frac{z (z^m - 1)}{z-1}\right)
		= \frac{\cos(r \pi \frac{2}{2m+1}) - 1}{2 - 2\cos(r \pi \frac{2}{2m+1})}
		= -\frac{1}{2}.
	\end{align*}
	finishing the proof of the lemma.
\end{proof}

\section{Complements for heat equation with absorption}
\label{SectionProofsForParabolic}

\begin{lemma}\label{lemma: bounded eigenfunctions parabolic}
Consider the operator $K: L_2(\O\times [0,1])\to L_2(\O\times [0,1])$ defined in  \eqref{eq: definition K gen} 
for $\L = \frac{\partial}{\partial t} - \frac{1}{2}\Delta$ and $\O= (0,1)^d$. 
For $i\in \NN^d$, define $\mu_i=\pi^2 \sum_{j=1}^d i_j^2$ and for $k\in\NN$ let $\nu_{i,k}$ be the (unique) solution
 to the equation $\nu_{i,k}/\tan\nu_{i,k} =-\mu_i/2$ in the interval $\bigl((k-1/2)\pi, k\pi\bigr)$.
Then the eigenvalues  of the operator $K^\tp K$  are given by $\l_{i,k}=1/(\nu_{i,k}^2+\mu_i^2/4)$, for $(i,k) \in \NN^d\times \NN$, 
and satisfy
\begin{align*}
\frac{\pi^{-2}}{k^2+ \pi^2(\sum_{j=1}^d i_j^2)^2/4} \leq \l_{i,k} \leq \frac{\pi^{-2}}{(k-1/2)^2 +  \pi^2(\sum_{j=1}^d i_j^2)^2/4},
\quad (i,k) \in \NN^d\times \NN.
\end{align*} 
The corresponding normalised eigenfunctions are given by
\begin{align}\label{eq: eigenfunctions KTK parabolic}
	h_{i,k}(x,t) := \frac{-\frac{1}{\tan\nu_{i,k}}\sin(\nu_{i,k} t) + \cos(\nu_{i,k} t)}
{\sqrt{\frac{- \frac{1}{\tan\nu_{i,k}} + \frac{\nu_{i,k}}{(\sin\nu_{i,k})^2}}{2\nu_{i,k}}}}2^{d/2}\prod_{j=1}^d \sin(i_j \pi x_j).
	\end{align}
The set of eigenfunctions $\{h_{i,k}: (i,k) \in \NN^d\times \NN\}$ is uniformly bounded in $L_\infty$.
\end{lemma}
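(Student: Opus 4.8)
The plan is to diagonalise $K^\tp K$ by separation of variables, reducing it to a family of one–dimensional Sturm--Liouville problems indexed by the spatial frequency $i\in\NN^d$, and then to read off the eigenvalues, the eigenfunctions, and the uniform bound. First I would record that the Dirichlet Laplacian on $\O=(0,1)^d$ has the complete orthonormal eigensystem $\phi_i(x)=2^{d/2}\prod_{j=1}^d\sin(i_j\pi x_j)$, $i\in\NN^d$, with $\Delta\phi_i=-\mu_i\phi_i$ and $\mu_i=\pi^2\sum_j i_j^2$. Under the unitary identification $L_2(\O\times[0,1])\cong\bigoplus_{i\in\NN^d}\phi_i\otimes L_2([0,1])$ one checks, using uniqueness of solutions and $\phi_i|_{\partial\O}=0$, that $K$ leaves each summand invariant and acts on $\phi_i\otimes g$ as $\phi_i\otimes M_i g$, where $M_i$ is the one–dimensional solution operator of $v'+\tfrac12\mu_i v=g$ on $(0,1]$ with $v(0)=0$, i.e.\ $M_ig(t)=\int_0^t e^{-\mu_i(t-s)/2}g(s)\,ds$; its adjoint is $M_i^\tp u(t)=\int_t^1 e^{-\mu_i(s-t)/2}u(s)\,ds$. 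Hence $K^\tp K$ is block–diagonal with blocks $M_i^\tp M_i$, each of which is a positive compact (Hilbert--Schmidt, continuous kernel) self-adjoint operator on $L_2([0,1])$; so it suffices to diagonalise $M_i^\tp M_i$.

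For the eigenproblem $M_i^\tp M_i\psi=\lambda\psi$, set $v=M_i\psi$ and $w=M_i^\tp v=\lambda\psi$. The defining ODEs give $v'+\tfrac12\mu_i v=\psi$ and $-w'+\tfrac12\mu_i w=v$, with $v(0)=0$ and $w(1)=0$; since $M_i$ is injective, $\lambda\neq 0$, so $w(1)=0$ forces $\psi(1)=0$, and eliminating $v,w$ yields $-\psi''+\tfrac14\mu_i^2\psi=\tfrac1\lambda\psi$ with boundary conditions $\psi(1)=0$ and $\psi'(0)=\tfrac12\mu_i\psi(0)$. A short case check rules out non-oscillatory solutions, so $\tfrac1\lambda>\tfrac14\mu_i^2$; writing $\tfrac1\lambda-\tfrac14\mu_i^2=\nu^2$ gives $\psi(t)=A\cos(\nu t)+B\sin(\nu t)$, and the boundary conditions force $B/A=-\cot\nu$ (from $\psi(1)=0$) and $\nu\cot\nu=-\tfrac12\mu_i$ (from $\psi'(0)=\tfrac12\mu_i\psi(0)$). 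On $((k-\tfrac12)\pi,k\pi)$ the map $\nu\mapsto\nu\cot\nu$ is continuous and strictly decreasing from $0$ to $-\infty$ (its derivative $\cot\nu-\nu/\sin^2\nu$ is negative there) and is positive on $(k\pi,(k+\tfrac12)\pi)$, so $\nu\cot\nu=-\tfrac12\mu_i$ has exactly one positive root $\nu_{i,k}$ in each $((k-\tfrac12)\pi,k\pi)$ and no others. This yields $\lambda_{i,k}=1/(\nu_{i,k}^2+\tfrac14\mu_i^2)$, the unnormalised eigenfunction $\psi_{i,k}(t)=\cos(\nu_{i,k}t)-\cot(\nu_{i,k})\sin(\nu_{i,k}t)$, and, since $M_i^\tp M_i$ is compact, self-adjoint, injective with distinct eigenvalues $\lambda_{i,k}$, a complete orthogonal system of $L_2([0,1])$. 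Tensoring with $\phi_i$ and normalising produces \eqref{eq: eigenfunctions KTK parabolic} and a complete orthonormal eigensystem of $K^\tp K$; the normalising constant is $\|\psi_{i,k}\|_{L_2}$, which a direct computation (using only the identity $\cos 2\nu+2\sin^2\nu=1$) evaluates to $\bigl(\tfrac{1}{2\nu_{i,k}}(\tfrac{\nu_{i,k}}{\sin^2\nu_{i,k}}-\tfrac{1}{\tan\nu_{i,k}})\bigr)^{1/2}$. The eigenvalue bounds are then immediate from $\nu_{i,k}^2\in((k-\tfrac12)^2\pi^2,\,k^2\pi^2)$ after dividing numerator and denominator by $\pi^2$.

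The part I expect to need the most care is the uniform $L_\infty$ bound. Since $|\phi_i|\le 2^{d/2}$, it is enough to bound $\|\psi_{i,k}\|_\infty/\|\psi_{i,k}\|_{L_2}$ uniformly in $(i,k)$. Writing $\psi_{i,k}$ in amplitude--phase form gives $\|\psi_{i,k}\|_\infty\le\sqrt{1+\cot^2\nu_{i,k}}=1/|\sin\nu_{i,k}|$, while the computation above gives $\|\psi_{i,k}\|_{L_2}^2=(2\nu_{i,k}-\sin 2\nu_{i,k})/(4\nu_{i,k}\sin^2\nu_{i,k})$, hence $\|\psi_{i,k}\|_\infty/\|\psi_{i,k}\|_{L_2}\le 2\sqrt{\nu_{i,k}/(2\nu_{i,k}-\sin 2\nu_{i,k})}$. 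Because $\nu_{i,k}\ge\pi/2$ we have $2\nu_{i,k}-\sin 2\nu_{i,k}\ge 2\nu_{i,k}-1>0$, and $\nu\mapsto\nu/(2\nu-1)$ is decreasing on $[\pi/2,\infty)$, so the ratio is bounded by a universal constant and $\sup_{i,k}\|h_{i,k}\|_\infty\le 2^{d/2}\cdot 2\sqrt{\pi/(2(\pi-1))}<\infty$. The delicate feature is exactly that, as $\mu_i\to\infty$, $\nu_{i,k}\to k\pi$ so that both $\|\psi_{i,k}\|_\infty$ and $\|\psi_{i,k}\|_{L_2}$ individually blow up like $1/|\sin\nu_{i,k}|$; the bound survives only thanks to this cancellation, whereas the rest of the argument is routine Sturm--Liouville bookkeeping.
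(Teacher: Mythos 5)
Your proof is correct, and it reaches the same transcendental equation, eigenvalues, and eigenfunctions as the paper, but by a genuinely different route. The paper works at the PDE level: it computes the adjoint $K^\tp$ by integration by parts (a backward heat solution operator), derives the fourth-order PDE $(-\partial_t^2 + \tfrac14\Delta^2)K^\tp K u = u$ together with its four boundary/initial conditions, makes the separation-of-variables ansatz $h(x,t)=g(t)f(x)$ inside that PDE, and then argues completeness by Sturm--Liouville theory plus a tensor-product argument. You instead decompose the Hilbert space at the \emph{operator} level right away: you use that $K$ commutes with the Dirichlet Laplacian eigenbasis $\{\phi_i\}$, so $K^\tp K$ is block-diagonal with explicit one-dimensional Volterra blocks $M_i^\tp M_i$, and the spectral problem becomes a second-order Sturm--Liouville ODE per block with the Robin condition at $t=0$ and a Dirichlet condition at $t=1$. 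This buys two things. First, the adjoint is transparent as an explicit integral operator rather than having to be characterised as a backward PDE solution operator with boundary conditions. Second---and this is the real gain---the eigenfunction in time simplifies to $\psi_{i,k}(t)\propto\sin\bigl(\nu_{i,k}(1-t)\bigr)/\sin\nu_{i,k}$, from which the normalisation $\|\psi_{i,k}\|_{L_2}^2 = (2\nu_{i,k}-\sin 2\nu_{i,k})/(4\nu_{i,k}\sin^2\nu_{i,k})$ and the $L_\infty$ bound $\|\psi_{i,k}\|_\infty\|\psi_{i,k}\|_{L_2}^{-1}\le 2\sqrt{\nu_{i,k}/(2\nu_{i,k}-\sin 2\nu_{i,k})}$ drop out in one stroke, making the cancellation of the $1/|\sin\nu_{i,k}|$ singularity manifest; the paper instead bounds the two summands of the unnormalised $g$ divided by the normalisation constant separately, which is a bit less illuminating but equally valid. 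The only place you should add a line is the "short case check" that rules out non-oscillatory solutions: it follows, as in the paper, from the sign of $\langle -\psi'',\psi\rangle_{L_2(0,1)}$ under the Robin--Dirichlet boundary conditions, or equivalently from positivity of $M_i^\tp M_i$. Everything else---uniqueness of $\nu_{i,k}$ in each interval, the eigenvalue inequalities obtained from $\nu_{i,k}\in\bigl((k-\tfrac12)\pi,k\pi\bigr)$, and completeness per block by the spectral theorem applied to the injective compact self-adjoint $M_i^\tp M_i$---matches the paper's conclusions.
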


\begin{proof}
Equation \eqref{eq: definition K gen}  gives $u=\L Ku$ on $\tilde\O=\O\times(0,1)$, whence one or two partial integrations yield
$$\langle u, K^\tp v\rangle_{L_2(\tilde\O)}=\langle \partial_t Ku-\tfrac12 \Delta Ku, K^\tp v\rangle_{L_2(\tilde\O)}
=\langle Ku, -\partial_tK^\tp v-\tfrac12\Delta K^\tp v\rangle_{L_2(\tilde\O)},$$
for every $v$ for which $K^\tp v(x,1)=K^\tp v(0,t)=0$ for every $x\in\O$ and $t\in (0,1)$.
Here we also use the boundary conditions on $Ku$ specified in \eqref{eq: definition K gen} to see that the boundary values to the partial integrations vanish.
It follows that the adjoint operator $K^\tp: L_2\bigl(\O\times [0,1]\bigr) \to L_2\bigl(\O\times [0,1]\bigr)$ of $K$ is the solution operator $v\mapsto K^\tp v$  of the homogeneous differential equation
	\begin{equation}\label{eq: definition adjoint K parabolic}
		\begin{caseAlign}
			{-\frac{\partial}{\partial t} K^\tp v - \frac{1}{2}\Delta K^\tp v} &= v,\qquad & (x,t) \in \O\times(0,1),\\
			K^\tp v(x,t) &= 0, \qquad  & x \in \partial \O,\, t\in(0,1],\\
			K^\tp v(x,1) &= 0, \qquad & x \in \O.
		\end{caseAlign}
	\end{equation}
Evaluating this at $v=Ku$, we find $Ku=-\partial_t K^\tp K u-\tfrac12 \Delta K^\tp K u$. By \eqref{eq: definition K gen}  this implies
that $\Delta (-\partial_t K^\tp K u-\tfrac12 K^\tp K u)=u$ on $\O\times(0,1)$, and moreover the function on the right inherits the boundary conditions of $Ku$. We infer that 
the operator $K^\tp  K$ is given as the solution operator $u\mapsto K^\tp K u$ of the homogeneous differential equation
	\begin{equation}
		\begin{caseAlign}
			\big(-\frac{\partial^2}{\partial t^2}+ \frac{1}{4}\Delta^2\big) K^\tp K u &= u, \qquad & (x,t) \in \O\times(0,1),\\
			K^\tp K u(x,t) &= 0, \qquad & x \in \partial \O,\, t\in(0,1],\\
			K^\tp K u(x,1) &= 0, \qquad & x \in \O,\\
			(-\frac{\partial}{\partial t} - \frac{1}{2}\Delta) K^\tp K u(x,t) &= 0, \qquad & x \in \partial \O,\, t\in(0,1],\\
			(-\frac{\partial}{\partial t} - \frac{1}{2}\Delta) K^\tp K u(x,0) &= 0, \qquad & x \in \O.
		\end{caseAlign}
	\end{equation}
Thus  an eigenfunction $h$ of $K^\tp  K$ with eigenvalue $\l$ satisfies $h = (-\frac{\partial^2}{\partial t^2} + \frac{1}{4} \Delta^2)\l h$ on $\O\times(0,1)$,
and satisfies the given boundary conditions.

For an eigenfunction of the form $h(x,t) = g(t) f(x)$ and $\l>0$, this becomes
	\begin{equation}
		gf = -\l \frac{\partial^2}{\partial t^2} gf + \l \frac{1}{4} g \Delta^2 f.
	\end{equation}
	Dividing both sides by $gf$ results in
	\begin{equation}
		1 = -\l \frac{\frac{\partial ^2}{\partial t^2} g}{g} + \frac{1}{4}\l \frac{\Delta^2 f}{f}.
	\end{equation}
	Since the left-hand side is constant, and the two terms on the right do not share any variable, they must be constants. 
Therefore, we are lead to the following systems of equations, for some constant $c\in\RR$,
	\begin{equation}\label{eq:heat:linear:1}
		\begin{caseAlign}
			\frac{\partial^2}{\partial t^2}g(t) &= -\frac{c}{\l}g(t),  \qquad &t \in (0,1),\\
			g(t) &= 0, \qquad &t = 1,
		\end{caseAlign}
	\end{equation}
	and
	\begin{equation}\label{eq:heat:linear:2}
		\begin{caseAlign}
			\Delta^2 f(x) &= 4\frac{1-c}{\l} f(x), \qquad &x \in \O,\\
			f(x) &= 0, \qquad &x \in \partial\O,\\
		\end{caseAlign}
	\end{equation}
together with the mixed boundary conditions
\begin{equation*}
\begin{caseAlign}
			(-\frac{\partial}{\partial t} - \frac{1}{2}\Delta) g(t)f(x) &=0, \qquad &x \in \O, t= 0,\\
			(-\frac{\partial}{\partial t} - \frac{1}{2}\Delta) g(t)f(x) &= 0, \qquad &x \in \partial \O,\, t\in (0,1].
\end{caseAlign}
\end{equation*}
We shall show that these equations yield a complete set of eigenfunctions.

The system of equations \eqref{eq:heat:linear:2} describes the eigenfunctions of the square Laplacian with zero boundary condition. These 
eigenfunctions are the same as those of the Laplacian with the eigenvalues squared. The system is solved
by the functions $f(x) = 2^d \prod_{j=1}^d \sin(i_j \pi x_j)$, for $i_j \in \NN$, with corresponding eigenvalues 
$4(1-c)/\l=\m^2$, for $\mu=\pi^2\sum_{j=1}^d i_j^2$ the eigenvalues of minus the Laplacian $-\Delta$.
Since $\Delta f=-\mu f$, the first mixed boundary condition 
becomes $(-g'(0)+ g(0)\mu/2)f(x)=0$, for $x\in\O$.
This implies the boundary condition $-g'(0)+ g(0)\mu /2=0$ of Robin form for the problem \eqref{eq:heat:linear:1}.

It follows from the preceding that $c<1$. Under the (Robin) boundary conditions of \eqref{eq:heat:linear:1}, partial integration gives
$\langle g'',g\rangle_{L_2(0,1)}=-g(0)^2\m/2-\int_0^1 g'(t)^2\,ds<0$, for $g\not=0$. It follows that the operator $g\mapsto g''$ is
negative definite and hence possesses only negative eigenvalues. We may thus restrict \eqref{eq:heat:linear:1} 
to $c>0$. Under this restriction the general solution of this second order ordinary differential equation is
$g(t) = c_1 \sin(\nu t) + c_2 \cos(\nu t)$, for $\nu = \sqrt{c/\l}$. The boundary conditions at $t=1$ and $t=0$ 
give $ c_1 \sin\nu + c_2 \cos\nu=0$ and $-c_1 \nu+ c_2 \mu/2=0$. A nonzero solution $(c_1,c_2)$ requires that $(\sin \nu)\mu/2+\nu\cos\nu=0$.
For $\nu>0$ this is possible only if $\nu\not=k\pi$, for $k\in\NN$, and then $\nu\cos\nu/\sin \nu=-\mu/2$.
The function $\nu\mapsto \nu\cos\nu/\sin\nu$ is strictly decreasing on every interval $\bigl((k-1)\pi,k\pi\bigr)$, for $k\in \NN$, 
with value 0 at $(k-1/2)\pi$ and limit $-\infty$ as $\nu\uparrow k\pi$.
It follows that for every $k \in \NN$ and $\mu>0$, there exists a unique $\nu_k \in \bigl((k-1/2)\pi,k\pi\bigr)$ such that 
$\nu_k\cos\nu_k/\sin\nu_k=- \mu/2$.

The equations $4(1-c)/\l=\mu^2$ and $\nu=\sqrt{c/\l}$ imply that $1=(\mu^2/4+\nu^2)\l$. Thus for
every $i\in\NN^d$ and $k\in \NN$, we have obtained an eigenvalue $\l_{i,k}=1/(\mu_i^2/4+\nu_{i,k}^2)$, 
for $\mu_i=\pi^2\sum_{j=1}^di_j^2$ and $\nu_{i,k}/\tan \nu_{i,k}=-\mu_i/2$ with $\nu_{i,k}$ contained in the interval
$\bigl((k-1/2)\pi,k\pi\bigr)$. This clearly satisfies the inequalities in the lemma.
	
The eigenfunctions $f$ as given are already normalised in $L_2(\O)$.
To normalise $(x,t)\mapsto g(t)f(x)$ in $L_2\bigl(\O\times [0,1]\bigr)$, we require that $\int_0^1 g(t)^2 \,dt = 1$, which gives
	\begin{align*}
		g(t) = \frac{-\frac{1}{\tan\nu}\sin(\nu t) + \cos(\nu t)}{\sqrt{\frac{- \frac{1}{\tan\nu} + \frac{\nu}{(\sin\nu)^2}}{2\nu}}}.
	\end{align*}
	For $\nu$ satisfying $\nu/\tan\nu =- \mu/2$, we have $\tan\nu = -{2\nu}/{\mu} < 0$.
Thus the norming constant is lower bounded by
	\begin{align*}
		\sqrt{\frac{-\frac{1}{\tan\nu} + \frac{\nu}{(\sin\nu)^2}}{2\nu}}\geq \sqrt{0 + \frac{1}{2(\sin\nu)^2}} \geq \sqrt{\frac{1}{2}}.
	\end{align*}
	Thus the second term of $g$ is bounded.
	The absolute value of the constant in front of the first term can be bounded using the definitions of $\nu$ and $\mu$ as
	\begin{align*}
		\biggl|\frac{\frac{1}{\tan\nu}}{\sqrt{\frac{-\frac{1}{\tan\nu} + \frac{\nu}{(\sin\nu)^2}}{2\nu}}}\biggr|
		= \sqrt{\frac{2\nu}{-\tan\nu + \frac{\nu}{(\cos\nu)^2}}}
		= \left(\frac{1}{\mu} + \frac{1}{2(\cos\nu)^2}\right)^{-1/2}
		\leq \sqrt{2}.
	\end{align*}
	We see that $g$ is uniformly bounded, and thus also the eigenfunctions are uniformly bounded.

It remains to show that all eigenfunctions of  $K^\tp K$ are of the form \eqref{eq: eigenfunctions KTK parabolic}.
The functions $(f_{i}(x))_{i \in \NN^d}$ form an orthonormal basis of $L_2(\O)$, and the $(g_k)_{k \in \NN}$ 
form an orthonormal basis of $L_2([0,1])$, since they are the eigenfunctions of a Sturm-Liouville problem.
Then the products of these functions form an orthonormal basis of $L_2\bigl(\O\times [0,1]\bigr)$ and give
the complete set of eigenfunctions. This concludes the proof of the lemma.
\end{proof}

\begin{lemma}\label{lemma: sorted eigenvalues2}
Let $\{\k_{i,k}: i\in \NN^d,k\in\NN\}$ be numbers with $C_1/(\|i\|^p+k^q)\le \k_{i,k}\le C_2/(\|i\|^p+k^q)$, for  constants $C_1, C_2>0$ and $p,q>0$ and
$\|i\|^2=\sum_{j=1}^di_j^2$, for every $i=(i_1,\ldots, i_d)\in\NN^d$, and let $k_1\geq k_2\ge\cdots$  
denote the same values $\k_{i,k}$ sorted in decreasing order.
Then there exist positive constants $\tilde{C}_1,\tilde{C}_2$ that depend on $C_1,C_2,d$ only such that 
		\begin{align*}
			\tilde{C}_1 \ell^{-p/(d+p/q)} \leq k_\ell\leq \tilde{C_2} \ell^{-p/(d+p/q)},\qquad \ell\in \NN.
		\end{align*}
Furthermore, if $v_1,v_2,\ldots$ are the values of the multi-indexed array  $\{\nu_{i,k}: i \in \NN^d,k\in\NN\}$
in corresponding order, then $\|v\|_{\Hil^\b}^2 =\sum_{\ell=1}^\infty\ell^{2\b/(d+1)}v_\ell^2$ satisfies
\begin{align*}
	\|v\|_{\Hil^\b}^2 \asymp \sum_{i\in \NN^d,k\in\NN} \bigl(\|i\|+k^{q/p}\bigr)^{2\b(d+p/q)/(d+1)}\nu_{i,k}^2 .
	\end{align*}
\end{lemma}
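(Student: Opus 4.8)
The plan is to follow the strategy of the proof of Lemma~\ref{lemma: sorted eigenvalues}, replacing the count of lattice points in a Euclidean ball by the count of lattice points under the ``mixed'' constraint $\|i\|^p+k^q\le R$. Let $z:\NN^d\times\NN\to\NN$ be a bijection that orders the numbers $\k_{i,k}$ by decreasing magnitude, so that $k_{z(i,k)}=\k_{i,k}$. The key estimate I would establish is that, for all large $R$,
\begin{align*}
N(R):=\#\bigl\{(i,k)\in\NN^d\times\NN:\ \|i\|^p+k^q\le R\bigr\}\asymp R^{d/p+1/q},
\end{align*}
with constants depending only on $p,q,d$. Granting this, the two-sided bound $C_1/(\|i\|^p+k^q)\le\k_{i,k}\le C_2/(\|i\|^p+k^q)$ shows, exactly as in Lemma~\ref{lemma: sorted eigenvalues}, that the set $\{(i',k'):\k_{i',k'}\ge\k_{i,k}\}$ is sandwiched between $\{\|i'\|^p+k'^q\le (C_1/C_2)(\|i\|^p+k^q)\}$ and $\{\|i'\|^p+k'^q\le (C_2/C_1)(\|i\|^p+k^q)\}$; hence $z(i,k)=\#\{(i',k'):\k_{i',k'}\ge\k_{i,k}\}\asymp (\|i\|^p+k^q)^{d/p+1/q}$. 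Combining this with $\k_{i,k}\asymp(\|i\|^p+k^q)^{-1}$ and inverting gives $k_{z(i,k)}=\k_{i,k}\asymp z(i,k)^{-1/(d/p+1/q)}=z(i,k)^{-p/(d+p/q)}$, which is the first assertion since $z$ is onto $\NN$; the finitely many small values of $\ell$ (for which $\|i\|^p+k^q$ is bounded) are absorbed into the constants using only $z(i,k)\ge 1$.

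For the counting estimate $N(R)\asymp R^{d/p+1/q}$ I would sandwich by product sets. For the upper bound, $\|i\|^p+k^q\le R$ forces $\|i\|\le R^{1/p}$ and $k\le R^{1/q}$, so $N(R)\le \#\{i\in\NN^d:\|i\|\le R^{1/p}\}\cdot\#\{k\in\NN:k\le R^{1/q}\}$, and these factors are $O(R^{d/p})$ and $O(R^{1/q})$ respectively by the Euclidean-ball count already used in the proof of Lemma~\ref{lemma: sorted eigenvalues}. For the lower bound, the product set $\{i:\|i\|^p\le R/2\}\times\{k:k^q\le R/2\}$ is contained in the region, and for large $R$ its two factors are $\gtrsim R^{d/p}$ and $\gtrsim R^{1/q}$ by the same count, so $N(R)\gtrsim R^{d/p+1/q}$. (Equivalently, one may identify $N(R)$ with the volume $\int_{\{x\in\RR^d_{\ge0},\,t\ge0:\ \|x\|^p+t^q\le R\}}dx\,dt\asymp R^{d/p+1/q}$ up to lower-order boundary terms.) For the norm equivalence, I would use the elementary comparison $a^p+b^p\asymp(a+b)^p$ for $a,b\ge0$, giving $\|i\|^p+k^q\asymp(\|i\|+k^{q/p})^p$; since $d/p+1/q=(d+p/q)/p$, the estimate above rewrites as $z(i,k)\asymp(\|i\|^p+k^q)^{(d+p/q)/p}\asymp(\|i\|+k^{q/p})^{d+p/q}$. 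Substituting $\ell=z(i,k)$ and $v_\ell=\nu_{i,k}$ into $\|v\|_{\Hil^\b}^2=\sum_\ell\ell^{2\b/(d+1)}v_\ell^2$ then gives $\ell^{2\b/(d+1)}\asymp(\|i\|+k^{q/p})^{2\b(d+p/q)/(d+1)}$, which is the claimed identity after reindexing the sum by $(i,k)$.

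The main obstacle, though a mild one, is the counting estimate $N(R)\asymp R^{d/p+1/q}$: one must check that the boundary contributions and the shift between $\NN$ and $\{0,1,2,\dots\}$ in the lattice-point counts are genuinely lower order than the main term $R^{d/p+1/q}$, uniformly in the relative sizes of $\|i\|^p$ and $k^q$. Everything else is a routine repackaging of the argument of Lemma~\ref{lemma: sorted eigenvalues}.
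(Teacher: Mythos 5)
Your proposal follows essentially the same approach as the paper's proof: both argue that the rank $z(i,k)$ of $\k_{i,k}$ in the sorted order satisfies $z(i,k)\asymp(\|i\|^p+k^q)^{d/p+1/q}$ by sandwiching the level set $\{\k_{i',k'}\ge\k_{i,k}\}$ between sublevel sets of $\|\cdot\|^p+\cdot^q$ and counting lattice points via product sets, then invert and reindex. Two cosmetic points: the paper computes the two-sided bound on $z(i,k)$ directly rather than isolating a counting function $N(R)$, and it keeps the constant strictly less than $C_1/C_2$ in the lower bound so that the counted points have $\k_{j,l}>\k_{i,k}$ (strict), which is what is needed to lower bound the rank rather than the size of the weak level set — your phrasing $z(i,k)=\#\{\k_{i',k'}\ge\k_{i,k}\}$ is not quite an identity when ties occur, though the sandwich still closes once the constant is shrunk slightly. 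Your explicit use of $\|i\|^p+k^q\asymp(\|i\|+k^{q/p})^p$ to match the stated form of the second assertion is a small step the paper leaves implicit.
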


\begin{proof}
Let $z: \NN^d\times\NN \to \NN$ be a bijection that corresponds to an ordering of the numbers $\k_{i,k}$. We show in the next paragraph
that $z(i,k)\asymp (\|i\|^p+k^q)^{d/p+1/q}$, where the multiplicative constants depend on $C_1,C_2,d$ only. 
If this is true, then $k_{z(i,k)}=\k_{i,k}\asymp (\|i\|^p+k^q)^{-1}$, by the assumption on the $\k_{i,k}$, which is $z(i,k)^{-p/(d+p/q)}$, and the first
assertion of the lemma is proved.
Furthermore $\sum_{\ell\in \NN}\ell^{2\b/(d+1)}v_\ell^2=\sum_{i\in\NN^d,k\in\NN}z(i,k)^{2\b/(d+1)}v_{z(i,k)}^2
\asymp \sum_{i\in\NN^d,\in\NN}\bigl(\|i\|^p+k^q\bigr)^{2\b(d/p+1/q)/(d+1)}\nu_{i,k}^2$,
proving the second assertion of the lemma.

It remains to prove that $z(i,k)\asymp  (\|i\|^p+k^q)^{d/p+1/q}$, which we achieve by a separate lower and upper bound.
If $(j,l)\in\NN^d\times\NN$ is such that $\|j\|^p+l^q\le d_1(\|i\|^p+k^q)$ for a sufficiently small constant $d_1>0$ that depends on $C_1,C_2$ only
($d_1<C_1/C_2$ works), then
the assumption on the numbers $\k_{i,k}$ shows that $\k_{j,l}>\k_{i,k}$ and hence $z(j,l)\le z(i,k)$. It follows that 
$z(i,k)\ge \#\bigl((j,l)\in\NN^{d+1}: \|j\|^p+l^q\le d_1(\|i\|^p+k^q)\bigr)$,
which is bounded below by $\#\bigl(j\in\NN^d: \|j\|^p\le d_1(\|i\|^p+k^q)/2\bigr)
\#\bigl(l\in\NN: l^q\le d_1(\|i\|^p+k^q)/2\bigr)$, which is of the order $(\|i\|^p+k^q)^{d/p+1/q}$
up to multiplicative constants depending on $d_1$ and $d$ only. Conversely, because every $(j,l)\in\NN^{d+1}$ with $z(j,l)\le z(i,k)$ has
$\k_{j,l}\ge \k_{i,k}$, we have $z(i,k)= \#\bigl((j,l)\in\NN^{d+1}: z(j,l)\le z(i,k)\bigr)\le  \#\bigl((j,l)\in\NN^{d+1}: \k_{j,l}\ge \k_{i,k}\bigr)$, 
which is bounded above by $\#\bigl((j,l)\in\NN^{d+1}: \|j\|^p+l^q\le d_2(\|i\|^p+k^q)\bigr)$ for  $d_2=C_2/C_1$ 
by the assumption on the numbers $\k_{i,k}$, which is bounded above by 
$\#\bigl(j\in\NN^d: \|j\|^p\le d_2(\|i\|^p+k^q)\bigr)\#\bigl(l\in\NN: l^q\le d_2(\|i\|^p+k^q)\bigr)$, 
which is bounded above by a multiple of $(\|i\|^p+k^q)^{d/p+1/q}$.
\end{proof}

\section{Complements for One-dimensional Darcy equation}
\label{AppendixOneDimensionalDarcy}
In Theorem~\ref{cor:darcy} it is assumed that $u_{f_0}'$ is bounded away from 0. This can be relaxed to the condition
$\inf_{0<x<1} \bigl(|u_{f_0}'(x)|+u_{f_0}''(x)\bigr)>0$, as imposed in \cite{Richter,Nickl23} and Section~\ref{SectionDarcy}. 
Under the latter condition any zero $x_0$ of $u_{f_0}'$ has $u_{f_0}''(x_0)>0$ and hence is a point of minimum of $u_{f_0}$. This implies
that there can be at most one zero $x_0\in(0,1)$, in which case the differential equation \eqref{EqDarcyOneDimensional} can be integrated to find
 $ (f_0u_{f_0}')(x)=\int _{x_0}^xh(s)\,ds$, for every $x\in[0,1]$, and hence 
$$f_0(x)=\begin{cases}\frac{H(x)-H(x_0)}{u_{f_0}'(x)}, &x\not=x_0,\\
\frac{h(x_0)}{u_{f_0}''(x_0)},& x=x_0.
\end{cases}$$
The value at $x_0$ follows by continuity or directly from \eqref{EqDarcyOneDimensional}.
An interesting feature is that this inversion formula requires no boundary value of $f_0$, as in Section~\ref{SectionOneDimensionalDarcy}.

To make this inversion strategy work with noisy data, we would have to replace $u_{f_0}'$ by a draw from the posterior. Because this appears
unstable, we instead define an inverse operator using the second derivative. Let $V$ be the set of continuous functions $v: [0,1]\to\RR$ such
that the map $x\mapsto \int_0^x v(s)\,ds+ g(1)-g(0)-\int_0^1\int_0^tv(s)\,ds\,dt$ possesses a single zero at a point $x_v\in(0,1)$, with $v(x_v)>0$,
and define a map $e: V\to L_2(0,1)$ by
\begin{equation}
\label{EqInversionMapOneDimensionalDarcyWithZero}
e(v)=\begin{cases} \frac{H(x)-H(x_v)}{\int_{x_v}^x v(s)\,ds},& \text{if } x\not= x_v,\\
\frac{h(x_v)}{v(x_v)},& \text{if } x=x_v.\end{cases}
\end{equation}
(The function $v$ in this definition stands for an estimate of $u''$ and the constant $b=g(1)-g(0)-\int_0^1\int_0^tv(s)\,ds\,dt$
in its primitive function $x\mapsto \int_0^x v(s)\,ds+ b$ is chosen so that the double primitive function 
$x\mapsto u(x):=\int_0^x\int_0^t v(s)\,ds+g(0)+ bx$ 
satisfies the boundary conditions of \eqref{EqDarcyOneDimensional} at $x\in\{0,1\}$.)

\begin{lemma}
Suppose that the solution $u_{f_0}\in C^2[0,1]$ to \eqref{EqDarcyOneDimensional} for given $f_0$ 
satisfies  $\inf_{0<x<1} \bigl(|u_{f_0}'(x)|+u_{f_0}''(x)\bigr)>0$ with $u_{f_0}'(x_0)=0$ for $x_0\in(0,1)$.
Then there exists $\eta>0$ so that any continuously differentiable $v$ with $\|v-u_{f_0}''\|_\infty<\eta$ 
is contained in $V$, and the function $f=e(v)$ satisfies $u_f''=v$ with
$\|f-f_0\|_{L^2(0,1)}\le c_0 \|v-u_{f_0}''\|_{L^2(0,1)}$, for a constant $c_0$ that depends on $f_0$ only.
\end{lemma}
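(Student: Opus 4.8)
The plan is to read off the stability from the explicit primitive construction underlying the sets $V$ and the map $e$, and then to kill the apparent singularity of \eqref{EqInversionMapOneDimensionalDarcyWithZero} at the zero of $u'$ by means of Hardy's inequality. To a given $v\in C^1[0,1]$ I would attach $b_v=g(1)-g(0)-\int_0^1\!\int_0^tv(s)\,ds\,dt$, the primitive $w_v(x)=\int_0^xv(s)\,ds+b_v$ (the function appearing in the definition of $V$) and the double primitive $u_v(x)=\int_0^x\!\int_0^tv(s)\,ds\,dt+g(0)+b_vx$, so that $u_v''=v$, $u_v'=w_v$ and $u_v=g$ at $\{0,1\}$; taking $v=u_{f_0}''$ returns $u_v=u_{f_0}$ and $w_v=u_{f_0}'$, and a Cauchy--Schwarz estimate gives $\|w_v-u_{f_0}'\|_\infty\le2\|v-u_{f_0}''\|_{L^2}$. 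With $c_1:=\inf_{0<x<1}(|u_{f_0}'|+u_{f_0}'')>0$ the interior zero $x_0$ of $u_{f_0}'$ has $u_{f_0}''(x_0)\ge c_1$, so one may fix $\delta>0$ with $u_{f_0}''\ge c_1/2$ on $[x_0-\delta,x_0+\delta]$ and, since $u_{f_0}'$ has no other zero on $[0,1]$, $\inf_{|x-x_0|\ge\delta}|u_{f_0}'|=:\mu>0$. Then for $\|v-u_{f_0}''\|_\infty<\eta=\eta(f_0)$ small enough, $w_v$ agrees in sign with $u_{f_0}'$ outside $[x_0-\delta,x_0+\delta]$ and is strictly increasing on it (as $w_v'=v\ge c_1/4>0$ there), hence has a single zero $x_v\in(x_0-\delta,x_0+\delta)\subset(0,1)$ with $v(x_v)\ge c_1/4>0$ and $|x_v-x_0|\le2c_2^{-1}\|v-u_{f_0}''\|_{L^2}$, and one obtains uniform lower bounds $|u_{f_0}'(x)|\ge c_2|x-x_0|$ and $|w_v(x)|\ge c_3|x-x_v|$ on $[0,1]$ with $c_2,c_3>0$ depending only on $f_0$. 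In particular $v\in V$.

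Next I would check that $u_f=u_v$. For $f:=e(v)$, since $w_v(x)=\int_{x_v}^xv$, formula \eqref{EqInversionMapOneDimensionalDarcyWithZero} gives $f(x)w_v(x)=H(x)-H(x_v)$ for every $x$ (both sides vanishing at $x=x_v$), hence $(f\,u_v')'=h$ on $(0,1)$ while $u_v=g$ at $\{0,1\}$; so $u_v$ solves \eqref{EqDarcyOneDimensional} with coefficient $f$, i.e.\ $u_f=u_v$ and $u_f''=v$, and $|f|\le\|h\|_\infty/c_3$, so $f\in L^2$. For the stability bound put $\rho:=\|v-u_{f_0}''\|_{L^2}$ and $E(x):=\int_{x_v}^x(v-u_{f_0}'')(s)\,ds$. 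Combining $f w_v=H-H(x_v)$, $f_0u_{f_0}'=H-H(x_0)$ (valid since $u_{f_0}'(x_0)=0$), the constancy of $fw_v-f_0u_{f_0}'$, and $w_v-u_{f_0}'=E-u_{f_0}'(x_v)$, a short algebraic manipulation produces
\[
f(x)-f_0(x)=-\,f_0(x)\,\frac{E(x)}{w_v(x)}+u_{f_0}'(x_v)\,\frac{f_0(x)-f_0(x_v)}{w_v(x)} .
\]

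Both quotients have denominator $|w_v(x)|\ge c_3|x-x_v|$, and both numerators are running integrals issued from $x_v$: $E(x)=\int_{x_v}^x(v-u_{f_0}'')$ and $f_0(x)-f_0(x_v)=\int_{x_v}^x f_0'$. Applying Hardy's inequality on $(0,x_v)$ and on $(x_v,1)$ separately therefore gives $\int_0^1\!\big(E(x)/(x-x_v)\big)^2\,dx\le4\rho^2$ and $\int_0^1\!\big((f_0(x)-f_0(x_v))/(x-x_v)\big)^2\,dx\le4\|f_0'\|_{L^2}^2$; together with $|x-x_v|/|w_v(x)|\le c_3^{-1}$, $\|f_0\|_\infty<\infty$ and $|u_{f_0}'(x_v)|=|u_{f_0}'(x_v)-w_v(x_v)|\le2\rho$, the $L^2$-norm of each of the two terms is bounded by a constant multiple of $\rho$. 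This yields $\|f-f_0\|_{L^2}\le c_0\rho$ with $c_0$ depending on $f_0$ only (through $c_2,c_3,\|f_0\|_\infty,\|f_0'\|_{L^2}$).

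The hard part is precisely this last step. Naively, $f=e(v)$ carries the factor $1/w_v$, which blows up at the zero $x_v$ of $u_f'$, and a pointwise or Cauchy--Schwarz treatment of $E(x)$ near $x_v$ only delivers $\|f-f_0\|_{L^2}\lesssim\rho\sqrt{\log(1/\rho)}$. The point is to recognise $E(x)/(x-x_v)$ — and likewise $(f_0(x)-f_0(x_v))/(x-x_v)$ — as a Hardy average and invoke Hardy's inequality, which transfers $\|v-u_{f_0}''\|_{L^2}$ (respectively $\|f_0'\|_{L^2}$) into the $L^2$-size of the singular term without any logarithmic loss; this is where one uses $f_0\in H^1(0,1)$, which holds under the standing smoothness hypotheses. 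The remaining, perturbative ingredients of the first step — existence and uniqueness of $x_v$ near $x_0$ and the uniform lower bounds $|u_{f_0}'|\gtrsim|x-x_0|$, $|w_v|\gtrsim|x-x_v|$ — are routine once the sign pattern and compactness of $u_{f_0}'$ are exploited, the only genuinely extra (but mild) assumption being that $u_{f_0}'$ has no zero on $[0,1]$ beside $x_0$, which is automatic when $f_0$ is bounded.
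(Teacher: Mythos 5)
Your proof is correct, and the key step — the stability estimate $\|f-f_0\|_{L^2}\lesssim\|v-u_{f_0}''\|_{L^2}$ — follows a genuinely different route than the paper's. The paper establishes that $f=e(v)\in C^1[0,1]$ (via a Taylor expansion at $x_v$), observes that the influx boundary is empty because $u_f'(0)<0<u_f'(1)$, and then simply cites the one-dimensional specialization of Lemma~\ref{LemmaDarcyStable}, whose proof is an energy argument via Green's identity for the divergence-form equation. You instead derive the exact algebraic identity
\[
f(x)-f_0(x)=-f_0(x)\,\frac{E(x)}{w_v(x)}+u_{f_0}'(x_v)\,\frac{f_0(x)-f_0(x_v)}{w_v(x)},\qquad E(x)=\int_{x_v}^x (v-u_{f_0}''),
\]
and control the two apparently singular quotients by Hardy's inequality (applied on either side of $x_v$), together with the elementary lower bound $|w_v(x)|\gtrsim|x-x_v|$ and the smallness $|u_{f_0}'(x_v)|\le\|u_{f_0}'-w_v\|_\infty\lesssim\rho$. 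This is a self-contained, one-dimensional computation that sidesteps the PDE machinery entirely; the paper's route has the virtue of reusing the general multi-dimensional stability lemma and thus exposes the 1D result as a special case. Both deliver the sharp constant without the logarithmic loss you correctly note would arise from a naive Cauchy--Schwarz treatment near $x_v$. One small bookkeeping remark: your constant $c_0$ involves $\|f_0\|_\infty$ and $\|f_0'\|_{L^2}$, which are finite under the hypotheses since the paper's Taylor argument shows $f_0\in C^1[0,1]$; you tacitly use $f_0\in H^1$, and you should record that this follows from the standing assumptions rather than being an extra hypothesis.
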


\begin{proof}
Define $u'(x)=b+\int_0^xv(s)\,ds$ and $u(x)=a+bx+\int_0^x\int_0^t v(s)\,ds\,dt$, where the constants $a$ and $b$ are determined
so that $u(0)=g(0)$ and $u(1)=g(1)$, i.e.\ $a=g(0)$ and $b=g(1)-g(0)-\int_0^1\int_0^tv(s)\,ds\,dt$. Because $u_{f_0}$ satisfies
the same boundary conditions, it follows that $u_{f_0}'(x)=b_0+\int_0^xv_0(s)\,ds$ and $u_{f_0}(x)=a_0+b_0x+\int_0^x\int_0^t v_0(s)\,ds\,dt$,
for $v_0=u_{f_0}''$, where $a_0=g(0)$ and $b_0=g(1)-g(0)-\int_0^1\int_0^tv_0(s)\,ds\,dt$. 
We conclude that $\|u'-u_{f_0}'\|_\infty\le 2 \|v-v_0\|_{L^2(0,1)}$.

Because any zero of $u_{f_0}'$ is a point of minimum of $u_{f_0}$, the point $x_0$ is a unique zero of $u_{f_0}'$,
with $u_{f_0}''(x_0)>0$. It follows that there exist
positive constants $\d$ and $c$ such that 
$$\sup_{0\le x\le x_0-\d}u_{f_0}'(x)\le -c,\qquad \inf_{x_0+\d\le x\le 1}u_{f_0}'(x)\ge c,
\qquad \inf_{x_0-\d\le x\le x_0+\d}u_{f_0}''(x)\ge c.$$
If $\|v-v_0\|_\infty<c/2$, then $\|u'-u_{f_0}'\|_\infty<c$ and hence $u'$ is negative on $[0,x_0-\d]$ 
and  positive on $[x_0+\d,1]$, and
$u''=v$ is positive on $[x_0-\d,x_0+\d]$. This implies that $u'$ has a unique zero $x_v$ in $[0,1]$, contained in $[x_0-\d,x_0+\d]$ and with $v(x_v)=u''(x_v)>0$.

Define $f(x)=\bigl(H(x)-H(x_v)\bigr)/u'(x)$, for $x\not=x_v$ and $f(x_v)=h(x_v)/u''(x_v)$. This implies that $u'f=H-H(x_v)$ and hence
$(u'f)'=h$.
Because $u$ satisfies the boundary conditions at $x\in\{0,1\}$ by construction, it follows that $u=u_f$ solves \eqref{EqDarcyOneDimensional}.
Since $u'(x)=\int_{x_v}^xv(s)\,ds$, it follows that $f=e(v)$. 

It is immediate from its definition that the function $f$ is continuously differentiable at every $x\not= x_v$.
Since $v$ is continuously differentiable, the function $u$ is three times continuously differentiable. By Taylor expansion it can be seen
that, as $x\rightarrow x_v$,
\begin{align*}
\frac{H(x)-H(x_v)}{u'(x)}&=\frac{h(x_v)}{u''(x_v)}+(x-x_v)\Bigl(\frac{h'(x_v)}{2u''(x_v)}-\frac{u'''(x_v)h(x_v)}{2u''(x_v)^2}\Bigr)+o(x-x_v),\\
f'(x)&=\frac{u'(x)h(x)-\bigl(H(x)-H(x_v)\bigr)u''(x)}{u'(x)^2}\ra \frac{h'(x_v)}{2u''(x_v)}-\frac{u'''(x_v)h(x_v)}{2u''(x_v)^2}.
\end{align*}
It follows that $f$ is continuously differentiable throughout $[0,1]$.

The inequality $\|f-f_0\|_{L^2(0,1)}\le c_0 \|v-u_{f_0}''\|_{L^2(0,1)}$ is the one-dimensional case of Lemma~\ref{LemmaDarcyStable}, where
the influx boundary is empty, as $u'(0)<0$ and $u'(1)>0$, and $\|f'\|_\infty$ and $\|f\|_\infty$ are finite.
\end{proof}

The solution map \eqref{EqInversionMapOneDimensionalDarcyWithZero} allows to transform a posterior distribution for $v=u_f''$
into a posterior distribution of $f$. By the preceding lemma this map preserves the $L_2$-contraction rate of $v$, provided
it can be ascertained that the posterior distribution of $v$ concentrates on sufficiently small uniform balls around $v_0=u_{f_0}''$.
The latter can be ascertained by supremum norm consistency of the posterior of $u_f''$ to $u_{f_0}''$, similarly to Theorem~\ref{cor:darcy}. 
The resulting rate is $n^{-(\a \wedge \b)/(1+2\a + 4)}$ if $u_{f_0}''$ is smooth of order $\b$. Uniform consistency can be guaranteed 
if $\b>1/2$. The contraction rate is slower compared to  Theorem~\ref{cor:darcy}, which may be the result 
of the milder assumptions on $u_{f_0}'$.

\section{Complements for Darcy equation}
\label{SectionComplementsDarcy}

\begin{proof}[Proof of of Lemma~\ref{LemmaDarcyStable}]
If $u_f$ and $u_{f_0}$ satisfy the partial differential equation \eqref{EqDarcy} with the same function $h$, then
 $\nabla\cdot(f\nabla u_f)= \nabla\cdot(f_0\nabla u_{f_0})$. Subtracting $\nabla\cdot (f_0\nabla u_f)$ on both sides yields the equation
$\nabla\cdot\bigl((f-f_0)\nabla u_f\bigr)=- \nabla\cdot\bigl(f_0\nabla(u_{f}- u_{f_0})\bigr)$. We multiply
both sides of this equation by $(f-f_0)e^{-u_f}$ and next integrate over $\O$. 

On the right side we obtain, with $dx$ Lebesgue measure,
$$-\int _\O\bigl(\nabla f_0\cdot\nabla(u_f-u_{f_0})+f_0\Delta (u_f-u_{f_0})\bigr)(f-f_0)e^{- u_f}\,dx.$$
We bound $\nabla f_0$, $f_0$ and $e^{-u_f}$ by their supremum norms, and next use the Cauchy-Schwarz inequality
to see that this is bounded in absolute value by the first term on the right side of the lemma times $e^{\|u_f\|_\infty}/2$.

On the left side we obtain, with $k=f-f_0$,
\begin{align*}
&\int_\O \bigl(\nabla k\cdot\nabla u_f +k\Delta u_f\bigr) ke^{- u_f}\,dx
=\int_\O \nabla (\thalf k^2)\cdot \nabla (-e^{- u_f})\,dx+\int_\O k^2(\Delta u_f) e^{- u_f}\,dx\\
&\qquad=\int_\O \thalf k^2\,\Delta (e^{- u_f})\,dx+
\int_{\partial\O} \thalf k^2\Bigl(\frac{\partial (-e^{- u_f})}{\partial \vec n}\Bigr)\,dS+\int_\O k^2(\Delta u_f) e^{- u_f}\,dx\\
&\qquad=\int_\O \thalf k^2\bigl(\|\nabla u_f\|^2+\Delta u_f\bigr)e^{- u_f}\,dx+
\int_{\partial\O} \thalf k^2e^{-u_f}\bigl(\nabla u_f\cdot \vec n\bigr)\,dS,
\end{align*}
where in the second equality we used Green's formula on the first term  (partial integration, e.g.\ \cite{Evans10}, Theorem~3 in Appendix~C2,
with $\partial/\partial\vec n$ 
the directional derivative in the direction of the outer normal vector $\vec n$ on $\partial \O$) 
and in the last equality the identity 
$\Delta (e^{- u})= e^{-u}\bigl(\|\nabla u\|^2-\Delta u\bigr)$. The first term on the far right is bounded below
by $\thalf \|k\|_{L_2(\O)}^2 C(u_f) e^{-\|u_f\|_\infty}$. The integrand in the second term is bounded below by zero 
on the complement of $(\partial \O)_{1,f}$, by the definition of the influx boundary, 
and bounded below by $-\thalf h^2 e^{\|u_f\|_\infty}\|\nabla u_f\|_\infty$ everywhere, so that 
the surface integral is bounded below by minus the second term on the right of the bound in the lemma times $e^{\|u_f\|_\infty}$.
Combining this with the preceding paragraph and rearranging the terms gives the bound of the lemma.

Because by construction the function $u_f-u_{f_0}$ vanishes on the boundary of $\O$, its Sobolev
norm on the right is equivalent to the $L_2(\O)$-norm of its Laplacian $\Delta (u_f-u_{f_0})$ (e.g.\ \cite{Lions1972}, Theorem~II.5.4 or
\cite{Nickl23}, (A.29)). 

If $f=f_0$ on $\partial \O$, then the second term on the right in the first bound of the lemma vanishes, and the bound reduces
to \eqref{EqRichardDarcy}, with the multiplicative constant $2\|f_0\|_{C^1(\O)}$. The assumption that $f,f_0\in \Sob^\b(\O)\subset C^1(\O)$ 
is sufficient for the existence of $u_f$ and $u_{f_0}$ in $C^2(\O)$ (see \cite{Nickl23}, Proposition~6.1.5).
\end{proof}

\begin{proof}[Proof of Lemma~\ref{LemmaDiscretisationDarcy}]
The proof consists of five steps, (i)-(v).

(i). We have $\|z_{i,j}\|\le \sqrt\d$ and
$\bigl\|{z_{i,j}}/{\|z_{i,j}\|}-{\nabla u_{i,j}}/{\|\nabla u_{i,j}\|}\bigr\|\le 2\sqrt 2\sqrt\d$.

The first assertion is immediate from the definition of $z_{i,j}$. To prove the second, let $\tilde z_{i,j}$ be as $z_{i,j}$, but without the truncation to the grid. 
Thus $\tilde z_{i,j}=\sqrt\d\,\nabla u/\|\nabla u\|$ and hence $\tilde z_{i,j}/\|\tilde z_{i,j}\|=\nabla u_{i,j}/\|\nabla u_{i,j}\|$.
Now $\bigl\|z_{i,j}/\|z_{i,j}\|-\tilde z_{i,j}/\|\tilde z_{i,j}\|\bigr\|\le 2\|z_{i,j}- \tilde z_{i,j}\|/\|\tilde z_{i,j}\|$, for any vectors $z,\tilde z$,
which is bounded above by $2\sqrt 2\d/\sqrt\d$ in the present case. The possible redefinition of $z_{i,j}$ if $x_{k,l}$ is a boundary point,
does not change the direction of $z_{i,j}$ and hence does not affect $z_{i,j}/\|z_{i,j}\|$.

(ii). If $\|\nabla u_{i,j}\|\ge \sqrt\d$, then $u_{i,j}-u_{k,l}\ge \|z_{i,j}\|\bigl(\|\nabla u_{i,j}\|(1-4\d)-\|\nabla u\|_{\text{Lip}}\sqrt \d\bigr)$.
In particular, $u_{i,j}>u_{k,l}$, for all sufficiently small $\d>0$.
 
The left side $u_{i,j}-u_{k,l}=u(x_{i,j})-u(x_{k,l})$ is equal to
\begin{align*}
&\int_0^1 \nabla u(x_{i,j}-s z_{i,j})\cdot z_{i,j}\,ds
\ge \nabla u(x_{i,j})\cdot z_{i,j}-\|\nabla u\|_{\text{Lip}}\|z_{i,j}\|^2\\
&\qquad\qquad=\|\nabla u_{i,j}\|\,\| z_{i,j}\|\Bigl(1-\thalf \Bigl\|\frac{z_{i,j}}{\|z_{i,j}\|}-\frac{\nabla u_{i,j}}{\|\nabla u_{i,j}\|}\Bigr\|^2\Bigr)-\|\nabla u\|_{\text{Lip}}\|z_{i,j}\|^2,
\end{align*}
since $x\cdot y=1-\|x-y\|^2/2$, for $x,y$ with $\|x\|=\|y\|=1$. Next we apply (i).

(iii). For $2D(u)=\min_{x\in\O} (\Delta u+\|\nabla u\|)(x)$ and $\|f\|_{\partial_d\O_{1,u}}=\max_{x\in\partial_d\O_{1,u}} |f(x)|$, we have,
for $D(u)(1-4\d)-\|\nabla u\|_{\text{Lip}}\sqrt\d>0$,
$$\max_{i,j}|\a_{i,j}|\le \Bigl(\frac{\|g\|_\infty}{D(u)}\vee \|f\|_{\partial_d\O_{1,u}}\vee \sqrt\d\Bigr)\,\exp\Bigl(\frac{{3(\max u-\min u)}}
{D(u)(1-4\d)-\|\nabla u\|_{\text{Lip}}\sqrt\d}\Bigr).$$

For the proof consider three cases.

If $\|\nabla u_{i,j}\|<\sqrt\d$, then $\Delta u_{i,j}\ge 2D(u)-\sqrt\d>0$ and $|\a_{i,j}|=|g_{i,j}|/\Delta u_{i,j}$ by the first case of \eqref{eq:sol:discrete}, and the bound 
follows.

If $x_{i,j}\in \partial_d\O_{1,u}$, then $\a_{i,j}$ is set equal to $f(x_{i,j})$ and the bound is immediate.

If $\|\nabla u_{i,j}\|\ge\sqrt\d$, then $\a_{i,j}$ is defined by the second case of \eqref{eq:sol:discrete} as a recursion on $\a_{k,l}$. 
If $\Delta u_{i,j}>D(u)$, then the inequality $(a+b)/(c+d)\le (a/c)\vee (b/d)$, valid for any nonnegative numbers $a,b$ and positive numbers $c,d$, gives that
$$|\a_{i,j}|\le \frac{\|g\|_\infty}{D(u)}\vee |\a_{k,l}|.$$
If $\Delta u_{i,j}\le D(u)$, then $\|\nabla u_{i,j}\|\ge D(u)$. Since the definition of $D(u)$ gives that always 
$\Delta u_{i,j}+\|\nabla u_{i,j}\|/\|z_{i,j}\|\ge D(u)+\|\nabla u_{i,j}\|(1/\|z_{i,j}\| -1)$,
definition \eqref{eq:sol:discrete} gives
\begin{align*}
|\a_{i,j}|&\le \frac{\|g\|_\infty+|\a_{k,l}|\,\|\nabla u_{i,j}\|/\|z_{i,j}\|}{\|\nabla u_{i,j}\|(1/\|z_{i,j}\|-1)}
\le \frac{\|g\|_\infty}{D(u)\|(1/\|z_{i,j}\|-1)}+\frac{|\a_{k,l}|}{1-\|z_{i,j}\|}\\
&\le \Bigl(\frac{\|g\|_\infty}{D(u)}\vee |\a_{k,l}|\Bigr)\frac{\|z_{i,j}\|+1}{1-\|z_{i,j}\|}.
\end{align*}
Combining the two inequalities in the last two displays, we see that whenever $\|\nabla u_{i,j}\|\ge\sqrt\d$, then $\b_{i,j}:=\|g\|_\infty/D(u)\vee |\a_{i,j}|$
satisfies $\b_{i,j}\le \b_{k,l}\,r_{i,j}$, where $r_{i,j}=1$ if $\Delta u_{i,j}>D(u)$ and $r_{i,j}=\bigl(1+\|z_{i,j}\|\bigr)/\bigl(1-\|z_{i,j}\|\bigr)$
otherwise. Iterating this from $x_{i,j}$ along its chain  of points $x_{k',l'}$ until the first point $x_{i_0,j_0}$, which is in the influx boundary or has $\|\nabla u_{i_0,j_0}\|<\sqrt\d$, we find
$$|\a_{i,j}|\le \Bigl(\frac{\|g\|_\infty}{D(u)}\vee |\a_{i_0,j_0}|\Bigr)\prod_{\Delta u_{k',l'}\le D(u)} \frac{\|z_{k',l'}\|+1}{1-\|z_{k',l'}\|}
\le \Bigl(\frac{\|g\|_\infty}{D(u)}\vee |\a_{i_0,j_0}|\Bigr) e^{3\sum \|z_{k',l'}\|},$$
since $(1+u)/(1-u)\le e^{3u}$, for $u<1/2$,
where the sum in the exponent is over the vectors $z_{k',l'}$ in the chain connected to $x_{i,j}$ with $\Delta u_{k',l'}\le D(u)$.
Since these vectors satisfy $\|\nabla u_{k',l'}\|\ge D(u)$, by (ii) this sum times 
$D(u)(1-4\d)-\|\nabla u\|_{\text{Lip}}\sqrt \d$ is bounded above by $\sum (u_{i',j'}-u_{k',l'})\le \max u-\min u$.

(iv). If $f\in C^1([0,1]^2)$, then $\max_{i,j} \bigl| (\L_f^\d u)_{i,j}-g_{i,j}\bigr|\le c\d^{\eta/2}$, for
$c=\|\nabla f\|_\infty(1+2\sqrt2\|\nabla u\|_\infty+\|\nabla u\|_{\text{Lip}})+\|g-f\Delta u\|_{\text{Lip}}$.

If $\|\nabla u_{i,j}\|<\sqrt\d$, then the first case of \eqref{EqDefLad} gives $(\L_f^\d u)_{i,j}=f_{i,j} \Delta u_{i,j}$, while $g_{i,j}=\nabla f_{i,j}\cdot \nabla u_{i,j}+f_{i,j}\Delta u_{i,j}$, by the
Darcy equation, so that the assertion is true, with multiplicative constant $c=\|\nabla f\|_\infty$ (and $\eta=1$).

If $\|\nabla u_{i,j}\|\ge\sqrt\d$, then the second case  of \eqref{EqDefLad} gives $(\L_f^\d u)_{i,j}=\bigl(\|\nabla u_{i,j}\|/\|z_{i,j}\|\bigr)(f_{i,j}-f_{k,l})+f_{i,j} \Delta u_{i,j}$.
Here
\begin{align*}
f_{i,j}-f_{k,l}&=f(x_{i,j})-f(x_{i,j}-z_{i,j})=\int_0^1\nabla f(x_{i,j}-sz_{i,j})\cdot z_{i,j}\,ds\\
&=\frac{\|z_{i,j}\|}{\|\nabla u_{i,j}\|}\int_0^1\nabla f(x_{i,j}-sz_{i,j})\cdot \nabla u_{i,j}\,ds+ R_{i,j},
\end{align*}
where $R_{i,j}\le \|\nabla f\|_\infty\|z_{i,j}\|2\sqrt 2\sqrt\d$, in view of (i). We can replace $\nabla u_{i,j}$ in the integral
by $\nabla u(x_{i,j}-sz_{i,j})$ at the cost of increasing the remainder $R_{i,j}$ by at most $ \|\nabla f\|_\infty\|z_{i,j}\|^2\|\nabla u\|_{\text{Lip}}/\|\nabla u_{i,j}\|$,
and next use the Darcy equation to rewrite $\nabla f(x_{i,j}-sz_{i,j})\cdot \nabla u(x_{i,j}-sz_{i,j})=g(x_{i,j}-sz_{i,j})-f(x_{i,j}-sz_{i,j})\Delta u(x_{i,j}-sz_{i,j})$,  which
is $g_{i,j}-f_{i,j}\Delta u_{i,j}$ up to an error bounded by $\|z_{i,j}\|^\eta\|g-f\Delta u\|_{C^\eta}$.
We thus obtain
\begin{align*}
&\Bigl|\frac{\|\nabla u_{i,j}\|}{\|z_{i,j}\|}(f_{i,j}-f_{k,l}) -( g_{i,j}-f_{i,j}\Delta u_{i,j})\Bigr|\\
&\qquad\le \|\nabla f\|_\infty\|\nabla u_{i,j}  \|2\sqrt 2\sqrt\d
+\|\nabla f\|_\infty \|\nabla u\|_{\text{Lip}}\|z_{i,j}\| +\|z_{i,j}\|^\eta\|g-f\Delta u\|_{C^\eta}.
\end{align*}
Here $\|z_{i,j}\|\le  \sqrt\d$, by (i).

(v). By construction $(\L_\a^\d u)_{i,j}=g_{i,j}$, while $|(\L_f^\d u)_{i,j}-g_{i,j}|\le c\d^{\eta/2}$,  by (iv). Thus $(\L_{\a-f}^\d u)_{i,j}=\bar g_{i,j}$, for a
function $\bar g$ with $\|\bar g\|_\infty\le c\d^{\eta/2}$. By construction the function $\a-f$ vanishes on the influx boundary. 
Then the analogue of (iii), applied to $\a-f$ instead of $\a$ and with $\|g\|_\infty\le c\d^{\eta/2}$ and $\|f\|_{\partial_d\O_{1,u}}=0$, gives the result of the lemma.
\end{proof}

\section{Smoothness norm contraction rates in the Gaussian white noise linear inverse problems}\label{sec:contract:smoothness:GWN}
In this section we extend the results of \cite{Yan2020} to give rates of contraction relative to 
smoothness norms in the context of the Gaussian white noise linear inverse problem \eqref{def:model:linear}. We obtain general results for an abstract scale of spaces of numerical sequences. By taking these sequences equal to the coefficients of functions relative to a suitable basis, 
the scale may be identified with concrete function spaces.
For instance, for a suitable wavelet basis the scale can (partially) be identified with 
the scale of Sobolev spaces and the general result will give contraction relative to Sobolev norms of positive order. 
By Sobolev embedding this implies consistency (at a rate) for other norms such as the uniform norm.
Alternatively, the sequences can be taken equal to coefficients relative to the eigenbasis
of an operator of interest, giving a contraction rate to a norm intrinsic to the problem.

\subsection{Smoothness scale}
Fix a sequence $1\le \r_1\le\r_2\le\ldots\uparrow\infty$, and  for $s\in \RR$ consider the Hilbert spaces
$$\Hil^s=\bigl\{g=(g_i): \NN\to \RR : \sum_{i\in \NN} \r_i^{2s}g_i^2<\infty\bigr\},$$
with inner product and norm defined by 
$$\langle g, h\rangle_{\Hil^s}=\sum_{i\in \NN} \r_i^{2s} g_i h_i,\qquad 
\|g\|_{\Hil^{s}}=\sqrt{\sum_{i\in \NN} \r_i^{2s}g_i^2}.$$
The spaces are nested: if $s\ge t$, then $\Hil^s\subset \Hil^t$. By the Cauchy-Schwarz inequality
the inner product and norm satisfy, for $s\ge 0$ and every $t\in \RR$,
\begin{equation}
	\label{EqDualNormIdentity}
	\|g\|_{\Hil^{t-s}}=\sup_{g\in \Hil^{t+s}: \|g\|_{\Hil^{t+s}}\le 1}\langle g,h\rangle_{\Hil^t},\qquad g\in \Hil^{t-s}.
\end{equation}
The map defined by $(C_sg)_i=\r_i^{s}g_i$ is an isometry $C_s: \Hil^{t+s}\to \Hil^t$, and 
$\langle g, h\rangle_{\Hil^s}=\langle C_sg, C_sh\rangle _{\Hil^0}$. The subspace $V_j=\{g\in \Hil_0: g_i=0\text{ for } i>j\}$
has the properties, for $s<t$, for $P_j$ the orthogonal projection onto $V_j$,
\begin{align*}
	\|g-P_jg\|_{\Hil^{s}}^2&=\sum_{i>j}\r_i^{2s}g_i^2\le \r_j^{-2(t-s)}\|g\|_{\Hil^t}^2,\\
	\|g\|_{\Hil^t}^2&\le \r_j^{2(t-s)}\|g\|_{\Hil^{s}}^2,\qquad\text{ if } g\in V_j.
\end{align*}
Thus it satisfies the conditions of a smoothness scale as in \cite{Yan2020}.

We consider a prior distribution on the sequences given as the law of the random
sequence $\GG=\bigl( \r_i^{-\a}i^{-1/2}\Ccor_i  Z_i\bigr)$,  for $Z_i\iid N(0,1)$ and a sequence $(\Ccor_i)$
such that $c_\pi\le \Ccor_i\le C_\pi$, for some constants $0<c_\pi<C_\pi$, for all $i$. Then 
$$\E\|\GG\|_{\Hil^{s}}^2=\sum_{i\in\NN} \r_i^{-2(\a-s)}i^{-1}\Ccor_i^2.$$
If $\r_i\gtrsim i^r$, for some $r>0$, then this is finite for every  $s<\a$, and hence the prior concentrates
on the space $\Hil^s$, for $s<\a$: it is regular of nearly order $\a$.

\begin{example}
\label{ExampleWaveletOrdering}
	Given a wavelet basis $\{\psi_{j,k}: j\in \NN_0, k=1,\ldots, 2^{jd}\}$ for functions on a bounded domain in $\RR^d$,
	we can consider the sequences of coefficients $(h_{j,k})$ of expansions of given functions $h$ in the basis, linearly ordered
	by level and location as $h_{0,1}, h_{1,1},\ldots,h_{1,2^d}, h_{2,1},\ldots,h_{2,2^{2d}}, h_{3,1},\ldots$. 
	The ordering defines the map
	$$(j,k)\mapsto i:=1+2^d+\cdots+2^{(j-1)d}+k=\frac{2^{jd}-1}{2^d-1}+k.$$ 
	Set $\r_i=2^j$ and $\bar h_i=h_{j,k}$ if $i\in\NN$ is the image of $(j,k)$ under this map. Then the square norm
	$\sum_j\sum_k 2^{js}h_{j,k}^2$ is equal to $\sum_i \r_i^{2s}\bar h_i^2$ and hence the smoothness
	space with norm the square root of $\sum_j\sum_k 2^{js}h_{j,k}^2$ corresponds to the space $\Hil^s$.
	
	The wavelet basis can be chosen so that the scale of spaces $\Hil^s$, with $s$ in a bounded interval,
	corresponds (partially) to the scale of Sobolev spaces.
	
	By the preceding display $i\asymp 2^{jd}$, as $j\ra\infty$, and hence
	$\r_i\asymp  i^{1/d}$. The Gaussian prior $\sum_j\sum_k 2^{-j(\a+d/2)}Z_{j,k}\psi_{j,k}$, for $Z_{j,k}\iid N(0,1)$,
	in the doubly indexed sequence space,
	corresponds to the Gaussian prior $\bigl( \r_i^{-\a-d/2}\bar Z_i\bigr)$, for $\bar Z_i\iid N(0,1)$ in the sequence space.
	This is the prior $\GG$ for $\r_i^{-d/2}=i^{-1/2}\Ccor_i$, i.e.\ the constants $\Ccor_i$ account for the discrepancy
	between $i^{1/d}$ and $\r_i$.
\end{example}


From now on we  consider a sequence
$\r_i$ such that $c_0 i^{1/d}\le \r_i\le C_0 i^{1/d}$, for constants $C_0\ge c_0>0$ and some $d>0$.
The power $1/d$ is included so that the index $s$ of $\Hil^s$ can be interpreted as ``smoothness''
of a function on a $d$-dimensional domain, but is irrelevant for the results. On the other hand,
the polynomial increase of $\r_i$ will be important for the form of the results. Allowing
$\r_i\asymp i^{1/d}$ rather than setting exactly $\r_i=i^{1/d}$ and including the constants $\Ccor_i$ in
the prior ensure that the construction applies to the ``exact'' weights $2^j$ in the wavelet basis, but
is inessential for the results.

\subsection{Inverse problem}
Let $L$ be an arbitrary separable 
Hilbert space, with inner product $\langle\cdot,\cdot\rangle_L$  and norm $\|\cdot\|_L$.
Let $K: \Hil^0\to L$ be a linear operator such that, for some given $p\ge 0$ and constants $C\ge c>0$,
\begin{equation}
	\label{EqSmoothingProperty}
	c\|f\|_{\Hil^{-p}}\le \|K f\|_L\le C \|f\|_{\Hil^{-p}}, \qquad f\in \Hil^0.
\end{equation}
For a standard white noise process $\xi$ in $L$, we observe
$$Y_n=Kf+\frac 1{\sqrt n}\xi.$$
We put the $\a$-regular prior $\GG$ as indicated on $f$ and denote the corresponding
posterior distribution of $f$ by  $\Pi_n(f\in\cdot\given Y_n)$.

\begin{theorem}\label{thm: contr:diff:norm}
	Assume that $c_0 i^{1/d}\le \r_i\le C_0 i^{1/d}$, for constants $C_0\ge c_0>0$ and $d>0$.
	Let $K: \Hil^0\to L$ be a linear operator satisfying \eqref{EqSmoothingProperty} for every $f\in \Hil^0$.
	If $f_0\in \Hil^\b$ and $0<\d<\a\wedge \b$, then there exists a constant $M$ such that
	$\E_{f_0} \Pi_n\bigl(f: \|f-f_0\|_{\Hil^{\d}}\le M n^{-(\a\wedge \b-\d)/(2\a+2p+d)}\given Y_n\bigr)\ra 1$.
\end{theorem}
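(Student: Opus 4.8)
The plan is to reduce the $\Hil^{\d}$-assertion to the known $L_2$-contraction result, Theorem~6.1 of \cite{Yan2020}, by a change of scale. Recall that the map $C_{\d}$ with $(C_{\d}g)_i=\r_i^{\d}g_i$ is an isometric isomorphism $\Hil^{t+\d}\to\Hil^{t}$ for every $t\in\RR$. Put $\tilde f:=C_{\d}f$, $\tilde f_0:=C_{\d}f_0$ and $\tilde K:=KC_{\d}^{-1}=KC_{-\d}$. The observation model is unchanged, $Y_n=Kf+n^{-1/2}\xi=\tilde K\tilde f+n^{-1/2}\xi$, and from \eqref{EqSmoothingProperty} together with $\|C_{-\d}g\|_{\Hil^{-p}}=\|g\|_{\Hil^{-p-\d}}$ one gets $c\|g\|_{\Hil^{-(p+\d)}}\le\|\tilde K g\|_{L}\le C\|g\|_{\Hil^{-(p+\d)}}$ for every $g\in\Hil^{0}$, so $\tilde K$ satisfies \eqref{EqSmoothingProperty} with ill-posedness index $\tilde p:=p+\d$. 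The prior $\GG$ pushes forward under $C_{\d}$ to the law of $(\r_i^{-(\a-\d)}i^{-1/2}\Ccor_iZ_i)$, a prior of exactly the same form with regularity $\tilde\a:=\a-\d$ (and the same bounded constants $\Ccor_i$), while $f_0\in\Hil^{\b}$ is equivalent to $\tilde f_0\in\Hil^{\tilde\b}$ with $\tilde\b:=\b-\d$.

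Since $C_{\d}$ is a bijection, the posterior law of $\tilde f$ is the image of the posterior law of $f$, and $\|f-f_0\|_{\Hil^{\d}}=\|\tilde f-\tilde f_0\|_{\Hil^{0}}$; hence the assertion is equivalent to an $L_2$-contraction statement at $\tilde f_0$ for the posterior of $\tilde f$ in the transformed linear inverse problem. The hypotheses $0<\d<\a\wedge\b$ guarantee $\tilde\a>0$, $\tilde p\ge p\ge 0$ and $\tilde\b>0$, so the transformed triple meets the assumptions of Theorem~6.1 of \cite{Yan2020}, which gives contraction in $L_2$ at rate $n^{-(\tilde\a\wedge\tilde\b)/(2\tilde\a+2\tilde p+d)}$. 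Because $\tilde\a\wedge\tilde\b=(\a\wedge\b)-\d$ and $2\tilde\a+2\tilde p+d=2\a+2p+d$, this is exactly $n^{-(\a\wedge\b-\d)/(2\a+2p+d)}$, which proves the claim.

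To obtain the displayed statement with a single constant $M$ and posterior probability tending to one (rather than with $M_n\to\infty$ or in $L^1$-expectation), I would add one Gaussian-concentration step: the model is linear-Gaussian with a Gaussian prior, so the posterior of $\tilde f$ is Gaussian with a covariance operator $\tilde\Sigma_n$ not depending on the data; by Borell's inequality $\|\tilde f-\hat{\tilde f}_n\|_{\Hil^{0}}$ concentrates about its mean, of order $\sqrt{\tr\tilde\Sigma_n}$, with Gaussian tails of scale $\|\tilde\Sigma_n\|_{\mathrm{op}}^{1/2}$, both of the order of the rate, and under $\tilde f_0$ the vector $\hat{\tilde f}_n-\tilde f_0$ is Gaussian with mean and covariance of the same order and hence likewise concentrates, so the triangle inequality gives the claim for $M$ large. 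A fully self-contained alternative avoids \cite{Yan2020} altogether: work directly with $\tilde\Sigma_n=(\tilde\Lambda^{-1}+n\tilde K^{\tp}\tilde K)^{-1}$ for $\tilde\Lambda$ the diagonal prior covariance of $\tilde f$, decompose $\E_{\tilde f_0}\E[\|\tilde f-\tilde f_0\|_{\Hil^{0}}^{2}\mid Y_n]$ into posterior spread, estimation variance and squared bias, and use the operator inequalities $c^{2}C_{-2\tilde p}\preceq\tilde K^{\tp}\tilde K\preceq C^{2}C_{-2\tilde p}$ (immediate from \eqref{EqSmoothingProperty}) to sandwich $\tilde\Sigma_n$ between explicit diagonal operators, whereupon all three pieces reduce to the elementary series estimates of \cite{Knapik2011}. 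In that route the one step needing real care — which I expect to be the main obstacle — is the bias term $\|\tilde\Sigma_n\tilde\Lambda^{-1}\tilde f_0\|_{\Hil^{0}}$: the diagonal surrogates cannot be substituted termwise there, so a frequency split at the effective resolution $n^{d/(2\a+2p+d)}$ is needed. With the change-of-scale reduction above, however, this difficulty is absorbed into the cited $L_2$ result and the only point to verify carefully is that the transformed problem satisfies its hypotheses, in particular that $\d>0$ keeps $\tilde p$ nonnegative.
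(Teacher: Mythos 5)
Your change-of-scale argument is correct and takes a genuinely different route. The paper proves the $\Hil^\d$-result directly via the Galerkin-sieve machinery: an evidence lower bound, a prior-mass bound (Lemma~\ref{LemmaSmallBall}), a Borell tail estimate on $\|(R_{j_n}K-I)\GG\|_{\Hil^\d}$ used as a sieve condition (Lemma~\ref{LemmaTailBoundPriorGalerkin}), and plug-in tests built from $R_{j_n}Y_n$ at resolution $\r_{j_n}\asymp n^{1/(2\a+2p+d)}$. Your route instead exploits that the prior family is closed under the isometry $C_\d$, which shifts $(\a,\b,p)\mapsto(\a-\d,\b-\d,p+\d)$, leaves the denominator $2\a+2p+d$ and every constant in play unchanged ($\|f_0\|_{\Hil^\b}$, the $c,C$ of \eqref{EqSmoothingProperty}, the bounds on $\Ccor_i$), and turns the $\Hil^\d$-norm into $L_2$; the hypothesis $0<\d<\a\wedge\b$ ensures $\tilde\a,\tilde\b>0$ and $\tilde p>0$, so the $\d=0$ theorem applies and gives exactly $n^{-(\a\wedge\b-\d)/(2\a+2p+d)}$. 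This is shorter and conceptually cleaner. The direct proof buys self-containment and delivers the explicit sieve sets and tests that the paper reuses nearly verbatim in the discrete-observation case (Theorem~\ref{thm: contr:diff:norm:regression}); your reduction would in fact likely survive discretisation as well since $\b+p=\tilde\b+\tilde p$ is invariant so the interpolation hypothesis is preserved, so the choice is largely one of taste.

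One slip worth fixing in your concentration sketch: you assert that $\sqrt{\tr\tilde\Sigma_n}$ and $\|\tilde\Sigma_n\|_{\mathrm{op}}^{1/2}$ are both of the order of the rate. If that were literally so, Borell's inequality would only give $\Pr\bigl(\|\cdot\|>\E\|\cdot\|+(M-c_0)\e_n\bigr)\lesssim e^{-c(M-c_0)^2}$, a fixed positive constant, not $o(1)$ at fixed $M$. In fact the diagonal entries $\sigma_i^2(\a)\r_i^{2\d}$ of the posterior covariance peak near $i\asymp n^{d/(2\a+2p+d)}$, and a short calculation gives $\|\tilde\Sigma_n\|_{\mathrm{op}}^{1/2}\asymp \e_n\, n^{-d/(2(2\a+2p+d))}$, smaller than the rate by a polynomial factor (roughly the inverse root of the effective dimension). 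It is precisely this separation between the weak and strong second moments that makes the Borell tail vanish at fixed $M$; the paper uses the identical mechanism when it chooses $t^2=4n\e_n^2\r_{j_n}^{2p}/b$ in the proof of claim (4). With that correction, and noting that the cited $L_2$ theorem is already in fixed-$M$ form (it is proved by the same test machinery), your first, cleaner route is complete; your self-contained alternative would be workable but no shorter than the paper's own argument.
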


\begin{proof}
	For positive constants to  $c_1$, $c_2$ be determined later in the proof  and a sequence of constants $c_{3,n}$ set
\begin{equation}\e_n=c_1\Bigl(\frac1n\Bigr)^{\frac{\a\wedge\b+p}{2\a+2p+d}},\quad
	\h_n=c_2\Bigl(\frac1n\Bigr)^{\frac{\a\wedge\b-\d}{2\a+2p+d}},\quad
	\r_{j_n}=c_{3,n} n^{\frac 1{2\a+2p+d}}.
        \label{EqDefEnHn}
\end{equation}
	By the assumption that $\r_i\asymp i^{1/d}$ , 
	we can find $j_n\in \NN$ so that the corresponding $c_{3,n}$ is bounded from below and above by positive constants.
	
	The smoothing property of $K$ implies that $K$ is injective. Therefore its restriction $K: V_j\to L$ to the
	finite-dimensional subspace $V_j \subset \Hil^0$ is invertible on its range $KV_j$. 
	Let $R_j: L\to V_j$ be the map $R_j=K^{-1}Q_j$ formed
	by first applying the orthogonal projection $Q_j: L\to KV_j$ of the Hilbert space $L$ onto $KV_j$ and
	next applying the inverse $K^{-1}$. (The map $R_jK: \Hil^0\to V_j\subset \Hil^0$ is called the ``Galerkin
	solution'' to $Kf$.) 
	
	Let $p_f^{(n)}$ be a density of $Y_n$ relative to a suitable dominating measure.
	The proof is based on the following claims: we can choose $c_1$ and $c_2$ so that
	\begin{itemize}
		\item[(1)] The events $A_n:=\bigl\{\int p_f^{(n)}/p_{f_0}^{(n)}(Y_n)\,d\Pi(f)\ge e^{-2n\e_n^2}\bigr\}$
		satisfy $P_{f_0}^{(n)}(A_n^c)\ra 0$.
		\item[(2)] $\Pi\bigl(\|Kf-Kf_0\|_L<\e_n\bigr)\ge e^{-n\e_n^2}$.
		\item[(3)] $\Pi\bigl(\|R_{j_n} K f-f\|_{\Hil^{\d}}>\h_n\bigr)\le e^{-4n\e_n^2}$.
		\item[(4)] There exist $M>0$ and tests $\t_n$ such that $P_{f_0}^{(n)} \t_n\ra 0$ and
		$P_f^{(n)}(1-\t_n)\le e^{-4n\e_n^2}$, for every $f\in \F_n$, where
		$$\F_n=\bigl\{f: \|f-f_0\|_{\Hil^{\d}}>M\h_n,\|R_{j_n} K f-f\|_{\Hil^{\d}}\le \h_n\bigr\}.$$
	\end{itemize}
	Claim (1) is the usual evidence lower bound, 
	as in Lemma~8.21 of \cite{GhosalvdVbook2017}. Its validity  follows from the prior mass condition (2)
	and the fact that the Kullback-Leibler divergence and variation between $P_f^{(n)}$ and $P_{f_0}^{(n)}$ are
	equal to $n\|Kf-Kf_0\|_L^2/2$ and twice this quantity, respectively (e.g.\ Lemma~8.30 in the same reference
	or Lemma~9.1 in \cite{Yan2020}).
	Claim (2) is the content of Lemma~\ref{LemmaSmallBall}. Claims (3) and  (4) are proved at the end of this proof.
	
	Given claims (1)--(4) the proof of the theorem can be finished as follows.
	We bound $\Pi_n\bigl(f: \|f-f_0\|_{\Hil^{\d}}> M \h_n\given Y^{(n)}\bigr)$ by
	\begin{align*}&\t_n+ 1_{A_n^c}+\Pi_n\bigl(\|R_{j_n}Kf-f\|_{\Hil^{\d}}\ge \h_n\given Y_n\bigr)
		+\Pi_n(\F_n\given Y_n)1_{A_n}(1-\t_n).
	\end{align*}
	The expectation under $P_{f_0}^{(n)}$ of the first two terms tend to zero
	by the first part of claim (4) and claim (1). The expectation of the third term tends
	to zero by claims (3) and (2), in view of the remaining mass condition; see Theorem~8.20(iii') in 
	\cite{GhosalvdVbook2017}. To bound the fourth term, we use Bayes' formula to write
	the posterior probability of $\F_n$
	as the quotient of $\int_{\F_n} p_f^{(n)}/p_{f_0}^{(n)}(Y_n)\,d\Pi(f)$ and $\int p_f^{(n)}/p_{f_0}^{(n)}(Y_n)\,d\Pi(f)$, 
	and bound the denominator on the event $A_n$ below by $e^{-2n\e_n^2}$. This shows that the expectation of the
	fourth term is bounded above by
	$$e^{2n\e_n^2} P_{f_0}^{(n)}\int_{\F_n} \frac{p_f^{(n)}}{p_{f_0}^{(n)}}\,d\Pi(f)(1-\t_n)
	\le e^{2n\e_n^2} \sup_{f\in \F_n}P_f^{(n)}(1-\t_n).$$
	This tends to zero by the second part of (4).
	
{\sl 	Proof of claim (3).} We apply Lemma~\ref{LemmaTailBoundPriorGalerkin}
	with $j=j_n$ as specified at the beginning of the proof. 
We choose $t$ such that $bt^2j_n\r_{j_n}^{2(\a-\d)}=4n\e_n^2$, thus
reducing the right side of the lemma to $e^{-4n\e_n^2}$. We always have that
$\h_n=c_2(\r_{j_n}/c_{3,n})^{-(\a\wedge \b-\d)}\ge c_2(\r_{j_n}/c_{3,n})^{-(\a-\d)}$. Therefore it suffices to show that
also $\h_n\gtrsim t=   2\sqrt n\e_nj_n^{-1/2}\r_{j_n}^{\d-\a}/\sqrt b$. This follows, since $n\e_n^2=c_1^2n^{(2\a-2(\a\wedge\b)+d)/(2\a+2p+d)}$
and $j_n\asymp n^{d/(2\a+2p+d)}$.
	
	{\sl Proof of claim (4).} To construct the tests in (4), we note that observing $Y_n$ is equivalent to observing a Gaussian
	stochastic process $\bigl(Y_n(g): g\in L\bigr)$ with mean $\bigl(\langle Kf,g\rangle_L: g\in L\bigr)$
	and covariance function $\E Y_n(g) Y_n(h)=\langle g,h\rangle_L$. In particular, 
	for a given orthonormal basis $(\psi_i)_{i\le j}$ of $KV_j$, we observe
	the variable $\sum_{i=1}^j Y_n(\psi_i)\psi_i$, which we shall denote by 
$Q_jY_n$. (It can formally be understood to be the projection of $Y_n$ onto $KV_j$.)
It can be represented in distribution as 
	$$Q_jY_n=Q_jKf+\frac1{\sqrt n}\xi_j,$$
	for the Gaussian variable $\xi_j=\sum_{i=1}\langle \xi,\psi\rangle_L\psi$
 with mean zero and  $\E \langle \xi_j,g\rangle_L^2=\|Q_jg\|_L^2$.
	Then $R_jQ_jY_n= R_jKf+n^{-1/2}R_j \xi_j$ is a well defined variable with values in $V_j\subset \Hil^0$, with
	$R_j \xi_j$ a Gaussian random element in $V_j\subset \Hil^0$ with strong and weak second moments
	\begin{align*}
		\E\bigl\|R_j \xi_j\bigr\|_{\Hil^{0}}^2&\le \|R_j\|_{\Hil^{0}}^2\,\E\|\xi_j\|_L^2
		=\|R_j\|_{\Hil^{0}}^2\,\E\sum_{i\le j} \langle\xi_j,\psi_i\rangle_L^2=\|R_j\|_{\Hil^{0}}^2j\lesssim \r_j^{2p} j,\\
		\sup_{\|f\|_{\Hil^{0}}\le 1}\E \langle R_j \xi_j, f\rangle_{\Hil^{0}}^2
		&=\!\sup_{\|f\|_{\Hil^{0}}\le 1}\E \langle  \xi_j, R_j^*f\rangle_L^2
		=\! \sup_{\|f\|_{\Hil^{0}}\le 1}\| Q_jR_j^*f\|_L^2\le \|R_j^*\|_{\Hil^{0}}^2\lesssim \r_j^{2p}.
	\end{align*}
	In both cases the  inequality on the norms $\|R_j\|_{\Hil^{0}}=\|R_j^*\|_{\Hil^{0}}$ of the operators $R_j: L\to \Hil^0$ and
	its adjoint $R_j^*: \Hil^0\to L$ at the far right side follow from \eqref{EqGalerkinREstimate}.
	The first inequality implies that the first moment 
	$\E\bigl\|R_j \xi_j\bigr\|_{\Hil^{0}}$ of the variable $\|R_j\xi_j\|_{\Hil^{0}}$ is bounded above by $\r_j^p\sqrt j$. 
	By Borell's inequality (e.g. Lemma~3.1 in \cite{LedouxTalagrand} and subsequent discussion or
	Proposition~A.2.1 in \cite{vdVWellner2nd}),
	applied to the Gaussian random variable $R_j\xi_j$ in $\Hil^0$, we see that
	there exist constants $a,b>0$ such that, for every $t>0$,
	$$\Pr\bigl(\|R_j\xi_j\|_{\Hil^{0}}>a \r_j^{p}\sqrt j+t\bigr)\le e^{-bt^2/\r_j^{2p}}.$$
	Since $R_j\xi_j\in V_j$, we have $\|R_j\xi_j\|_{\Hil^{\d}}\le \r_j^{\d}\|R_j\xi_j\|_{\Hil^{0}}$ and hence
\begin{equation}
\Pr\bigl(\|R_j\xi_j\|_{\Hil^{\d}}>a \r_j^{p+\d}\sqrt j+\r_j^\d t\bigr)\le e^{-bt^2/\r_j^{2p}}.
\label{EqBorelGeneral}
\end{equation}
	Choose $j=j_n$ and $t^2=4n\e_n^2\r_{j_n}^{2p}/b$ to reduce the right side to $e^{-4n\e_n^2}$. Then 
	$\r_{j_n}^\d t=2\sqrt n\e_n\r_{j_n}^{p+\d}/\sqrt b\simeq\sqrt n\h_n$ and 
	$\r_{j_n}^{p+\d}\sqrt{j_n}\lesssim \sqrt n \h_n$. It follows that, for a sufficiently large constant $c_4$,
	\begin{align}
	P_f^{(n)}\bigl(\|R_{j_n}Y_n-R_{j_n}Kf\|_{\Hil^{\d}}> c_4 \h_n\bigr)\le e^{-4n\e_n^2}.\label{eq:borel:Hd}
	\end{align}
	We define
	$$\t_n=1\bigl\{\|R_{j_n}Y_n-R_{j_n}Kf_0\|_{\Hil^{\d}}> c_4 \h_n\bigr\}.$$
	It is immediate that $P_{f_0}^{(n)}\t_n\le e^{-4n\e_n^2}\ra 0$.
	Since $f_0\in \Hil^\b$ and $\d<\b$, we have 
	\begin{align}
	\|f_0-R_{j_n}Kf_0\|_{\Hil^{\d}}\le \r_{j_n}^{-(\b-\d)}\|f_0\|_{\Hil^\b}\le c_6\h_n\|f_0\|_{\Hil^\b}.\label{eq:UB:bias:test}
	\end{align}
	It follows that $\|R_{j_n}Kf-f\|_{\Hil^{\d}}<\h_n$ implies that
	$$\|R_{j_n}Kf-R_{j_n}Kf_0\|_{\Hil^{\d}}\ge\|f-f_0\|_{\Hil^{\d}}-\h_n-\h_n c_6\|f_0\|_{\Hil^\b}.$$
	If also $\|f-f_0\|_{\Hil^{\d}}>M\h_n$, then the right side is bounded below by $(M-1-c_6\|f_0\|_{\Hil^\b})\h_n$, and hence
	\begin{align*}\|R_{j_n}Y_n-R_{j_n}Kf\|_{\Hil^{\d}}
		&\ge (M-1-c-6\|f_0\|_{\Hil^\b})\h_n-\|R_{j_n}Y_n-R_{j_n}Kf_0\|_{\Hil^{\d}}.
	\end{align*}
	If $\t_n=0$, then the norm on the far right is bounded above by $c_4\h_n$. 
	It follows that $P_f^{(n)}(1-\t_n)\le \Pr\bigl(\|R_{j_n}Y_n-R_{j_n}Kf\|_{\Hil^{\d}}\ge c_5\h_n\bigr)$
	for $c_5=M-1-c_6\|f_0\|_{\Hil^\b}-c_4$, whenever $f\in\F_n$. For $M\ge 1+c_6\|f_0\|_{\Hil^\b}+2c_4$, the latter
	probability is bounded above by $e^{-4n\e_n^2}$.
\end{proof}

\begin{lemma}
	\label{LemmaGalerkin}
	If $K: \Hil^0\to L$ is a bounded linear operator satisfying \eqref{EqSmoothingProperty} for every $f\in \Hil^0$, 
	then the norms of the operators $R_j: L\to \Hil^0$ and $R_j K: \Hil^0\to \Hil^0$ and 
	$R_j K-I: \Hil^t\to \Hil^s$ satisfy
	\begin{align}
		\|R_j\|_{L\to \Hil^0} &\lesssim \r_j^p,\label{EqGalerkinREstimate}\\
		\|R_j K\|_{\Hil^s\to \Hil^s} &\lesssim 1,\qquad s\ge 0,\label{EqGalerkinRKEstimate}\\
		\|R_j K-I\|_{\Hil^t\to \Hil^s} &\lesssim \r_j^{-(t-s)},\qquad t\ge s\ge 0.\label{EqGalerkinRKminIEstimate}
	\end{align}
\end{lemma}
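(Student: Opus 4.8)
The plan is to reduce all three bounds to two elementary facts about the scale $(\Hil^s)$ that are already recorded in this section: the \emph{inverse inequality} $\|g\|_{\Hil^t}\le\r_j^{\,t-s}\|g\|_{\Hil^s}$ for $g\in V_j$, and the \emph{approximation bound} $\|g-P_jg\|_{\Hil^s}\le\r_j^{-(t-s)}\|g\|_{\Hil^t}$, both valid for any real $s\le t$ (in particular for $s=-p$, since the proofs of the displayed properties of $V_j$ use only $\r_i\le\r_j$ for $i\le j$ and $\r_i\ge\r_j$ for $i>j$). First I would record the structural facts: by the lower bound in \eqref{EqSmoothingProperty} the operator $K$ is injective, hence $K|_{V_j}:V_j\to KV_j$ is a bijection, $R_j=K^{-1}Q_j$ is well defined with range in $V_j$, and $KR_jy=Q_jy$ for every $y\in L$ because $Q_jy\in KV_j$. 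I would also note that $K$ extends uniquely to a bounded operator $\Hil^{-p}\to L$ still satisfying \eqref{EqSmoothingProperty} (density of $\Hil^0$ in $\Hil^{-p}$), so that the $\Hil^{-p}$-norm can be estimated through $\|K\cdot\|_L$ throughout.

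Estimate \eqref{EqGalerkinREstimate} is then immediate: since $R_jy\in V_j$, the inverse inequality with indices $-p\le 0$ gives $\|R_jy\|_{\Hil^0}\le\r_j^{\,p}\|R_jy\|_{\Hil^{-p}}$, while the lower bound in \eqref{EqSmoothingProperty} gives $\|R_jy\|_{\Hil^{-p}}\lesssim\|KR_jy\|_L=\|Q_jy\|_L\le\|y\|_L$; combining the two yields $\|R_jy\|_{\Hil^0}\lesssim\r_j^{\,p}\|y\|_L$.

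For \eqref{EqGalerkinRKminIEstimate} I would fix $f\in\Hil^t$ with $t\ge s\ge 0$ and split $R_jKf-f=(R_jKf-P_jf)+(P_jf-f)$. The second piece is handled directly by the approximation bound, $\|P_jf-f\|_{\Hil^s}\le\r_j^{-(t-s)}\|f\|_{\Hil^t}$. For the first piece, $R_jKf-P_jf\in V_j$, so the inverse inequality with indices $-p\le s$ gives $\|R_jKf-P_jf\|_{\Hil^s}\le\r_j^{\,s+p}\|R_jKf-P_jf\|_{\Hil^{-p}}$, and by the triangle inequality the right-hand factor is at most $\|R_jKf-f\|_{\Hil^{-p}}+\|f-P_jf\|_{\Hil^{-p}}$. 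Here I would use the orthogonal-projection identity $K(R_jKf-f)=-(I-Q_j)Kf$ together with $\|(I-Q_j)Kf\|_L=\inf_{w\in KV_j}\|Kf-w\|_L\le\|K(f-P_jf)\|_L$ and \eqref{EqSmoothingProperty} to get $\|R_jKf-f\|_{\Hil^{-p}}\lesssim\|f-P_jf\|_{\Hil^{-p}}$; then the approximation bound with index $-p$ gives $\|f-P_jf\|_{\Hil^{-p}}\le\r_j^{-(t+p)}\|f\|_{\Hil^t}$. Multiplying back by $\r_j^{\,s+p}$, the $\r_j$-powers collapse to $\r_j^{\,s+p-(t+p)}=\r_j^{-(t-s)}$, and adding the two pieces proves \eqref{EqGalerkinRKminIEstimate}. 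Finally \eqref{EqGalerkinRKEstimate} follows by taking $t=s$ in \eqref{EqGalerkinRKminIEstimate}, which gives $\|R_jK-I\|_{\Hil^s\to\Hil^s}\lesssim 1$ and hence $\|R_jK\|_{\Hil^s\to\Hil^s}\le 1+\|R_jK-I\|_{\Hil^s\to\Hil^s}\lesssim 1$.

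The only genuinely delicate point is the passage from the weak norm $\Hil^{-p}$ — where the Galerkin error is naturally controlled because $K$ is coercive and bounded only in that norm — up to the strong norm $\Hil^s$. This works precisely because the Galerkin correction $R_jKf-P_jf$ lies in $V_j$, where the inverse inequality costs merely $\r_j^{\,s+p}$, and that factor is exactly cancelled by the $\r_j^{-(t+p)}$ decay of the approximation error $\|f-P_jf\|_{\Hil^{-p}}$. I would take care to phrase $R_jKf$ via the orthogonal projection $Q_j$ (rather than through a separate variational/Céa argument), so that the bound $\|R_jKf-f\|_{\Hil^{-p}}\lesssim\|f-P_jf\|_{\Hil^{-p}}$ is an immediate consequence of $\|(I-Q_j)Kf\|_L$ being a distance to $KV_j$.
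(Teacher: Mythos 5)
Your proof is correct, but it takes a genuinely different route from the paper's. The paper proves \eqref{EqGalerkinREstimate} by citation (Lemma~A.2 of \cite{Yan2020}), then proves \eqref{EqGalerkinRKEstimate} for general $s\ge 0$ via the decomposition $R_jKf = R_jK(I-P_j)f + P_jf$ (using that $R_jK$ acts as the identity on $V_j$), composing the operator bounds $\|R_j\|_{L\to \Hil^0}\lesssim\r_j^p$ and $\|K\|_{\Hil^{-p}\to L}\lesssim 1$ with the inverse and approximation inequalities; \eqref{EqGalerkinRKminIEstimate} then falls out from the factorisation $(R_jK-I)=(R_jK-I)(I-P_j)$ and the already-established bound on $R_jK$. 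You reverse the logical order: you prove \eqref{EqGalerkinRKminIEstimate} directly and deduce \eqref{EqGalerkinRKEstimate} as the special case $t=s$ plus the triangle inequality with $I$. Your proof of \eqref{EqGalerkinRKminIEstimate} also differs in mechanism: instead of the algebraic factorisation $(R_jK-I)(I-P_j)$, you control the Galerkin error $\|R_jKf-f\|_{\Hil^{-p}}$ by the quasi-optimality of the orthogonal projection $Q_j$ onto $KV_j$ — a C\'ea-lemma bound $\|(I-Q_j)Kf\|_L\le\|K(f-P_jf)\|_L$ — and then promote to $\Hil^s$ via the inverse inequality, with the $\r_j^{s+p}$ cost cancelled by the $\r_j^{-(t+p)}$ decay of $\|f-P_jf\|_{\Hil^{-p}}$. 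Both arguments are standard Galerkin estimates and of comparable length; yours has the advantage of being self-contained (you give a direct proof of \eqref{EqGalerkinREstimate} from the inverse inequality and $\|KR_jy\|_L=\|Q_jy\|_L\le\|y\|_L$, rather than citing an external lemma), while the paper's factorisation trick makes \eqref{EqGalerkinRKminIEstimate} a one-line corollary once \eqref{EqGalerkinRKEstimate} is in hand.
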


\begin{proof}
	For $s=0$ the lemma is the same as Lemma~A.2 in \cite{Yan2020}. For the extension 
	of \eqref{EqGalerkinRKEstimate} to general $s\ge 0$,
	we first use the triangle inequality to see that $\|R_jKf\|_{\Hil^{s}}\le \|R_jKf-P_jf\|_{\Hil^{s}}+\|P_jf\|_{\Hil^{s}}$, for
	$P_j: \Hil^s\to V_j\subset \Hil^s$ the orthogonal projection onto $V_j$. Since $P_j$ simply sets the coefficients
	of index larger than $j$ to zero, we have $\|P_jf\|_{\Hil^{s}}\le \|f\|_{\Hil^{s}}$, for any $s$. Because  $R_jKf-P_jf\in V_j$
	and $R_jK$ is the identity on $V_j$, we have 
	\begin{align*}
		\|R_jKf-P_jf\|_{\Hil^{s}}&\le \r_j^s\|R_jK(I-P_j)f\|_{\Hil^{0}}\le \r_j^s \|R_j\|_{L\to \Hil^0} \|K(I-P_j)f\|_L\\
		&\lesssim \r_j^s \r_j^p \|(I-P_j)f\|_{\Hil^{-p}}\le \|f\|_{\Hil^{s}},
	\end{align*}
	where we used \eqref{EqGalerkinREstimate} in the second last step and
	$\|(I-P_j)f\|_{\Hil^{-p}}\le \r_j^{-(p+s)}\|f\|_{\Hil^{s}}$ in the last. We conclude
	that $\|R_jKf\|_{\Hil^{s}}\lesssim \|f\|_{\Hil^{s}}$, for every $f$, which is \eqref{EqGalerkinRKEstimate}.
	
	Because $(R_jK-I)f=(R_jK-I)(I-P_j)f$, for every $f\in \Hil^s$, we have 
	$$\|(R_jK-I)f\|_{\Hil^{s}}\le \bigl(\|R_jK\|_{\Hil^s\to \Hil^s}+1\bigr)\|f-P_jf\|_{\Hil^{s}}.$$
	The right side is bounded by a multiple of $\|f-P_jf\|_{\Hil^{s}}\le \r_j^{-(t-s)}\|f\|_{\Hil^t}$, for $t\ge s$.
	This proves \eqref{EqGalerkinRKminIEstimate}.
\end{proof}

\begin{lemma}
	\label{LemmaSmallBall}
	Let $\a,\b,p>0$ and let
	$K: \Hil^0\to L$ be a bounded linear operator satisfying \eqref{EqSmoothingProperty} for every $f\in \Hil^0$.
Let $\GG=\bigl( \r_i^{-\a}i^{-1/2}\Ccor_i  Z_i\bigr)$,  for $Z_i\iid N(0,1)$ and constants $\Ccor_i$
that are bounded away from 0 and $\infty$.
	Then for every $f_0\in \Hil^\b$ there exists a constant $C$ so that, for every $\e<1$,
	$$\log \Pr\bigl(\|K\GG-Kf_0\|_L<C\e\bigr)\ge \Bigl(\frac1\e\Bigr)^{\frac{2\a-2\b+d}{p+\b}}
	+\Bigl(\frac1\e\Bigr)^{\frac{d}{p+\a}}.$$
	Consequently, there exists a constant $c_1$ such that $\e_n=c_1n^{-(\a\wedge\b+p)/(2\a+2p+d)}$ satisfies
	$\Pr\bigl(\|K\GG-Kf_0\|_L<\e_n\bigr)\ge e^{-n\e_n^2}$.
\end{lemma}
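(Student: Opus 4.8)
The plan is to transfer the statement to the sequence level and then run the standard concentration-function argument for the Gaussian prior $\GG$, measured in the Hilbert-scale norm $\|\cdot\|_{\Hil^{-p}}$. First, the two-sided estimate \eqref{EqSmoothingProperty} gives $\|K\GG-Kf_0\|_L\le C\|\GG-f_0\|_{\Hil^{-p}}$, so $\{\|\GG-f_0\|_{\Hil^{-p}}<\e\}\subset\{\|K\GG-Kf_0\|_L<C\e\}$ and it suffices to lower bound $\Pr(\|\GG-f_0\|_{\Hil^{-p}}<\e)$. Since $\r_i\asymp i^{1/d}$ and $c_\pi\le\Ccor_i\le C_\pi$, the coordinates $\GG_i$ are independent centred normals with $\mathrm{Var}(\GG_i)\asymp i^{-1-2\a/d}$, and $\E\|\GG\|_{\Hil^{-p}}^2=\sum_i\r_i^{-2p}\mathrm{Var}(\GG_i)\asymp\sum_i i^{-1-2(\a+p)/d}<\infty$; thus $\GG$ is a centred Gaussian Borel random element of the separable Hilbert space $\Hil^{-p}$, with reproducing kernel Hilbert space $\mathcal H$ satisfying $\|h\|_{\mathcal H}^2\asymp\sum_i i^{1+2\a/d}h_i^2$ (so $\mathcal H=\Hil^{\a+d/2}$ with equivalent norms). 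The Cameron--Martin/Kuelbs--Li shift inequality (e.g.\ \cite{GhosalvdVbook2017}, or as used in \cite{Yan2020}) then gives, for any $h\in\mathcal H$ with $\|h-f_0\|_{\Hil^{-p}}\le\e$,
\begin{equation*}
-\log\Pr\bigl(\|\GG-f_0\|_{\Hil^{-p}}<2\e\bigr)\le\tfrac12\|h\|_{\mathcal H}^2+\phi_0(\e),\qquad\phi_0(\e):=-\log\Pr\bigl(\|\GG\|_{\Hil^{-p}}<\e\bigr).
\end{equation*}
So the two tasks are to bound the centred exponent $\phi_0$ and to choose an efficient shift $h$.

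For $\phi_0$, one observes that $\|\GG\|_{\Hil^{-p}}^2$ has the law of $\sum_i\s_i^2Z_i^2$ with $Z_i\iid N(0,1)$ and $\s_i^2:=\r_i^{-2p}\mathrm{Var}(\GG_i)\asymp i^{-1-2(\a+p)/d}$. The classical small-ball estimate for a weighted sum of squared normals with polynomial weight decay of order $2(\a+p)/d$ yields $\phi_0(\e)\lesssim\e^{-d/(\a+p)}$; I would obtain this by the standard splitting argument --- controlling the tail $\sum_{i>M}\s_i^2Z_i^2$ via Markov's inequality and the head $\sum_{i\le M}\s_i^2Z_i^2$ by a finite-dimensional Gaussian small-ball bound, then optimising over the truncation level $M$ --- or simply cite it.

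For the shift I would take $h=h^{(N)}$, the truncation of $f_0$ to its first $N$ coordinates. Then $\|h^{(N)}-f_0\|_{\Hil^{-p}}^2=\sum_{i>N}\r_i^{-2p}f_{0,i}^2\lesssim N^{-2(p+\b)/d}\|f_0\|_{\Hil^\b}^2$, which is $\le\e^2$ once $N\asymp\e^{-d/(p+\b)}$; and $\|h^{(N)}\|_{\mathcal H}^2\asymp\sum_{i\le N}i^{1+2\a/d}f_{0,i}^2$, which for $\a+d/2>\b$ (so the exponent $1+2(\a-\b)/d$ is positive) is $\lesssim N^{1+2(\a-\b)/d}\|f_0\|_{\Hil^\b}^2\asymp\e^{-(2\a-2\b+d)/(p+\b)}$, and for $\a+d/2\le\b$ is $O(1)$ since then $\sum_i i^{1+2\a/d}f_{0,i}^2\le\|f_0\|_{\Hil^\b}^2$. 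As $(1/\e)^{d/(p+\a)}\ge1$ for $\e<1$, in every case $\tfrac12\|h^{(N)}\|_{\mathcal H}^2+\phi_0(\e)\lesssim(1/\e)^{(2\a-2\b+d)/(p+\b)}+(1/\e)^{d/(p+\a)}$, which together with the first step is the first assertion. For the consequence, each of the two terms is $\lesssim n\e^2$ as soon as $\e\gtrsim n^{-(p+\b)/(2\a+2p+d)}$, respectively $\e\gtrsim n^{-(p+\a)/(2\a+2p+d)}$; hence $\e_n=c_1n^{-(p+\a\wedge\b)/(2\a+2p+d)}$ with $c_1$ large enough satisfies $\tfrac12\|h^{(N)}\|_{\mathcal H}^2+\phi_0(\e_n)\le n\e_n^2$ (using $n\e_n^2\to\infty$ to absorb constants and the $O(1)$ term), so $\Pr(\|K\GG-Kf_0\|_L<\e_n)\ge e^{-n\e_n^2}$.

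The only genuinely technical ingredient is the centred small-ball bound $\phi_0(\e)\lesssim\e^{-d/(\a+p)}$; the rest is bookkeeping with the exponents (including the harmless case split at $\a=\b-d/2$) and a short optimisation in $\e$, and uses nothing beyond the Cameron--Martin inequality and the elementary scale estimates recalled before Lemma~\ref{LemmaSmallBall}.
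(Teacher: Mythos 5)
Your proof is correct and follows essentially the same route as the paper's: reduce via \eqref{EqSmoothingProperty} to a small-ball probability in $\Hil^{-p}$, apply the Cameron--Martin/concentration-function decomposition with the reproducing kernel Hilbert space identified as $\Hil^{\a+d/2}$, bound the shift term by truncating $f_0$ at level $N\asymp\e^{-d/(p+\b)}$ with the same case split at $\a+d/2=\b$, and invoke the standard centred small-ball estimate $\phi_0(\e)\lesssim\e^{-d/(\a+p)}$ (which you sketch and the paper simply asserts). The only cosmetic divergence is that you phrase the lower bound on $\log\Pr$ with the (correct) minus signs that the paper's display omits.
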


\begin{proof}
	By the smoothing property \eqref{EqSmoothingProperty}, the left side is bounded below by $\log \Pr\bigl(\|\GG-f_0\|_{\Hil^{-p}}<C_1\e\bigr)$.
	This is bounded below by, for $\HH$ the reproducing kernel Hilbert space of $\GG$,
	$$\inf_{h\in\HH: \|h-f_0\|_{\Hil^{-p}}<C_1\e} \|h\|_\HH^2-\log \Pr\bigl(\|\GG\|_{\Hil^{-p}}<C_1\e\bigr).$$
	The reproducing kernel Hilbert space of the Gaussian variable $\GG$ consists of the sequences
	$\bigl(\r_i^{-\a}i^{-1/2}\Ccor_iw_i\bigr)$, for $w\in \ell_2$, with square norm $\sum_iw_i^2$, or equivalently
	the sequences $(h_i)$ with square norm $\sum_i \r_i^{2\a}ih_i^2/\Ccor_i$.
	Since $i\simeq \r_i^d$, this norm is equivalent to the norm of $\Hil^{\a+d/2}$.
	Thus the first term in the preceding display is bounded above by a multiple of 
	$$\inf\Bigl\{ \sum_i \r_i^{2\a+d}h_i^2:  \sum_i \r_i^{-2p}(h_i-f_{0,i})^2<\e^2\Bigr\}.$$
	If $\a+d/2\le\b$, we can choose $h=f_0$ to see that the infimum is bounded above by
	$\|f_0\|_{\Hil^{\a+d/2}}^2\le \|f_0\|_{\Hil^\b}^2$. For  $\a+d/2>\b$, we choose $h_i=f_{0,i}$ for $i\le J$ and $h_i=0$ for $i>J$
	with $\r_J\simeq (1/\e)^{1/(p+\b)}$. Then 
	\begin{align*}
		\|h-f_0\|_{\Hil^{-p}}^2&= \sum_{i>J}\r_i^{-2p}f_{0,i}^2\le \r_J^{-(2p+2\b)}\sum_i \r_i^{2\b}f_{0,i}^2\lesssim\e^2,\\
		\|h\|_{\Hil^{\a+d/2}}^2&= \sum_{i\le J}\r_i^{2\a+d}f_{0,i}^2\le \r_J^{2\a+d-2\b}\sum_i \r_i^{2\b}f_{0,i}^2\lesssim
		\Bigl(\frac1\e\Bigr)^{\frac{2\a-2\b+d}{p+\b}}.
	\end{align*}
	This gives the first term of the lower bound. 
	
	The centered small probability can be bounded using
	direct computation or from entropy bound of the unit ball of $\Hil^{\a+\d/2}$ in $\Hil^{-p}$. 
	\end{proof}

\begin{lemma}
	\label{LemmaTailBoundPriorGalerkin}
Let $\GG=\bigl( \r_i^{-\a}i^{-1/2}\Ccor_i  Z_i\bigr)$,  for $Z_i\iid N(0,1)$ and constants $\Ccor_i$
that are bounded away from 0 and $\infty$.
	For all $\d<\a$, there exist constants $a,b>0$ so that, for every $j\in\NN$ and  $t>0$,
	$$\Pr\bigl(\|R_jK-I)\GG\|_{\Hil^{\d}}>a \r_j^{-(\a-\d)}+t\bigr)\le e^{-b t^2 j\r_j^{2(\a-\d)} }.$$
	Consequently, for every $b_1>0$, there exists $a_1$
	such that $\Pr\bigl(\|R_jK-I)\GG\|_{\Hil^{\d}}>a_1 \r_j^{-(\a-\d)}\bigr)\le e^{-b_1 j}$, for every $j\in\NN$.
\end{lemma}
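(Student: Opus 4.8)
The plan is to recognise $X:=(R_jK-I)\GG$ as a centred Gaussian random element of $\Hil^\d$ and to apply Borell's concentration inequality, so that the task reduces to bounding the mean $\E\|X\|_{\Hil^\d}$ and the weak variance $\sigma^2:=\sup_{\|g\|_{\Hil^\d}\le1}\E\langle g,X\rangle_{\Hil^\d}^2$. The starting point is the identity $(R_jK-I)P_j=0$, valid because $R_jK$ acts as the identity on $V_j$ while $P_j\GG\in V_j$; hence $X=(R_jK-I)W$ with $W:=(I-P_j)\GG$ the diagonal Gaussian sequence given by $W_i=\GG_i=\r_i^{-\a}i^{-1/2}\Ccor_i Z_i$ for $i>j$ and $W_i=0$ otherwise. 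Since $\a>0$ and the $\Ccor_i$ are bounded, $\GG\in\Hil^0$ a.s.; since $\d<\a$ and $\r_i\asymp i^{1/d}$, also $\GG\in\Hil^\d$ a.s.; and by \eqref{EqGalerkinRKEstimate} the operator $R_jK-I$ is bounded on $\Hil^\d$. So $X$ is a genuine centred Gaussian element of $\Hil^\d$.

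For the mean, estimate $\E\|X\|_{\Hil^\d}\le(\E\|X\|_{\Hil^\d}^2)^{1/2}\le\|R_jK-I\|_{\Hil^\d\to\Hil^\d}(\E\|W\|_{\Hil^\d}^2)^{1/2}$ and, using $\r_i\asymp i^{1/d}$ and $\d<\a$, compute $\E\|W\|_{\Hil^\d}^2=\sum_{i>j}\r_i^{2\d}\r_i^{-2\a}i^{-1}\Ccor_i^2\asymp\sum_{i>j}i^{2(\d-\a)/d-1}\asymp j^{2(\d-\a)/d}\asymp\r_j^{-2(\a-\d)}$. Combined with $\|R_jK-I\|_{\Hil^\d\to\Hil^\d}\lesssim1$ from \eqref{EqGalerkinRKEstimate}, this gives $\E\|X\|_{\Hil^\d}\lesssim\r_j^{-(\a-\d)}$, which will furnish the leading term $a\,\r_j^{-(\a-\d)}$.

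For the weak variance---the crux---write $\langle g,X\rangle_{\Hil^\d}=\langle A^{\dagger}g,W\rangle_{\Hil^\d}$ with $A^{\dagger}$ the $\Hil^\d$-adjoint of $A:=R_jK-I$, so $\E\langle g,X\rangle_{\Hil^\d}^2=\sum_{i>j}\r_i^{2\d}(A^{\dagger}g)_i^2\bigl(\r_i^{2(\d-\a)}i^{-1}\Ccor_i^2\bigr)$. The factor in parentheses is at most $Ci^{2(\d-\a)/d-1}$, decreasing in $i$, hence at most $C'\r_j^{-2(\a-\d)}j^{-1}$ for all $i>j$; pulling this out and using $\sum_{i>j}\r_i^{2\d}(A^{\dagger}g)_i^2\le\|A^{\dagger}g\|_{\Hil^\d}^2\le\|A^{\dagger}\|_{\Hil^\d\to\Hil^\d}^2\|g\|_{\Hil^\d}^2$ with $\|A^{\dagger}\|_{\Hil^\d\to\Hil^\d}=\|R_jK-I\|_{\Hil^\d\to\Hil^\d}\lesssim1$ gives $\sigma^2\lesssim\r_j^{-2(\a-\d)}j^{-1}$. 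Thus the covariance operator of $X$ has top eigenvalue a factor $j$ below $\E\|X\|_{\Hil^\d}^2$, which is precisely what yields the factor $j$ in the exponent: Borell's inequality (as invoked earlier in this section, cf.\ \eqref{EqBorelGeneral}) gives $\Pr(\|X\|_{\Hil^\d}>\E\|X\|_{\Hil^\d}+t)\le\exp(-t^2/(2\sigma^2))$, and assembling the estimates produces $\Pr(\|X\|_{\Hil^\d}>a\,\r_j^{-(\a-\d)}+t)\le\exp(-bt^2 j\r_j^{2(\a-\d)})$ for constants $a,b>0$ not depending on $j$ or $t$. The ``consequently'' part then follows by taking $t=a'\r_j^{-(\a-\d)}$, which turns the bound into $\exp(-b(a')^2 j)$; picking $a'$ with $b(a')^2\ge b_1$ and $a_1=a+a'$ finishes the proof.

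The step I expect to be the main obstacle is the sharp weak-variance bound $\sigma^2\asymp\r_j^{-2(\a-\d)}/j$, rather than the crude $\sigma^2\le\E\|X\|_{\Hil^\d}^2\asymp\r_j^{-2(\a-\d)}$ (which would only give $e^{-bt^2\r_j^{2(\a-\d)}}$): although $\asymp j$ coordinates of the truncated Gaussian contribute comparably to the $\Hil^\d$-norm, each single coordinate carries variance only $\asymp\r_j^{-2(\a-\d)}/j$, so the largest covariance eigenvalue sits a factor $j$ below the trace. A secondary technicality is the range $\d<0$ (down to $-p$), where boundedness of $R_jK-I$ on $\Hil^\d$ is not literally contained in \eqref{EqGalerkinRKEstimate} and must instead be read off from \eqref{EqGalerkinREstimate} and the finite-dimensionality of $V_j$, leaving the above mean and variance computations unchanged.
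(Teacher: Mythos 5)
Your proof is correct and, at the top level, follows the same Borell-inequality skeleton as the paper (bound the mean $\E\|X\|_{\Hil^\d}$ and the weak variance $\sigma^2$, then invoke Gaussian concentration). However, your route to the crucial weak-variance bound $\sigma^2\lesssim\r_j^{-2(\a-\d)}j^{-1}$ is genuinely more elementary than the paper's. The paper represents the covariance operator as $SS_\d^*$ with $S=(R_jK-I)C_{\d-\a}D$, then passes through the duality identity \eqref{EqDualNormIdentity} to the negative-smoothness space $\Hil^{2\d-\a-d/2}$, and finally appeals to the cross-scale Galerkin estimate $\|R_jK-I\|_{\Hil^{\a+d/2}\to\Hil^\d}\lesssim\r_j^{-(\a+d/2-\d)}$ from \eqref{EqGalerkinRKminIEstimate}; the factor $\r_j^{-d}\asymp j^{-1}$ emerges from the gap $\a+d/2-\d$ versus $\a-\d$ in the exponents. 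Your argument instead exploits $(R_jK-I)P_j=0$ to write $X=AW$ with $W=(I-P_j)\GG$ supported on $i>j$, observes that the per-coordinate variance $\r_i^{2(\d-\a)}i^{-1}\Ccor_i^2$ is decreasing so can be pulled out at $i\approx j$ (yielding the extra $j^{-1}$ explicitly), and then needs only the same-scale bound $\|R_jK-I\|_{\Hil^\d\to\Hil^\d}\lesssim1$. You thus avoid \eqref{EqDualNormIdentity} and \eqref{EqGalerkinRKminIEstimate} with general exponents entirely. Your strong-moment computation ($\E\|X\|^2\le\|A\|^2_{\Hil^\d\to\Hil^\d}\E\|W\|_{\Hil^\d}^2$) is likewise a shortcut past the paper's formal trace expansion, reaching the identical bound. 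What your approach buys is transparency: it makes visible, at the coordinate level, why the weak variance sits one factor of $j$ below the trace (the $\asymp j$ active coordinates near the cutoff contribute comparably, each with variance $\asymp\r_j^{-2(\a-\d)}/j$). What the paper's approach buys is a statement at the level of smoothing operators, more in the spirit of the Hilbert-scale machinery used elsewhere in the section. Your parenthetical concern about $\d<0$ is legitimate and correctly flagged as a secondary matter; the same issue is present in the paper's proof (which applies \eqref{EqGalerkinRKminIEstimate}, stated for $t\ge s\ge0$), and in fact the only downstream use, Theorem~\ref{thm: contr:diff:norm}, restricts to $\d>0$.
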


\begin{proof}
	By Borell's inequality a centered Gaussian variable $G$ in a separable Banach space
	satisfies $\Pr\bigl(\|G\|\gtrsim \E \|G\|+t\bigr)\le e^{-bt^2/\sigma^2}$, for every $t>0$, where
	$\sigma^2=\sup_{b^*} \E (b^*G)^2$ is the supremum of the second moments of the variables
	$b^*G$ if $b^*$ ranges over the unit ball of the dual space of the Banach space.
	The variable $G=(R_jK-I)\GG$ is centered Gaussian in  $\Hil^\d$.
	Thus it suffices to show that
	$\E \|(R_jK-I)\GG\|_{\Hil^{\d}}\lesssim \r_j^{-(\a-\d)}$ and 
	$\E \langle (R_jK-I)\GG, g\rangle_{\Hil^{\d}}^2\lesssim j^{-1}\r_j^{-2(\a-\d)}\|g\|_{\Hil^{\d}}^2$,
	for every $g\in \Hil^\d$. Here we  can bound the first strong moment by the root of the second strong moment. 
	
	We have $\langle \GG, f\rangle_{\Hil^{\d}}=\sum_{i\in\NN} \r_i^{-\a}i^{-1/2}\Ccor_i Z_if_i\r_i^{2\d}$ and hence
	$$\E \langle \GG, f\rangle_{\Hil^{\d}}^2=\sum_{i\in\NN}\r_i^{-2\a+4\d}i^{-1}\Ccor_i^2f_i^2=\langle f, C_{2(\d-\a)}D^2f\rangle_{\Hil^{\d}},$$
	for $(D f)_i= f_ii^{-1/2}\Ccor_i$.
	Therefore, as a map into $\Hil^\d$ the variable $(R_jK-I)\GG$  has covariance operator 
	$(R_jK-I)C_{2(\d-\a)}D^2(R_jK-I)_\d^*;= SS_\d^*$, for $S=(R_jK-I)C_{\d-\a}D$, and 
	$(R_jK-I)_\d^*$ and $S_\d^*$  the adjoint operators of $(R_jK-I): \Hil^\d\to \Hil^\d$ and  $S_\d: \Hil^\d\to \Hil^\d$.
	(The operators $C_s: \Hil^\d\to \Hil^\d$, for $s\le 0$, and $D^2:  \Hil^\d\to \Hil^\d$ are self-adjoint, and commute,
	with roots $C_{s/2}$ and $D$.)
	
	The strong second moment is equal to the trace of the covariance operator.
	Therefore, for any orthonormal basis $(\phi_i)$ of $\Hil^\d$,
	\begin{align*}
		\E  \|(R_jK-I)\GG\|_{\Hil^{\d}}^2&=\tr\bigl(SS_\d^*)=\sum_{i\in \NN}\langle SS_\d^*\phi_i,\phi_i\rangle_{\Hil^{\d}}^2
		=\sum_{i\in \NN}\|S_\d^*\phi_i\|_{\Hil^{\d}}^2\\
		&=\sum_{i\in \NN}\sum_{j\in\NN}\langle S_\d^*\phi_i,\phi_j\rangle_{\Hil^{\d}}^2
		=\sum_{i\in \NN}\sum_{j\in\NN}\langle \phi_i,S\phi_j\rangle_{\Hil^{\d}}^2=\sum_{j\in\NN}\|S\phi_j\|_{\Hil^{\d}}^2.
	\end{align*}
	We apply this with the orthonormal basis consisting of the sequences $\phi_i=e_i/\r_i^\d$, for 
	$e_i=(0,\ldots, 0,1,0,\ldots)$ the sequence with a 1 in the $i$th coordinate and zeros elsewhere, to
	see that this is equal to 
	\begin{align*}
		&\sum_{i\in \NN}\bigl\|(R_jK-I)C_{\d-\a}De_i\r_i^{-\d}\bigr\|_{\Hil^{\d}}^2
		=\sum_{i\in \NN}\bigl\|(R_jK-I)e_i\bigr\|_{\Hil^{\d}}^2\,\r_i^{-2\a}i^{-1}\Ccor_i^2\\
		&\qquad\lesssim \sum_{i>j}\|R_jK-I\|_{\Hil^\d\to \Hil^\d}^2\|e_i\|_{\Hil^{\d}}^2\,\r_i^{-2\a}i^{-1}\le \sum_{i>j}\r_i^{-2(\a-\d)}i^{-1},
	\end{align*}
	since $R_jK-I$ vanishes on $V_j$, the norm of $R_jK-I: \Hil^\d\to \Hil^\d$ is bounded by a constant by
	\eqref{EqGalerkinRKminIEstimate}, and $\|e_i\|_{\Hil^{\d}}=\r_i^{\d}$. For $\d<\a$, the right side is bounded
	by a multiple of $\r_j^{-2(\a-\d)}$.
	
	The weak second moment is the supremum over $\|g\|_{\Hil^{\d}}\le 1$ of 
	\begin{align*}
		&\E \langle (R_jK-I)\GG, g\rangle_{\Hil^{\d}}^2=\langle g, SS_\d^*g\rangle_{\Hil^{\d}}=\|S_\d^*g\|_{\Hil^{\d}}^2
		=\|C_{\d-\a}D(R_jK-I)_\d^*g\|_{\Hil^{\d}}^2\\
		&\qquad\lesssim\|(R_jK-I)_\d^*g\|_{\Hil^{2\d-\a-d/2}}^2=\sup_{\|f\|_{\Hil^{\a+d/2}}\le 1}\langle f, (R_jK-I)_\d^*g\rangle_{\Hil^{\d}}^2,
	\end{align*}
	by \eqref{EqDualNormIdentity}, applied with $t-s=\d+(\d-\a-d/2)$ and $t+s=\d-(\d-\a-d/2)=\a+d/2$.
	The right side is equal to
	\begin{align*}
		&\sup_{\|f\|_{\Hil^{\a+d/2}}\le 1}\langle (R_jK-I) f, g\rangle_{\Hil^{\d}}^2
		\le \sup_{\|f\|_{\Hil^{\a+d/2}}\le 1}\bigl\|(R_jK-I) f\bigr\|_{\Hil^{\d}}^2 \|g\|_{\Hil^{\d}}^2\\
		&\qquad\le \sup_{\|f\|_{\Hil^{\a+d/2}}\le 1}\|R_jK-I\|_{\Hil^{\a+d/2}\to \Hil^\d}^2\|f\|_{\Hil^{\a+d/2}}^2 \|g\|_{\Hil^{\d}}^2
		\le \r_j^{-2(\a+d/2-\d)}\|g\|_{\Hil^{\d}}^2,
	\end{align*}
	by \eqref{EqGalerkinRKminIEstimate}. Thus the weak second moment is bounded by $\r_j^{-2(\a+d/2-\d)}
	\lesssim\r_j^{-2(\a-\d)} j^{-1}$.
	
	This concludes the proof of the first probability bound. The second bound follows by choosing
	$t=\sqrt{b_1/b} \, \r_j^{-(\a-\d)}$ in the first bound.
\end{proof}

\section{Smoothness norm contraction rates in the discrete observation linear inverse problems}
\label{sec:contract:smoothness:regression}
In this section we derive similar results as in Section~\ref{sec:contract:smoothness:GWN}, but in the context of the discrete observational framework. 
We extend the contraction rate results derived in Chapter 8 of \cite{Yan2020thesis} (or \cite{Yan2024})
to smoothness norms. 

The setup is the same as in  Section~\ref{sec:contract:smoothness:GWN}, and so is the general aim:
to recover a function $f$ from the noisy observation of its image $Kf$
under a given operator $K: G^0\to L$. However, presently the Hilbert space $L$ is a space
of functions on a domain $\O$, and for fixed points $x_{n,1},\ldots, x_{n,n}$ in $\O$, 
the observations are $Y_n(1),\ldots, Y_n(n)$ given by, for i.i.d.\ standard normal
variables $Z_i$,
\begin{equation}
	Y_n(i) =  Kf(x_{n,i}) + Z_i, \qquad i = 1, \ldots, n.\label{def:regression}
\end{equation}
We tackle this problem by putting it in the continuous setup, using an interpolation scheme.
To make this possible we assume that the image space $L$ is also embedded as $L=\Gil^0$ in
a scale of spaces $(\Gil^s)_{s\in\RR}$, in which the discretisation error can be controlled.

Define the empirical inner product and norm on the design points by
\begin{equation}\label{eq: empirical inner product}
	\langle f, g \rangle_{\LL_n} := \frac{1}{n}\sum_{i=1}^n g(x_{n,i}) h(x_{n,i}),
\qquad\quad \|f\|_{\LL_n} := \sqrt{\langle f, f \rangle_{\LL_n}}.
\end{equation}
We assume that, for every $n \in \NN$, there exists an $n$-dimensional subspace $\LL_n\subset L$
such that, for fixed constants $0<C_1<C_2<\infty$,
\begin{align}
\label{eq:regression_W_n_isomorphism2}
C_1 \|f\|_L \leq \|f\|_{\LL_n} \leq C_2 \|f\|_L,\qquad f\in \LL_n.
\end{align}
Under this condition there exist for every $f\in L$ a unique element $\mathcal{I}_nf$ of $\LL_n$
that interpolates $f$ at the design points, i.e.\ $\mathcal{I}_nf(x_i)=f(x_i)$, for every $i=1,\ldots, n$.
We assume that,  for every $s$ in some interval $(s_d,S_d)\subset (0,\infty)$, and numbers $\d_{n,s}$,
\begin{align}
\label{eq:regression_stable_reconstruction2}
\|\mathcal{I}_n f - f \|_L\le \d_{n,s} \|f\|_{\Gil^s}.
\end{align}
Fix an arbitrary orthonormal basis $e_{n,1},\ldots, e_{n,n}$ of $\LL_n$ relative to $\langle\cdot,\cdot \rangle_{\LL_n}$.
and given the observations $Y_n(1),\ldots, Y_n(n)$ in  \eqref{def:regression}, define
\begin{equation}
\label{EqDefinitionContinuousSignal}
Y^{(n)}=\sumin \frac1n\sumjn Y_n(j)e_{n,i}(x_{n,j}) \,e_{n,i}.
\end{equation}
This variable contains the same information as the original observations $Y_n(1),\ldots, Y_n(n)$
(and thus leads to the same posterior distribution),
and embeds the discrete data as a ``continuous signal'' into the space $\LL_n\subset L_2(\O)$.
The variable can be decomposed as
\begin{align}
Y^{(n)}=\sumin \langle Kf, e_{n,i}\rangle_{\LL_n}e_{n,i}+
\frac1n\sumjn Z_j\sumin e_{n,i}(x_{n,j}) \,e_{n,i}
=\mathcal{I}_n Kf+\frac 1{\sqrt n}\xi^{(n)},
\label{def:embedding2}
\end{align}
where  $\xi^{(n)}$ is a Gaussian random variable with values in the space in $\LL_n$. 
This connects the regression model to the Gaussian white noise model
and allows the application of the techniques developed for the latter idealised model.

To simplify the notation, we write $K_n=\mathcal{I}_nK$. 
Consider the same random sequence prior $\GG=\bigl( \r_i^{-\a}i^{-1/2}\Ccor_i  Z_i\bigr)$ 
as in Section~\ref{sec:contract:smoothness:GWN}, with $Z_i\iid N(0,1)$ and constants $\Ccor_i$ 
such that $c_\pi\le \Ccor_i\le C_\pi$, for some constants $0<c_\pi<C_\pi$.


\begin{theorem}\label{thm: contr:diff:norm:regression}
Assume that $c_0 i^{1/d}\le \r_i\le C_0 i^{1/d}$, for constants $C_0\ge c_0>0$ and $d>0$.	
Let $K: \Hil^0\to \Gil^0$ be a linear operator satisfying \eqref{EqSmoothingProperty} for every $f\in \Hil^0$ and such that $K: \Hil^\b\to \Gil^{\b+p}$ is continuous.	
If $f_0\in \Hil^\b$, for $s_d<\b+p<S_d$ and $\d<\a\wedge \b$, and 
\eqref{eq:regression_W_n_isomorphism2}-\eqref{eq:regression_stable_reconstruction2} hold with $n\d_{n,\b+p}^2\ra 0$,
 then there exists a constant $M$ such that
	$\E_{f_0} \Pi_n\bigl(f: \|f-f_0\|_{\Hil^{\d}}\le M n^{-(\a\wedge \b-\d)/(2\a+2p+d)}\given Y^{(n)}\bigr)\ra 1$.
\end{theorem}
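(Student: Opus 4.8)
The plan is to mirror the proof of Theorem~\ref{thm: contr:diff:norm} from Section~\ref{sec:contract:smoothness:GWN}, transporting it to the discrete observational setup via the embedding \eqref{def:embedding2}. First I would record the representation $Y^{(n)}=K_nf+n^{-1/2}\xi^{(n)}$ with $K_n=\mathcal I_nK$, so that the discrete problem becomes a Gaussian white-noise-type problem with operator $K_n$ instead of $K$. The key point is that $K_n$ is a small perturbation of $K$: by \eqref{eq:regression_stable_reconstruction2} applied with $s=\b+p$ and the continuity $K:\Hil^\b\to\Gil^{\b+p}$, we have $\|K_nf-Kf\|_L=\|\mathcal I_nKf-Kf\|_L\lesssim\d_{n,\b+p}\|Kf\|_{\Gil^{\b+p}}\lesssim\d_{n,\b+p}\|f\|_{\Hil^\b}$, and the condition $n\d_{n,\b+p}^2\to0$ makes this negligible at the scale of the rate $\e_n=c_1n^{-(\a\wedge\b+p)/(2\a+2p+d)}$, since $n\e_n^2\asymp n^{(2\a-2(\a\wedge\b)+d)/(2\a+2p+d)}\to\infty$ while $n\d_{n,\b+p}^2\to0$.

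With that in hand, I would reprove the four claims (1)--(4) of Theorem~\ref{thm: contr:diff:norm} with $K$ replaced by $K_n$. For the evidence lower bound (1) and the prior-mass estimate (2), the Kullback--Leibler quantities are now $n\|K_nf-K_nf_0\|_{\LL_n}^2/2$; using \eqref{eq:regression_W_n_isomorphism2} this is comparable to $n\|K_nf-K_nf_0\|_L^2$, and by the triangle inequality and the perturbation bound above, $\|K_nf-K_nf_0\|_L\le\|Kf-Kf_0\|_L+C\d_{n,\b+p}(\|f\|_{\Hil^\b}+\|f_0\|_{\Hil^\b})$, so the small-ball probability of $\{\|K\GG-Kf_0\|_L<\e_n\}$ from Lemma~\ref{LemmaSmallBall} carries over to $\{\|K_n\GG-K_nf_0\|_{\LL_n}<C'\e_n\}$ after absorbing the $O(\d_{n,\b+p})$ term (note $\GG\in\Hil^\b$ a.s.\ only if $\b<\a$; when $\b\ge\a$ one replaces $\b$ by any $\b'<\a$ throughout the perturbation bound, which is harmless). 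Claim (3), the tail bound on the Galerkin bias $\|(R_{j_n}K-I)\GG\|_{\Hil^\d}>\h_n$, is purely a statement about the prior and the scale $(\Hil^s)$ and is unchanged — it is exactly Lemma~\ref{LemmaTailBoundPriorGalerkin}. The only genuine work is in claim (4): I would define the Galerkin reconstruction using $K_n$, i.e.\ $R_{j_n}^{(n)}=K_n^{-1}Q_{j_n}$ on $K_nV_{j_n}$, establish the analogues of Lemma~\ref{LemmaGalerkin} for $K_n$ (the smoothing property \eqref{EqSmoothingProperty} holds for $K_n$ with slightly adjusted constants, for $n$ large, because $\|K_nf-Kf\|_L$ is small relative to $\|f\|_{\Hil^{-p}}$ on the finite-dimensional $V_{j_n}$ — here one needs $\d_{n,\b+p}$ times the inverse of the smallest singular value to be small, which follows from $j_n\asymp n^{d/(2\a+2p+d)}$ and $n\d_{n,\b+p}^2\to0$), and then run the Borell-inequality testing argument \eqref{EqBorelGeneral}--\eqref{eq:borel:Hd} verbatim, using that $\|f_0-R_{j_n}^{(n)}K_nf_0\|_{\Hil^\d}\lesssim\h_n$ still holds since $f_0\in\Hil^\b$ and $\d<\b$.

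Assembling these, the posterior probability $\Pi_n(\|f-f_0\|_{\Hil^\d}>M\h_n\given Y^{(n)})$ is bounded, exactly as before, by $\t_n+1_{A_n^c}+\Pi_n(\|R_{j_n}^{(n)}K_nf-f\|_{\Hil^\d}\ge\h_n\given Y^{(n)})+\Pi_n(\F_n\given Y^{(n)})1_{A_n}(1-\t_n)$, and each term has $P_{f_0}$-expectation tending to zero by claims (1)--(4). This yields the rate $\h_n=c_2n^{-(\a\wedge\b-\d)/(2\a+2p+d)}$ in $\Hil^\d$, which is the assertion.

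The main obstacle I anticipate is \textbf{controlling the perturbation $K_n-K$ uniformly on the relevant finite-dimensional subspaces and propagating it through the Galerkin machinery} — in particular verifying that the restricted operator $K_n:V_{j_n}\to\LL_n$ remains boundedly invertible with the right dependence of constants, so that $R_{j_n}^{(n)}$ satisfies the norm bounds \eqref{EqGalerkinREstimate}--\eqref{EqGalerkinRKminIEstimate} with constants independent of $n$. This is where the hypothesis $n\d_{n,\b+p}^2\to0$ (together with $K:\Hil^\b\to\Gil^{\b+p}$ continuous and $s_d<\b+p<S_d$) must be used most carefully; everything else is a transcription of the white-noise proof with $\langle\cdot,\cdot\rangle_L$ replaced by $\langle\cdot,\cdot\rangle_{\LL_n}$ via \eqref{eq:regression_W_n_isomorphism2}. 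I would place the detailed verification of these operator bounds in the same supplement section and otherwise refer to the proof of Theorem~\ref{thm: contr:diff:norm}, noting only the changes.
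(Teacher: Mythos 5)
Your overall architecture matches the paper's: both reduce to four claims mirroring the white-noise proof of Theorem~\ref{thm: contr:diff:norm}, both use the embedding $Y^{(n)}=K_nf+n^{-1/2}\xi^{(n)}$ with $K_n=\mathcal I_nK$, and your treatment of the prior-mass/evidence bound (absorbing the $O(\d_{n,\b+p})$ perturbation) agrees with the paper's claim~(2*). The genuine divergence is in how you handle the Galerkin reconstruction in claims~(3) and~(4), and there your plan has an internal inconsistency and a harder obstacle than the paper faces.

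You propose to build a new Galerkin operator $R_{j_n}^{(n)}=K_n^{-1}Q_{j_n}$ for claim~(4), but simultaneously assert that claim~(3) is "unchanged" and is a statement about $\|(R_{j_n}K-I)\GG\|_{\Hil^\d}$. Those two pieces do not fit together: the remaining-mass condition must control exactly the quantity that appears in the definition of $\F_n$ and in the test statistic, and with your choice that quantity is $\|R_{j_n}^{(n)}K_n\GG-\GG\|_{\Hil^\d}$, not $\|(R_{j_n}K-I)\GG\|_{\Hil^\d}$. So Lemma~\ref{LemmaTailBoundPriorGalerkin} does not apply "exactly"; you would have to reprove it for the $K_n$-based operator. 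Moreover, you correctly flag that establishing the analogue of Lemma~\ref{LemmaGalerkin} for $K_n$ on $V_{j_n}$ is the crux, but you underestimate how delicate it is. Injectivity with the right constants requires something like $\d_{n,\b+p}\,\r_{j_n}^{\b+p}\to 0$ (the interpolation error must beat the smallest singular value $\asymp\r_{j_n}^{-p}$ of $K$ on $V_{j_n}$ after converting $\Hil^{-p}$ to $\Hil^\b$); with $\r_{j_n}\asymp n^{1/(2\a+2p+d)}$ and only $\d_{n,\b+p}=o(n^{-1/2})$ available, this forces $\b<\a+d/2$, which is not assumed. Replacing $\b$ by some $\b'<\a$ in the perturbation bound, as you suggest for the case $\b\ge\a$, does not obviously help either, since the hypothesis $n\d_{n,\b+p}^2\to0$ only controls $\d_{n,s}$ at $s=\b+p$, not at smaller $s$.

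The paper sidesteps all of this by never inverting $K_n$: it keeps the $K$-based $R_{j_n}$ everywhere, states claim~(3*) as a bound on $\|R_{j_n}K_nf-f\|_{\Hil^\d}$, and splits off the cross term via the triangle inequality,
\[
\|R_{j_n}K_nf-f\|_{\Hil^\d}\le\|R_{j_n}Kf-f\|_{\Hil^\d}+\|R_{j_n}(K-K_n)f\|_{\Hil^\d},
\]
bounding the second term by $\r_{j_n}^{\d+p}\d_{n,\b+p}\|Kf\|_{\Gil^{\b+p}}$ using the known estimate $\|R_{j_n}\|_{L\to\Hil^0}\lesssim\r_{j_n}^p$ from Lemma~\ref{LemmaGalerkin} applied to $K$. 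Since $\h_n/\r_{j_n}^{\d+p}\asymp\e_n\gg n^{-1/2}\gg\d_{n,\b+p}$, this cross term is negligible, and Borell's inequality applied to the Gaussian variable $\|K\GG\|_{\Gil^{\b+p}}$ turns "negligible" into "$\le e^{-4n\e_n^2}$". The same decomposition handles the centering $\|R_{j_n}Kf_0-R_{j_n}K_nf_0\|_{\Hil^\d}\ll\h_n$ in claim~(4*), and for the noise the paper uses $R_{j_n}Y^{(n)}=R_{j_n}K_nf+n^{-1/2}R_{j_n}\xi^{(n)}$ directly, together with the fact that the covariance of $\xi^{(n)}$ is dominated by a multiple of the projection onto $\LL_n$. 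In short: no new Galerkin theory is needed, only an extra triangle inequality and a Borell bound. I would recommend switching to that strategy; it removes the invertibility obstacle entirely and restores consistency between your claims~(3) and~(4).
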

	
\begin{proof}
  Define $\e_n$, $\h_n$ and $j_n$ as in \eqref{EqDefEnHn}, for given $c_1,c_2,c_{3,n}$.
  We show below that there exist constants $c_1,c_2,M, M_1>0$ such that
\begin{itemize}
\item[(1*)] The events $A_n:=\bigl\{\int p_f^{(n)}/p_{f_0}^{(n)}(Y^{(n)})\,d\Pi(f)\ge e^{-2n\e_n^2}\bigr\}$
		satisfy $P_{f_0}^{(n)}(A_n^c)\ra 0$.
\item[(2*)]  $\Pi\bigl(\|Kf-Kf_0\|_{\LL_n}<\e_n\bigr)\gtrsim e^{-n\e_n^2}$.
\item[(3*)]  $\Pi\bigl(\|R_{j_n} K_n f-f\|_{\Hil^{\d}}> M_1\h_n\bigr)\le e^{-4n\e_n^2}$.
\item[(4*)] There exist tests $\t_n$ such that $P_{f_0}^{(n)} \t_n\ra 0$ and
		$\sup_{f\in\F_n}P_f^{(n)}(1-\t_n)\le e^{-4n\e_n^2}$, where
		$$\F_n=\bigl\{f: \|f-f_0\|_{\Hil^{\d}}>M\h_n,\|R_{j_n} K_n f-f\|_{\Hil^{\d}}\le M_1\h_n\bigr\}.$$
	\end{itemize}
Then the proof can be completed exactly as the proof of Theorem~\ref{thm: contr:diff:norm},
	
{\sl Proof of claim (1*).} 
The Kullback-Leibler divergence and variation between the (normal) distributions of $(Y_1,\ldots, Y_n)$ 
under two functions $f$ and $f_0$ are given by $n\|K f-K f_0\|_{\LL_n}^2/2$ and twice this quantity, respectively. 
Hence togther with (2*), this is the usual evidence lower bound (as in  Lemma~8.21 of \cite{GhosalvdVbook2017}).

{\sl Proof of claim (2*)}. In view of Lemma~\ref{LemmaInterpolation}, $\|Kf - Kf_0\|_{\LL_n}\leq C_2 \|K_n f - K_nf_0\|_L$.
By the triangle inequality and \eqref{eq:regression_stable_reconstruction2}, since 
$s_d<\b+p<S_d$ by assumption,
\begin{align*}
\|K_nf - K_nf_0\|_L 
&\le\| K_nf - Kf \|_L+  \|K_nf_0 - Kf_0\|_L+\|Kf - Kf_0 \|_L \\
&\le \d_{n,\b+p}  \|Kf\|_{\Gil^{\b+p}}+\d_{n,\b+p}\|K f_0\|_{\Gil^{\b+p}}+\|Kf - Kf_0 \|_L.
\end{align*}
Since $Kf_0\in\Gil^{\b+p}$ by assumption and $\d_{n,\b+p}\ll \e_n$, the second term on the right is $o(\e_n)$, and negligible
for verifying (2*). Because $Kf$ is a centered Gaussian random element in $\Gil^{\b+p}$ by assumption,
Borell's inequality gives that $\Pr\bigl(\d_{n,\b+p} ( \|Kf\|_{\Gil^{\b+p}}>\e_n\bigr)\le e^{-c\e_n^2/\d_{n,\b+p}^2}$, for some
constant $c$, which is $o(e^{-n\e_n^2})$, since $n\d_{n,\b+p}^2\ra 0$, by assumption. We now use that
$\Pr(X+Y<\e)\ge \Pr(X<\e/2)-\Pr(Y>\e/2)$, valid for any random variables $X,Y$, to see that
the left side of (2*) is bounded below by $\Pi\bigl(\|Kf-Kf_0\|_L<\e_n/(2C_2)\bigr)$.
By Lemma~\ref{LemmaSmallBall} we can adjust the constant $c_1$ in $\e_n$ so that this is bounded
below by $e^{-n\e_n^2}$.

{\sl Proof of claim (3*).} By the triangle inequality,
\begin{align*}
\|R_{j_n} K_n f-f\|_{\Hil^\d}\leq \|R_{j_n} K f-f\|_{\Hil^{\d}}+\|R_{j_n} K f-R_{j_n} K_n f\|_{\Hil^{\d}}.
\end{align*}
The second term on the right hand side is bounded above by
\begin{align*}
\r_{j_n}^{\d}\|R_{j_n} K f-R_{j_n} K_n f\|_{\Hil^{0}}&\lesssim\r_{j_n}^{\d+p}\| K_n f-Kf \|_L
\lesssim  \r_{j_n}^{\d+p}\d_{n,\b+p}\|Kf\|_{\Gil^{\b+p}},
\end{align*}
where the second inequality follows by  \eqref{EqGalerkinREstimate} and the third by
\eqref{eq:regression_stable_reconstruction2}.  By Borell's inequality applied to the
variable $Kf$, the prior probability that
the right side is bigger than a multiple of $\h_n$ is smaller than
$\exp(-c\h_n^2\rho_{j_n}^{-2\d-2p}\d_{n,\b+p}^{-2})\ll e^{-n\e_n^2}$,  since   $\h_n/\r_{j_n}^{\d+p}\asymp \e_n$ and
$n\d_{n,\b+p}^2\ra 0$. Therefore the probability on the left hand side of (3*) is bounded above
by $	\Pi\bigl(\|R_{j_n} K f-f\|_{\Hil^{\d}}>M_1\h_n/2\bigr)\leq e^{-4n\eps_n^2}$,
in view of claim (3) from the proof of Theorem~\ref{thm: contr:diff:norm}.

{\sl Proof of claim (4*).}
For a given constant $M_0>0$, to be determined later, consider  the test
$\tau_n=1\bigl\{\|R_{j_n}Y^{(n)}-R_{j_n}K_nf_0\|_{\Hil^{\d}}\ge M_0 \eta_n\bigr\}$.

For $Y^{(n)}$ given in \eqref{def:embedding2} we have 
\begin{equation}
R_j Y^{(n)}=R_j {K}_n f+\frac1{\sqrt n}R_j \xi^{(n)}.
\label{EqRY}
\end{equation}
The variable $R_j \xi^{(n)}=R_j Q_j\xi^{(n)}$ is a centered Gaussian random element in $V_j$ with strong and weak second moments,
\begin{align*}
\E\bigl\|R_j \xi^{(n)}\bigr\|_{\Hil^{0}}^2&\le \|R_j\|_{L\mapsto \Hil^{0}}^2\E\|Q_j\xi^{(n)}\|_L^2
\lesssim\r_j^{2p} j,\\
\sup_{\|f\|_L\le 1}\E \langle R_j \xi^{(n)}, f\rangle_{\Hil^{0}}^2
&=\sup_{\|f\|_L\le 1}\E \langle \xi^{(n)}, R_j^*f\rangle_L^2
\lesssim \sup_{\|f\|_L\le 1}\| R_j^*f\|_L^2\le \|R_j^*\|_{L\mapsto \Hil^{0}}^2\lesssim \r_j^{2p}.
\end{align*}
In both cases the  inequality on $\|R_j\|_{L\mapsto \Hil^{0}}=\|R_j^*\|_{L\mapsto \Hil^{0}}$ 
at the far right side follows from \eqref{EqGalerkinREstimate}, and we also use 
that the covariance operator of $\xi^{(n)}$ is bounded above by a multiple of the projection onto $\LL_n$,
and hence the identity, so that the covariance operator of $Q_j \xi^{(n)}$ is bounded above
by a multiple of $Q_j$. Therefore \eqref{EqBorelGeneral} is valid for $R_j\xi^{(n)}$ in the place
of $R_j\xi_j$.  
Choose $j=j_n$ and $t^2=4n\e_n^2\r_{j_n}^{2p}/b$ to reduce the right side of the latter equation to $e^{-4n\e_n^2}$ and
conclude that for a sufficiently large constant $M_0$ 
	\begin{align*}
	P_f^{(n)}\bigl(\|R_{j_n}Y^{(n)}-R_{j_n}K_nf\|_{\Hil^{\d}}> M_0 \h_n\bigr)\le e^{-4n\e_n^2}=o(1).
	\end{align*}
This is the equivalent of \eqref{eq:borel:Hd}, with $K$ replaced by $K_n$.

This and the definition of $\tau_n$  immediately give that $P_{f_0}^{(n)}\t_n\ra 0$. 
To bound the type-2 error, we first note that in view of  assertion \eqref{EqGalerkinREstimate} and assumption \eqref{eq:regression_stable_reconstruction2},
\begin{align*}
\|R_{j_n}Kf_0-R_{j_n}K_nf_0\|_{\Hil^{\d}}&\leq \r_{j_n}^{\d} \|R_{j_n}Kf_0-R_{j_n}K_nf_0\|_{\Hil^{0}}\\
&\leq\r_{j_n}^{\d} \|R_{j_n}\|_{L \mapsto \Hil^{0}}\,\|K_n f_0- Kf_0 \|_L \\
& \lesssim \r_{j_n}^{\d+p}\d_{n,\b+p}\|Kf_0\|_{\Gil^{\b+p}}\ll  \eta_n.
 \end{align*}
Combining this with \eqref{eq:UB:bias:test} and the triangle inequality, we see that there exists $c_0>0$ such that
	$$ \|f_0-R_{j_n}K_nf_0\|_{\Hil^{\d}}\leq  \|f_0-R_{j_n}Kf_0\|_{\Hil^{\d}}+ \|R_{j_n}Kf_0-R_{j_n}K_nf_0\|_{\Hil^{\d}}\leq c_0  \eta_n.$$
It follows that $\|R_{j_n}K_nf-f\|_{\Hil^{\d}}<M_1\h_n$ and $\|f-f_0\|_{\Hil^{\d}}>M\h_n$ imply
	$$\|R_{j_n}K_nf-R_{j_n}K_nf_0\|_{\Hil^{\d}}\ge\|f-f_0\|_{\Hil^{\d}}-M_1\h_n-c_0\h_n \geq(M-M_1-c_0)\h_n.$$
Then, for $\t_n=0$ and $f\in\F_n$,
	\begin{align*}\|R_{j_n}Y^{(n)}-R_{j_n}K_nf\|_{\Hil^{\d}}
		&\ge (M-M_1-c_0)\h_n-\|R_{j_n}Y^{(n)}-R_{j_n}K_nf_0\|_{\Hil^{\d}}\\
		&\geq  (M-M_1-c_0-M_0)\h_n.
	\end{align*}
It follows that $P_f^{(n)}(1-\t_n)\le \Pr\bigl(\|R_{j_n}Y^{(n)}-R_{j_n}K_nf\|_{\Hil^ {\d}}\ge c_5\h_n\bigr)$, 
	for $c_5=M-M_1-c_0-M_0$,  which is bounded above by $e^{-4n\e_n^2}$ for sufficiently large $M$,
by the last inequality of the preceding paragraph.
\end{proof}

The following next lemma is Lemma 8.3 of \cite{Yan2020thesis} or Lemma 2.9 of \cite{Yan2024}.

\begin{lemma}
\label{LemmaInterpolation}
If \eqref{eq:regression_W_n_isomorphism2} holds, then the interpolation map $\mathcal{I}_n$
is well defined, and for any orthonormal basis $e_{n,1},\ldots, e_{n,n}$ of $\LL_n$ relative to
the empirical inner product \eqref{eq: empirical inner product} and everyevery $g\in L$:
\begin{itemize}
\item $\mathcal{I}_ng$ is the orthogonal projection of $g$ onto $\LL_n\subset L$.
relative to inner product \eqref{eq: empirical inner product}, 
\item  $C_1\|\mathcal{I}_ng\|_L\le \|g\|_{\LL_n}\le C_2\|\mathcal{I}_ng\|_L$.
\end{itemize}
Furthermore, the Gram matrix $\bigl(\langle e_{n,i}, e_{n,j}\rangle_L\bigr){}_{i,j=1..n}$ is bounded below and above by the
identity matrix up to multiplicative constants that do not depend on $n$.
\end{lemma}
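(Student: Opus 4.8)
The plan is to reduce the whole statement to the single structural fact that the semi-inner product $\langle\cdot,\cdot\rangle_{\LL_n}$ restricts to a genuine (non-degenerate) inner product on the $n$-dimensional space $\LL_n$; this is exactly what the lower bound in \eqref{eq:regression_W_n_isomorphism2} supplies. First I would prove that $\mathcal{I}_n$ is well defined. Consider the point-evaluation map $E_n\colon\LL_n\to\RR^n$ given by $E_nf=\bigl(f(x_{n,1}),\ldots,f(x_{n,n})\bigr)$. It is linear, and its kernel consists of the $f\in\LL_n$ with $\|f\|_{\LL_n}=0$, which by the lower bound in \eqref{eq:regression_W_n_isomorphism2} forces $\|f\|_L=0$, hence $f=0$. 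Thus $E_n$ is an injective linear map between spaces of equal dimension $n$, hence a bijection, so for every $g\in L$ there is a unique $\mathcal{I}_ng\in\LL_n$ with $\mathcal{I}_ng(x_{n,i})=g(x_{n,i})$ for $i=1,\ldots,n$.

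Next I would establish the projection property and the norm equivalence. Since $\langle\cdot,\cdot\rangle_{\LL_n}$ depends only on the values of its arguments at the design points, for every $h\in\LL_n$ we have $\langle g-\mathcal{I}_ng,h\rangle_{\LL_n}=n^{-1}\sum_{i=1}^n\bigl(g(x_{n,i})-\mathcal{I}_ng(x_{n,i})\bigr)h(x_{n,i})=0$, so $g-\mathcal{I}_ng$ is $\langle\cdot,\cdot\rangle_{\LL_n}$-orthogonal to $\LL_n$ while $\mathcal{I}_ng\in\LL_n$; this is precisely the characterisation of the orthogonal projection of $g$ onto $\LL_n$ relative to $\langle\cdot,\cdot\rangle_{\LL_n}$ (uniqueness of the projection again follows from non-degeneracy of the form on $\LL_n$). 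The same observation gives $\|g\|_{\LL_n}=\|\mathcal{I}_ng\|_{\LL_n}$, and combining this identity with \eqref{eq:regression_W_n_isomorphism2} applied to $f=\mathcal{I}_ng\in\LL_n$ yields $C_1\|\mathcal{I}_ng\|_L\le\|g\|_{\LL_n}\le C_2\|\mathcal{I}_ng\|_L$.

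For the Gram matrix bound, write $G=\bigl(\langle e_{n,i},e_{n,j}\rangle_L\bigr)_{i,j=1}^n$ and, for $c=(c_1,\ldots,c_n)^\tp\in\RR^n$, set $f=\sum_{i=1}^n c_ie_{n,i}\in\LL_n$. Orthonormality of $(e_{n,i})$ with respect to $\langle\cdot,\cdot\rangle_{\LL_n}$ gives $\|f\|_{\LL_n}^2=\sum_{i=1}^n c_i^2$, while bilinearity gives $\|f\|_L^2=c^\tp Gc$. Substituting both into \eqref{eq:regression_W_n_isomorphism2} yields $C_1^2\,c^\tp Gc\le\sum_{i=1}^n c_i^2\le C_2^2\,c^\tp Gc$ for all $c\in\RR^n$, i.e.\ $C_2^{-2}\sum_{i=1}^n c_i^2\le c^\tp Gc\le C_1^{-2}\sum_{i=1}^n c_i^2$, which is the asserted two-sided bound of $G$ by the identity matrix with constants not depending on $n$.

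There is no genuine analytic obstacle here: once the form $\langle\cdot,\cdot\rangle_{\LL_n}$ is known to be non-degenerate on $\LL_n$, everything is finite-dimensional linear algebra. The one point deserving a little care is precisely this non-degeneracy, which is the qualitative content of \eqref{eq:regression_W_n_isomorphism2} and underlies both the bijectivity of $E_n$ (hence the existence and uniqueness of $\mathcal{I}_n$) and the well-posedness of the $\langle\cdot,\cdot\rangle_{\LL_n}$-orthogonal projection; the remaining estimates are then just the quantitative part of the same hypothesis. Alternatively one may simply invoke Lemma~8.3 of \cite{Yan2020thesis} (equivalently Lemma~2.9 of \cite{Yan2024}), where the statement is proved in exactly this form.
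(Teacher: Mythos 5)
Your proof is correct, and the argument is sound at every step. For the record, the paper itself does not prove this lemma; it simply refers to Lemma~8.3 of \cite{Yan2020thesis} (equivalently Lemma~2.9 of \cite{Yan2024}). What you have written is, in effect, the self-contained elementary proof behind that citation, and it rests on exactly the right structural observation: the lower bound in \eqref{eq:regression_W_n_isomorphism2} makes the empirical semi-norm $\|\cdot\|_{\LL_n}$ non-degenerate on the $n$-dimensional space $\LL_n$, which simultaneously yields bijectivity of the evaluation map $E_n$ (hence existence and uniqueness of $\mathcal{I}_n$) and well-posedness of the $\langle\cdot,\cdot\rangle_{\LL_n}$-orthogonal projection. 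The identity $\|g\|_{\LL_n}=\|\mathcal{I}_n g\|_{\LL_n}$, obtained because the two functions agree at the design points, then turns the two-sided estimate in \eqref{eq:regression_W_n_isomorphism2} applied to $\mathcal{I}_n g\in\LL_n$ directly into the second bullet, and the Gram-matrix bound $C_2^{-2}I\preceq G\preceq C_1^{-2}I$ is the same estimate read off in coordinates. This is a nice trade: the paper economises by citing, while your version makes the dependence on \eqref{eq:regression_W_n_isomorphism2} fully transparent, showing it is the only ingredient needed.

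One small point of hygiene worth flagging, though it is already present in the lemma's statement and does not affect your argument: for a generic element $g$ of $L=L_2(\O)$, the pointwise values $g(x_{n,i})$ are not well defined, so ``every $g\in L$'' should implicitly be read as $g\in L\cap C(\bar\O)$ (or some subset on which evaluation at the design points is meaningful, as in the assumption $\LL_n\subset L_2(\O)\cap C(\O)$ made in Section~\ref{sec:discrete:obs}). If you wanted to be fully rigorous you would state this restriction explicitly when introducing $E_n$ and $\mathcal{I}_n$.
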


\section{Empirical Bayes contraction in smoothness norm}\label{sec:EB:delta}
We extend the contraction results of \cite{Knapik2016} from the squared norm $\|v\|_{L_2}^2=\sum_{i=1}^\infty v_i^2$
to the squared norm $\|v\|_{G^{\d}}^2=\sum_{i=1}^\infty v_i^2i^{2\d}$. We employ the same notation, and follow the proof closely.

For given $\m$ define the function   $h_n(\cdot;v):(0,\infty)\to [0,\infty)$ by
\begin{equation}\label{eq: h}
h_n(\a;v)=\frac{1+2\a+2p}{n^{1/(1+2\a+2p)}\log n}\sum_{i=1}^{\infty}\frac{n^2i^{1+2\a} v_{0,i}^2\log i}{(i^{1+2\a}\k_i^{-2}+n)^2}.
\end{equation}
For positive constants $0 < l<L$ we define lower and upper bounds as
\begin{align}
\underline{\a}_n(v)&=\inf\{\a>0: h_n(\a;v)>l\}\wedge\sqrt{\log n}\label{eq: LB}, \\
\overline{\a}_n(v)&=\inf\{\a>0: h_n(\a;v)>L(\log n)^2\}\label{eq: UB}.
\end{align}
The infimum over the empty set is considered $\infty$.

The following lemma and theorem are identical as in \cite{Knapik2016} and need
no new proof.

\begin{lemma}\label{lem: abar}
For any $l, L > 0$  the following statements hold.
\begin{enumerate}
\item[(i)]
For all $\beta , R > 0$, there exists $c_0 > 0$ such that, for $n$ large enough,
\[
\inf_{\|v\|_{G^{\b}} \le R} \underline\a_n(v) \ge \beta -\frac{c_0}{\log n}
\]
\item[(ii)]
If $v_{,i} \not = 0$ for some $i \ge 2$, then $\overline\a_n(v) \le  (\log n)/(2\log 2) - 1/2- p$ for $n$ large enough.
\end{enumerate}
\end{lemma}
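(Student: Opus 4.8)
This lemma coincides with the corresponding lemma of \cite{Knapik2016}, and its proof applies verbatim, since the criterion function $h_n(\cdot\,;v)$ in \eqref{eq: h} does not involve $\d$; I sketch it for completeness. The plan is to analyse, for each fixed $\a$, the coefficient sequence $i\mapsto \phi_i(\a):=n^2 i^{1+2\a}/\bigl(i^{1+2\a}\k_i^{-2}+n\bigr)^2$ appearing in \eqref{eq: h}; since $\k_i^{-2}\asymp i^{2p}$, this sequence is, up to multiplicative constants, unimodal in $i$ with peak at $i\asymp i_n(\a):=n^{1/(1+2\a+2p)}$. Claim (i) then amounts to an upper bound on $h_n$ for $\a$ not exceeding $\b$, and claim (ii) to a lower bound on $h_n$ at a value of $\a$ of order $\log n$.

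For (i), I would set $w_i:=i^{2\b}v_{0,i}^2\ge 0$, so that $\|v\|_{G^\b}\le R$ reads $\sum_{i} w_i\le R^2$, and note that the series in \eqref{eq: h}, stripped of its prefactor, is the linear functional $(w_i)\mapsto\sum_{i\ge 2} w_i\,\phi_i(\a)\,i^{-2\b}\log i$, hence at most $R^2\sup_{i\ge 2}\phi_i(\a)\,i^{-2\b}\log i$ — the extremal configuration puts all the mass at the least favourable index. Evaluating this supremum near $i\asymp i_n(\a)$ and using $n^{1/(1+2\a+2p)}=i_n(\a)$ together with $\log i_n(\a)\asymp(\log n)/(1+2\a+2p)$, the prefactor of \eqref{eq: h} cancels and one is left with $h_n(\a;v)\lesssim R^2\,i_n(\a)^{2(\a-\b)}=R^2\,n^{2(\a-\b)/(1+2\a+2p)}$ for $\a$ in a bounded neighbourhood of $\b$ (for smaller $\a$ the supremum sits at $i=2$ and $h_n(\a;v)\to0$), with constant depending only on $p$ and uniform over $\|v\|_{G^\b}\le R$. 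Substituting $\a=\b-c_0/\log n$ turns $n^{2(\a-\b)/(1+2\a+2p)}$ into $\exp\bigl(-2c_0/(1+2\a+2p)\bigr)\to\exp\bigl(-2c_0/(1+2\b+2p)\bigr)$, which is strictly below $l$ once $c_0=c_0(\b,R,p,l)$ is large enough; as the bound is non-increasing as $\a$ decreases below $\b$, it follows that $h_n(\a;v)\le l$ for all $0<\a\le\b-c_0/\log n$ and $n$ large, whence $\underline{\a}_n(v)=\inf\{\a>0:h_n(\a;v)>l\}\wedge\sqrt{\log n}\ge\b-c_0/\log n$ uniformly over $\|v\|_{G^\b}\le R$.

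For (ii), I would keep a single term. Let $i^*\ge 2$ be the smallest index with $v_{0,i^*}\neq 0$ and define $\a^*$ by $1+2\a^*+2p=\log_{i^*}n$, so that $(i^*)^{1+2\a^*}\k_{i^*}^{-2}\asymp n$, $n^{1/(1+2\a^*+2p)}=i^*$ and $(i^*)^{1+2\a^*}=n(i^*)^{-2p}$. Discarding all but the $i^*$-summand of \eqref{eq: h} and using that its denominator is $\asymp n^2$,
\[
h_n(\a^*;v)\;\gtrsim\;\frac{1+2\a^*+2p}{n^{1/(1+2\a^*+2p)}\log n}\,(i^*)^{1+2\a^*}v_{0,i^*}^2\log i^*\;\asymp\;n\,v_{0,i^*}^2 ,
\]
by the three identities above and $\log i^*>0$. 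Since $v_{0,i^*}$ is a fixed nonzero number, $n\,v_{0,i^*}^2>L(\log n)^2$ for $n$ large, so $\overline{\a}_n(v)\le\a^*=\tfrac12\bigl(\log_{i^*}n-1\bigr)-p\le\tfrac12\bigl(\log_2 n-1\bigr)-p=(\log n)/(2\log 2)-1/2-p$, and $\a^*>0$ for $n$ large.

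The main obstacle is claim (i): converting the soft constraint $\|v\|_{G^\b}\le R$ into a bound on the infinite series in \eqref{eq: h} that is uniform in $v$, through the worst-index (linear-programming) reduction, and then the exponent bookkeeping that pins down $\a=\b-c_0/\log n$ as exactly the scale at which this bound is a tunable constant rather than growing or vanishing. The interplay with the truncation $\wedge\sqrt{\log n}$ in \eqref{eq: LB} and with the restriction $\a>0$ is then routine for $n$ large.
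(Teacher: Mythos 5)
Your proposal takes exactly the paper's approach: the paper explicitly states that this lemma ``is identical as in \cite{Knapik2016} and need no new proof,'' which is precisely your opening observation that $\d$ never enters the marginal-likelihood criterion $h_n(\cdot\,;v)$ in \eqref{eq: h} or the crossing points $\underline\a_n,\overline\a_n$ in \eqref{eq: LB}--\eqref{eq: UB}, so the Knapik--Szab\'o--van der Vaart proof carries over verbatim. Your reconstruction of that proof is sound; the only harmless slip is in (ii), where retaining the single term $i=i^*$ actually gives $h_n(\a^*;v)\gtrsim n(i^*)^{-1-2p}v_{0,i^*}^2$ rather than $n v_{0,i^*}^2$, but since $i^*$ is a fixed index this still grows faster than $L(\log n)^2$ and the conclusion is unaffected.
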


\begin{theorem}\label{thm: AlphaMagnitude}
For every $R > 0$
the constants $l$ and $L$ in \eqref{eq: LB} and \eqref{eq: UB} can be chosen such that
\[
\inf_{\|v\|_{G^{\b}} \le R} \Pr_v \Big(\arg\max_{\a \in [0,\log n]} \ell_n(\a) \in \bigl[\underline{\a}_n(v), \overline{\a}_n(v)\bigr]\Big) \to 1.
\]
\end{theorem}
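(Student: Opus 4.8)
The plan is to follow the argument of \cite{Knapik2016} (Theorems~1 and~3 there), which is exactly why the excerpt records that no new proof is needed; I sketch its structure. Write $\ell_n$ for the marginal log-likelihood of $\a$ given $\tilde Y_n$, so that $\arg\max_{\a\in[0,\log n]}\ell_n(\a)$ is the estimator in \eqref{eq: log likelihood Y given a} (the criterion there is $-\ell_n$ up to terms independent of $\a$). Differentiating in $\a$, and using $\partial_\a i^{1+2\a}=2(\log i)\,i^{1+2\a}$, one obtains a score of the form $\ell_n'(\a)=\sum_i w_{n,i}(\a)\bigl(\tilde Y_{n,i}^2-s_{n,i}(\a)\bigr)$ with explicit nonnegative weights $w_{n,i}(\a)$ and centering $s_{n,i}(\a)$. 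Since $\tilde Y_{n,i}\sim N(\k_i v_{0,i},1/n)$ has $\E_v\tilde Y_{n,i}^2=\k_i^2v_{0,i}^2+1/n$, the deterministic part $\E_v\ell_n'(\a)$ splits into a \emph{bias} contribution from the $\k_i^2 v_{0,i}^2$ terms — which, after dividing by the normalizer $n^{1/(1+2\a+2p)}\log n$, equals $h_n(\a;v)$ of \eqref{eq: h} up to a bounded positive factor — and a \emph{variance} contribution from the $1/n$ terms, which after the same normalization is bounded above and below by positive constants, uniformly in $\a\in[0,\log n]$ and in $\|v\|_{G^\b}\le R$.

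First I would carry out the deterministic analysis. Using the mild ill-posedness $\k_i\asymp i^{-p}$ and elementary estimates on the partial sums defining $h_n$ and the variance term, one shows that on $[0,\log n]$ the normalized expected score is bounded away from zero from \emph{above} for $\a$ just below $\underline\a_n(v)$ and bounded away from zero from \emph{below} for $\a$ just above $\overline\a_n(v)$: the constant $l$ in \eqref{eq: LB} is taken large enough to dominate the bounded variance contribution, and $L$ in \eqref{eq: UB} large enough that the bias contribution dominates. Here one uses the defining property that $h_n(\cdot;v)\le l$ to the left of $\underline\a_n(v)$ and $h_n(\cdot;v)>L(\log n)^2$ to the right of $\overline\a_n(v)$, together with monotonicity of the relevant finite truncations. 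Lemma~\ref{lem: abar} then guarantees $\underline\a_n(v)\le\sqrt{\log n}$ and $\overline\a_n(v)\le (\log n)/(2\log 2)-1/2-p$ for $n$ large (the degenerate case $v_{0,i}=0$ for all $i\ge 2$ being read with $\overline\a_n(v)=\infty$, equivalently $=\log n$), so the deterministic crossing region lies strictly inside $[0,\log n]$.

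The main work, and the principal obstacle, is the uniform control of the stochastic remainder:
\[
\sup_{\a\in[0,\log n]}\frac{\bigl|\ell_n'(\a)-\E_v\ell_n'(\a)\bigr|}{n^{1/(1+2\a+2p)}\log n}\ \stackrel{\Pr_v}{\longrightarrow}\ 0,
\]
uniformly over $\{\|v\|_{G^\b}\le R\}$. For fixed $\a$ this centered sum is a weighted combination of independent sub-exponential variables ($\chi^2_1-1$'s scaled by $1/n$ and by $w_{n,i}(\a)$), so Bernstein's inequality yields a tail bound of order $\exp(-c\,n^{\gamma})$ for some $\gamma>0$, with $c,\gamma$ depending only on $p,\b,R$, since the variance of the sum is controlled using $\sum_i i^{2\b}v_{0,i}^2\le R^2$ together with the explicit weights. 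To pass from a fixed $\a$ to the supremum, discretize $[0,\log n]$ into a polynomial-in-$n$ grid and bound the increment of $\a\mapsto\ell_n'(\a)$ over each cell of width $n^{-A}$ (again the factor $2\log i$ from differentiating $i^{1+2\a}$ only costs a logarithm, and the series converges with room to spare, so the increment is negligible for $A$ large); a union bound then suffices because the exponential tail beats the polynomial grid size, and the bound is uniform in $v$ because the variance estimate was.

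Finally I would assemble the pieces: on the high-probability event where the stochastic remainder is smaller than the deterministic gaps of the second paragraph, $\ell_n'(\a)$ is strictly positive just to the left of $\underline\a_n(v)$ and strictly negative just to the right of $\overline\a_n(v)$, so every maximizer of $\ell_n$ over $[0,\log n]$ lies in $[\underline\a_n(v),\overline\a_n(v)]$ — note that this sign argument at the two endpoints is all that is needed; unimodality of $\ell_n$ is never required. Taking the infimum over $\|v\|_{G^\b}\le R$ of the probability of this event and letting $n\to\infty$ gives the claim. The analogous statement for the hierarchical-Bayes posterior of $\a$, used in the proof of Theorem~\ref{theorem: general uniform convergence_adaptive}, follows by the same device, integrating the likelihood-ratio bounds against the hyperprior $\l$ rather than evaluating at the maximizer.
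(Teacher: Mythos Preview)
Your sketch follows the strategy of \cite{Knapik2016}, which is precisely the proof the paper invokes (it gives no new argument). The decomposition of $\ell_n'(\a)$ into a bias part governed by $h_n$ and a variance part of constant order after normalization, together with uniform control of the centered score via sub-exponential concentration and a chaining/discretization step, is the right architecture.

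There is, however, a gap in the way you close the argument. You write that ``$h_n(\cdot;v)>L(\log n)^2$ to the right of $\overline\a_n(v)$'', but the definition $\overline\a_n(v)=\inf\{\a:h_n(\a;v)>L(\log n)^2\}$ only gives $h_n(\a;v)\le L(\log n)^2$ for $\a<\overline\a_n(v)$; it says nothing about \emph{all} $\a>\overline\a_n(v)$ without some monotonicity of $h_n$, which does not hold in general. Relatedly, your concluding sentence --- that positivity of $\ell_n'$ ``just to the left'' of $\underline\a_n$ and negativity ``just to the right'' of $\overline\a_n$ forces every maximizer into $[\underline\a_n,\overline\a_n]$ --- is not a valid deduction: a sign on a one-sided neighbourhood of the endpoints does not rule out a global maximum at, say, $\a=0$. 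What is actually needed (and what \cite{Knapik2016} establishes) is that the normalized expected score has the required sign on the \emph{entire} intervals $[0,\underline\a_n)$ and $(\overline\a_n,\log n]$. On the left this does follow from $h_n\le l$ there and the variance term being bounded below; on the right one must argue more carefully, for instance by comparing $\ell_n(\a)$ for $\a>\overline\a_n$ directly with $\ell_n(\overline\a_n)$ via an integral of the score, using that the bias contribution accumulated over $[\overline\a_n,\a]$ dominates because $h_n$ exceeds $L(\log n)^2$ at least at $\overline\a_n$ and the variance part grows only like $\log n$ over this range. Once you fill this in, the rest of your outline goes through.
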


The following two theorems change the norm to $\|\cdot\|_{G^{\d}}$, with a resulting
decrease in the rate. The first theorem concerns the plug-in empirical Bayes
posterior, the second theorem the hierarchical posterior, where the assumption
on the hyper prior of $\a$ is the same as in \cite{Knapik2016}.
Set $\ell_n=(\log n)^{{ 3/2}}(\log\log n)^{1/2}$.

\begin{theorem}\label{thm: ConvergenceEB}
For every $\beta, \gamma, R > 0$, { $\d\in (-p-1/2,\b)$} and $M_n \to \infty$,
\[
\sup_{\|v_0\|_{G^{\b}} \le R} \E_{v_0} \Pi_{\hat{\a}_n}\bigl( v: \|v-v_0\|_{G^\d} \geq M_n\ell_nn^{-{(\b-\d)}/(1+2\b+2p)}\, \big|\, Y_n\bigr) \to 0.
\]
\end{theorem}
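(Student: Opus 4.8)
The plan is to adapt, essentially line by line, the $L_2$-contraction proof of \cite{Knapik2016}, inserting the weights $i^{2\d}$ and tracking how they shift the exponents. Put $\e_n=n^{-(\b-\d)/(1+2\b+2p)}$. By Theorem~\ref{thm: AlphaMagnitude} the estimator $\hat\a_n$ belongs to $[\underline\a_n(v_0),\overline\a_n(v_0)]$ with $P_{v_0}$-probability tending to one, uniformly over $\|v_0\|_{\Hil^\b}\le R$, so it suffices to bound $\sup_{\underline\a_n\le\a\le\overline\a_n}\Pi_\a\bigl(v:\|v-v_0\|_{\Hil^\d}>M_n\ell_n\e_n\given Y_n\bigr)$. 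By Markov's inequality and the explicit Gaussian form of the posterior recalled in the proof of Theorem~\ref{theorem: general uniform convergence_adaptive}, this supremum is at most $(M_n\ell_n\e_n)^{-2}\sup_{\underline\a_n\le\a\le\overline\a_n}\sum_{i\ge1}\bigl(m_i^2(\a)+\sigma_i^2(\a)\bigr)i^{2\d}$. Thus it remains to show, uniformly over the ball $\{\|v_0\|_{\Hil^\b}\le R\}$, that
\begin{align}
\sup_{\underline\a_n\le\a\le\overline\a_n}\sum_{i\ge1}\sigma_i^2(\a)\,i^{2\d}&=O(\e_n^2),\label{EqPlanVar}\\
\sup_{\underline\a_n\le\a\le\overline\a_n}\sum_{i\ge1}m_i^2(\a)\,i^{2\d}&=O_{P_{v_0}}(\ell_n^2\e_n^2).\label{EqPlanBias}
\end{align}

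For \eqref{EqPlanVar} I would use that $\sigma_i^2(\a)=1/\bigl(\l_i(\a)^{-1}+n\k_i^2\bigr)$ is decreasing in $\a$, so the supremum is attained at $\a=\underline\a_n$; splitting the series at the effective resolution level gives $\sum_i\sigma_i^2(\underline\a_n)i^{2\d}\lesssim n^{-2(\underline\a_n-\d)/(1+2\underline\a_n+2p)}$, the tail converging since $\underline\a_n\ge\b-c_0/\log n>\d$ for large $n$ by Lemma~\ref{lem: abar}(i). The key point is that $\a\mapsto(\a-\d)/(1+2\a+2p)$ has derivative $(1+2p+2\d)/(1+2\a+2p)^2$, which is strictly positive \emph{exactly} when $\d>-p-1/2$; combined with $\underline\a_n\ge\b-c_0/\log n$ this yields $(\underline\a_n-\d)/(1+2\underline\a_n+2p)\ge(\b-\d)/(1+2\b+2p)-O(1/\log n)$, whence $n^{-2(\underline\a_n-\d)/(1+2\underline\a_n+2p)}\lesssim n^{O(1/\log n)}\e_n^2=O(\e_n^2)$. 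No $\ell_n$ factor is needed here.

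For \eqref{EqPlanBias} I would write $m_i(\a)=-v_{0,i}/(1+n\l_i(\a)\k_i^2)+\sqrt n\,\l_i(\a)\k_i Z_i/(1+n\l_i(\a)\k_i^2)$, using $\tilde Y_{n,i}=\k_i v_{0,i}+n^{-1/2}Z_i$, so that $m_i^2(\a)\le 2B_i(\a)+2V_i(\a)Z_i^2$ with $B_i(\a)=v_{0,i}^2/(1+n\l_i(\a)\k_i^2)^2$ and $V_i(\a)=n\l_i(\a)^2\k_i^2/(1+n\l_i(\a)\k_i^2)^2$. The deterministic part $\sum_iB_i(\a)i^{2\d}$ is handled as in \cite{Knapik2016}: splitting at $i^\ast(\a)\asymp n^{1/(1+2\a+2p)}$ and using $\sum_iv_{0,i}^2i^{2\b}\le R^2$ together with $\d<\b$ bounds the high-frequency part by $R^2(i^\ast)^{2\d-2\b}$ and the low-frequency part by $R^2n^{-2}(i^\ast)^{2\d+2+4\a+4p-2\b}$, both of exact order $n^{(2\d-2\b)/(1+2\a+2p)}$; the uniformity over $\a\le\overline\a_n$ (which may be as large as $O(\log n)$) is then obtained, exactly as in \cite{Knapik2016}, from the defining inequality $h_n(\cdot;v_0)\le L(\log n)^2$ for $\overline\a_n$, which converts the crude bound into one losing only a fixed power of $\log n$, absorbed into $\ell_n$.

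The stochastic part $\sum_iV_i(\a)Z_i^2 i^{2\d}$, which is random and not monotone in $\a$, is the main obstacle, and is treated verbatim as in \cite{Knapik2016}. For a fixed $\a$ one writes $\sum_iV_i(\a)Z_i^2i^{2\d}=\sum_iV_i(\a)i^{2\d}+\sum_iV_i(\a)(Z_i^2-1)i^{2\d}$; the first (deterministic) term is $\lesssim n^{-2(\a-\d)/(1+2\a+2p)}$ by the same series splitting as for $\sigma_i^2(\a)$, and the second is a centered weighted $\chi^2$-sum to which Bernstein's inequality applies with variance proxy $\sum_iV_i^2(\a)i^{4\d}$ and scale $\max_iV_i(\a)i^{2\d}$, both small enough that taking the deviation $t$ to be a suitable power of $\log n$ times $n^{-2(\a-\d)/(1+2\a+2p)}$ renders the probability negligible. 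One then discretises $[\underline\a_n,\overline\a_n]$, whose length is $O(\sqrt{\log n})$ by Lemma~\ref{lem: abar}, into a grid of polynomial cardinality, applies a union bound over the grid, and controls the oscillation of $\a\mapsto\sum_iV_i(\a)(Z_i^2-1)i^{2\d}$ between neighbouring grid points by Lipschitz estimates for $\a\mapsto V_i(\a)$ (using $|\partial_\a\log\l_i(\a)|\lesssim\log i\lesssim\log n$ on the relevant frequency range). The accumulated logarithmic losses from the union bound and the deterministic-bias step are precisely what the factor $\ell_n=(\log n)^{3/2}(\log\log n)^{1/2}$ is calibrated to absorb, which gives \eqref{EqPlanBias}. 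Since all estimates involve $v_0$ only through $\|v_0\|_{\Hil^\b}\le R$, the convergence is uniform over the ball; I expect the hardest step to be this uniform-in-$\a$ control of the stochastic bias term, everything else being bookkeeping of exponents.
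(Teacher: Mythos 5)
Your overall skeleton is the same as the paper's: reduce via Theorem~\ref{thm: AlphaMagnitude} and Markov to bounding $\sup_{\underline\a_n\le\a\le\overline\a_n}\sum_i(m_i^2(\a)+\sigma_i^2(\a))i^{2\d}$, and treat the (deterministic, monotone) posterior-variance series and the (random) posterior-mean bias separately; the posterior-variance series and the deterministic squared bias $\sum_iB_i(\a)i^{2\d}$ are handled as in the paper, and you correctly identify the derivative computation that makes $\d>-p-1/2$ the right threshold. Where you genuinely diverge is in the stochastic part: the paper decomposes $\sum_im_i^2i^{2\d}-\E\sum_im_i^2i^{2\d}=\VV(\a)/n-2\WW(\a)/\sqrt n$ into a centred weighted $\chi^2$ process $\VV$ and a Gaussian cross-term $\WW$, and controls the supremum over $\a$ of each by Dudley's entropy-integral bound (Corollary~2.2.5 of \cite{vdVWellner}), using the Lipschitz estimates of Lemma~\ref{lem: VarVW} to bound covering numbers. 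You instead use $m_i^2\le 2B_i+2V_iZ_i^2$, which kills the cross-term $\WW$ at the cost of a harmless factor $2$, and then propose a Bernstein-tail $+$ grid-discretisation $+$ union bound to control $\sup_\a\sum_iV_i(\a)(Z_i^2-1)i^{2\d}$. This is a legitimate and arguably cleaner alternative — avoiding $\WW$ means one fewer process and no need for the second bound in Lemma~\ref{lem: VarVW} — and the Bernstein route is, if anything, better adapted to the sub-exponential increments than the sub-Gaussian chaining bound the paper invokes.

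However, there are concrete gaps in your sketch. First, the statement that the interval $[\underline\a_n,\overline\a_n]$ has length $O(\sqrt{\log n})$ by Lemma~\ref{lem: abar} is wrong: Lemma~\ref{lem: abar}(ii) gives $\overline\a_n\le(\log n)/(2\log 2)-1/2-p$ when $v_{0,i}\not=0$ for some $i\ge 2$, so the relevant interval has length $O(\log n)$, and when $v_0$ has only the first coordinate nonzero, $\overline\a_n=\infty$ and the interval is unbounded (the paper's set $Q_n$). Second, your union bound quietly breaks in exactly that regime: for fixed $\a$ the Bernstein tail is $\exp(-c\,n^{1/(1+2\a+2p)})$ for the sub-Gaussian deviation range, and once $\a\asymp\log n$ the exponent $n^{1/(1+2\a+2p)}$ is $O(1)$, so a union bound over a polynomial grid does not close. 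You need a separate argument for large $\a$ — e.g.\ that for $\a\gtrsim\log n$ only $i=1$ contributes and $\sup_\a V_1(\a)|Z_1^2-1|=O_P(1/n)\ll\e_n^2$ — which is exactly what the paper's ``single $\e$-ball covers $[K\log(n/\e),\infty)$'' step does. Third, the Lipschitz control between grid points is random: the Lipschitz constant of $\a\mapsto\sum_iV_i(\a)(Z_i^2-1)i^{2\d}$ is proportional to $(\log n)\sum_iV_i(\a)|Z_i^2-1|i^{2\d}$ and needs its own high-probability control before you can interpolate off the grid. Finally, the claim that the accumulated log-losses come out to exactly $\ell_n=(\log n)^{3/2}(\log\log n)^{1/2}$ is asserted rather than verified; in fact the $(\log n)^3(\log\log n)$ budget is essentially consumed by the deterministic bias sum (the partition of the frequency range into $J\lesssim(\log n)(\log\log n)$ blocks, each contributing $(\log n)^2$ through the $h_n$ bound), so your stochastic term only needs to land below that — which makes your job easier than you suggest — but this needs to be said, otherwise the rate in the theorem statement is not pinned down.
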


\begin{theorem}\label{thm: ConvergenceHB}
If the hyper  density satisfies Assumption~1, then 
for every $\beta, \gamma, R > 0$, { $\d\in (-p-1/2,\b)$} and $M_n \to \infty$ we have
\[
\sup_{\|v_0\|_{G^{\b}} \le R} \E_{v_0} \Pi\bigl(v:  \|v-v_0\|_{G^\d} \geq M_n\ell_nn^{-{(\b-\d)}/(1+2\b+2p)}\, \big|\, Y_n\bigr) \to 0.
\]
\end{theorem}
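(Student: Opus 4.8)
The plan is to follow the proof of Theorem~3 in \cite{Knapik2016} line by line, carrying the extra weights $i^{2\d}$ through every partial sum. First I would record that, since the hyper prior density satisfies \eqref{def:hyperprior}, the argument in the proof of Theorem~3 of \cite{Knapik2016}, combined with the characterisation of the crossing points in Lemma~\ref{lem: abar} and the localisation of the marginal maximum likelihood estimator in Theorem~\ref{thm: AlphaMagnitude}, yields that the hierarchical posterior of the hyper-parameter concentrates on the deterministic interval $[\underline{\a}_n(v_0),\overline{\a}_n(v_0)]$, in the sense that $\Pi\bigl(\a\in[\underline{\a}_n(v_0),\overline{\a}_n(v_0)]\given Y_n\bigr)\stackrel{P_{v_0}}{\rightarrow}1$ uniformly over $\|v_0\|_{G^\b}\le R$. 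Disintegrating the posterior of $v$ over the value of $\a$, the assertion then reduces to the uniform bound
\[
\sup_{\underline{\a}_n\le\a\le\overline{\a}_n}\Pi\bigl(v:\|v-v_0\|_{G^\d}> M_n\ell_n n^{-(\b-\d)/(1+2\b+2p)}\given Y_n,\a\bigr)\stackrel{P_{v_0}}{\rightarrow}0,
\]
uniformly in $v_0$ with $\|v_0\|_{G^\b}\le R$; this is the $G^\d$-analogue of \eqref{eq:help:unifconv}, obtained by the same reduction used in the proof of Theorem~\ref{theorem: general uniform convergence_adaptive}.

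By Markov's inequality it suffices to bound the conditional second moment $\E\bigl(\|v-v_0\|_{G^\d}^2\given Y_n,\a\bigr)=\sum_{i\ge 1}\bigl(m_i^2(\a)+\sigma_i^2(\a)\bigr)i^{2\d}$, with $m_i(\a)$ and $\sigma_i^2(\a)$ as displayed in the proof of Theorem~\ref{theorem: general uniform convergence_adaptive} and $\l_i(\a)\asymp i^{-1-2\a}$, $\k_i^2\asymp i^{-2p}$. The variance part $\sum_i\sigma_i^2(\a)i^{2\d}=\sum_i\l_i(\a)i^{2\d}/(1+n\l_i(\a)\k_i^2)$ is deterministic and monotonely decreasing in $\a$; splitting the series at the effective resolution level $i\asymp n^{1/(1+2\a+2p)}$, and using $\d>-p-1/2$ so that the low-frequency block dominates, one gets by elementary calculus the bound $n^{-2(\a-\d)/(1+2\a+2p)}$ at $\a=\underline{\a}_n$, which by Lemma~\ref{lem: abar}(i) (i.e.\ $\underline{\a}_n\ge\b-c_0/\log n$) is at most a constant multiple of $n^{-2(\b-\d)/(1+2\b+2p)}$. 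For the bias part, decompose $m_i(\a)=-v_{0,i}/(1+n\l_i(\a)\k_i^2)+\sqrt n\,\l_i(\a)\k_i\,\xi_i/(1+n\l_i(\a)\k_i^2)$, where $(\xi_i)$ are the standard normal noise coefficients of the observation $\tilde Y_{n,i}=\k_iv_{0,i}+\xi_i/\sqrt n$, so that $\sum_i m_i^2(\a)i^{2\d}\le 2B_n(\a)+2S_n(\a)$ with $B_n(\a)=\sum_i v_{0,i}^2i^{2\d}/(1+n\l_i(\a)\k_i^2)^2$ and $S_n(\a)=\sum_i n\l_i^2(\a)\k_i^2i^{2\d}\xi_i^2/(1+n\l_i(\a)\k_i^2)^2$.

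The deterministic squared bias $B_n(\a)$ is the decisive term: it is non-monotone in $\a$, and one must show $\sup_{\underline{\a}_n\le\a\le\overline{\a}_n}B_n(\a)=O\bigl(\ell_n^2 n^{-2(\b-\d)/(1+2\b+2p)}\bigr)$. This is precisely where the crossing-point definition \eqref{eq: LB}--\eqref{eq: UB} of $\underline{\a}_n,\overline{\a}_n$ through the function $h_n$ of \eqref{eq: h} is used, and I would transcribe the chain of elementary estimates in \cite{Knapik2016} relating $h_n(\a;v_0)$ to the partial sums $\sum_i v_{0,i}^2 i^{1+2\a}/(i^{1+2\a}\k_i^{-2}+n)^2$, the only change being the insertion of a factor $i^{2\d}$ in each such sum; in every resulting exponent this shifts $-2\b$ to $-2(\b-\d)$, the condition $\d<\b$ being exactly what is needed to keep the pertinent series summable at the high-frequency end. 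For the stochastic term $S_n(\a)$ I would first bound $\E S_n(\a)=\sum_i n\l_i^2(\a)\k_i^2i^{2\d}/(1+n\l_i(\a)\k_i^2)^2$ by the same weighted calculus, obtaining the same order, and then control the uniform fluctuation $\sup_{\a}\bigl(S_n(\a)-\E S_n(\a)\bigr)$ over $[\underline{\a}_n,\overline{\a}_n]$ by a maximal inequality over a sufficiently fine but polynomially-sized grid of $\a$-values, together with a $\chi^2$-concentration (Laurent--Massart, or Borell) bound at each grid point and a regularity estimate for $\a\mapsto S_n(\a)$ on the interval — again the argument of \cite{Knapik2016} with the $i^{2\d}$ weights carried along. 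Assembling the variance, bias and stochastic bounds through Markov's inequality yields the displayed uniform statement, hence the theorem.

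The main obstacle is re-running the delicate but elementary estimates of \cite{Knapik2016} for $B_n(\a)$ and $\E S_n(\a)$ with the extra weights $i^{2\d}$, and verifying that the resulting bookkeeping of exponents produces exactly the rate $n^{-(\b-\d)/(1+2\b+2p)}$ uniformly in $\a\in[\underline{\a}_n,\overline{\a}_n]$ and uniformly in $\|v_0\|_{G^\b}\le R$; in particular one must check that $\d>-p-1/2$ is precisely the threshold at which the $G^\d$-weighted variance series is still dominated by its low-frequency block, and that $\d<\b$ is precisely the threshold at which the $G^\d$-weighted bias series remains controlled by the crossing-point mechanism of $h_n$. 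Once these weighted estimates are in place, the localisation of the $\a$-posterior and the Markov reduction are routine and identical to \cite{Knapik2016}.
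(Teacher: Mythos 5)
Your proposal is correct and follows essentially the same route as the paper. The paper's proof of this theorem is a short reduction: decompose via the event that $\hat\a_n\in[\underline\a_{n,0},\overline\a_{n,0}]$ and the posterior $\a$-escape probabilities (which are handled exactly as in Theorem~3 of \cite{Knapik2016}), and then invoke the $G^\d$-weighted fixed-$\a$ bounds established for Theorem~\ref{thm: ConvergenceEB}; your ``localisation of the $\a$-posterior plus uniform fixed-$\a$ bound'' is the same decomposition. The detailed bias/variance/stochastic analysis you outline (decomposing $m_i(\a)$, bounding $\sum_i\sigma_i^2(\a)i^{2\d}$ deterministically at $\a=\underline\a_n$, using $h_n$ and the crossing-point bounds for the squared bias, and a chaining/maximal-inequality argument for the stochastic fluctuation) is precisely what the paper's preceding sections do for Theorem~\ref{thm: ConvergenceEB}. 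One implementation detail you do not mention explicitly but which the paper uses is the split of the Sobolev ball into $P_n$ and $Q_n$ depending on whether $\overline\a_{n,0}$ is moderate or very large (e.g.\ infinite), which lets the $h_n(\overline\a_{n,0};v_0)\le L(\log n)^2$ bound be applied cleanly on $P_n$ and a separate crude bound used on $Q_n$; this is a bookkeeping device rather than a new idea, and your argument would need to incorporate it when carrying out the ``transcription'' you describe.
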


\subsection{Proof of Theorem~\ref{thm: ConvergenceEB}}\label{sec: ProofSob}
We assume the exact equality $\kappa_i = i^{-p}$ for simplicity. Let $\e_n=\ell_n n^{-{(\b-\d)}/(1+2\b+2p)}$.

By Markov's inequality and Theorem \ref{thm: AlphaMagnitude},
\begin{align*}
& \sup_{\|v_0\|_{G^{\b}} \le R} \E_{v_0} \Pi_{\hat{\a}_n}\bigl(\|v-v_0\|_{G^\d} \geq M_n\e_{n}\, \big|\, Y_n\bigr)\\
&\qquad\qquad\le \frac{1}{M_n^2\e_n^2} \sup_{\|v_0\|_{G^{\b}} \le R} \E_{v_0} \sup_{\a\in[\underline{\a}_{n,0}, \overline{\a}_{n,0}\wedge \log n]}
R_n(\a) + o(1),
\end{align*}
where
\[
R_n(\a) = \int \|v-v_0\|_{G^\d}^2\,\Pi_\a(dv \given Y_n).
\]
By the explicit form of the posterior distribution 
\begin{equation}
\label{eq: PostExp}
R_n(\a)
=\sum_{i=1}^\infty (\hat{v}_{\a,i}-v_{0,i})^2{ i^{2\d}}
+\sum_{i=1}^{\infty}\frac{i^{2p+{ 2\d}}}{i^{1+2\a+2p}+n},
\end{equation}
where $\hat{v}_{\a,i}=ni^{p}(i^{1+2\a+2p}+n)^{-1}Y_{n,i}$
is the $i$th coefficient of the posterior mean. 

We divide the Sobolev-ball $\|v\|_{G^{\b}} \le R$ into two subsets
\begin{align*}
P_n  & = \{v: \|v \|_{G^{\b}} \le R, \ \overline{\a}_n(v) \leq (\log n)/\log 2-1/2-p\},\\
Q_n  & = \{v: \|v \|_{G^{\b}} \le R, \ \overline{\a}_n(v) > (\log n)/\log 2-1/2-p\}.
\end{align*}
{ For $\a=(\log n)/\log 2-1/2-p$,  we have $n^{1/(1+2\a+2p)}=\sqrt 2$. In the proof below the summation index $i$ is
often cut at the latter quantity. If $v\in Q_n$, then $i=1$ is the only index below the  cut-off. Also if $v\in Q_n$, the
upper bound $\overline\a_n(v)$ may be infinite. If $\overline\a_n(v)<\infty$, then we have
the bound $h_n\bigl(\overline\a_n(v);v\bigr)\le L (\log n)^2$.}

\subsubsection{Bound for the expected posterior risk over $P_n$}
\label{sec: 61}
For ease of notation abbreviate $\underline\a_{n,0}=\underline\a_n(v_0)$ and $\overline\a_{n,0}=\overline\a_n(v_0)$.
In this section we  prove that
\begin{equation}\label{eq: PnExp}
\sup_{v_0\in P_n} \sup_{\a\in[\underline\a_{n,0}, \overline\a_{n,0}]} \E_{v_0} R_n(\a) = O(\e_n^2).
\end{equation}
The expectation of the first term in \eqref{eq: PostExp} can be split into a square bias and a variance term. This gives
\begin{equation}\label{eq: PostRisk}
\E_{v_0} R_n(\a)=\sum_{i=1}^\infty \frac{i^{2+4\a+4p{+2\d}}v_{0,i}^2}{(i^{1+2\a+2p}+n)^2}
+{n}\sum_{i=1}^\infty \frac{i^{2p{+2\d}}}{(i^{1+2\a+2p}+n)^2}
+\sum_{i=1}^\infty \frac{i^{2p{+2\d}}}{i^{1+2\a+2p}+n}.
\end{equation}
The second and third terms in
\eqref{eq: PostRisk} are deterministic and free of $v_0$, and the second is bounded by
the third. By Lemma~\ref{lem: LogM} (with $m=0$, $l=1$, $r=1+2\a+2p$ and $s=2p{+2\d}$
and ${-p-1/2<\d<\a}$), the third term  is for  $\a \ge\underline{\a}_{n,0}$ further bounded by
\begin{align*}
 n^{-\frac{{2(\a-\d)}}{1+2\a+2p}}
\leq  n^{-\frac{{2(\underline{\a}_{n,0}-\d)}}{1+2\underline{\a}_{n,0}+2p}}.
\end{align*}
In view of Lemma \ref{lem: abar}.(i),  the right-hand side is bounded
by a constant times $n^{-{2(\beta-\d)}/(1+2\beta+ 2p)}$ for large $n$.

It remains to consider the first sum in \eqref{eq: PostRisk}, which
we divide into three parts and show that each of the parts has the stated order. First we note that
\begin{equation*}
\sum_{i > n^{1/(1+2\b+2p)}} \frac{i^{2+4\a+4p{+2\d}}v_{0,i}^2}{(i^{1+2\a+2p}+n)^2}
\le \sum_{i > n^{1/(1+2\b+2p)}}v_{0,i}^2{ i^{2\d}} \leq \|v_0\|^2_{G^{\b}} n^{-2{(\b-\d)}/(1+2\b+2p)}.
\end{equation*}
For $0< \a\leq\log n/(2\log 2)-1/2-p$ and { sufficiently large $n$}, 
the maximum of the function $i \mapsto
i^{1+2\a + 4p{+2\d}}/\log i$ over the interval $[2, n^{1/(1+2\a + 2p)}]$ is attained
at $i = n^{1/(1+2\a + 2p)}$. (As long as $r:=1+2\a+4p+2\d\ge0$, the function is increasing at least on $[e^{1/r},\infty)$,  as $x\mapsto x^r/\log x$ is, and its value 
on $[2,e^{1/r}]$ is no smaller than its value at $n$ for sufficiently large $n$.) It follows that for $\a > 0$,
\begin{align*}
& \sum_{i \le n^{1/(1+2\a+2p)}} \frac{i^{2+4\a+4p{+2\d}}v_{0,i}^2}{(i^{1+2\a+2p}+n)^2} \\
& \quad = \frac{v_{0,1}^2}{(1+n)^2}
+ \frac1{n^2}\sum_{2 \le i \le n^{1/(1+2\a+2p)}} \frac{(i^{1+2\a+4p{+2\d}}/\log i)n^2i^{1+2\a}v_{0,i}^2\log i}{(i^{1+2\a+2p}+n)^2} \\
& \quad \le \frac{v_{0,1}^2}{(1+n)^2} + n^{-\frac{2{(\a-\d)}}{1+2\a + 2p}}h_n(\a;v_0).
\end{align*}
For $\a>\log n/(2\log 2)-1/2-p$, we have $n^{1/(1+2\a+2p)}<2$ and  the second term 
of the preceding display disappears, and for $v_0 \in P_n$, we have that $\overline\a_{n,0}$ is finite, whence
$h_n(\overline\a_{n,0};v_0)\le L(\log n)^2$. Since $n^{1/(1+2\overline\a_{n,0}+2p)} \le n^{1/(1+2\a+2p)}$
for $\a \le \overline\a_{n,0}$, the preceding implies that
\begin{align*}
 \sup_{v_0\in P_n} \sup_{\a\in[\underline{\a}_{n,0}, \overline{\a}_{n,0}]} 
\sum_{i \le n^{1/(1+2\bar\a_n+2p)}} \frac{i^{2+4\a+4p{+2\d}}v_{0,i}^2}{(i^{1+2\a+2p}+n)^2} 
\lle \frac{R^2}{n^2} + L n^{-\frac{2{(\underline{\a}_{n,0}-\d)}}{1+2\underline{\a}_{n,0} + 2p}} \log^2n.
\end{align*}
By Lemma \ref{lem: abar}, $\underline{\a}_{n,0} \ge \beta - c_0/\log n$ for a constant $c_0 > 0$
(only depending on $\beta, R, p$). Hence,
using  that { $x \mapsto (x-\d)/(1+x+2p)$ is increasing for every $\d+1+2p > 0$} the right-hand side is bounded
by a constant times $n^{-2{(\beta-\d)}/(1+2\beta+ 2p)}\log^2 n$.

To complete the proof
 we deal with the terms between $n^{1/(1+2\overline{\a}_{n,0}+2p)}$ and $n^{1/(1+2\b+2p)}$.
Let $J=J(n)$ be the smallest integer such that $\overline{\a}_{n,0}
/(1+1/\log n)^J \leq \b$. One can see that $J$ is bounded above by a multiple of $(\log n)(\log\log n)$ for any positive $\b$.
We partition the summation range under consideration into $J$ pieces using the auxiliary numbers
\[
b_j = 1+ 2\frac{\overline{\a}_{n,0}
}{(1+1/\log n)^j}+2p, \qquad j =0, \ldots, J.
\]
Note that the sequence $b_j$ is decreasing. Now we have
\[
\sum_{i= n^{1/(1+2\overline{\a}_{n,0}+2p)}}^{n^{1/(1+2\b+2p)}}
\frac{i^{2+4\a+4p{+2\d}}v_{0,i}^2}{(i^{1+2\a + 2p}+n)^2}
\leq \sum_{j=0}^{J-1}\sum_{i=n^{1/b_j}}^{n^{1/b_{j+1}}}{ i^{2\d}}v_{0,i}^2 
\leq 4\sum_{j=0}^{J-1}\sum_{i=n^{1/b_j}}^{n^{1/b_{j+1}}}\frac{ni^{b_j}{ i^{2\d}}v_{0,i}^2}{(i^{b_{j+1}}+n)^2}.
\]
Since $(b_j-b_{j+1})\log n = b_{j+1}-1-2p$, it holds for $n^{1/b_j}\leq i \leq n^{1/b_{j+1}}$ that
 $i^{b_j-b_{j+1}} \leq n^{1/\log n} = e$. On the same interval $i^{2p{+2\d}}$ is bounded by $n^{{(2p+2\d)}/b_{j+1}}$. 
Therefore the right hand side of the preceding display is further bounded by a constant times
\begin{align*}
&\sum_{j=0}^{J-1}\sum_{i=n^{1/b_j}}^{n^{1/b_{j+1}}}\frac{ni^{b_{j+1}}{ i^{2\d}}v_{0,i}^2\log i}{(i^{b_{j+1}}+n)^2}
\leq \sum_{j=0}^{J-1}n^{(2p{+2\d})/b_{j+1}-1}\sum_{i=n^{1/b_j}}^{n^{1/b_{j+1}}}\frac{n^2i^{b_{j+1}-2p}v_{0,i}^2\log i}{(i^{b_{j+1}}+n)^2{ \log n^{1/b_j}}}\\
&\leq \sum_{j=0}^{J-1}n^{(2p{+2\d})/b_{j+1}-1}h_n\biggl(\frac{\overline{\a}_{n,0}
}{(1+1/\log n)^{j+1}};v_0\biggr)n^{1/b_{j+1}}\frac{ b_j}{b_{j+1}}\\
&\lesssim {\sum_{j=0}^{J-1}n^{(1+2p{+2\d}-b_{j+1})/b_{j+1}}h_n(b_{j+1}/2-1/2-p; v_0)}\\
&\lesssim n^{-\frac{2\b/(1+1/\log n){-2\d}}{1+2\b/(1+1/\log n)+2p}}\sum_{j=0}^{J-1}h_n(b_{j+1}/2-1/2-p; v_0).
\end{align*}
In the last step we used the fact that for $j\le J$,  $b_j\ge b_J\ge 1+2\b/(1+1/\log n)+2p$, by the definition of $J$.
Because $b_{j}/2-1/2-p \le\overline{\a}_{n,0}$ for every $j\ge0$, it follows from
the definition of $\overline{\a}_{n,0}$ that $h_n(b_{j}/2-1/2-p; v_0)$
is bounded above by $L(\log n)^2$, and we recall that $J=J(n)$ is bounded
above by a multiple of $(\log n)(\log\log n)$. Finally we note that
\begin{align*}
n^{-\frac{2\b/(1+1/\log n){-2\d}}{1+2\b/(1+1/\log n)+2p}}  \lesssim n^{-2{(\b-\d)}/(1+2\b+2p)}.
\end{align*}
Therefore the first sum in \eqref{eq: PostRisk} over the range $[n^{1/(1+2\overline{\a}_{n,0}+2p)},
n^{1/(1+2\b+2p)}]$ is bounded above by a multiple of $n^{-2{(\b-\d)}/(1+2\b+2p)}(\log n)^{ 3}(\log\log n)$, in
the appropriate uniform sense over $P_n$. Putting the bounds above together we conclude 
the proof of \eqref{eq: PnExp}.

\subsubsection{Bound for the centered posterior risk over $P_n$}\label{sec: brisk}

We show in this section that for the set $P_n$ we also have
\[
\sup_{v_0\in P_n}\E_{v_0} \sup_{\a\in[\underline{\a}_{n,0},\overline{\a}_{n,0}]}\Bigl|\sum_{i=1}^\infty \bigl(\hat{v}_{\a,i}-v_{0,i}\bigr)^2{ i^{2\d}} - \E_{v_0} \sum_{i=1}^\infty \bigl(\hat{v}_{\a,i}-v_{0,i}\bigr)^2{ i^{2\d}}\Bigr| = O(\e_n^2).
\]
Using the explicit expression for the posterior mean $\hat{v}_{\a,i}$,
 we see that the random variable in the supremum is the absolute value of $\VV(\a)/n-2\WW(\a)/\sqrt{n}$, where
\[
\VV(\a)=\sum_{i=1}^\infty \frac{n^2\k_i^{-2}{ i^{2\d}}}{(i^{1+2\a}\k_i^{-2}+n)^2}(Z_i^2-1), 
\qquad \WW(\a)= \sum_{i=1}^\infty \frac{ni^{1+2\a}\k_i^{-3}v_{0,i}{ i^{2\d}}}{(i^{1+2\a}\k_i^{-2}+n)^2}Z_i.
\]
We deal with the two processes separately.

For the process $\VV$, Corollary 2.2.5 in \cite{vdVWellner} implies that
\[
\E_{v_0} \sup_{\a\in[\underline{\a}_{n,0},\infty)}|\VV(\a)| \lle
\sup_{\a\in[\underline{\a}_{n,0},\infty)}\sqrt{\var_{v_0}\VV(\a)} +
\int_0^{\diam_n}\sqrt{N(\e, [\underline{\a}_{n,0},\infty), d_n)}\,d\e,
\]
where $d^2_n(\a_1, \a_2) = \var_{v_0}\bigl(\VV(\a_1)-\VV(\a_2)\bigr)$ and $\diam_n$ is the $d_n$-diameter of
$[\underline{\a}_{n,0},\infty)$.
Since $\var_{v_0} Z_i^2 = 2$, the variance of $\VV(\a)$ is equal to
\[
\var_{v_0}\VV(\a) =  2n^4\sum_{i=1}^\infty \frac{i^{4p{+4\d}}}{(i^{1+2\a+2p}+n)^4}\lesssim n^{(1+4p{+4\d})/(1+2\a+2p)}\vee \log n,
\]
by Lemma~\ref{lem: LogM} (with $m=0$, $l=4$, $r=1+2\a+2p$ and $s=4p{+4\d}$).
{ The maximum with  $\log n$ arises for $\d\le -p-1/4$ (it can be the maximum with a constant if $\d<-p-1/4$).}
It follows that $\diam_n \lle r_n:=n^{(1+4p{+4\d})/{ (2+4\underline{\a}_{n,0}+4p)}}\vee \log n$.
To  compute the covering number of the interval $[\underline{\a}_{n,0}, \infty)$
we first note that for $0 < \a_1 < \a_2$,
\begin{align*}
\var_{v_0}\bigl(\VV(\a_1)& -\VV(\a_2)\bigr) = \sum_{i=2}^\infty \biggl(\frac{n^2i^{2p{+2\d}}}{(i^{1+2\a_1+2p}+n)^2}-
\frac{n^2i^{2p{+2\d}}}{(i^{1+2\a_2+2p}+n)^2}\biggr)^2\var Z_i^2\\
&\leq 2\sum_{i=2}^\infty \frac{n^4i^{4p{+4\d}}}{(i^{1+2\a_1+2p}+n)^4}
\leq 2n^4\sum_{i=2}^\infty i^{-4-8\a_1-4p{+4\d}} \lle n^42^{-{ 4}\a_1},
\end{align*}
for ${ \d<\b\le \a_1}$. Hence for $\e > 0$,
a single $\e$-ball covers the whole interval $[K\log(n/\e), \infty)$ for some constant $K > 0$.
By Lemma~\ref{lem: VarVW}, the distance $d_n(\a_1, \a_2)$ is bounded above by a multiple of 
$|\a_1-\a_2|r_n(\log n)$. 
Therefore the covering number of the interval $[\underline{\a}_{n,0}, K\log(n/\e)]$ 
relative to $d_n$ is bounded above by a multiple of $(\log n)r_n(\log(n/\e))/\e$. 
Since $\diam_n\lesssim r_n$, this gives an entropy integral
$${\int_0^{r_n}\sqrt{\frac{(\log n)r_n \log(n/\e)}{\e}}\,d\e
\lesssim r_n\sqrt{\log n}\int_0^1\sqrt{\frac{ \log (n/r_n)+\log (1/\h)}{\h}}\,d\h\lesssim r_n(\log n)}.$$
Combining everything we see that
\begin{align*}
\E_{v_0}\sup_{\a\in[\underline{\a}_{n,0},\infty)}\frac{|\VV(\a)|}n
&\lesssim \frac{r_n\log n}n
\lesssim n^{-\frac{{ 1/2+}2\underline{\a}_{n,0}{-2\d}}{1+2\underline{\a}_{n,0}+2p}}(\log n)\vee (\log n)^2/n.
\end{align*}
This is smaller than a multiple of $\e_n^2$ in view of Lemma~\ref{lem: abar}.(i).

It remains to deal with the process $\WW$.
The variance of $\WW(\a)/\sqrt{n}$  is given by
\[
\var_{v_0}\biggl(\frac{\WW(\a)}{\sqrt{n}}\biggr) = \sum_{i=1}^\infty \frac{ni^{2+4\a+ 6p{+4\d}}v_{0,i}^2}{(i^{1+2\a+2p}+n)^4}.
\]
We show that uniformly in $\a \in [\underline{\a}_{n,0}, \overline{\a}_{n,0}]$, 
this variance is bounded above by a constant that depends only on $\|v_0\|_{G^{\b}}$
times $r_n^2(\log n)^2$, for $r_n:=n^{-(1+4{(\b-\d)})/(2+4\b+4p)}$.
On the set $P_n$ the upper bound $\overline{\a}_{n,0}\leq \log n/\log2-1/2-p$ is finite and hence
$h_n(\overline{a}_{n,0};v_0)\le L(\log n)^2$.

For the sum over  $i \leq n^{1/(1+2\a+2p)}$ we have
\begin{equation}\label{eq: piet}
\begin{split}
&\qquad\quad\sum_{i \le n^{1/(1+2\a+2p)}} \frac{ni^{2+4\a+6p{+4\d}}v_{0,i}^2}{(i^{1+2\a+2p}+n)^4}\\
& \leq
\frac{v_{0,1}^2}{n^3} +
\frac{1}{n^3}\sum_{2 \le i \le n^{1/(1+2\a+2p)}} \frac{i^{1+2\a+6p{+4\d}}(\log i)^{-1}n^2i^{1+2\a}v_{0,i}^2\log i}{(i^{1+2\a+2p}+n)^2}\\
&\leq
\frac{\|v_0\|^2_{G^{\b}}}{n^3} +
(1+2\a+2p)\frac{n^{(4p{+4\d)}/(1+2\a+2p)}}{(\log n)n^2}\sum_{i \le n^{1/(1+2\a+2p)}} \frac{n^2i^{1+2\a}v_{0,i}^2\log i}{(i^{1+2\a+2p}+n)^2}\\
&\leq
\frac{\|v_0\|^2_{G^{\b}}}{n^3} +
n^{-\frac{1+4{(\a-\d)}}{1+2\a+2p}}h_n(\a; v_0).
\end{split}
\end{equation}
We note that the second term on the right hand side of the preceding display disappears for $\a>\log n/(2\log2)-1/2-p$. We have used again the fact that on the range $i \leq n^{1/(1+2\a+2p)}$, the quantity $i^{1+2\a+6p{+4\d}}(\log i)^{-1}$
is maximal for the largest $i$.
Now the function $x \mapsto -(1+2x)/(x+c)$ is  decreasing on $(0, \infty)$
for any $c > 1/2$. Moreover $h_n(\a; v_0) \leq L(\log n)^2$ for any $\a \leq \overline{\a}_{n,0}$, 
thus the preceding display is bounded above by a multiple of $n^{-(1+4{ (\underline{\a}_{n,0}-\d)})/(1+2\underline{\a}_{n,0}+2p)}(\log n)^2$. 
By Lemma~\ref{lem: abar}.(i) this
is further bounded by a constant times $n^{-(1+4{(\b-\d)})/(1+2\b+2p)}(\log n)^2$.

Next we consider the sum over the range $i > n^{1/(1+2\a+2p)}$. We distinguish two cases according to the value of $\a$. 
First suppose that ${ 2\a \geq 2p+4\d}$. Then $i^{-1-2\a+2p{+4\d}}(\log i)^{-1}$ is (easily) decreasing in $i$, hence
\begin{align*}
& \sum_{i>n^{1/(1+2\a+2p)}}\frac{ni^{2+4\a+ 6p{+4\d}}v_{0,i}^2}{(i^{1+2\a+2p}+n)^4} \\
&\quad \leq \frac{1}{n}\sum_{i>n^{1/(1+2\a+2p)}} \frac{n^2i^{-1-2\a+2p{+4\d}}(\log i)^{-1}i^{1+2\a}v_{0,i}^2\log i}{(i^{1+2\a+2p}+n)^2}\\
&\quad \leq \frac{1}{n^{(2+4{(\a-\d)})/(1+2\a+2p)}\log n} \sum_{i>n^{1/(1+2\a+2p)}}
\frac{n^2i^{1+2\a}v_{0,i}^2\log i}{(i^{1+2\a+2p}+n)^2}\\
&\quad \leq  n^{-\frac{1+4{(\a-\d)}}{1+2\a+2p}}h_n(\a;v_0).
\end{align*}
As above, this  is further bounded by a constant times $n^{-(1+4{ (\b-\d)})/(1+2\b+2p)}(\log n)^2$.
Next consider ${ 2\a < 2p+4\d}$.  { For $\a\ge\underline\a_{n,0}\ge \b-c_0/\log n$
and $\d<\b$, we have $2+4\a+2p+2\b>4\d$,  easily}, and hence
\begin{align*}
\sum_{i>n^{1/(1+2\a+2p)}} \frac{ni^{2+4\a+6p{+4\d}}v_{0,i}^2}{(i^{1+2\a+2p}+n)^4}
&\leq n\sum_{i>n^{1/(1+2\a+2p)}} i^{-2-4\a-2p-2\b{+4\d}}i^{2\b}v_{0,i}^2\\
&\leq \|v_0\|_{G^{\b}}^2n^{{\frac{-1-2\a-2\b+4\d}{1+2\a+2p}}}\\
&{ \leq \|v_0\|_{G^{\b}}^2n^{{\frac{-1-2\a-2(\b-c_0/\log n)+4\d}{1+2\a+2p}}}}.
\end{align*}
Since { $2p+4\d > 2\a>2(\beta-c_0/\log n)$, the function $\a\mapsto (1+2\a+2(\b-c_0/\log n)+4\d)/(1+2\a+2p)$ is increasing in $\a$}, 
and hence the right hand side of the preceding display attains its maximum at 
$\a=\underline{\a}_{n,0}\geq \b - c_0/\log n$. Since $n^{c_0/\log n}=e^{c_0}$, it follows that 
the preceding display is bounded above by 
\[
\|v_0\|_{G^{\b}}^2  e^{4c_0}n^{-\frac{1+4{(\b-\d)}}{1+2\b+2p}}.
\]
This concludes the proof that the variance of $\WW(\a)/\sqrt{n}$  is bounded above by a multiple of $r_n^2(\log n)^2$.

{ 
For $\a_1<\a_2$, we also have the easy bound
$$\var \Bigl(\frac{\WW(\a_1)-\WW(\a_2)}{\sqrt n}\Bigr)\le n\sum_{i=2}^\infty i^{-2-4\a_1-4p+4\d}v_{0,i}^2
\le n R^2 2^{-2\a_1},$$
for $\d<\b\le \a_1$. The right side is bounded above by $\e$ for $\a_1\ge 2 \log (n R^2/\e)$. By Lemma~\ref{lem: VarVW}
the square distance $d_n^2(\a_1,\a_2)$ on the left side of the display satisfies
$d_n(\a_1,\a_2)\lesssim |\a_1-\a_2| r_n (\log n)^2$.  This allows to bound 
the covering number of the interval $[\underline\a_{n,0},2\log (nR^2/\e)]$ by a multiple of
$1\vee r_n(\log n)^2\log (nR^2/\e)/\e$. Next we obtain
\begin{align*}\E \sup_{\underline\a_{n,0}\le \a\le \overline\a_{n,0}}\Bigl|\frac{\WW(\a)}{\sqrt n}\Bigr|
&\lesssim r_n\log n+\int_0^{r_n\log n}\sqrt {1\vee \frac{r_n(\log n)^2\log (nR^2/\e)}{\e}}\,d\e\\
&\lesssim r_n(\log n)^{3/2}\int_0^1\sqrt{\frac{1}{\eta}\log \frac{nR^2}{r_n\log n \eta}}\,d\eta
\lesssim r_n(\log n)^{2}\lesssim \e_n^2.
\end{align*}
}

\subsubsection{Bound for the expected and centered posterior risk over $Q_n$}\label{sec: 63}
To complete the proof of  Theorem~\ref{thm: ConvergenceEB} we show that similar results to Sections \ref{sec: 61} and \ref{sec: brisk} hold over the set $Q_n$ as well:
\begin{equation}\label{eq: Qn1}
\sup_{v_0\in Q_n} \sup_{\a\in[\underline{\a}_{n,0}, \infty)} \E_{v_0} R_n(\a) = O(\e_n^2),
\end{equation}
\begin{equation}\label{eq: Qn2}
\sup_{v_0\in Q_n}\E_{v_0} \sup_{\a\in[\underline{\a}_{n,0},\infty)}\Bigl|\sum_{i=1}^\infty \bigl(\hat{v}_{\a,i}-v_{0,i}\bigr)^2{ i^{2\d}} - \E_{v_0} \sum_{i=1}^\infty \bigl(\hat{v}_{\a,i}-v_{0,i}\bigr)^2{ i^{2\d}}\Bigr| = O(\e_n^2).
\end{equation}
For the first statement $\eqref{eq: Qn1}$ we follow the same line of reasoning as in Section
\ref{sec: 61}. The second and third terms in \eqref{eq: PostRisk} are free of $v_0$,
and hence the same upper bounds as in Section \ref{sec: 61} apply.
The first term in \eqref{eq:  PostRisk} is also treated exactly as in Section \ref{sec: 61},
except that $n^{1/(1+2\overline{\a}_{n,0}+2p)}\le 2$ if $v_0\in Q_n$ and hence
the sum over the terms $i< n^{1/(1+2\overline{\a}_{n,0}+2p)}$ need not be treated,
and we can proceed by replacing $\overline{\a}_{n,0}$ by $\log n/(2\log 2)-1/2-p$ in the definitions
of $J$ and the sequence $b_j$.

To bound the centered posterior risk $\eqref{eq: Qn2}$, we follow the proof given in Section~\ref{sec: brisk}. 
There the process $\VV(\a)$ is already bounded uniformly over $[\underline{\a}_{n,0},\infty)$,
whence it remains to deal with the process $\WW(\a)$. The only essential difference
is the upper bound for the variance of the process $\WW(\a)/\sqrt n$. In Section \ref{sec:
  brisk} this was shown to be  bounded above by a
multiple of the desired rate $(\log n)^2n^{-(1+4{ (\beta-\d)})/(1+2\beta+2p)}$
for $\alpha\in[\underline{\a}_{n,0},\overline{\a}_{n,0}\wedge (\log n/\log2-1/2-p)]$,
which is $\alpha\in[\underline{\a}_{n,0},\log n/\log2-1/2-p]$ on the set $Q_n$.
Finally, for $\alpha\geq \log n/\log 2-1/2-p$, and $-1-2\a-2\b+4\d<0$, we have
\begin{align}
&\sum_{i=1}^{\infty}\frac{ni^{2+4\a+6p+{ 4\d}}v_{0,i}^2}{(i^{1+2\a+2p}+n)^4}
\leq \frac{v_{0,1}^2}{n^3}+\sum_{i=2}^{\infty}\frac{ni^{-1-2\a{ +4\d}}v_{0,i}^2}{i^{1+2\a+2p}+n}\label{eq: Var2}\\
&\qquad\leq \frac{\|v_{0}\|_{\beta}^2}{n^3}+\sum_{i=2}^{\infty}i^{-1-2\a-2\beta{ +4\d}}i^{2\beta}v_{0,i}^2\nonumber\\
&\qquad\leq \frac{\|v_{0}\|_{\beta}^2}{n^3}+2^{-1-2\a-2\b{ +4\d}}\|v_0\|_{\beta}^2
\leq \frac{\|v_{0}\|_{\beta}^2}{n^3}+2^{2p{+4\d}}\frac{\|v_{0}\|_{\beta}^2}{n^2}\nonumber
\lesssim \frac1{n^2}.
\end{align}

\subsubsection{Bounds for the semimetrics associated to  $\VV$ and $\WW$}

\begin{lemma}\label{lem: VarVW}
{ Assume $1+2\underline\a_{n,0}+6p+4\d>-1$.}
For any $\underline{\a}_{n,0} \leq \a_1 < \a_2\leq\overline{\a}_{n,0}$ the following inequalities hold:
\[
\var_{v_0}\bigl(\VV(\a_1)-\VV(\a_2)\bigr) \lle (\a_1-\a_2)^2n^{(1+4p+{4\d})/(1+2\underline{\a}_{n,0}+2p)}(\log n)^2,
\]
\[
\var_{v_0}\biggl(\frac{\WW(\a_1)}{\sqrt{n}}-\frac{\WW(\a_2)}{\sqrt{n}}\biggr)
\lesssim (\a_1-\a_2)^2 n^{-\frac{1+4{ (\overline{\a}_{n,0}-\d)}}{1+2\overline{\a}_{n,0}+2p}}
(\log n)^4,
\]
with a constant that does not depend on $\a$ and $v_0$.
\end{lemma}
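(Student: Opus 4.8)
The plan is to follow the strategy of \cite{Knapik2016}, now carrying the extra weights $i^{2\d}$ coming from the $\|\cdot\|_{G^\d}$-norm. By independence of the $Z_i$, and since $\var(Z_i^2-1)=2$, one has
\[
\var_{v_0}\bigl(\VV(\a_1)-\VV(\a_2)\bigr)=2\sum_{i=1}^\infty\bigl(g_i(\a_1)-g_i(\a_2)\bigr)^2,\qquad g_i(\a):=\frac{n^2 i^{2p+2\d}}{(i^{1+2\a+2p}+n)^2},
\]
and likewise $\var_{v_0}\bigl(\WW(\a_1)/\sqrt n-\WW(\a_2)/\sqrt n\bigr)=n^{-1}\sum_i v_{0,i}^2\bigl(w_i(\a_1)-w_i(\a_2)\bigr)^2$ with $w_i(\a):=n i^{1+2\a+3p+2\d}/(i^{1+2\a+2p}+n)^2$. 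First I would differentiate in $\a$: an elementary computation gives $\partial_\a g_i(\a)=-4n^2 i^{1+2\a+4p+2\d}(\log i)(i^{1+2\a+2p}+n)^{-3}$ and $|\partial_\a w_i(\a)|\le 2w_i(\a)\log i$, so the mean value theorem bounds $|g_i(\a_1)-g_i(\a_2)|$ and $|w_i(\a_1)-w_i(\a_2)|$ by $|\a_1-\a_2|$ times the supremum over $\a\in[\underline\a_{n,0},\overline\a_{n,0}]$ of the corresponding derivative.

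It then remains to estimate $\sum_i(\partial_\a g_i(\a))^2$ and $n^{-1}\sum_i v_{0,i}^2(w_i(\a)\log i)^2$ uniformly over the interval, and here the same devices as in the proof of Theorem~\ref{thm: ConvergenceEB} apply. For the first sum I would split at the cut-off $i\asymp n^{1/(1+2\a+2p)}$ where $i^{1+2\a+2p}+n$ changes regime: below the cut-off the squared summand behaves like $n^{-2}i^{2+4\a+8p+4\d}(\log i)^2$, dominated by its value at the cut-off, and above it like $n^4 i^{-4-8\a-4p+4\d}(\log i)^2$, a convergent power series (the hypothesis $1+2\underline\a_{n,0}+6p+4\d>-1$ being exactly what keeps the relevant exponents in the summable regime after the enlargement by $i^{2\d}$) again dominated by its value at the cut-off; each piece is of order $n^{(1+4p+4\d)/(1+2\a+2p)}(\log n)^2$. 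Since $\a\mapsto(1+4p+4\d)/(1+2\a+2p)$ is monotone, its extremum over $[\underline\a_{n,0},\overline\a_{n,0}]$ is attained at an endpoint; matching the sign of $1+4p+4\d$ and using $\underline\a_{n,0}\ge\b-c_0/\log n$ with $n^{c_0/\log n}\asymp 1$ one checks it is controlled by $n^{(1+4p+4\d)/(1+2\underline\a_{n,0}+2p)}$, giving the first inequality. For the $\WW$-sum I would observe that $n^{-1}w_i(\a)^2$ equals $n i^{2+4\a+6p+4\d}(i^{1+2\a+2p}+n)^{-4}$, which up to the factor $(\log i)^2$ is exactly the summand already estimated in Section~\ref{sec: brisk}: on the range $i^{1+2\a+2p}<n$ one routes the weights $v_{0,i}^2$ through the function $h_n(\cdot;v_0)$ of \eqref{eq: h} and on the tail through $\|v_0\|_{G^\b}^2$ after extracting $i^{2\b}$; the bound $h_n(\a;v_0)\le L(\log n)^2$, valid on $[\underline\a_{n,0},\overline\a_{n,0}]$ by definition of $\overline\a_{n,0}$ when it is finite (the tail term being decisive otherwise), turns the $(\log n)^2$ of Section~\ref{sec: brisk} into $(\log n)^4$ once the extra $(\log i)^2\asymp(\log n)^2$ is absorbed, and checking monotonicity of the resulting $n$-exponent in $\a$ places its extremum at $\overline\a_{n,0}$, yielding $n^{-(1+4(\overline\a_{n,0}-\d))/(1+2\overline\a_{n,0}+2p)}(\log n)^4$.

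The differentiation and the geometric-series splitting are routine. The main obstacle is the bookkeeping of uniformity in $\a$: for every power of $n$ that appears one must decide which endpoint of $[\underline\a_{n,0},\overline\a_{n,0}]$ is extremal, verify that the sign conditions keep all the tail series convergent after inserting $i^{2\d}$, and --- in the $\WW$ case --- distribute the weights $v_{0,i}^2$ across the $h_n$-controlled range and the Sobolev-controlled tail so that $h_n\le L(\log n)^2$ can be invoked on the whole interval. This is precisely the delicate part of the argument of \cite{Knapik2016}, here repeated with the $\|\cdot\|_{G^\d}$-weights in place throughout.
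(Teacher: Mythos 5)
Your proposal follows the same strategy as the paper's proof: differentiate in $\a$, apply the mean value theorem, and estimate the resulting supremum via the split at $i\asymp n^{1/(1+2\a+2p)}$ that the paper encodes in Lemmas~\ref{lem: LogM} and~\ref{lem: Botond'sTrick}, routing the $v_{0,i}^2$ weights through $h_n$ and $\|v_0\|_{G^\b}$ for the $\WW$-term and absorbing the extra $(\log i)^2$ into the $(\log n)^4$ factor. One caveat: for $1+4p+4\d>0$ the sup of $\a\mapsto n^{-(1+4(\a-\d))/(1+2\a+2p)}$ over $[\underline\a_{n,0},\overline\a_{n,0}]$ is attained at $\underline\a_{n,0}$, not $\overline\a_{n,0}$, which is what the paper's own passage through \eqref{eq: piet} actually yields; the $\overline\a_{n,0}$ in the lemma's display appears to be a slip for $\underline\a_{n,0}$, and your monotonicity check should land there as well.
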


\begin{proof}
The left-hand side of the first inequality is equal to, for $f_i(\a) = (i^{1+2\a+2p}+n)^{-2}$,
\[
n^4\sum_{i=1}^\infty (f_i(\a_1)-f_i(\a_2))^2i^{4p{+4\d}}\var Z_i^2,
\]
The derivative of $f_i$ is given by  $f_i'(\a) = -4i^{1+2\a+2p}(\log i)/(i^{1+2\a+2p}+n)^{3}$, 
whence the preceding display is bounded above by a multiple of
\begin{align*}
& (\a_1-\a_2)^2n^4\sup_{\a\in[\a_1, \a_2]} \sum_{i=1}^\infty \frac{i^{2+4\a+8p{+4\d}}(\log i)^2}{(i^{1+2\a+2p}+n)^6}\\
&\qquad\leq (\a_1-\a_2)^2n^3(\log n)^2\sup_{\a\in[\a_1, \a_2]} \frac{1}{(1+2\a+2p)^2}\sum_{i=1}^\infty \frac{i^{1+2\a+6p{+4\d}}}{(i^{1+2\a+2p}+n)^4}\\
&\qquad\lle (\a_1-\a_2)^2(\log n)^2 \sup_{\a\in[\a_1, \a_2]}
n^{(1+4p{+4\d})/(1+2\a+2p)},
\end{align*}
with the help of Lemma~\ref{lem: Botond'sTrick} (with $r=1+2\a+2p$, and $m=2$), and Lemma~\ref{lem: LogM} (with $m=0$, $l = 4$, $r=1+2\a+2p$, and $s={ 1+2\a+6p+4\d}$). Since $\a \geq \underline{\a}_{n,0}$, we get the first assertion of the lemma.

We next consider $\WW/\sqrt{n}$. The left-hand side of the second inequality in the statement of the lemma is equal to
\[
\sum_{i=1}^\infty (f_i(\a_1) - f_i(\a_2))^2nv_{0,i}^2{ i^{4\d}}\var Z_i,
\]
where now $f_i(\a)=i^{1+2\a+3p}/(i^{1+2\a+2p}+n)^2$.
The derivative of this $f_i$ satisfies
$|f_i'(\a)| \le 2(\log i)f_i(\a)$, hence we get the upper bound
\[
4(\a_2-\a_1)^2 \sup_{\a \in [\a_1, \a_2]} \sum_{i=1}^\infty
\frac{n i^{2+4\a+6p}v^2_{0,i}{ i^{4\d}}\log^2 i}{(i^{1+2\a+2p}+n)^4}.
\]
The proof is completed by arguing as in (\ref{eq: piet}) or (\ref{eq: Var2}), { where we may
use that the function $x\mapsto (\log x)/x$ is decreasing for $x>e$ to take care of the
extra $(\log i)^2$ factor.}
\end{proof}

\subsection{Proof of Theorem~\ref{thm: ConvergenceHB}}\label{sec: ProofHB}
Let $A_n$ be the event that $\hat\a_n \in [\underline{\a}_{n,0},\overline{\a}_{n,0}]$.
Then with $\a \mapsto \lambda_n(\a \given Y_n)$ denoting the posterior Lebesgue density of $\a$, we have
{\begin{align*}
& \sup_{\|v_0\|_{G^{\b}}\le R}\E_{v_0} \Pi(\|v-v_0\|_{G^\d}\geq M_n\e_n|Y_n)\\
&\qquad\leq \sup_{\|v_0\|_{G^{\b}}\le R} \Pr_0(A_n^c) + \sup_{\|v_0\|_{G^{\b}}\le R}\E_{v_0}\int_0^{\underline{\a}_{n,0}}\l_n(\a|Y_n)\, d\a\, 1_{A_n}\\
&\qquad\qquad +\sup_{v_0 \in Q_n} \E_{v_0} \sup_{\a\in[\underline{\a}_{n,0},\infty)} \Pi_\a(\|v-v_0\|_{G^\d}\geq M_n \e_{n}|Y_n)\\
&\qquad\qquad +\sup_{v_0 \in P_n}  \E_{v_0}\sup_{\a\in[\underline{\a}_{n,0},\overline{\a}_{n,0}]} \Pi_\a(\|v-v_0\|_{G^\d}\geq M_n
 \e_n|Y_n)\\
&\qquad\qquad\qquad+\sup_{v_0\in P_n}\E_{v_0}\int_{\overline{\a}_{n,0}}^\infty\l_n(\a|Y_n)\, d\a.
\end{align*}
The first, second and fifth terms on the right are shown to tend to zero in \cite{Knapik2016}. The third and fourth terms
are shown to tend to zero in the preceding.

\subsection{Auxiliary lemmas}\label{sec: Appendix}

\begin{lemma}[Lemma 8 of \cite{Knapik2016}]\label{lem: LogM}
For any $m>0$, $l \geq 1$, $r_0>0$, $r \in (0, r_0]$, $s \in (0, rl-2]$, and $n \geq e^{2mr_0}$
\[
\sum_{i=1}^\infty \frac{i^{s}(\log i)^m}{(i^{r}+n)^l} \leq 4n^{(1+s-lr)/r}\frac{(\log n)^m}{r^m}.
\]
The same upper bound holds for $m = 0$, $r \in (0, \infty)$, $s \in ({ -1}, rl-1)$, and $n \geq 1$.
{ For $m=0$ and $s=-1$, the rate is $(\log n)n^{-l}$ and for any $s<-1$ it becomes $n^{-l}$.}
\end{lemma}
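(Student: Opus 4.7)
The plan is a standard dyadic split of the sum at the natural cut‑off $i_n = \lceil n^{1/r}\rceil$, treating the two ranges with opposite bounds on the denominator:
\begin{align*}
S_1 &= \sum_{i\le i_n} \frac{i^s(\log i)^m}{(i^r+n)^l} \le n^{-l}\sum_{i\le i_n} i^s(\log i)^m,\\
S_2 &= \sum_{i>i_n} \frac{i^s(\log i)^m}{(i^r+n)^l} \le \sum_{i>i_n} i^{s-rl}(\log i)^m.
\end{align*}
On each range I would bound the discrete sum by the corresponding integral. For the case $m>0$, the derivative of $x\mapsto x^s(\log x)^m$ on $[1,\infty)$ equals $x^{s-1}(\log x)^{m-1}(s\log x+m)$, which is positive for $s>0$, so the integrand in $S_1$ is increasing and
\[
\sum_{i\le i_n} i^s(\log i)^m \le \int_1^{i_n+1} x^s(\log x)^m\,dx \le (i_n+1)^{s+1}\frac{(\log(i_n+1))^m}{s+1}.
\]
Using $i_n+1\le 2n^{1/r}$ and $\log(i_n+1)\le (\log n)/r + \log 2$, this gives (up to constants depending on $l,m$ but with the right powers) $S_1\lesssim n^{(1+s-lr)/r}(\log n)^m/r^m$.

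For $S_2$, since $s-rl\le -2$, the integrand $x^{s-rl}(\log x)^m$ has derivative of sign equal to $(s-rl)\log x+m$, which is negative as soon as $\log x\ge m/(rl-s)\ge m/2$. The hypothesis $n\ge e^{2mr_0}$ ensures $(\log n)/r\ge 2m$, so $\log x\ge (\log n)/r\ge 2m$ for $x\ge i_n$, and the integrand is decreasing on the whole summation range. Then
\[
\sum_{i>i_n} i^{s-rl}(\log i)^m \le \int_{i_n}^\infty x^{s-rl}(\log x)^m\,dx,
\]
which by integration by parts (or direct estimation using $rl-s\ge 2$) is bounded by $i_n^{s-rl+1}(\log i_n)^m/(rl-s-1)$, giving again $S_2\lesssim n^{(1+s-lr)/r}(\log n)^m/r^m$. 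Combining $S_1$ and $S_2$ and tracking constants carefully yields the factor $4$ in the stated bound.

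The case $m=0$ follows from the same two‑range argument, only now one does not need the assumption on $n$: for $s\in(-1,rl-1)$ both $\int_1^{n^{1/r}}x^s\,dx$ and $\int_{n^{1/r}}^\infty x^{s-rl}\,dx$ converge and are of order $n^{(1+s-lr)/r}$; for $s=-1$ the first integral produces the extra $\log n$; and for $s<-1$ the first sum is bounded by a constant, making the $n^{-l}$ term dominate. The main bookkeeping obstacle is just ensuring the multiplicative constant stays $\le 4$, which drives the choice of the precise cut‑off $\lceil n^{1/r}\rceil$ and the use of $n\ge e^{2mr_0}$ to absorb lower‑order log factors.
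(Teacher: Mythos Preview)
The paper does not give its own proof of this lemma; it simply cites it as Lemma~8 of \cite{Knapik2016}. So there is nothing in the paper to compare against, and your task reduces to producing a correct self-contained proof.

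Your approach---splitting at $i_n\asymp n^{1/r}$ and bounding the two ranges by integral comparison---is exactly the standard one and is correct in structure. Two points deserve attention. First, for $S_1$ your bound $(i_n+1)^{s+1}(\log(i_n+1))^m/(s+1)$ combined with $i_n+1\le 2n^{1/r}$ introduces a factor $2^{s+1}$; since $s$ may be as large as $r_0 l-2$ with $l$ unrestricted, this is not uniformly bounded and so cannot yield the absolute constant~$4$. A cleaner route is to split at $i_n=\lfloor n^{1/r}\rfloor$ and simply bound $\sum_{i\le i_n} i^s(\log i)^m\le i_n\cdot i_n^s(\log i_n)^m\le n^{(1+s)/r}((\log n)/r)^m$, which already gives $S_1\le n^{(1+s-lr)/r}(\log n)^m/r^m$ with constant~$1$. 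Second, for $S_2$ your ``integration by parts'' bound $\int_{i_n}^\infty x^{s-rl}(\log x)^m\,dx\le i_n^{s-rl+1}(\log i_n)^m/(rl-s-1)$ is not literally true: the integration by parts leaves a positive remainder. Iterating the identity and using $\log i_n\ge 2m$ with $rl-s-1\ge 1$ gives a geometric series bounded by~$2$, so the correct inequality has an extra factor~$2$. With these two fixes the constants combine to at most~$4$ and the argument goes through; the $m=0$ cases are handled exactly as you describe.
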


\begin{lemma}[Lemma 10 of \cite{Knapik2016}]\label{lem: Botond'sTrick}
Let $m$, $i$, $r$, and $\xi$ be positive reals. Then for $n \geq e^m$
\[
\frac{ni^{r}\bigl(r\log i\bigr)^m}{(i^{r}+n)^2}\leq (\log n)^m, \qquad \text{and} \qquad \frac{n^\xi\bigl(r\log i\bigr)^{\xi m}}{(i^{r}+n)^\xi}\leq (\log n)^{\xi m}.
\]
\end{lemma}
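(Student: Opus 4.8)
The plan is to reduce both displayed inequalities to a single elementary one-variable estimate and then prove that estimate by a monotonicity argument. Throughout, since $\log i$ is raised to the (generally non-integer) power $m$, I read the statement in the only regime in which it is used, namely $i\ge 1$; the case $i=1$ is trivial because both left-hand sides then vanish. Writing $t=i^{r}\ge 1$, so that $r\log i=\log t\ge 0$, I would first isolate the master inequality
\begin{equation}
\frac{n(\log t)^{m}}{t+n}\le(\log n)^{m},\qquad t\ge 1,\ \ n\ge e^{m}.
\label{EqBotondMaster}
\end{equation}
Granting \eqref{EqBotondMaster}, both assertions follow at once. For the first, factoring off the bounded ratio $t/(t+n)$ gives
\[
\frac{ni^{r}(r\log i)^{m}}{(i^{r}+n)^{2}}=\frac{t}{t+n}\cdot\frac{n(\log t)^{m}}{t+n}\le\frac{n(\log t)^{m}}{t+n}\le(\log n)^{m},
\]
using $0\le t/(t+n)\le 1$. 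For the second, since $\xi>0$ and the bracketed quantity is nonnegative, raising \eqref{EqBotondMaster} to the power $\xi$ yields
\[
\frac{n^{\xi}(r\log i)^{\xi m}}{(i^{r}+n)^{\xi}}=\Bigl(\frac{n(\log t)^{m}}{t+n}\Bigr)^{\xi}\le\bigl((\log n)^{m}\bigr)^{\xi}=(\log n)^{\xi m}.
\]

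To prove \eqref{EqBotondMaster} I would split according to the size of $t$. If $1\le t\le n$, then $0\le\log t\le\log n$ gives $(\log t)^{m}\le(\log n)^{m}$, while $n/(t+n)\le 1$, and multiplying the two bounds gives \eqref{EqBotondMaster}. If $t>n$, I would bound $n/(t+n)<n/t$ and examine $\phi(s):=s^{-1}(\log s)^{m}$ on $[e^{m},\infty)$. Since $\phi'(s)=s^{-2}(\log s)^{m-1}(m-\log s)$, which is $\le 0$ for $s\ge e^{m}$ (there $\log s\ge m>0$, so the power is well defined and the last factor is nonpositive), $\phi$ is nonincreasing on $[e^{m},\infty)$. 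As $t>n\ge e^{m}$ we get $\phi(t)\le\phi(n)$, whence $n(\log t)^{m}/(t+n)<n\phi(t)\le n\phi(n)=(\log n)^{m}$, which is \eqref{EqBotondMaster}.

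There is no real obstacle here: the lemma is an elementary calculus fact. The only place the hypothesis $n\ge e^{m}$ enters is to place $n$ (and hence every relevant $t>n$) on the decreasing branch of $\phi$, beyond its maximum at $s=e^{m}$, and to guarantee $\log n>0$; without such a lower bound on $n$ the inequality can fail (for instance at $n=1$, where the right-hand sides are zero while the left-hand sides are not).
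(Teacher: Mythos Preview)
Your proof is correct. The paper itself does not supply a proof of this lemma; it merely quotes the statement from \cite{Knapik2016}, so there is no in-paper argument to compare against. Your reduction to the single master inequality $n(\log t)^m/(t+n)\le(\log n)^m$ via the substitution $t=i^r$, followed by the two-case split $t\le n$ versus $t>n$ and the monotonicity of $\phi(s)=s^{-1}(\log s)^m$ on $[e^m,\infty)$, is clean and complete. Your remark that the hypothesis $n\ge e^m$ is needed precisely to place $n$ beyond the maximiser of $\phi$ is also exactly the point.
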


\bibliographystyle{imsart-number} 
\bibliography{bibliography}       

\end{document}